\DeclareSymbolFont{symbols3}{LS1}{stixbb}{m}{n}
\DeclareMathSymbol{\bigslopedvee}{\mathbin}{symbols3}{"A7}
\newcounter{fig}
\newcounter{clai}
\newcounter{exa}
\newcounter{theorem}[section]
\newcounter{lemma}[section]
\renewcommand{\thetheorem}{\arabic{section}.\arabic{theorem}}
\renewcommand{\thelemma}{\arabic{section}.\arabic{lemma}}
\newcounter{nona}[theorem]
\newcounter{nonanona}[theorem]
\newcounter{nonanonanona}[theorem]
\newcounter{nonanonanonab}[theorem]
\newcounter{nonanonanonabc}[nonanonanonab]
\renewcommand{\thenona}{\Alph{nona}}
\renewcommand{\thenonanona}{\alph{nonanona}}
\renewcommand{\thenonanonanona}{\roman{nonanonanona}}
\renewcommand{\thenonanonanonab}{\roman{nonanonanonab}}
\renewcommand{\thenonanonanonabc}{\roman{nonanonanonabc}}
\newenvironment{nonamenoname}{\begin{trivlist}\item[]\refstepcounter{nonanona}%
        {\bf (\thenonanona)\ \  \ }\nobreak\noindent\sl\ignorespaces}{%
        \ifvmode\smallskip\fi\end{trivlist}}
\newenvironment{nonamenonamenonameb}{\begin{trivlist}\item[]\refstepcounter{nonanonanonab}%
        {\bf (\thenona.\thenonanonanonab )\ \  \ }\nobreak\noindent\sl\ignorespaces}{%
        \ifvmode\smallskip\fi\end{trivlist}}
\newenvironment{nonamenonamenonamebc}{\begin{trivlist}\item[]\refstepcounter{nonanonanonabc}%
        {\bf (\thenona.\thenonanonanonab.\thenonanonanonabc)\ \  \ }\nobreak\noindent\sl\ignorespaces}{%
        \ifvmode\smallskip\fi\end{trivlist}}        
\newenvironment{nonamenonamenoname}{\begin{trivlist}\item[]\refstepcounter{nonanonanona}%
        {\bf (\thenonanona.\thenonanonanona )\ \  \ }\nobreak\noindent\sl\ignorespaces}{%
        \ifvmode\smallskip\fi\end{trivlist}}                
\newenvironment{theorem}{\begin{trivlist}\item[]\refstepcounter{theorem}%
        {\bf\thetheorem\ Theorem}\par\nobreak\noindent\sl\ignorespaces}{%
        \ifvmode\smallskip\fi\end{trivlist}}
\newenvironment{theoremnopar}{\begin{trivlist}\item[]\refstepcounter{theorem}%
        {\bf\thetheorem\ Theorem}\ \ \nobreak\noindent\sl\ignorespaces}{%
        \ifvmode\smallskip\fi\end{trivlist}}
\newenvironment{noname}{\begin{trivlist}\item[]\refstepcounter{nona}%
        {\bf (\thenona)\ \ \ }\nobreak\noindent\sl\ignorespaces}{%
        \ifvmode\smallskip\fi\end{trivlist}}
\newenvironment{theoremplus}[1]{\begin{trivlist}\item[]%
        \refstepcounter{theorem}{\bf\thetheorem\ Theorem} {\rm (\,#1\,)}%
        \par\nobreak\noindent\sl\ignorespaces}{%
        \ifvmode\smallskip\fi\end{trivlist}}
\newenvironment{lemma}{\begin{trivlist}\item[]\refstepcounter{lemma}%
        {\bf\thelemma\ Lemma}\par\nobreak\noindent\sl\ignorespaces}{%
        \ifvmode\smallskip\fi\end{trivlist}}
\newenvironment{lemmanopar}{\begin{trivlist}\item[]\refstepcounter{theorem}%
        {\bf\thetheorem\ Lemma}\ \ \nobreak\noindent\sl\ignorespaces}{%
        \ifvmode\smallskip\fi\end{trivlist}}
\newenvironment{proposition}{\begin{trivlist}\item[]\refstepcounter{theorem}%
        {\bf\thetheorem\ Proposition}\par\nobreak\noindent\sl\ignorespaces}{%
        \ifvmode\smallskip\fi\end{trivlist}}
\newenvironment{conjecture}{\begin{trivlist}\item[]%
        \refstepcounter{theorem}{\bf\thetheorem\ Conjecture}\par%
        \nobreak\noindent\sl\ignorespaces}{%
        \ifvmode\smallskip\fi\end{trivlist}}
\newenvironment{conjectureplus}[1]{\begin{trivlist}\item[]%
        \refstepcounter{theorem}{\bf\thetheorem\ Conjecture} %
        {\rm(\,#1\,)}\par\nobreak\noindent\sl\ignorespaces}{%
        \ifvmode\smallskip\fi\end{trivlist}}
\newenvironment{claim}{\begin{trivlist}\item[]\refstepcounter{clai}%
        {\bf Claim \theclai}\mbox{ \ }\sl\ignorespaces}{%
        \ifvmode\smallskip\fi\end{trivlist}}
\newcommand{\wideitem}[1]{\leavevmode\hangindent\mathindent\noindent%
        \hbox to \mathindent{#1\hfil}\ignorespaces}
\renewcommand{\emptyset}{\mathchar"001F}
\newcommand{\eop}{\rule{1.4ex}{1.4ex}}
\newcommand{\A}{{\cal A}}
\newcommand{\B}{\mathcal{B}}
\newcommand{\C}{{\cal C}}
\newcommand{\D}{{\cal D}}
\newcommand{\I}{{\cal I}}
\newcommand{\X}{{\cal X}}
\newcommand{\fM}{\mathfrak{M}}
\newcommand{\fB}{\mathfrak{B}}
\newcommand{\fE}{\mathfrak{E}}
\newcommand{\sx}{\mathsf{x}}
\newcommand{\bigsquare}{\mathlarger{\square}}
\newcommand{\bigbowtie}{\mathlarger{\bowtie}}
\newcommand{\bigboxtimes}{\mathlarger{\boxtimes}}
\newcommand{\bigrhd}{\mathlarger{\rhd}}
\newcommand{\bigbigslopedvee}{\mathlarger{\bigslopedvee}}
\newcommand{\swapxy}[2]{
\bigsquare^{ \hspace{-16pt}\raisebox{4pt}
                 {$\mathsmaller{#1}$ \hspace{1pt} $\mathsmaller{#2}$}
               }
                                        }
\newcommand{\swapxyzw}[4]{\ \ \swapxy{#1}{#3}_{\hspace{-25pt}\raisebox{-1.5pt}{$\mathsmaller{#2}$ \hspace{2.7pt}$\mathsmaller{#4}$}}}
\newcommand{\crossswapxy}[2]{
\bigbowtie^{ \hspace{-16pt}\raisebox{4pt}
                 {$\mathsmaller{#1}$ \hspace{1pt} $\mathsmaller{#2}$}
               }
                                        }
\newcommand{\crossswapxyzw}[4]{\ \ \crossswapxy{#1}{#3}_{\hspace{-25pt}\raisebox{-1.5pt}{$\mathsmaller{#2}$ \hspace{2.7pt}$\mathsmaller{#4}$}}}
\newcommand{\exchangexy}[2]{
\bigboxtimes^{ \hspace{-16pt}\raisebox{4pt}
                 {$\mathsmaller{#1}$ \hspace{1pt} $\mathsmaller{#2}$}
               }
                                        }
\newcommand{\exchangexyzw}[4]{\ \ \exchangexy{#1}{#3}_{\hspace{-25pt}\raisebox{-1.5pt}{$\mathsmaller{#2}$ \hspace{2.7pt}$\mathsmaller{#4}$}}}
\newcommand{\singswapxy}[2]{
\bigbigslopedvee^{ \hspace{-16pt}\raisebox{4pt}
                 {$\mathsmaller{#1}$ \hspace{1pt} $\mathsmaller{#2}$}
               }
                                        }
\newcommand{\singswapxyz}[3]{\ \ \ \singswapxy{#1}{#3}_{\hspace{-25pt}\raisebox{-1.5pt}{$\mathsmaller{#2}$}}\ \ \ }
\newcommand{\singswaptri}[3]{\ \ \ \bigrhd^{\hspace{-16pt}\raisebox{4pt}{$\mathsmaller{#1}$}}_{\hspace{-16pt}\raisebox{-1.5pt}{$\mathsmaller{#2}$}}\hspace{2pt}\raisebox{2pt}{$\mathsmaller{#3}$}\ }
\newcommand{\bigs}{\vspace*{2.3in}}
\newcommand{\sms}{\vspace*{.2in}}
\title{\bf Frame Matroids, Toric Ideals, and a Conjecture of White}
\author{\quad\\
{\sc Sean McGuinness \thanks{Research supported by NSERC discovery grant}}\\
 Thompson Rivers University\\}
\date{}
\bigs \pagenumbering{roman}
\begin{document}
\newpage

\pagenumbering{arabic}

\maketitle
\begin{abstract}
\noindent Blasiak verified a conjecture of White for graphic
matroids by showing that the toric ideal of a graphic matroid is
generated by quadrics.  In this paper, we extend this result to frame matroids satisfying a {\it linearity condition}.  Such classes of frame matroids include graphic matroids, bicircular matroids, signed graphic matroids, and more generally frame matroids obtained from group-labelled graphs. 

{\sl Keywords}\,:  Matroid, toric ideal, frame matroid.

\bigskip\noindent
{\sl AMS Subject Classifications (2012)}\,: 05D99,05B35.
\end{abstract}

\section{Introduction}\label{sec-intro}

When possible, we shall follow the terminology and notation of \cite{BonMur} for graphs.    For a graph $G$ and vertex $v \in V(G)$, we let $E_G(v)$ denote the set of edges incident with $v$, and we let $d_G(v)$ denote its degree.  For a subset of edges $X\subseteq E(G)$ or a subset of vertices $X\subseteq V(G),$ we let $G[X]$ denote the subgraph induced by $X.$  For all basic concepts and definitions pertaining to matroids, we refer the reader to \cite{Oxl}.  For a matroid $M,$ we let $\B(M)$ denote the set of bases in $M.$  For $B\in \B(M)$ and $e\in E(M)\backslash B,$ we let $C(e,B)$ denote the {\it fundamental circuit} with respect to $B$ which contains $e.$  For $e\in B,$ we let $C^*(e,B)$ denote the {\it fundamental cocircuit} with respect to $B$ which contains $e.$
For convenience, if we create a new base from a base $B$ by deleting $X\subset B$ and adding elements of $Y,$ then we denote the resulting base by $B-X+Y$.  In the case where $X = \{ x_1, \dots ,x_k \}$ and $Y= \{ y_1, \dots ,y_k \},$ we will often write the new base as $B - x_1 - \cdots - x_k +y_1 + \cdots + y_k.$  For example, if $X = \{ e \}$ and $Y = \{ f \},$ then the new base is just $B-e+f.$
For
a pair of bases $B_1, B_2 \in \B(M)$ we have the well-known {\bf
symmetric exchange property}; that is, for all $e\in B_1$, there
exists $f \in B_2$ such that $B_1' = B_1 - e + 
f $ and $B_2'= B_2 - f + e $ are bases of
$M.$  We say that two sequences of sets $(X_1, \dots ,X_k)$ and $(Y_1, \dots ,Y_k)$ are {\bf compatible} if as multisets, $\cup_{i=1}^kX_i = \cup_{i=1}^kY_i.$
In \cite{Whi2}, White made the following well-known conjecture:

\begin{conjectureplus}{White}
If $(A_1, \dots, A_k)$ and $(B_1,\dots ,B_k)$ are compatible sequences of bases in $\B(M)$, then one can transform $(A_1, \dots, A_k)$ into $(B_1, \dots, B_k)$
by a sequence of symmetric exchanges. 
\label{con-white1}
\end{conjectureplus}

The above conjecture can be rephrased in terms of generators of {\it toric ideals}.  For all elements $e\in E(M)$ we associate an indeterminant $x_e$,
and for each base $B$, we associate an indeterminant $y_B.$ Let
$\mathbb{K}$ be a field and let $S_M$ be the polynomial ring
$\mathbb{K}[y_B\ \big|\ B\in \B(M)]$.  The {\bf toric ideal} $\I_M$
of $S_M$ is defined to be the kernel of the homomorphism $\theta_M:
S_M \rightarrow \mathbb{K}[x_e\ \big| \ e\in E(M)]$ where
$\theta_M(y_B) = \prod_{e\in B}x_e$ for each base $B\in \B(M).$   It is known (see \cite{Stu}) that $\I_M$ is generated by {\bf binomials};  that is, polynomials $y_{B_1} y_{B_2} \cdots y_{B_k} - y_{B_1'} y_{B_2'} \cdots y_{B_k'}$ which are the difference of two monomials.   
Conjecture \ref{con-white1} implies the following:

\begin{conjecture}
For any matroid $M$, the toric ideal $\I_M$ is generated by
quadratic binomials $y_{B_1}y_{B_2} - y_{B_1'}y_{B_2'},$ where
$B_1',B_2'$ are bases obtained from $B_1,B_2$ by a symmetric
exchange.\label{con-white2}
\end{conjecture}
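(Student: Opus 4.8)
The plan is to translate Conjecture~\ref{con-white2} into the combinatorial language of symmetric exchanges and make precise the relationship with Conjecture~\ref{con-white1}. Write $J\subseteq S_M$ for the ideal generated by the quadratic binomials $y_{B_1}y_{B_2}-y_{B_1'}y_{B_2'}$ with $B_1',B_2'$ obtained from $B_1,B_2$ by a symmetric exchange; then $J\subseteq\I_M$, and the goal is $J=\I_M$. Since $\I_M$ is a toric ideal it is generated by binomials $y_{A_1}\cdots y_{A_k}-y_{B_1}\cdots y_{B_k}$, and $\theta_M$ annihilates such a binomial precisely when $\prod_i\prod_{e\in A_i}x_e=\prod_i\prod_{e\in B_i}x_e$, i.e.\ precisely when $(A_1,\dots,A_k)$ and $(B_1,\dots,B_k)$ are compatible. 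By the standard dictionary for binomial ideals (see \cite{Stu}), such a binomial $m-m'$ lies in $J$ if and only if $m$ and $m'$ are joined by a walk in the graph on monomials whose edges record divisibility by a generator of $J$; here an edge is exactly the replacement of a sub-product $y_{A_i}y_{A_j}$ by $y_{A_i'}y_{A_j'}$ coming from a symmetric exchange between the two bases $A_i,A_j$ — that is, a symmetric exchange performed on two terms of the sequence. Hence $J=\I_M$ is equivalent to the assertion that any two compatible sequences of bases can be transformed into one another by a sequence of (pairwise) symmetric exchanges, which is Conjecture~\ref{con-white1}; so it suffices to establish the latter.

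I would then attack Conjecture~\ref{con-white1} by induction on the common length $k$. For $k\le 2$ it is immediate from the symmetric exchange property. For $k\ge 3$, introduce a complexity measure, e.g.\ $\delta=\sum_{i=1}^k|A_i\setminus B_i|$ after relabelling the $B_i$ so as to minimise it, which vanishes exactly when the two sequences agree as multisets; the inductive step would be to show that whenever $\delta>0$ there is a pairwise symmetric exchange applied to $(A_1,\dots,A_k)$ that strictly decreases $\delta$, or decreases a suitable lexicographic refinement of it. Concretely, choose an index with $A_1\ne B_1$ and an element $e\in A_1\setminus B_1$; compatibility forces $e$ to occur in some $B_j$, and one wants to use symmetric exchanges to migrate $e$ toward a term in which it belongs without creating new mismatches elsewhere. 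Producing such an exchange, and proving the process never stalls, is where the real content lies.

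The main obstacle is precisely this last point: a symmetric exchange moves only one element at a time, and for a general matroid there is no known way to guarantee a complexity-decreasing choice — which is why Conjecture~\ref{con-white1}, equivalently Conjecture~\ref{con-white2}, remains open in full generality. The route taken in this paper is therefore not the purely matroidal induction above but a structural one: for frame matroids satisfying the linearity condition — including graphic, bicircular, and signed-graphic matroids, and more generally those arising from group-labelled graphs — one represents bases as explicit spanning subgraphs of a biased graph (for graphic matroids, spanning forests), under which symmetric exchanges become local edge swaps that can be tracked combinatorially, and then extends Blasiak's argument for graphic matroids. The extra difficulty there is controlling the \emph{unbalanced} components, those carrying a single non-trivial cycle, and this is exactly where the linearity condition is used; I expect the bookkeeping for those components to be the hardest part of the argument.
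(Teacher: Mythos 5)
This statement is a conjecture that the paper itself never proves: the paper only records that Conjecture~\ref{con-white1} implies it (via the standard binomial/Markov-basis dictionary you spell out in your first paragraph) and then establishes the special case of linear frame matroids in Theorems~\ref{the-main2} and~\ref{the-main1}. Your treatment is therefore essentially the paper's: the reduction to Conjecture~\ref{con-white1} is correct, your admission that the general matroidal induction stalls is accurate (which is precisely why the statement remains open), and your sketch of the structural route through biased graphs extending Blasiak, with linearity used to control the unbalanced cycles, is exactly the strategy the paper carries out for the linear frame matroid case.
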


Kashiwaba \cite{Kas} verified Conjecture \ref{con-white1} for rank three matroids.  Schweig \cite{Sch} showed that the conjecture holds for lattice path matroids, and Laso\'{n} and Micha\l ek \cite{LasMic} verified it for strongly base orderable matroids.  This class includes all transversal matroids.  Perhaps the best known result relating to White's conjecture is a result of Blasiak \cite{Bla} who verified the conjecture for graphic matroids.
In this paper, we shall extend Blasiak's result by verifying Conjecture \ref{con-white1} for {\it linear} frame matroids.  The class of such matroids includes graphic matroids, bicircular matroids, Dowling matroids, signed graphic matroids and more generally, frame matroids obtained from group-labelled graphs.   Frame matroids are a well-studied object -- see Funk \cite{Fun} for a good, concise overview.  They are also important concept in the {\it graph minors project} initiated by Geelen, Gerards, and Whittle \cite{GeeGerWhi}. 

\subsection{Biased Graphs}\label{subsec-biased}

{\it Biased graphs}, introduced by Zaslavsky \cite{Zas}, have been widely studied, not least in the context of matroids. 
A {\bf theta graph}  is a graph consisting of three cycles $C_i,\ i = 1,2,3$ such that for all distinct $i,j,k\in \{ 1,2,3 \}$ we  have  $C_i\triangle C_j = C_k.$
Let $G$
be a graph, and let $\C$ be a subset of cycles of $G.$     The pair $(G,\C)$ is defined to be a {\bf biased graph} if it satisfies the following property:

\sms

\noindent {\bf Theta Property}:   If two cycles in a theta graph belong
to $\C$, then the third cycle must also belong to $\C.$

\sms
\noindent  
Each cycle
in $\C$ is referred to as a {\bf balanced cycle}; cycles not in $\C$ are said to be {\bf unbalanced}. 
 The pair $\Omega =(G,\C)$
is referred to as a {\bf biased graph}.   Additionally, the collection of cycles $\C$ is said to be {\bf linear} if for any cycle $C$ which can be expressed as the symmetric difference of cycles in $\C$, we have $C\in \C.$  When $\C$ is linear, we refer to $\Omega$ as a {\it linear} biased graph.

\subsection{Frame Matroids}\label{subsec-frame}

In \cite{Zas2}, it was first shown that {\it frame matroids}  can be defined in terms of a biased graph.  The {\bf frame matroid} of a biased graph $\Omega = (G, \C)$, denoted $M_F(\Omega)$,  is the matroid whose circuits are the edge sets represented by
one of the subgraphs below:

\begin{itemize}
\item[(i)]  A balanced cycle.
\item[(ii)]  Two unbalanced cycles sharing exactly one vertex.
\item[(iii)]  Two vertex disjoint unbalanced cycles joined by a path.
\item[(iv)]  A theta graph all of whose cycles are unbalanced.
\end{itemize}

A second matroid associated with $\Omega$, is called the {\bf lift matroid}, and is denoted $M_L(\Omega).$  The circuits of this matroid are defined in nearly the same way, except that we replace (iii) above with a subgraph consisting of two vertex disjoint unbalanced cycles.  Examples of frame and lift matroids include graphic matroids, bicircular matroids, and even cycle matroids.  
The
bases of $M_F(\Omega)$ are just the maximal collections of edges $A\subseteq E(G)$
for which each component of the induced graph $G[A]$ contains
at most one cycle, which if present, must be unbalanced.  The bases for $M_L(\Omega)$ differ slightly in that they are the maximal subsets of edges $A \subseteq E(G)$ where $G[A]$ contains at most one component having a (unbalanced) cycle.  When $\C$ contains all cycles
of $G$, then $M_F(\Omega) = M_L(\Omega)$ is just the graphic matroid $M(G).$ When $\C
= \emptyset,$ $M_F(\Omega)$ is seen to be the bicircular matroid
of $G.$ 

We say the $M_F(\Omega)$ and $M_L(\Omega)$ are {\it linear} if $\Omega$ is linear.  Clearly graphic matroids and bicircular matroids are linear frame matroids, and it can be shown that the frame matroids associated with group-labelled graphs are also linear.

For a collection $\A$ of cycles of $G$, we define a set $\C(\A)$ to be the linear collection of cycles consisting of all cycles which can be expressed as a symmetric difference of cycles in $\A.$ 
We call $\C(\A)$ the {\bf linear completion} of $\A$ and refer to $(G,\C(\A))$ as the biased graph induced by $\A.$  

\subsection{The Main Theorem}\label{subsec-maintheorem}
Our matroids $M$ will be assumed to be such that  $M= M(\Omega)$ where $\Omega = (G,\C)$ is a biased graph.  Suppose $\B = (B_1, \dots ,B_k)$
and $\B' = (B_1', \dots ,B_k')$ are compatible sequences of bases.  
Furthermore, suppose there exist $i,j\in \{ 1, \dots ,k \}$ and    
$e\in B_i$ and $f\in B_j$ such that $\B' = (B_1, \dots ,B_{i-1},
B_i-e+f, B_{i+1}, \dots , B_{j-1}, B_j-f+e, B_{j+1}, \dots ,B_{k}).$
We say that $\B'$ is obtained from $\B$ by a single
symmetric exchange and write $\B \sim_1 \B'.$
If $\B'$ is a sequence of bases obtained from $\B$ by a
sequence of symmetric exchanges, then we write $\B \sim \B'.$   Building on the methods of Blasiak \cite{Bla},  we shall prove the following:

\begin{theorem}
Let $M = M(\Omega)$ be a linear frame matroid where $\Omega =
(G,\C)$ and $r=r(M)\ge 2.$  For all $k\ge 2,$ and for
any two compatible sequences of $k$ bases $\B$ and $\B'$ of $M$, we have
that $\B \sim \B'.$  \label{the-main2}
\end{theorem}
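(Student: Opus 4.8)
The plan is to follow Blasiak's strategy for graphic matroids, replacing graph-theoretic arguments with their biased-graph analogues, and to organize the argument as a double induction — outer induction on the rank $r=r(M)$, and, within a fixed rank, an analysis that reduces the number $k$ of bases (or a suitable potential) step by step. First I would record the two basic moves available in a frame matroid: given a base $B$ (so each component of $G[B]$ is a tree or a tree plus one unbalanced cycle — an ``unbalanced pseudoforest'') and an edge $f\notin B$, the fundamental circuit $C(f,B)$ is one of the four types (i)–(iv), and I would catalogue, for each type, exactly which $e\in B$ can be swapped for $f$ to yield another base. The point of the linearity hypothesis on $\C$ is precisely that it makes this local exchange structure well-behaved: when we add $f$ and remove $e$, any new cycle created lies in the linear span of old cycles, so balanced stays balanced, and this is what fails for general frame matroids. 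I would isolate this as a lemma (``linear exchange lemma'') up front.

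Next I would set up the reduction on rank. Fix an element; more precisely, since the sequences $\B$ and $\B'$ are compatible, each element $e\in E(G)$ appears the same number of times among $B_1,\dots,B_k$ as among $B_1',\dots,B_k'$. The goal is to find an element $e$ and, after some symmetric exchanges, arrange that $e$ appears in exactly the same bases (by index) in both sequences, so that we may delete or contract $e$ and invoke the inductive hypothesis on a frame matroid of smaller rank — here one checks that $M/e$ and $M\backslash e$ are again linear frame matroids, using that deletion of an edge and contraction of an edge of a biased graph yield biased graphs whose cycle classes remain linear. The heart of this step is a ``normalization'' procedure: given that $e\in B_i\cap \cdots$ but the pattern of occurrences differs between $\B$ and $\B'$, use single symmetric exchanges moving $e$ between bases (this requires finding, for a base containing $e$, a neighbouring base to which $e$ can be shipped — i.e. an element in the fundamental cocircuit $C^*(e,B)$ lying in the other base) to make the occurrence patterns agree. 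This is where the biased-graph combinatorics is genuinely needed, since ``$e$ can be moved from $B_i$ to $B_j$'' amounts to a statement about how the unbalanced-pseudoforest structures of $G[B_i]$ and $G[B_j]$ interact across $e$.

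I expect the main obstacle to be exactly this normalization/connectivity step: showing that when the occurrence patterns of a chosen element differ, there is always a sequence of single symmetric exchanges correcting them. In the graphic case Blasiak handles this via careful surgery on spanning forests (moving an edge along a path, exchanging along cycles); in the frame setting one must additionally track balancedness of cycles through every exchange, and handle the three distinct ``circuit shapes'' (handcuff of type (ii), type (iii), and theta of type (iv)) that arise when an element's fundamental circuit is not a balanced cycle. A secondary technical point is the base case $r=2$ and small $k$, and the possibility that after contracting/deleting one reaches a matroid of rank $<2$ or one that is not simple — these degenerate configurations (loops, which for a frame matroid are unbalanced loops, and parallel elements) must be dealt with directly. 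Once the rank reduction goes through, the theorem follows by induction, with the $k=2$ instance recovering Conjecture \ref{con-white2} for this class.
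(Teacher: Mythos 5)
Your plan diverges from the paper's in a way that introduces a genuine gap. You propose to pick a single element $e$, normalize by symmetric exchanges so that $e$ sits in the same bases (by index) in $\B$ and $\B'$, and then contract or delete $e$ to invoke induction on rank. But deleting $e$ does not decrease the rank (unless $e$ is a coloop), and contracting $e$ only makes sense for bases that actually contain $e$: if $e\in B_1$ but $e\notin B_2$, then $B_2$ is not a basis of $M/e$, so the whole problem cannot be transported to $M/e$. Compatibility guarantees that the multiplicity of $e$ matches between $\B$ and $\B'$, not that $e$ lies in every base; in the generic situation $e$ lies in a proper nonempty subset of the $B_i$, and your reduction then simply does not apply. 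There is no easy ``split into $M/e$ and $M\backslash e$'' fix either, because symmetric exchanges move $e$ in and out of bases freely, so the two pieces do not decouple.

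The paper's reduction is qualitatively different, and that difference is precisely where the work lies. Rather than a single element, one picks a low-degree \emph{vertex} $v$: after duplicating edges so the bases partition $E(G)$, averaging gives a vertex of degree $\le 2\kappa$. Both sequences are first made $v$-reduced (at most two edges of each base at $v$, Lemma~\ref{lemma2}); a matching graph on the edge set at $v$ is attached to each sequence, and Theorem~\ref{theorem3.2} shows one can arrange the two matching graphs to be equal or to differ in a single $4$-cycle. The rank is reduced not by matroid contraction but by passing to a new biased graph $\widehat{G}$ on $V(G)\setminus\{v\}$ whose extra edges $\widehat{e}_{ij}$ encode pairs of edges incident with $v$ (Section~\ref{sec-newbiased}); the induction is on $|V(G)|=r(M)$, and it must be carried out jointly with a stronger statement about $u$-extended base sequences (Theorem~\ref{the-kappa31}). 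The difficulty you correctly anticipate (tracking balancedness through exchanges) manifests in the pull-back from $\widehat{M}$ back to $M$ when the two matching graphs disagree: some intermediate sequences $\widehat{\D}_i$ have empty pull-back, and this is what the switchability/amenability/perturbation machinery of Sections~\ref{sec-BaseChange} and~\ref{subsubsec-perturbation} exists to repair. None of this is in your sketch, and the element-level contraction route you outline cannot be patched into a substitute for it.
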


As a result of the above we obtain the following:

\begin{theorem}
Let $M = M(\Omega)$ be a linear frame matroid where $\Omega = (G, \C).$  Then the toric ideal $\I_M$ of $S_M$ is generated by
the quadratic binomials $y_{B_1}y_{B_2} - y_{B_1'}y_{B_2'}$ where
the bases $B_i',\ i = 1,2$ are obtained from $B_i,\ i = 1,2$ by a
symmetric exchange.  \label{the-main1}
\end{theorem}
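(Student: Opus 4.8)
The plan is to derive Theorem \ref{the-main1} directly from Theorem \ref{the-main2} via the standard dictionary between White's conjecture and the generators of the toric ideal $\I_M$, following the reduction alluded to in the introduction (Conjecture \ref{con-white1} $\Rightarrow$ Conjecture \ref{con-white2}). First I would recall that $\I_M = \ker \theta_M$ is generated by binomials $y_{B_1}\cdots y_{B_k} - y_{B_1'}\cdots y_{B_k'}$, and that such a binomial lies in $\I_M$ precisely when $\theta_M(y_{B_1}\cdots y_{B_k}) = \theta_M(y_{B_1'}\cdots y_{B_k'})$, i.e. $\prod_i \prod_{e\in B_i} x_e = \prod_i \prod_{e\in B_i'} x_e$ in $\mathbb{K}[x_e]$. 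Since this is a monomial identity in a polynomial ring, it holds if and only if the two sides have the same exponent vector, which is exactly the statement that the multisets $\biguplus_i B_i$ and $\biguplus_i B_i'$ coincide — that is, $(B_1,\dots,B_k)$ and $(B_1',\dots,B_k')$ are compatible in the sense defined in the excerpt. So the toric ideal is spanned as a $\mathbb{K}$-vector space (after accounting for the ideal structure, as in \cite{Stu}) by the binomials $y_{B_1}\cdots y_{B_k} - y_{B_1'}\cdots y_{B_k'}$ running over all pairs of compatible sequences of bases, of all lengths $k$.

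Next I would show that each such generator lies in the ideal $J$ generated by the quadratic binomials $q(B,B',C,C') := y_B y_{B'} - y_C y_{C'}$ for which $C,C'$ are obtained from $B,B'$ by a single symmetric exchange. The key observation is the telescoping identity: if $\B = (B_1,\dots,B_k) \sim_1 \B'' $ via a single symmetric exchange affecting coordinates $i$ and $j$, then
\begin{equation*}
y_{B_1}\cdots y_{B_k} - y_{B_1''}\cdots y_{B_k''} = \Bigl(\prod_{\ell \ne i,j} y_{B_\ell}\Bigr)\bigl(y_{B_i}y_{B_j} - y_{B_i''}y_{B_j''}\bigr),
\end{equation*}
and the parenthesized quadratic factor is exactly a generator of $J$ (with $B_i'' = B_i - e + f$, $B_j'' = B_j - f + e$). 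Hence a single symmetric exchange changes the monomial $y_{B_1}\cdots y_{B_k}$ by an element of $J$. Chaining this along a sequence of symmetric exchanges $\B = \B^{(0)} \sim_1 \B^{(1)} \sim_1 \cdots \sim_1 \B^{(m)} = \B'$ and summing the telescoping differences shows $y_{B_1}\cdots y_{B_k} - y_{B_1'}\cdots y_{B_k'} \in J$ whenever $\B \sim \B'$.

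Now I would invoke Theorem \ref{the-main2}: for a linear frame matroid $M$ with $r(M)\ge 2$, any two compatible sequences of $k \ge 2$ bases satisfy $\B \sim \B'$. Combining this with the two previous paragraphs, every binomial generator of $\I_M$ of degree $k\ge 2$ lies in $J$, so $\I_M \subseteq J$; and since each generator of $J$ is itself visibly in $\I_M$ (the two quadratic monomials have equal image under $\theta_M$, as a symmetric exchange preserves the union of the two bases as a multiset), we get $\I_M = J$, which is precisely the assertion of Theorem \ref{the-main1}. I would also dispose of the trivial edge cases: if $r(M) \le 1$ then $M$ has at most one basis up to the elements involved and $\I_M$ is generated by linear or no binomials (or one checks the statement is vacuous/immediate); and the $k=1$ part of the ideal is empty since $\theta_M$ is injective on single basis monomials only up to — more carefully, degree-one elements of $\ker\theta_M$ would force two distinct bases with equal characteristic vector, impossible, so there are none. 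The main obstacle is essentially bookkeeping rather than mathematical depth: one must be careful that the generation statement from \cite{Stu} is applied correctly (the ideal, not merely the vector space spanned by binomials, and that it suffices to hit a generating set), and that the reduction genuinely reduces \emph{every} degree to degree two — which is exactly where the hypothesis $k\ge 2$ in Theorem \ref{the-main2} and the telescoping identity do the work. Since all the substance is in Theorem \ref{the-main2}, this proof is short.
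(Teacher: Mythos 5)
Your proof is correct and is exactly the standard reduction the paper invokes implicitly — the paper merely states Theorem \ref{the-main1} as a consequence of Theorem \ref{the-main2} with the phrase ``As a result of the above we obtain the following,'' relying on the well-known equivalence (Conjecture \ref{con-white1} $\Rightarrow$ Conjecture \ref{con-white2}) that you spell out via the telescoping identity and the multiset characterization of binomials in $\ker\theta_M$. Your careful handling of the degree-one and $r(M)\le 1$ edge cases is appropriate bookkeeping that the paper leaves tacit.
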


It should be mentioned that if $\B = (B_1, B_2)$ and $\B' = (B_1',
B_2')$ are compatible sequences of bases of a graphic matroid $M$, then it was shown by Farber et al \cite{FarRicSha} that $\B \sim \B'$.  

\subsection{Extended Base Sequences}\label{subsec-extendedbase}

    Let $\Omega = (G,\C)$ be a linear biased graph and let $M = M(\Omega)$ where  $r(M) = n = |V(G)|.$  Suppose $|E(M)| = kn+1$ and let $u\in V(G).$   Let $\B  = (B_1,\dots ,B_k)$ be a sequence of pairwise disjoint bases in $M$ where $\{ h \} = E(M)\backslash (B_1 \cup \cdots \cup B_k)$ and $h\in E_G(u).$  We refer the pair
$\B^+ = (\B, h )$ as an $\mathbf{u}-${\bf extended base sequence} of $M$ (or just extended base sequence when $u$ is implicit).    Let  $\B'^+ = (\B',  h' )$ be another $u$-extended base sequence where $\B' = (B_1', \dots , B_\kappa').$  We say that $\B'^+$ is obtained from  $\B^+$ by an {\bf exchange} if $\B'^+$ is obtained from $\B^+$ by one of the two operations below:

\begin{itemize}
\item[({\bf BB})]  A symmetric exchange between $B_i$ and $B_j$,  for some $1\le i < j \le \kappa,$ where for some $e\in B_i$ and $f\in B_j$, $B_i' = B_i -e+f,\ B_j' = B_j -f +e,$ and $h' = h.$ 

\item[({\bf EB})] An edge exchange between $h$ and $\B$, where for some $i \in \{ 1, \dots ,k\}$ and some $e \in B_i\cap E_G(u),$  $B_i' = B_i - e +h,$ $B_{j}' = B_j,\ \forall j\in \{ 1, \dots ,k \} \backslash \{ i \},$ and $h' = e.$ 
\end{itemize}

If a  $u$-extended base sequence $\B'^+$ can be obtained from $\B^+$ by a single exchange, then we write $\B^+ \sim_1 \B'^+.$  If $\B'^+$ is obtained from $\B^+$ by a sequence of exchanges, then we write $\B^+ \sim \B'^+.$  Later in this paper, we shall prove the following theorem:

\begin{theoremnopar}
Let $\B_i^+ = (\B_i, h_i),\ i = 1,2$ be $u$-extended base sequences where $\B_i = (B_{i1}, \dots ,B_{ik})$ is a sequence of pairwise disjoint bases.  If $d_G(u) =  2k +2$, then $\B_1^+ \sim \B_2^+.$\label{the-kappa31}
\end{theoremnopar}

\subsection{Overview of the Proof of Theorem \ref{the-main2}}\label{subsec-overview}

Due to the length of this paper, we shall give a rough blueprint describing its organization and the proof of Theorem \ref{the-main2}.  Our basic template is the inductive strategy employed in Blasiak's proof of the graphic matroid case, where we shall use induction on $r(M)$, the rank of $M.$  Adapting Blasiak's approach to frame matroids turns out to be remarkably complex. 
Suppose $\B_i = (B_{i1}, \dots ,B_{i\kappa}),\ i = 1,2$ are two compatible sequences of bases in a linear frame matroid $M = M(\Omega)$, where $\Omega = (G,\C),\ n= |V(G)|.$  Our goal is to show that $\B_1 \sim \B_2.$   By duplicating edges, we may assume that for $i=1,2,$ the bases of $\B_i$ partition $E(G).$  By averaging, there is vertex $v\in V(G)$ for which $d_G(v) \le 2n.$   We shall use this vertex in the initial
{\it reduction} step.   In Section \ref{sec-reductionsI}, we show that $\B_i,\ i = 1,2$ can be transformed into {\it v-reduced base sequences}.  That is, there are base sequences $\B_i' = (B_{i1}', \dots ,B_{ik}'), \ i = 1,2$ where $\B_i' \sim \B_i$ and each base $B_{ij}'$ has at most two edges incident with $v.$    We assume that $\B_i,\ i = 1,2$ are $v$-reduced.  We associate a {\it matching graph} $\fM_{\B_i}$ with $\B_i$, the matching edges corresponding to pairs of edges belonging to bases in $\B_i.$    In Section \ref{sec-matchings}, we show that via symmetric exchanges, the ($v$-reduced) base sequences $\B_j,\ i = 1,2$ can be chosen so that $\fM_{\B_i},\ i = 1,2$ differ by at most one edge.   In Section \ref{sec-newbiased}, we construct a new biased graph $\widehat{\Omega} = (\widehat{G}, \widehat{\C})$ and a biased matroid $\widehat{M} = M(\widehat{\Omega})$ where $V(\widehat{G}) = V(G)\backslash \{v \}$.   Using the matching graphs $\fM_{\B_i},\ i = 1,2,$ we associate base sequences $\widehat{\B}_i,\ i = 1,2$ in $\widehat{M}$ to $\B_i,\ i = 1,2$  When $\fM_{\B_1} = \fM_{\B_2},$ the sequences $\widehat{\B}_i,\ i = 1,2$ are compatible.   In this case, assuming Theorem \ref{the-main2} holds for $\widehat{M},$ we have $\widehat{\B}_1 \sim \widehat{\B}_2.$  Thus there are sequences of bases $\widehat{\B}_1 = \widehat{\D}_1 \sim_1 \widehat{\D}_2 \sim_1 \cdots \sim_1 \widehat{\D}_p = \widehat{\B}_2$.  If $\fM_{\B_1} \ne \fM_{\B_2},$ then $\widehat{\B}_i,\ i = 1,2$ are no longer compatible in $\widehat{M}.$  In this case, we change the sequences slightly, yielding new base sequences $\widehat{\B}_i',\ i=1,2$ which are compatible in $\widehat{M}.$  Arguing with $\widehat{\B}_i',\ i = 1,2$ in place of $\widehat{\B}_i,\ i =1,2$, there are sequences of bases
$\widehat{\B}_1' = \widehat{\D}_1' \sim_1 \widehat{D}_2' \sim_1 \cdots \sim_1 \widehat{\D}_{p}' = \widehat{\B}_2'.$ 
This will complete the reduction phase of the proof.  

In the {\it pull-back} step of the proof, we define the {\it pull-back} of a base sequence $\widehat{\B}$ in $\widehat{M};$ that is,  a collection of base sequences in $M$ which are associated with $\widehat{\B}.$  This is described in Section \ref{sec-pullback}.  For the base sequences $\B_i,\ i = 1,2$ described above,
if $\fM_{\B_1} = \fM_{\B_2},$ then our goal is to create base sequences $\D_i,\ i = 1, \dots ,p$ associated with the base sequences $\widehat{\D}_i, \ i = 1, \dots ,p$
where $\D_1 \sim \D_2 \sim \cdots \sim \D_p$. Otherwise, if $\fM_{\B_1} \ne \fM_{\B_2},$ then our goal to create sequences $\D_i',\ i = 1, \dots ,p$ associated with the base sequences $\widehat{\D}_i', \ i = 1, \dots ,p$ where $\D_1' \sim \D_2' \sim \cdots \sim \D_p'$.      While the task of constructing $\D_1 \sim \D_2 \sim \cdots \sim \D_{p}$, is relatively easy, the pull-back procedure faces a significant technical hurdle when $\fM_{\B_1} \ne \fM_{\B_2}$, the biggest single obstacle when extending Blasiak's theorem from graphic matroids to frame matroids. In this case,  some of the base sequences $\widehat{\D}_i',\ i = 1, \dots ,p$ may not even have a pull-back.  Because of this, we must consider {\it perturbations} $\widehat{\D}_i'',\ i = 1, \dots ,p$ of the sequences $\widehat{\D}_i',\ i = 1, \dots ,p$, a concept we introduce in Section \ref{subsubsec-perturbation}.   We proceed by constructing pull-backs $\D_i'',\ i = 1, \dots ,p$ of $\widehat{\D}_i'',\ i = 1, \dots ,p$ and show that $\D_1'' \sim \D_2'' \sim \cdots \sim \D_{p}''.$  To guarantee that such perturbations exist, we need to the look at the 
{\it switchability} and {\it amenability} of base sequences, concepts which are introduced in Section \ref{sec-BaseChange}.

The above pull-back procedure is complex, even in the case where we have base sequences with two bases.  Because of this, we need to strengthen the statement we prove in the induction step.  Our strategy is to prove Theorems \ref{the-main2} and \ref{the-kappa31} simultaneously.  Assuming Theorems \ref{the-main2} and \ref{the-kappa31} are true for all $k \le \kappa$, we shall first show that Theorem \ref{the-main2} is true for $k =\kappa+1.$   We then use this to prove that Theorem \ref{the-kappa31} holds for $k= \kappa+1$. 

\section{Serial exchanges}\label{sec-serexch}

Let $B_i,\ i = 1,2$ be bases and let $A_i \subseteq B_i,\ i = 1,2$ be subsets.  We say that $A_1$ can be {\bf serially exchanged} with $A_2$ with respect to $B_i,\ i = 1,2$ if for some ordering $a_{11} \prec a_{12} \prec \cdots \prec a_{1k}$ of $A_1$ and some ordering $a_{21} \prec a_{22} \prec \cdots \prec a_{2k}$ of $A_2$ it follows that $B_1 - \{ a_{11}, \dots ,a_{1i} \} + \{a_{21}, \dots ,a_{2i} \}$ and $B_2 - \{ a_{21}, \dots ,a_{2i} \} + \{a_{11}, \dots ,a_{1i} \}$ are bases for $i = 1, \dots ,k.$  We refer to such orderings of $A_i,\ i=1,2$ as {\bf serial orderings}.  For a matroid $M$ and positive integer $k$, we say that $M$ has the $\mathbf{k}${\bf - serial exchange property}, if for any two bases $B_1,B_2 \in \B(M)$ and any subset $A_1 \subseteq B_1$ where $|A_1| =k,$ there is a subset $A_2 \subseteq B_2$ for which $A_1$ can be serially exchanged with $A_2$ with respect to $B_i,\ i = 1,2.$
By the symmetric exchange property, all matroids have the $1$-serial exchange property. Kotlar and Ziv \cite{KotZiv} conjectured that all matroids $M$ have the $k$-serial exchange property, for all $k \le r(M).$  They showed that all matroids of rank at least two have the $2$-exchange property.  

\begin{theoremplus}{Kotlar, Ziv \cite{KotZiv}}
All matroids of rank at least two have the $2$-exchange property. \label{the-KotZiv}
\end{theoremplus} 

The above theorem will be used later on. 
  
\section{Cycle properties in biased graphs}\label{sec-biasprop}

We shall need a few lemmas pertaining to balanced cycles in biased graphs.  Let $\Omega = (G,\C)$ be a linear biased graph.
For a spanning tree $T$ in $G$, and an edge $e\not\in E(T)$, let $C_G(e,T)$ denote the {\it fundamental cycle} containing $e$ in $T\cup \{ e \}.$   When $G$ is implicit, we shall simply write $C(e,T)$ in place of $C_G(e,T).$   

\begin{lemma}
Let $T$ be a spanning tree and let
$P_i,\  i = 1,2$ be two internally vertex-disjoint
$(u,v)$-paths where $E(P_1) \subseteq E(T)$ and $E(P_2) \cap E(T) = \emptyset.$  If
for each $e \in E(P_2),$ $C(e,T)$ is a balanced cycle, then $C' = P_1
\cup P_2$ is also a balanced cycle.\label{lem-lemma1}
\end{lemma}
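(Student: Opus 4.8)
**The plan is to build the cycle $C'$ step by step from the edges of $P_2$, using the linearity of $\C$ together with the Theta Property at each stage.**

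First I would set up notation: write $P_2 = u_0 e_1 u_1 e_2 u_2 \cdots e_m u_m$ where $u_0 = u$ and $u_m = v$, so that the edges of $P_2$ in order are $e_1, \dots, e_m$. Since $E(P_2)\cap E(T)=\emptyset$, each $e_i$ has a well-defined fundamental cycle $C(e_i,T)$, and by hypothesis each such cycle is balanced, i.e. $C(e_i,T)\in\C$. The key observation is that the symmetric difference of fundamental cycles telescopes: for any subpath of $P_2$ from $u_0$ to $u_j$, the symmetric difference $C(e_1,T)\triangle C(e_2,T)\triangle\cdots\triangle C(e_j,T)$ is exactly the edge-disjoint union of the path $e_1\cdots e_j$ and the tree-path in $T$ joining $u_0$ to $u_j$ (the internal tree-path edges cancel in pairs since $P_2$ is a path and its internal vertices are distinct). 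I would prove this telescoping identity by induction on $j$, taking care that the tree-paths $T[u_0,u_{j-1}]$ and $T[u_0,u_j]$ share an initial segment whose non-shared parts combine with the edge $e_j$ correctly — this is a routine edge-counting argument in the tree.

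Taking $j=m$ in that identity gives that $C(e_1,T)\triangle\cdots\triangle C(e_m,T)$ equals the edge-disjoint union of $P_2$ and the tree-path $T[u,v]$. Now I must identify this tree-path with $P_1$: since $P_1$ is a $(u,v)$-path with $E(P_1)\subseteq E(T)$ and $T$ is a tree, $P_1$ is forced to be precisely the unique $(u,v)$-path in $T$, so $T[u,v]=P_1$. Hence $C(e_1,T)\triangle\cdots\triangle C(e_m,T) = E(P_1)\cup E(P_2) = E(C')$, confirming first that $C'$ is genuinely a cycle (it is a symmetric difference of cycles whose edge set forms the internally-disjoint union of two $(u,v)$-paths, which is a cycle) and, more importantly, that $C'$ is expressible as a symmetric difference of cycles in $\C$. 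By the linearity of $\C$ — precisely, the defining property that any cycle expressible as a symmetric difference of cycles in $\C$ lies in $\C$ — we conclude $C'\in\C$, i.e. $C'$ is balanced, as required.

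**The main obstacle** I anticipate is the bookkeeping in the telescoping identity: one has to argue carefully that when forming $C(e_1,T)\triangle\cdots\triangle C(e_j,T)$ every tree edge used by two consecutive fundamental cycles genuinely cancels, and that no spurious cancellation occurs between non-consecutive terms. This hinges on the fact that $P_2$ is internally vertex-disjoint from $P_1$ (hence its internal vertices are distinct and the "entry points" into $T$ are all distinct), so the relevant tree-paths nest properly. An alternative, perhaps cleaner, route that avoids explicit telescoping is to induct directly on $m=|E(P_2)|$: the case $m=1$ is immediate since then $C' = C(e_1,T)$; for the inductive step, let $e_m$ be the last edge of $P_2$, apply the inductive hypothesis to the shorter path to get a balanced cycle $C''$ through $u_{m-1}$, and then observe that $C''$, $C(e_m,T)$, and $C'$ together form (the three cycles of) a theta graph, so the Theta Property forces $C'\in\C$. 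I would likely present this inductive version as the main argument, since it uses only the Theta Property rather than full linearity, and relegate the telescoping identity to a remark — though I would double-check the base case and the theta-graph configuration at the inductive step, as the three cycles must be verified to pairwise symmetric-difference to one another.
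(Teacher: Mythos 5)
Your main (telescoping) argument is essentially the paper's proof, reached by a slightly more laborious route: both establish that $\bigtriangleup_{e\in E(P_2)}C(e,T)$ equals $E(C')$, then invoke linearity of $\C$. The paper sidesteps the explicit telescoping and the associated bookkeeping you worry about: it simply notes that $C=\bigtriangleup_{e}C(e,T)$ contains $P_2$ and lies inside $T\cup P_2$, while $C'=P_1\cup P_2\subseteq T\cup P_2$ also contains $P_2$, so $C\triangle C'\subseteq E(T)$; since $C\triangle C'$ is a symmetric difference of cycles it has even degree everywhere and decomposes into edge-disjoint cycles, but $T$ is acyclic, so $C\triangle C'=\emptyset$. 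That disposes of the ``no spurious cancellation'' concern you flag without ever tracking how the tree paths nest. Your observation that $P_1$ must equal the unique tree path $T[u,v]$ is correct and implicitly used in the paper's step that $C\triangle C'\subseteq E(T)$.

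Your preferred ``cleaner'' alternative via the Theta Property, however, has a real gap. Writing $P_2'=u_0e_1\cdots u_{m-1}$ and $Q=T[u_0,u_{m-1}]$, you take $C''=Q\cup P_2'$ and treat $C''$, $C(e_m,T)$, $C'$ as the three cycles of a theta graph. The symmetric-difference identities do hold, and $C(e_m,T)$ and $C'$ are genuine cycles, but nothing forces $C''$ to be a simple cycle: the tree path $Q$ may well pass through internal vertices $u_1,\dots,u_{m-2}$ of $P_2'$, and there is no hypothesis guaranteeing internal disjointness of $Q$ and $P_2'$ (the given internal disjointness is of $P_1$ and $P_2$, not of $Q$ and a truncation of $P_2$). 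If $C''$ is only an edge-disjoint union of several cycles, the Theta Property does not apply, and your inductive hypothesis (which requires a pair of internally disjoint paths) isn't even satisfiable for the shorter instance. Since the lemma is stated for a linear biased graph, there's no payoff to restricting to the weaker Theta Property anyway; the linearity argument is both correct and shorter, so you should present that one and drop the induction, or at least not claim it is the cleaner route.
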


\begin{proof}
Let $C = \bigtriangleup_{e\in E(P_2)}C(e,T)$ and $C' = P_1 \cup P_2.$  We have that  $C$ is a edge-disjoint union of cycles and $P_2$ is a subgraph of $C$.  This means that $C\triangle C'$ is a subgraph of $T.$  If $C\ne C'$, then
$C\triangle C'$ is an edge-disjoint union of cycles contained in $T.$  This is impossible since $T$ is a tree.  Thus $C= C'$ and it follows by the linearity of $\C$ that $C'$ is balanced. 
\end{proof}

We have the following useful lemma:

\begin{lemma}
Suppose $C$ is an unbalanced cycle  and $T$ is a spanning tree where $E(C) \cap E(T) = \emptyset.$  Then there exists $e\in E(C)$ such that $C(e,T)$ is unbalanced.
\label{cor-lemma1}
\end{lemma}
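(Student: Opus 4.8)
The plan is to argue by contradiction and reduce everything to the linearity of $\C$. Suppose, for contradiction, that $C(e,T)$ is balanced for every $e\in E(C)$. Since $T$ is a spanning tree, the fundamental cycles $\{C(f,T)\ :\ f\in E(G)\setminus E(T)\}$ form a basis of the binary cycle space of $G$, and every element $Z$ of that space can be recovered from its own non-tree edges: $Z=\bigtriangleup_{f\in Z\setminus E(T)}C(f,T)$, because the unique non-tree edge of $C(f,T)$ is $f$. Applying this with $Z=C$ and using the hypothesis $E(C)\cap E(T)=\emptyset$ (so that $C\setminus E(T)=E(C)$), I obtain the identity $C=\bigtriangleup_{e\in E(C)}C(e,T)$.

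Now $C$ is itself a cycle, and the identity above exhibits it as the symmetric difference of the cycles $C(e,T)$, $e\in E(C)$, each of which lies in $\C$ by the contradiction hypothesis. By the linearity of $\C$ (as defined in Section~\ref{subsec-biased}), any cycle that is a symmetric difference of cycles in $\C$ must itself lie in $\C$; hence $C\in\C$, i.e.\ $C$ is balanced. This contradicts the assumption that $C$ is unbalanced, so some $C(e,T)$ with $e\in E(C)$ must be unbalanced, which is exactly the claim.

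There is essentially no serious obstacle here: the only point requiring care is the cycle-space identity $C=\bigtriangleup_{e\in E(C)}C(e,T)$, where one must note that an element of the binary cycle space is determined by its set of non-tree edges, so the right-hand side is exactly the single cycle $C$ and not merely some edge-disjoint union of cycles sharing the same non-tree edges. (One could instead try to peel off one edge of $C$ at a time and invoke Lemma~\ref{lem-lemma1}, but that runs into the nuisance that the relevant tree-path and the complementary path of $C$ need not be internally vertex-disjoint, so the direct cycle-space computation is the cleaner route.) Once the identity is in hand, the conclusion is immediate from the definition of a linear biased graph.
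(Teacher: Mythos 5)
Your proof is correct and follows essentially the same route as the paper's: both establish the identity $C=\bigtriangleup_{e\in E(C)}C(e,T)$ and then conclude by the linearity of $\C$. The only cosmetic difference is that you justify the identity by the standard fact that a cycle-space element is determined by its non-tree coordinates in the fundamental-cycle basis, whereas the paper argues directly that $C\triangle C'$ would otherwise be a union of cycles contained in $T$; these are the same observation.
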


\begin{proof}
Let $C' = \bigtriangleup_{e\in E(C)}C(e,T).$  Then $C$ is a subgraph of $C'$.  Suppose $C \ne C'$.  Then $C\triangle C'$ is a disjoint union of cycles, and furthermore, $C\triangle C'$ is a subgraph of $T.$  However, this is impossible since $T$ is as tree.  Thus $C= C'.$  Since $C$ is unbalanced, it follows by the linearity of $\C$ that at least one of the fundamental circuits $C(e,T),\ e\in E(C)$ is unbalanced.
\end{proof}

We shall need the following more technical lemma:

\begin{lemma}
Let $C_1$ and $C_2$ be cycles in $G$ where the edges  of $E(C_1)\cap E(C_2)$
induce a path $P$ between vertices $u$ and $v.$  Let $P_i,\ i
= 1,2$ be paths such that $C_i = P_i \cup P,\ i = 1,2.$  Let $e_1\in E(P)$ and suppose $T$ is a spanning tree where $E(C_1)\backslash \{ e_1\} \subseteq E(T)$ and $E(P_2) \cap E(T) = \emptyset.$  Suppose
that $C(e,T)$ is balanced for all
$e \in E(P_2)$. Then $C_1$ is balanced if and only if $C_2$ is
balanced.\label{lem-lemma2}
\end{lemma}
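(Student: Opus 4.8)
The plan is to reduce Lemma \ref{lem-lemma2} to the two preceding lemmas by analysing the cycle $C' = P_1 \cup P_2$ and relating it to $C_1$ and $C_2$ via symmetric differences. First I would observe that $C_1 \triangle C_2 = P_1 \cup P_2 = C'$, since the common part $P$ cancels and what remains are the two ``outer'' paths $P_1$ and $P_2$ joined at $u$ and $v$; so $C'$ is a genuine cycle, and by the Theta Property applied to the theta graph formed by $P, P_1, P_2$, the linearity of $\C$ gives: among $C_1, C_2, C'$, if any two are balanced so is the third, and more precisely $C_1 \in \C \iff C_2 \triangle C' \text{ behaves accordingly}$ — but cleanly, $C_1$ balanced $\Leftrightarrow$ $C_2$ balanced is exactly the statement ``$C'$ is balanced'' would give us via the theta relation $C' = C_1 \triangle C_2$.

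So the crux is to show $C'$ is balanced, and here is where the spanning tree hypotheses come in. By hypothesis $E(P_2) \cap E(T) = \emptyset$ and each $C(e,T)$ for $e \in E(P_2)$ is balanced. I would like to apply Lemma \ref{lem-lemma1} to $C' = P_1 \cup P_2$, viewing $P_2$ as the ``off-tree'' path and $P_1$ as the ``tree'' path. The obstacle is that $P_1$ need not be contained in $T$: we only know $E(C_1)\setminus\{e_1\} = (E(P_1) \cup E(P)) \setminus \{e_1\} \subseteq E(T)$, and $e_1 \in E(P)$, so in fact $E(P_1) \subseteq E(T)$ does hold (the missing edge $e_1$ lies on $P$, not on $P_1$). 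Good — so $P_1 \subseteq T$ after all. Then $P_1$ and $P_2$ are internally vertex-disjoint $(u,v)$-paths (internally disjoint because $E(C_1) \cap E(C_2)$ is exactly $P$, so $P_1$ and $P_2$ share no internal vertices), $E(P_1) \subseteq E(T)$, $E(P_2) \cap E(T) = \emptyset$, and every $C(e,T)$, $e \in E(P_2)$, is balanced. Lemma \ref{lem-lemma1} then yields that $C' = P_1 \cup P_2$ is balanced.

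Finally I would close the argument: since $C'$ is balanced and $C_1 \triangle C_2 = C'$, the three cycles $C_1, C_2, C'$ form a theta graph (on the three internally-disjoint $(u,v)$-paths $P_1, P_2, P$), so the Theta Property forces $C_1 \in \C \iff C_2 \in \C$. Indeed, if $C_1$ is balanced then $C_1, C'$ are two balanced cycles of this theta graph, hence $C_2$ is balanced; symmetrically if $C_2$ is balanced then $C_2, C'$ balanced forces $C_1$ balanced. This gives the ``if and only if.''

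The main obstacle I anticipate is purely bookkeeping: correctly identifying that $e_1 \in E(P)$ forces $E(P_1) \subseteq E(T)$ (so that Lemma \ref{lem-lemma1} applies verbatim), and verifying that $P_1, P_2$ are internally vertex-disjoint from the hypothesis that $E(C_1) \cap E(C_2)$ induces the path $P$. One should be slightly careful about degenerate cases — e.g.\ $u = v$ (so $P$ is a single vertex and $C_1, C_2$ share only a vertex) or $P_1$ or $P_2$ trivial — but in each such case the theta graph / Lemma \ref{lem-lemma1} reasoning degenerates gracefully or the claim is vacuous, so no separate treatment of real substance is needed.
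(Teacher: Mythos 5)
Your observation that $e_1 \in E(P)$ forces $E(P_1) \subseteq E(T)$ is correct and necessary. However, the step that $P_1$ and $P_2$ are internally vertex-disjoint does \emph{not} follow from the hypothesis that $E(C_1) \cap E(C_2)$ induces a path. The hypothesis constrains only the edge intersection of the two cycles; $P_1$ and $P_2$ can still meet at a vertex $w \notin V(P)$. For instance, take $P$ a single edge $uv$, $P_1 = u\text{-}a\text{-}w\text{-}b\text{-}v$ and $P_2 = u\text{-}c\text{-}w\text{-}d\text{-}v$: then $E(C_1)\cap E(C_2) = \{uv\}$ is a path, yet $P_1$ and $P_2$ share the internal vertex $w$, so $P_1 \cup P_2$ is not a single cycle and $P \cup P_1 \cup P_2$ is not a theta graph. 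In that situation neither Lemma~\ref{lem-lemma1} (whose hypotheses explicitly require internal vertex-disjointness, and whose conclusion asserts $P_1 \cup P_2$ is \emph{a cycle}) nor the Theta Property can be invoked as you do, so the argument breaks.

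The paper's proof does not go through Lemma~\ref{lem-lemma1} or the Theta Property at all, and therefore does not need internal vertex-disjointness. It sets $C = \bigtriangleup_{e\in E(P_2)} C(e,T)$, shows that $C \triangle C_2$ is a union of cycles lying inside $T \cup \{e_1\}$ and hence equals $C(e_1,T) = C_1$, and then reads off $C_2 = C \triangle C_1$ and $C_1 = C \triangle C_2$. Linearity of $\C$ is applied to $C_1$ and $C_2$ \emph{individually} --- each is a genuine cycle written as a symmetric difference of balanced cycles when the other is assumed balanced --- so nothing has to be asserted about $P_1 \cup P_2$ being a single balanced cycle. Your underlying idea (relate $C_1$ and $C_2$ by symmetric differences via the fundamental cycles $C(e,T)$) is the right one and is exactly what the paper does; the gap is specifically the detour through Lemma~\ref{lem-lemma1} and the theta graph, both of which demand a vertex-disjointness that the hypotheses do not provide.
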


\begin{proof}
Let $C = \bigtriangleup_{e\in E(P_2)} C(e,T).$  Then we see that $C' = C\triangle C_2$ is a subgraph of $T \cup \{ e_1 \}.$  Given that $C'$ is an edge-disjoint union of cycles, it follows that $C' = C(e_1,T) = C_1.$  Thus
$C_1 = C \triangle C_2$ and hence $C_2 = C \triangle C_1$  and $C = C_1 \triangle C_2$ (a cycle) as well.  By the linearity of $\C$, it follows that $C_1$ is balanced iff $C_2$ is balanced.
\end{proof}

 \section{The Biased Graph $\widehat{\Omega} = (\widehat{G}, \widehat{\C} )$}\label{sec-newbiased}
 
 Suppose $\Omega = (G,\C)$ is a linear biased graph where $\C$ contains no balanced loops.   Let $v\in V(G).$  We shall describe a new linear biased graph $\widehat{\Omega} = (\widehat{G}, \widehat{\C})$ obtained from $\Omega$ where $\widehat{\C}$ contains no balanced loops and $V(\widehat{G}) = V(G) \backslash \{ v \}.$  Let $\{ e_1, \dots ,e_m \}$ be the edges incident with $v$, and for $i = 1, \dots ,m$ let $e_i = vv(e_i)$ where $v(e_i) \in V(G)$, possibly $v(e_i)=v,$ when $e_i$ is a loop.  We shall define a graph $\widehat{G}$ where $V(\widehat{G}) = V(G)\backslash \{ v \}$ and $E(\widehat{G})  = E(G-v) \cup \widehat{E},$ the set $\widehat{E}$ which we will now define.
 For all $i,j \in \{ 1, \dots ,m\}$, $i\ne j$ where at least one of $e_i$ or $e_j$ is not a loop, we shall associate an edge $\widehat{e}_{ij}$ with the pair of edges $\{ e_i, e_j \}$ as follows:
 \begin{itemize}
 \item If $e_i = vv(e_i)$ and $e_j = vv(e_j)$ are not loops, then let $\widehat{e}_{ij} = v(e_i)v_(e_j)$ be an edge of $\widehat{G}$ having endvertices $v(e_i)$ and $v(e_j).$
 \item If exactly one of $e_i$ or $e_j$ is a loop, say $e_j$, then let $\widehat{e}_{ij}$ be a loop at the vertex $v(e_i).$  We call $\widehat{e}_{ij}$ a {\bf stem loop}.
 \end{itemize}
 
 \noindent We define $\widehat{E} := \{ \widehat{e}_{ij}\ | \ i,j \in \{ 1, \dots ,m\} \}.$  We now define the {\it pull-back} $\fE(\widehat{e})$ of an edge $\widehat{e} \in E(\widehat{G}).$
 For all edges $\widehat{e} \in E(\widehat{G})\backslash \widehat{E},$ let $\fE(\widehat{e}) = \{ \widehat{e} \}.$  For all $\widehat{e} \in \widehat{E},$ if $\widehat{e} = \widehat{e}_{ij},$ then we define $\fE(\widehat{e}) := \{ e_i, e_j \}.$  For a subset $\widehat{F} \subseteq E(\widehat{G}),$ we define the pull-back of $\widehat{F}$ to be  $\fE(\widehat{F}) := \bigtriangleup_{\widehat{f} \in \widehat{F}}\fE(\widehat{f});$ that is, $\fE(\widehat{F})$ is the symmetric difference of the sets $\fE(\widehat{f}), \widehat{f} \in \widehat{F}.$   For each subgraph $\widehat{H}$ of $\widehat{G},$ the subgraph $H$ of $G$ induced by the edges of $\fE(E(\widehat{H}))$ is referred to as the {\bf pullback} of $\widehat{H}.$    In short, we write $H = \fE(\widehat{H}).$
  
 We shall now define a set of cycles $\widehat{\A}$ of $\widehat{G}.$   Let $\widehat{C}$ be a cycle of $\widehat{G}$ and let $C = \fE(\widehat{C})$.  Suppose that $E(\widehat{C})\cap \widehat{E} \ne \emptyset$ and $\widehat{C}$ is not a stem loop. If  $E(C) \ne \emptyset$, then $C$ has a unique cycle decomposition $C = C_1 \cup \dots \cup C_t$ where $V(C_i) \cap V(C_j) = \{ v \}$, for all $i,j$ where $1\le i < j \le t.$  The cycles $C_1, \dots, C_t$ are referred to as the {\bf petals} of $C$.    See Figure \ref{fig1}.  
We define a subset of cycles $\widehat{\A}$ in $\widehat{G}$ as follows:   suppose $\widehat{C}$ is a cycle in $\widehat{G}$ which is not a stem loop, and let $C = \fE(\widehat{C}).$  Then $\widehat{C} \in \widehat{\A}$ if and only if one of the following holds:
  
\begin{itemize}
\item $\widehat{C} \in \C.$
\item $E(C) = \emptyset.$
\item $E(\widehat{C}) \cap \widehat{E} \ne \emptyset$, $E(C) \ne \emptyset,$ and each petal of $C$ is balanced.
\end{itemize}

\noindent Let $\widehat{\A}'$ be the collection resulting from deleting all balanced loops from $\widehat{\A}.$
We define $\widehat{\C} := \C(\widehat{\A}')$, the linear completion of $\widehat{\A}'$ in $\widehat{G}$ and we define $\widehat{\Omega} := (\widehat{G}, \widehat{\C})$ to be the biased graph induced by $\widehat{\A}'.$   We refer to $\widehat{\Omega}$ as the $\mathbf{v}${\bf-deleted} biased graph obtained from $\Omega.$   Note that each stem loop of $\widehat{G}$ is unbalanced in $\widehat{\Omega}$.

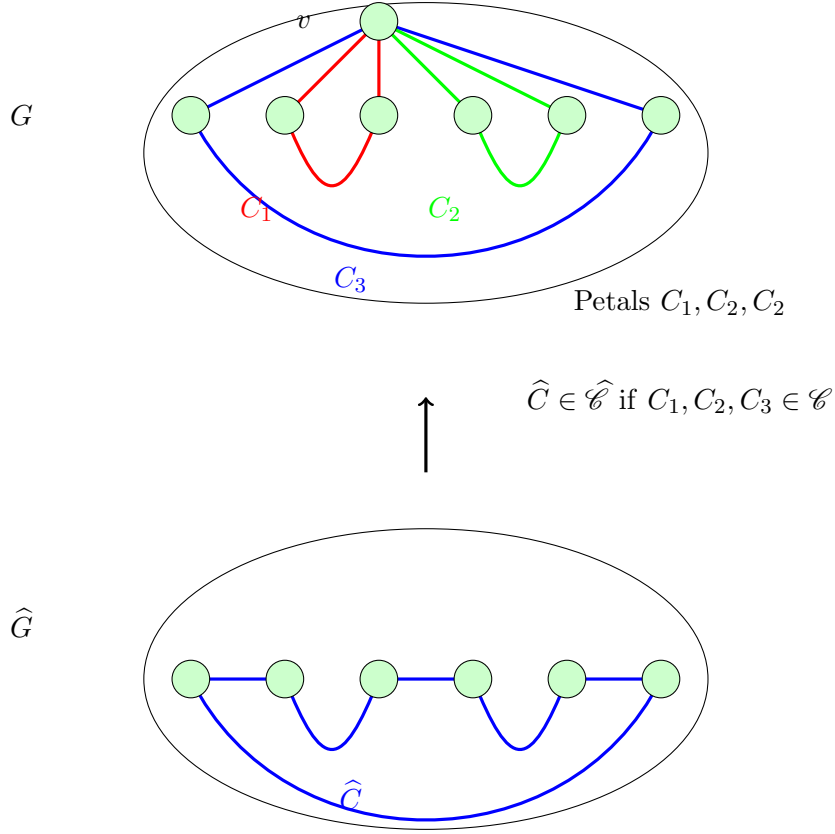
\begin{figure}[t]
\begin{flushright}
\begin{tikzpicture}[scale=.25]


 \pgfmathsetmacro{\minsize}{0.5cm}

\tikzstyle{every circle node} = [draw, fill=green!20,shape=circle,
minimum size = \minsize, inner sep = 0.20]

\vspace*{-4cm}\hspace{-1cm}
\draw (-5,30) node[] {$G$}; \draw (-5,3) node[]
{$\widehat{G}$};

\draw[very thick, color=blue] (10,35) -- (0,30);

\draw[very thick, color=red] (10,35) -- (5,30);

\draw[very thick,color=red] (10,35) -- (10,30);

\draw[very thick, color=blue] (10,35) -- (25,30);

\draw[very thick,color=green] (10,35) -- (15,30);

\draw[very thick, color=green] (10,35) -- (20,30);

\draw[color=blue, very thick](0,30) .. controls (5,20) and (20, 20) .. node[below]{$C_3$}(25,30); 

\draw[color=red, very thick](5,30) .. controls (7,25) and (8, 25) .. node[below]{$C_1$} (10,30); 

\draw[color=green, very thick](15,30) .. controls (17,25) and (18, 25) .. node[below]{$C_2$} (20,30);

\draw[color=blue, very thick](0,0) .. controls (5,-10) and (20, -10) .. node[above]{$\widehat{C}$}(25,0); 

\draw[color=blue, very thick](5,0) .. controls (7,-5) and (8, -5) .. (10,0); 

\draw[color=blue, very thick](15,0) .. controls (17,-5) and (18, -5) .. (20,0);

\draw[color =blue,very thick]((0,0) -- (5,0); \draw[color =blue,very thick]((10,0) -- (15,0); \draw[color =blue,very thick]((20,0) -- (25,0);

\draw( 12.5,28) ellipse (15 and 8); \draw (12.5,0) ellipse (15 and
8);

\draw[very thick,->]  (12.5,11) -- (12.5,15);

\draw node[circle] at (10,35) {$v$}; \draw node[circle] at (0,30) {}
;\draw node[circle] at (5,30){}; \draw node[circle] at (10,30){};
\draw node[circle] at (15,30){}; \draw node[circle] at (20,30){};
\draw node[circle] at (25,30){};

\draw node[circle] at (0,0){};\draw node[circle] at (5,0){};\draw
node[circle] at (10,0){};

\draw node[circle] at (15,0){};\draw node[circle] at (20,0){};\draw
node[circle] at (25,0){};

\draw node[] at (30,20){$\mathrm{Petals}\ C_1, C_2, C_2$};
\draw node[] at (30,15){$\widehat{C} \in \widehat{\C}\  \mathrm{if} \ C_1, C_2,C_3 \in \C$};

\end{tikzpicture}
\end{flushright}
\caption{Petals of a cycle $\widehat{C}$}\label{fig1}
\end{figure}
  
It is imperative to show that each cycle $\widehat{C}$ in $\widehat{G}$ which is an unbalanced cycle in $G$, is also an unbalanced cycle in $\widehat{G}.$  The next lemma establishes this and more. 
   
\begin{lemma}
For all cycles $\widehat{C}$ in $\widehat{G},$ if $C = \fE(\widehat{C})$ is an unbalanced cycle in $\Omega,$ then $\widehat{C}$ is an unbalanced cycle in $\widehat{\Omega}.$\label{lem-newbiased}
\end{lemma}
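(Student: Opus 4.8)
The plan is to argue by contradiction: suppose $\widehat{C}$ is a cycle in $\widehat{G}$ with $C = \fE(\widehat{C})$ unbalanced in $\Omega$, but $\widehat{C}$ is balanced in $\widehat{\Omega}$. By definition $\widehat{\C} = \C(\widehat{\A}')$, so a balanced cycle is a symmetric difference of cycles in $\widehat{\A}'$; hence $\widehat{C} = \bigtriangleup_{i=1}^{s}\widehat{D}_i$ for some cycles $\widehat{D}_1,\dots,\widehat{D}_s \in \widehat{\A}'$. The strategy is to push this identity through the pull-back map $\fE$. Since $\fE$ is defined on edge sets as a symmetric-difference-respecting map (it is additive: $\fE(\widehat{F}) = \bigtriangleup_{\widehat f \in \widehat F}\fE(\widehat f)$), we get $E(C) = \fE(E(\widehat C)) = \bigtriangleup_{i=1}^{s}\fE(E(\widehat D_i))$ as a symmetric difference of edge subsets of $G$. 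The first key step is to check that each $\fE(\widehat D_i)$ is, as a subgraph of $G$, an edge-disjoint union of balanced cycles (or empty): indeed each $\widehat D_i \in \widehat\A'$, and the three defining cases of $\widehat\A$ give exactly this — if $\widehat D_i \in \C$ it is a single balanced cycle of $G$; if $E(\fE(\widehat D_i)) = \emptyset$ it contributes nothing; and if $\widehat D_i$ meets $\widehat E$ with nonempty pull-back then $\fE(\widehat D_i)$ decomposes into its petals, all of which are balanced by definition of $\widehat\A$. (Stem loops are excluded from $\widehat\A'$ up to the balanced-loop deletion; one should note $\widehat C$ is not itself a stem loop, since stem loops are declared unbalanced, and also handle the possibility that some $\widehat D_i$ is a stem loop — but stem loops are unbalanced in $\widehat\Omega$ so they cannot appear in $\widehat\A'$.)

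The second step is to collect all the petals: write $\fE(\widehat C) = C$ on one side, and on the other side $\bigtriangleup_i \fE(\widehat D_i)$, which is a symmetric difference of balanced cycles of $G$ (the petals, ranging over all $i$). Now invoke the linearity of $\C$ directly. The symmetric difference of a family of balanced cycles is an edge-disjoint union of cycles; and I want to conclude $C$ is balanced. Here is the subtlety: $C$ need not literally equal that symmetric difference as a set — wait, actually it does, since $\fE$ is additive and $\widehat C = \bigtriangleup \widehat D_i$ gives $E(C) = \bigtriangleup \fE(E(\widehat D_i))$ exactly. So $E(C)$ equals the symmetric difference of the edge sets of the balanced petal-cycles. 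But $C$ itself is an edge-disjoint union of cycles (its petals $C_1,\dots,C_t$, or $C$ is a single cycle), so each petal $C_j$ of $C$ is a symmetric difference of the collected balanced petals restricted appropriately; more carefully, $E(C) = \bigtriangleup(\text{balanced cycles})$ means, by the standard cycle-space / linearity argument, that every cycle in the edge-disjoint decomposition of $E(C)$ — i.e., every petal $C_j$ of $C$ — lies in $\C$. This is precisely the kind of deduction Lemma~\ref{lem-lemma1} and Lemma~\ref{cor-lemma1} encapsulate, and one can either cite the linearity of $\C$ applied to the binary cycle space directly, or fix a spanning tree $T$ of $G$ adapted to $C$ and run a fundamental-cycle argument as in those lemmas. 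Either way, every petal of $C$ is balanced.

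The final step is to derive the contradiction. If every petal of $C$ is balanced, then either $C$ is a single balanced cycle (when $t=1$), contradicting that $C$ is unbalanced; or $C$ is genuinely a cycle of $G$ with $t \ge 2$ petals — but then $C$ having all balanced petals would force $C$ itself into $\widehat\A$ via the third bullet, which concerns the cycle $\widehat C$, not $C$. Hmm — so I must be careful about what "unbalanced cycle in $\Omega$" means when $C$ has several petals: $C$ as a subgraph of $G$ is then not a cycle at all (the petals share only $v$), so the hypothesis "$C = \fE(\widehat C)$ is an unbalanced cycle" forces $t = 1$, i.e. $C$ is a single cycle of $G$. Then the previous step says $C$ is balanced, contradiction. (If instead the hypothesis is read loosely to allow $C$ a union of petals, the statement would be about each petal being unbalanced, and the argument above directly contradicts that.) I expect the main obstacle to be precisely this bookkeeping about petals versus the cycle $\widehat C$, together with the careful verification that the additive pull-back map turns a linear-combination identity in $\widehat G$'s cycle space into one in $G$'s cycle space consisting entirely of balanced cycles — the rest is a clean application of linearity of $\C$ (or of Lemmas~\ref{lem-lemma1}–\ref{lem-lemma2}). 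One more detail to handle: the deletion of balanced loops in passing from $\widehat\A$ to $\widehat\A'$ and then the linear completion $\C(\widehat\A')$ — since removing balanced loops does not change the binary cycle space modulo balancedness, and linear completion only adds cycles forced by linearity, the representation $\widehat C = \bigtriangleup \widehat D_i$ with $\widehat D_i \in \widehat\A'$ can be traced back to $\widehat\A$ without loss, which is what licenses the petal analysis above.
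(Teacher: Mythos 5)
Your proposal is correct and follows essentially the same route as the paper's proof: assume $\widehat{C}$ is balanced, write it as a symmetric difference of cycles in $\widehat{\A}'$, push through the additive pull-back $\fE$ to express $C$ as a symmetric difference of balanced petals, and invoke linearity of $\C$ to conclude $C$ is balanced, a contradiction. The hedging at the end about multiple petals of $C$ is unnecessary since the hypothesis already stipulates $C$ is a cycle, so linearity applies directly once $C$ is exhibited as a symmetric difference of balanced cycles.
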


\begin{proof}
Suppose that $\widehat{C}$ is a cycle in $\widehat{G}$ such that $C=\fE(\widehat{C})$ is an unbalanced cycle in $\Omega.$  Suppose to the contrary that $\widehat{C}$ is balanced.  Then $\widehat{C} = \widehat{C_1} \triangle \cdots \triangle \widehat{C}_k$ for some cycles $\widehat{C}_i \in \widehat{\A},\ i = 1, \dots ,k.$  Let $C_i = \fE(\widehat{C}_i),\ i =1, \dots ,k.$  Then for $i=1, \dots ,k,$ either $C_i$ is a disjoint union of cycles in $\C$ or $E(C_i) = \emptyset.$    Given $\widehat{C} = \widehat{C_1} \triangle \cdots \triangle \widehat{C}_k$, we see that $C = C_1 \triangle \cdots \triangle C_k.$  One now sees that $C = C_{1}^* \triangle C_{2}^* \triangle \cdots \triangle C_{l}^*$, where for $i = 1, \dots ,l$, $C_i^* = C_j$ for some $j$, or $C_i^*$ is a petal of some $C_j.$  Since $\widehat{C}_i \in \widehat{\C},\ i = 1, \dots ,k,$ it follows that $C_i^* \in \C,\ i = 1, \dots ,l.$  Hence it follows by linearity that $C\in \C.$  This yields a contradiction.  Therefore, $\widehat{C}$ is unbalanced.
\end{proof}

\section{Changing Bases in $\widehat{M} = M(\widehat{\Omega})$}\label{sec-BaseChange}

Let $\widehat{\Omega} = (\widehat{G}, \widehat{\C})$ be the $v$-deleted biased graph described in Section \ref{sec-newbiased}.  Let $\widehat{M} = M(\widehat{\Omega})$ be the linear frame matroid associated with $\widehat{\Omega}.$ In this section, we will introduced some lemmas which describe how bases in $\widehat{M}$ can be changed.  This will be important later on when we introduce {\it perturbations}. 
 
 Let $I \subseteq \{ 1, \dots ,m \}$ and let $S_I$ denote the set of permutations of $I$.  Let $\widehat{H}_I$ denote the subgraph of $\widehat{G}$ induced by the edges in $\{ \widehat{e}_{ij}\ \big| \ i,j \in I \}.$  For a base $\widehat{B} \in \B(\widehat{M}),$ let $\widehat{F}_I(\widehat{B}) = \{ \widehat{e} \in E(\widehat{H}_I) \ \big| \ (\widehat{B} \backslash E(\widehat{H})_I) \cup \{ \widehat{e} \} \in \B(\widehat{M}) \}.$  Note that if $\widehat{B} \cap E(\widehat{H}_I) = \{ \widehat{e} \},$ then $\widehat{e} \in \widehat{F}_I(\widehat{B}).$  Also,
 $\widehat{F}_I(\widehat{B}) \ne \emptyset$ iff $|\widehat{B} \cap E(\widehat{H}_I)| = 1.$  
 
 \begin{lemma}
 Let $I = \{ i,j,k\}$ where $i<j<k$ and suppose $\widehat{B} \in \B(\widehat{M})$ is such that $\widehat{B}\cap E(\widehat{H}_I) = \{ \widehat{e} \}.$  Then $\widehat{F}_I(\widehat{B}) \backslash \{ \widehat{e}\}  \ne \emptyset$.\label{lem-Mhat1}
 \end{lemma}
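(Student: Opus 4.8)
The plan is to analyze the subgraph $\widehat H_I$ on the three indices $I=\{i,j,k\}$, which (ignoring stem loops for the moment) is essentially a triangle on the vertices $v(e_i),v(e_j),v(e_k)$ with edge set $\{\widehat e_{ij},\widehat e_{ik},\widehat e_{jk}\}$, and to show that a base $\widehat B$ meeting $E(\widehat H_I)$ in a single edge $\widehat e$ can always absorb a \emph{different} edge of this triangle in place of $\widehat e$. The point is that $\widehat B':=(\widehat B\setminus E(\widehat H_I))\cup\{\widehat e'\}$ is a base of $\widehat M$ precisely when $\widehat e'$ together with $\widehat B_0:=\widehat B\setminus\{\widehat e\}$ is independent in $\widehat M$; since $\widehat B_0$ is independent of size $r(\widehat M)-1$ and $\widehat e$ itself is such a valid choice, we only need one more valid choice among the (at most two) remaining edges of the triangle.

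First I would set up the structure of $\widehat B_0$ relative to the vertices $a:=v(e_i)$, $b:=v(e_j)$, $c:=v(e_k)$. Since $\widehat B\cup\{\widehat e'\}$ must fail to be a base for each of the offending $\widehat e'$, the obstruction is a circuit of $\widehat M(\widehat\Omega)$ contained in $\widehat B_0\cup\{\widehat e'\}$; by the circuit description of frame matroids this circuit is either a balanced cycle through $\widehat e'$, or a subgraph of type (ii), (iii), or (iv) — i.e.\ $\widehat e'$ lies on an unbalanced cycle linked in one of those ways to the rest of $\widehat B_0$. I would then argue by contradiction: suppose \emph{both} of the other two triangle edges, say $\widehat e'$ and $\widehat e''$, fail. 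Using that $\widehat e,\widehat e',\widehat e''$ are the three edges of a theta/triangle among $a,b,c$, the two putative obstructing circuits (one using $\widehat e'$, one using $\widehat e''$), together with the fact that $\widehat B_0\cup\{\widehat e\}$ \emph{is} a base, give too much structure on the components of $\widehat B_0$ near $\{a,b,c\}$: combining the obstruction for $\widehat e'$ with the obstruction for $\widehat e''$ produces a circuit inside $\widehat B_0\cup\{\widehat e\}$, or forces $\widehat e$ itself to have been an obstruction, contradicting basehood of $\widehat B$. The cases to check are according to how many of $a,b,c$ lie in the same component of $\widehat B_0$ and whether those components are balanced or contain an unbalanced cycle; Lemmas \ref{lem-lemma1}, \ref{cor-lemma1}, and \ref{lem-lemma2} control the balance of the cycles that appear when one closes up tree paths with triangle edges. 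One must also handle separately the case where one of $e_i,e_j,e_k$ is a loop at $v$, so that the corresponding triangle edge(s) are stem loops; stem loops are unbalanced in $\widehat\Omega$ by construction, which actually makes them easier to insert (an unbalanced loop can be added to any independent set whose component at its base vertex has no unbalanced cycle).

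The cleanest way to organize the contradiction is probably to observe that $\widehat F_I(\widehat B)\setminus\{\widehat e\}=\emptyset$ would mean that adjoining either remaining triangle edge to $\widehat B_0$ creates a dependency, while adjoining $\widehat e$ does not; intersecting the three "fundamental" structures and taking symmetric differences (as in the proofs of Lemmas \ref{lem-lemma1}–\ref{lem-lemma2}) yields a cycle or pair of cycles forcing $\widehat e$ to complete a circuit with $\widehat B_0$ after all — i.e.\ contradicting $\widehat B\in\B(\widehat M)$ — unless the component(s) of $\widehat B_0$ touching $\{a,b,c\}$ already fail the base conditions. Equivalently, one can phrase it via rank: $r_{\widehat M}(\widehat B_0\cup\{\widehat e_{ij},\widehat e_{ik},\widehat e_{jk}\})$ is at least $r(\widehat M)-1$ with $\widehat e$ attaining $r(\widehat M)$, and a short matroid argument shows the flat spanned by $\widehat B_0$ cannot contain exactly one of the three triangle edges unless that triangle is "balanced rel.\ $\widehat B_0$", which by the petal/linearity analysis cannot happen when $C=\fE(\widehat e)$ carries the unbalance it must.

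\textbf{Main obstacle.} The hard part will be the bookkeeping in the case analysis: there are several configurations for how $a,b,c$ distribute among the components of $\widehat B_0$ and for whether each relevant component is balanced or unbalanced, and in each one must use the precise frame-matroid circuit types (ii)–(iv) together with the linearity lemmas to rule out simultaneous failure of both alternative edges. The subtlety that $\widehat\Omega$ is the $v$-deleted biased graph (so balance of cycles there is governed by petals and the linear completion $\widehat\C=\C(\widehat\A')$, not directly by $\C$) means one cannot reason purely in $\widehat G$; one has to pull back to $G$ via $\fE$ and invoke Lemma \ref{lem-newbiased} to transfer unbalance correctly. I expect the stem-loop subcase and the "all three of $a,b,c$ in one component" subcase to require the most care.
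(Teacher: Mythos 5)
Your proposal is correct in spirit—one wants to show that if both remaining triangle edges lie in $\mathrm{cl}(\widehat B_0)$ then $\widehat e$ does too—but it is substantially longer and more case-driven than the paper's proof, and it misses the one observation that makes the whole thing trivial. The paper's proof notes that $E(\widehat H_I)$ itself always contains a circuit $\widehat C$ of $\widehat M$ with $\widehat e\in\widehat C$, and then the conclusion is a one-line matroid fact: a circuit and the fundamental cocircuit $C^*(\widehat e,\widehat B)$ cannot meet in exactly one element, so some $\widehat e'\in\widehat C\setminus\{\widehat e\}$ satisfies $\widehat B-\widehat e+\widehat e'\in\B(\widehat M)$, and since $\widehat e'\in E(\widehat H_I)$ and $\widehat B\cap E(\widehat H_I)=\{\widehat e\}$ this means $\widehat e'\in\widehat F_I(\widehat B)$. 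The reason $E(\widehat H_I)$ contains such a circuit is immediate from the construction of $\widehat\C$: in the generic case $\widehat H_I$ is the triangle $\{\widehat e_{ij},\widehat e_{ik},\widehat e_{jk}\}$, and $\fE$ of this triangle is $\{e_i,e_j\}\triangle\{e_i,e_k\}\triangle\{e_j,e_k\}=\emptyset$, so by the "$E(C)=\emptyset$" clause of the definition of $\widehat\A$ the triangle is balanced, hence a circuit of $\widehat M$. In the degenerate shapes (2-cycle plus loop, two stem loops plus an edge, three loops) one checks similarly that a circuit through $\widehat e$ sits inside $E(\widehat H_I)$, using that loops of $\widehat G$ are unbalanced by construction. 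None of Lemmas~\ref{lem-lemma1}--\ref{lem-lemma2} are needed, and there is no need to analyze the components of $\widehat B_0$ at all.

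Two specific issues with your write-up. First, the rank argument in your last paragraph has the quantifiers backwards: since $\widehat e\notin\mathrm{cl}(\widehat B_0)$, the failure mode you need to exclude is that $\mathrm{cl}(\widehat B_0)$ contains \emph{two} of the three triangle edges (namely both $\widehat e'$ and $\widehat e''$), not "exactly one"; and the condition that saves you is precisely that the triangle \emph{is} a circuit (so two in the flat forces the third), rather than a case to be ruled out. Your sentence about $C=\fE(\widehat e)$ "carrying the unbalance it must" does not parse for $\widehat e=\widehat e_{ij}$, since $\fE(\widehat e_{ij})=\{e_i,e_j\}$ is generally a path, not a cycle. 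Second, and more importantly, your plan never exploits the single structural fact $\fE(E(\widehat H_I))=\emptyset$, which is what makes the triangle balanced \emph{by definition} of $\widehat\C$ with no further work. Without that observation the "combine obstructing circuits via symmetric differences" route forces exactly the component bookkeeping you flag as the main obstacle, and it is genuinely painful because the obstructing subgraphs can be of any of the frame-matroid circuit types (ii)--(iv), not just balanced cycles.
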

 
 \begin{proof}
 Without loss of generality, we may assume that $I = \{ 1,2,3 \}$ and $\widehat{B} \cap E(\widehat{H}_I) = \{ \widehat{e}_{12} \}.$   By case checking, one can show that $\widehat{H}_I$ is either a $3$-cycle or, a $2$-cycle and a loop or, two stem loops joined by an edge or, three loops meeting at a vertex.  It is straightforward to show that in each of these cases
there is a circuit $\widehat{C}$ in $\widehat{M}$ for which  $\widehat{C} \subseteq E(\widehat{H}_I)$ and $\widehat{e}_{12} \in \widehat{C}.$   
Then there exists $\widehat{e}_{12}' \in \widehat{C}\backslash \{ \widehat{e}_{12} \}$ for which $\widehat{B} - \widehat{e}_{12} + \widehat{e}_{12}'$ is a base.   Thus $\widehat{F}_I(\widehat{B})\backslash \{ \widehat{e}_{12} \} \ne \emptyset.$ 
 \end{proof}
 
  Let $I \subseteq \{ 1,2, \dots ,m \}$ and let $\widehat{B}\in \B(\widehat{M}).$  We say that $\widehat{B}$ is $\mathbf{I}${\bf -cyclic} if there exists a cycle $\sigma \in S_I$ of length $|I|$ such that $\widehat{e}_{i\sigma(i)} \in \widehat{F}_I(\widehat{B}),\ \forall i\in I.$  We say that $\widehat{B}$ is $\mathbf{I}${\bf - singular} if there exists $i\in I$ such that $\widehat{e}_{ij} \in \widehat{F}_I(\widehat{B}), \ \forall j\in I\backslash \{ i \}.$  Moreover, $\widehat{B}$ is {\bf strictly} $\mathbf{I}${\bf -cyclic} (respectively, {\bf strictly} $\mathbf{I}${\bf -singular}) if $\widehat{B}$ is $I$ -cyclic (respectively, $I$ -singular) but not $I$-singular (respectively, $I$ -cyclic).
 
 \begin{lemma}
  Let $I \subseteq \{ 1, \dots, m \}$ where $|I| =4.$   Let $\widehat{B} \in \B(\widehat{M})$ where $\widehat{B} \cap E(\widehat{H}_I) = \{ \widehat{e} \}.$  
 Then $\widehat{B}$ is $I$-cyclic or $I$-singular.  Moreover, either 
 \begin{itemize}
 \item[i)]  there is a $4$-cycle $\sigma \in S_I$ such that $\widehat{e} \in \left\{ \widehat{e}_{i\sigma(i)}\ \big| \ i\in I \right\} \subseteq \widehat{F}_I(\widehat{B}),$
 or 
 \item[ii)] for some $i\in I,$ $\widehat{e} \in \left\{  \widehat{e}_{ij} \ \big| \  j\in I\backslash \{ i \} \right\} \subseteq \widehat{F}_I(\widehat{B})$.
 \end{itemize}
\label{lem-Mhat2} 
 \end{lemma}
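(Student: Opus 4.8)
The plan is to reduce Lemma~\ref{lem-Mhat2} to one combinatorial claim about $\widehat{F}_I(\widehat{B})$, viewed as a set of unordered pairs from $I$, and then to settle the four-element case directly; this avoids the more painful alternative of classifying the graph $\widehat{H}_I$ case by case (as was done for $|I|=3$ in the proof of Lemma~\ref{lem-Mhat1}). The engine is a \emph{base-switching} observation. Write $X = \widehat{B}\setminus E(\widehat{H}_I)$, so $\widehat{B} = X\cup\{\widehat{e}\}$ and $X$ is independent with $r(X)=r(\widehat{M})-1$. Straight from the definition, $\widehat{f}\in\widehat{F}_I(\widehat{B})$ holds exactly when $X\cup\{\widehat{f}\}\in\B(\widehat{M})$, so $\widehat{F}_I(\widehat{B})$ depends only on $X$, not on which single edge of $\widehat{H}_I$ is adjoined to it. In particular, for each $\widehat{f}\in\widehat{F}_I(\widehat{B})$ the set $\widehat{B}_{\widehat{f}} := X\cup\{\widehat{f}\}$ is a base with $\widehat{B}_{\widehat{f}}\cap E(\widehat{H}_I)=\{\widehat{f}\}$ and $\widehat{F}_I(\widehat{B}_{\widehat{f}})=\widehat{F}_I(\widehat{B})$; this lets us move the ``distinguished'' edge around freely inside $\widehat{F}_I(\widehat{B})$ before applying Lemma~\ref{lem-Mhat1}.

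The first step is to prove the following claim: \emph{for every $3$-subset $T\subseteq I$ with $\widehat{F}_I(\widehat{B})\cap E(\widehat{H}_T)\neq\emptyset$ one has $|\widehat{F}_I(\widehat{B})\cap E(\widehat{H}_T)|\geq 2$.} Indeed, pick $\widehat{f}\in\widehat{F}_I(\widehat{B})\cap E(\widehat{H}_T)$. Since $E(\widehat{H}_T)\subseteq E(\widehat{H}_I)$ and $X\cap E(\widehat{H}_I)=\emptyset$, the base $\widehat{B}_{\widehat{f}}=X\cup\{\widehat{f}\}$ meets $E(\widehat{H}_T)$ in exactly $\{\widehat{f}\}$, so Lemma~\ref{lem-Mhat1} applies to $\widehat{B}_{\widehat{f}}$ and $T$ and yields an edge $\widehat{g}\in\widehat{F}_T(\widehat{B}_{\widehat{f}})\setminus\{\widehat{f}\}$. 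As $\widehat{B}_{\widehat{f}}\setminus E(\widehat{H}_T)=X$, this says $X\cup\{\widehat{g}\}\in\B(\widehat{M})$, i.e. $\widehat{g}\in\widehat{F}_I(\widehat{B})$, with $\widehat{g}\in E(\widehat{H}_T)$ and $\widehat{g}\neq\widehat{f}$; hence $\widehat{F}_I(\widehat{B})\cap E(\widehat{H}_T)$ has at least two elements.

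The second step is the case analysis for $|I|=4$. Write $\widehat{e}=\widehat{e}_{ab}$ and $I=\{a,b,c,d\}$. Applying the claim to $T=\{a,b,c\}$ and to $T=\{a,b,d\}$ (each contains $\widehat{e}_{ab}$), $\widehat{F}_I(\widehat{B})$ contains one of $\widehat{e}_{ac},\widehat{e}_{bc}$ and one of $\widehat{e}_{ad},\widehat{e}_{bd}$; since $\widehat{e}_{ab}$ is symmetric in $a$ and $b$, we may assume $\widehat{e}_{ac}\in\widehat{F}_I(\widehat{B})$. If also $\widehat{e}_{ad}\in\widehat{F}_I(\widehat{B})$, then $\{\widehat{e}_{ab},\widehat{e}_{ac},\widehat{e}_{ad}\}\subseteq\widehat{F}_I(\widehat{B})$, so $\widehat{B}$ is $I$-singular and conclusion (ii) holds with $i=a$. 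Otherwise $\widehat{e}_{bd}\in\widehat{F}_I(\widehat{B})$; applying the claim to $T=\{a,c,d\}$ (which contains $\widehat{e}_{ac}\in\widehat{F}_I(\widehat{B})$) puts one of $\widehat{e}_{ad},\widehat{e}_{cd}$ into $\widehat{F}_I(\widehat{B})$. In the first subcase we again obtain the star $\{\widehat{e}_{ab},\widehat{e}_{ac},\widehat{e}_{ad}\}$ and conclusion (ii); in the second subcase $\{\widehat{e}_{ab},\widehat{e}_{ac},\widehat{e}_{bd},\widehat{e}_{cd}\}\subseteq\widehat{F}_I(\widehat{B})$, and these are precisely the edges $\widehat{e}_{i\sigma(i)}$, $i\in I$, for the $4$-cycle $\sigma=(a\,b\,d\,c)$, so $\widehat{B}$ is $I$-cyclic and conclusion (i) holds. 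In every case $\widehat{e}$ lies among the displayed edges, which gives both the dichotomy and the sharper statement.

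I expect the main obstacle to be the base-switching bookkeeping: at each invocation of Lemma~\ref{lem-Mhat1} one must confirm that the relevant intersection $\widehat{B}'\cap E(\widehat{H}_T)$ is genuinely a single edge, and that $\widehat{F}_I$ is truly unchanged under the replacement $\widehat{B}\rightsquigarrow\widehat{B}_{\widehat{f}}$ — both hinge on the trivial but easily-mishandled fact that $X$ is disjoint from $E(\widehat{H}_I)$. A secondary subtlety is that some $\widehat{e}_{ij}$ need not exist (when $e_i$ and $e_j$ are both loops); such pairs are simply absent from $E(\widehat{H}_I)$ and hence never appear in $\widehat{F}_I(\widehat{B})$, and every edge the argument manufactures is by construction an element of $E(\widehat{H}_I)$, so no case is lost. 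Beyond these points the argument is routine.
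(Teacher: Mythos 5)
Your proof is correct and follows essentially the same route as the paper: both iterate Lemma~\ref{lem-Mhat1} over $3$-element subsets of $I$, using the observation that $\widehat{F}_I(\widehat{B})$ depends only on $X=\widehat{B}\setminus E(\widehat{H}_I)$ to move the distinguished edge freely before each application. The only difference is cosmetic — you abstract the key step into a standalone claim that every $3$-subset $T$ meeting $\widehat{F}_I(\widehat{B})$ does so in at least two edges, whereas the paper unwinds the same base-switching inline; your packaging is slightly cleaner but the argument is the same (your redundant "first subcase" in the final paragraph, where $\widehat{e}_{ad}$ has already been excluded, is harmless).
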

 
 \begin{proof}
 It suffices to prove the lemma for $I = \{ 1,2,3,4 \}$ and $\widehat{e} = \widehat{e}_{12}.$  Let $\widehat{F} = \widehat{F}_I(\widehat{B}).$
 By Lemma \ref{lem-Mhat1},
 either $\widehat{e}_{13} \in \widehat{F}$ or $\widehat{e}_{23} \in \widehat{F}.$  Without loss of generality, we may assume that $\widehat{e}_{23}\in \widehat{F}.$  Let $\widehat{B}' = \widehat{B} - \widehat{e}_{12} + \widehat{e}_{23}.$  Then $\widehat{F}' = \widehat{F}_I(\widehat{B}') = \widehat{F}.$ It follows by Lemma \ref{lem-Mhat1} that either $\widehat{e}_{24} \in \widehat{F}'$ or $\widehat{e}_{34} \in \widehat{F}'.$  If $\widehat{e}_{24} \in \widehat{F}'$, then 
 $\widehat{e}_{24}\in \widehat{F}$ and hence ii) holds with $i=2$.   Suppose $\widehat{e}_{34} \in \widehat{F}'.$ Then $\widehat{e}_{34} \in \widehat{F}.$   Since $\widehat{e}_{12} \in \widehat{F},$ it follows by Lemma \ref{lem-Mhat1}
 that either $\widehat{e}_{24} \in \widehat{F}$ or $\widehat{e}_{14} \in \widehat{F}.$  If the former holds, then ii) holds with $i=2.$   If the latter holds, then we see that for $\sigma = (1,2,3,4)$, $\widehat{e}_{i\sigma(i)} \in \widehat{F},\ \forall i\in I,$  and hence i) holds.
 \end{proof}
 
 \subsection{Switchability and Amenability of Base Pairs in $\widehat{M}$}\label{subsec-switchamen}
  
 Let $\widehat{B}_i \in \B(\widehat{M}),\ i = 1,2$ be disjoint bases.  Let $I \subseteq \{ 1, \dots ,m\}$ and let $\widehat{H} = \widehat{H}_I$.   Suppose that $(\widehat{B}_1 \cup \widehat{B}_2)\cap E(\widehat{H}) = \{ \widehat{e}, \widehat{f} \}.$  If either $\widehat{e}$ and $\widehat{f}$ are non-incident or $\{\widehat{e}, \widehat{f} \} \subseteq \widehat{B}_i,$ for some $i \in \{ 1,2 \},$ then we say that the pair $(\widehat{B}_1, \widehat{B}_2)$ is $\mathbf{\widehat{H}}${\bf-viable}.  
 
 For distinct edges $\widehat{e}', \widehat{f}' \in E(\widehat{H}),$ we say that the pair $(\widehat{B}_1, \widehat{B}_2)$ is $\mathbf{\widehat{e}', \widehat{f}'}${\bf-amenable} 
 if there exist disjoint bases $\widehat{B}_i' \in \B(\widehat{M}),\ i = 1,2$ such that
 $\widehat{B}_i'\backslash E(\widehat{H}) =  \widehat{B}_i\backslash E(\widehat{H}),\ i = 1,2$ and $(\widehat{B}_i' \cup \widehat{B}_2') \cap E(\widehat{H}) = \{ \widehat{e}', \widehat{f}' \}.$ 
 We write $(\widehat{B}_1, \widehat{B}_2) \xrightarrow[\widehat{e}', \widehat{f}']{\widehat{e}, \widehat{f}} (\widehat{B}_1', \widehat{B}_2')$.
 

Let $(\widehat{A}_1, \widehat{A}_2)$ and $(\widehat{B}_1, \widehat{B}_2)$ be $\widehat{H}$-viable pairs of disjoint bases where\\ $(\widehat{A}_1 \cup \widehat{A}_2) \cap E(\widehat{H}) = \{ \widehat{a}_1, \widehat{a}_2 \}$ and $(\widehat{B}_1 \cup \widehat{B}_2) \cap E(\widehat{H}) = \{ \widehat{b}_1, \widehat{b}_2 \}.$  Suppose there exist pairs of bases $(\widehat{A}_1', \widehat{A}_2')$ and $(\widehat{B}_1', \widehat{B}_2')$ such that 
\begin{itemize}
\item $(\widehat{A}_1, \widehat{A}_2) \xrightarrow[\widehat{c}_1, \widehat{c}_2]{\widehat{a}_1, \widehat{a}_2} (\widehat{A}_1', \widehat{A}_2')$.
\item $(\widehat{B}_1, \widehat{B}_2) \xrightarrow[\widehat{c}_1, \widehat{c}_2]{\widehat{b}_1, \widehat{b}_2} (\widehat{B}_1', \widehat{B}_2')$. 
\item The pairs $(\widehat{A}_1', \widehat{A}_2')$ and $(\widehat{B}_1', \widehat{B}_2')$ are $\widehat{H}$-viable.
\item $(\widehat{A}_1', \widehat{A}_2') \sim_1 (\widehat{B}_1', \widehat{B}_2')$.
\end{itemize}
Then we write $(\widehat{A}_1, \widehat{A}_2) \curvearrowright_1 (\widehat{B}_1, \widehat{B}_2).$  If there exist base pairs $(\widehat{A}_1^j, \widehat{A}_2^j), \ j =1, \dots ,k$ such that
$$(\widehat{A}_1, \widehat{A}_2) \curvearrowright_1 (\widehat{A}_1^1, \widehat{A}_2^1) \curvearrowright_1 (\widehat{A}_1^2, \widehat{A}_2^2) \curvearrowright_1 \cdots \curvearrowright_1 (\widehat{A}_1^k, \widehat{A}_2^k) \curvearrowright_1 (\widehat{B}_1, \widehat{B}_2),$$ then we write $(\widehat{A}_1, \widehat{A}_2) \curvearrowright (\widehat{B}_1, \widehat{B}_2).$

Suppose that $\widehat{B}_i \in \B(\widehat{M}),\ i = 1,2$ are disjoint bases such that $\widehat{e} \in \widehat{F}_I(\widehat{B}_1)$ and $\widehat{f} \in \widehat{F}_I(\widehat{B}_2))$ where $\widehat{e}$ and $\widehat{f}$ are non-incident edges. We say that  $(\widehat{B}_1, \widehat{B}_2)$ is $\mathbf{\widehat{e}, \widehat{f}}${\bf-switchable} if there exists $(\widehat{B}_1', \widehat{B}_2')$
such that\\ $(\widehat{B}_1, \widehat{B}_2) \curvearrowright (\widehat{B}_1', \widehat{B}_2')$ and $(\widehat{B}_1', \widehat{B}_2')$ is $\widehat{e}', \widehat{f}'$-amenable for some non-incident edges $\widehat{e}',\widehat{f}' \in E(\widehat{H})\backslash \{ \widehat{e}, \widehat{f} \}.$   We shall need the following lemma.

 \begin{lemma}
 Let $I = \{ i_1,i_2,i_3,i_4 \} \subset \{ 1,2, \dots ,m\},$ $\widehat{H} = \widehat{H}_I,$ and let $\widehat{B}_i\in \B(\widehat{M}),\ i = 1,2$ be disjoint bases where $|\widehat{B}_i \cap E(\widehat{H})|=1,\ i = 1,2.$  If $(\widehat{B}_1,\widehat{B}_2)$ is not $\widehat{e}, \widehat{f}$-amenable for all $\{ \widehat{e}, \widehat{f} \} \in \left\{ \{ \widehat{e}_{i_1i_3}, \widehat{e}_{i_2i_4} \},  \{ \widehat{e}_{i_1i_4}, \widehat{e}_{i_2i_3} \} \right\}$, then one of two possibilities hold: 
 \begin{itemize}
 \item[i)] $\widehat{B}_i,\ i = 1,2$ are strictly $I$-singular and
 for some $i\in I,$ $\widehat{F}_I(\widehat{B}_1) = \widehat{F}_I(\widehat{B}_2) = \{ \widehat{e}_{ij}\ \big| \ j\in I\backslash \{ i \} \},$
 
 or 
 
 \item[ii)] $\widehat{B}_i,\ i = 1,2$ are strictly $I$-cyclic and \\$\{ \widehat{F}_I(\widehat{B}_1), \widehat{F}_I(\widehat{B}_2) \}  = \{ \{ \widehat{e}_{i_1i_2}, \widehat{e}_{i_3i_4}, \widehat{e}_{i_1i_3}, \widehat{e}_{i_2i_4} \},\  \{ \widehat{e}_{i_1i_2}, \widehat{e}_{i_3i_4}, \widehat{e}_{i_1i_4}, \widehat{e}_{i_2i_3} \} \}.$ 
 \end{itemize}
 \label{lem-Mhat3}
 \end{lemma}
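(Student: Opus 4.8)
The plan is to translate amenability into a purely set-theoretic statement about $\widehat{F}_1:=\widehat{F}_I(\widehat{B}_1)$ and $\widehat{F}_2:=\widehat{F}_I(\widehat{B}_2)$, determine the short list of possibilities for the pair $(\widehat{F}_1,\widehat{F}_2)$, and check by hand which ones avoid the two forbidden splits. First I would observe that since $\widehat{B}_i'\setminus E(\widehat{H})=\widehat{B}_i\setminus E(\widehat{H})$ is forced, any witness to $\widehat{x},\widehat{y}$-amenability has the form $\widehat{B}_i'=(\widehat{B}_i\setminus E(\widehat{H}))\cup\{\widehat{g}_i\}$ with $\widehat{g}_i\in E(\widehat{H})$; hence $(\widehat{B}_1,\widehat{B}_2)$ is $\widehat{x},\widehat{y}$-amenable if and only if one of $\widehat{x},\widehat{y}$ lies in $\widehat{F}_1$ and the other in $\widehat{F}_2$ (using that $|\widehat{B}_i\cap E(\widehat{H})|=1$, so $\widehat{F}_i\neq\emptyset$, and that the parts outside $E(\widehat{H})$ are automatically disjoint). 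Thus the hypothesis says exactly that neither $\{\widehat{e}_{i_1i_3},\widehat{e}_{i_2i_4}\}$ nor $\{\widehat{e}_{i_1i_4},\widehat{e}_{i_2i_3}\}$ is split between $\widehat{F}_1$ and $\widehat{F}_2$, and the task is to classify the pairs $(\widehat{F}_1,\widehat{F}_2)$ with this property.

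Next I would extract the structural constraints on a set $\widehat{F}=\widehat{F}_I(\widehat{B})$. On one hand, Lemma \ref{lem-Mhat2} gives that $\widehat{F}$ contains a $4$-cycle of $\widehat{H}_I$ or a $3$-star $\{\widehat{e}_{ij}\mid j\in I\setminus\{i\}\}$. On the other hand, $\widehat{F}$ is the complement, inside $E(\widehat{H}_I)$, of a flat of $\widehat{M}\mid E(\widehat{H}_I)$ — namely of $\mathrm{cl}_{\widehat{M}}(\widehat{B}\setminus E(\widehat{H}))\cap E(\widehat{H}_I)$ — so $\widehat{F}$ is a union of cocircuits of $\widehat{M}\mid E(\widehat{H}_I)$. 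Now every cycle $\widehat{C}$ of $\widehat{H}_I$ has empty pull-back, since $\fE(E(\widehat{C}))$ is the symmetric difference of the pairs $\{e_i,e_j\}$ read around $\widehat{C}$ and each port occurs there an even number of times; by the construction of $\widehat{\A}$ such a $\widehat{C}$ is balanced. In the principal case where $\widehat{H}_I$ is a $K_4$ on four distinct vertices, this says that all triangles and all $4$-cycles of this $K_4$ are balanced, so $\widehat{M}\mid E(\widehat{H}_I)$ is the cycle matroid $M(K_4)$; its cocircuits are the four $3$-stars and the three $4$-cycles (complements of perfect matchings), and a nonempty union of these is a single $3$-star, a single $4$-cycle, or a set of at least five edges.

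The five-or-more edge case is killed directly: then $E(\widehat{H}_I)\setminus\widehat{F}_1$ has at most one edge, so $\widehat{F}_1$ meets each of the two forbidden pairs; non-amenability pushes those edges out of $\widehat{F}_2$, leaving $\widehat{F}_2$ inside a set of at most two edges, which contains no $3$-star and no $4$-cycle — contradicting Lemma \ref{lem-Mhat2}. So $\widehat{F}_1$ and $\widehat{F}_2$ are each a single $3$-star or a single $4$-cycle, and it remains to check the finitely many pairs, which I would do modulo the order-$8$ dihedral group fixing the unordered pair of forbidden matchings. If both are $3$-stars, two distinct centres split one of the forbidden pairs, so the centres agree and conclusion (i) holds. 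If one is a $3$-star and the other a $4$-cycle, one checks in each configuration that a forbidden pair is split. If both are $4$-cycles, the $4$-cycle that is the complement of $\{\widehat{e}_{i_1i_2},\widehat{e}_{i_3i_4}\}$ contains both forbidden pairs and is therefore impossible for either $\widehat{F}_i$, while two equal $4$-cycles among the remaining two split a forbidden pair; hence $\{\widehat{F}_1,\widehat{F}_2\}=\{\{\widehat{e}_{i_1i_2},\widehat{e}_{i_3i_4},\widehat{e}_{i_1i_3},\widehat{e}_{i_2i_4}\},\ \{\widehat{e}_{i_1i_2},\widehat{e}_{i_3i_4},\widehat{e}_{i_1i_4},\widehat{e}_{i_2i_3}\}\}$, which is conclusion (ii). In both surviving cases strict singularity, resp. strict cyclicity, is immediate, since a $3$-set contains no $4$-cycle and a $4$-cycle contains no $3$-star.

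What remains, and what I expect to be the real work, is the degenerate shapes of $\widehat{H}_I$: some port $e_i$ may be a loop at $v$, or some endvertices $v(e_i)$ may coincide, so that $\widehat{H}_I$ carries stem loops or parallel edges and $\widehat{M}\mid E(\widehat{H}_I)$ is no longer $M(K_4)$. In each such shape $\widehat{M}\mid E(\widehat{H}_I)$ is an explicit small frame matroid on at most four vertices, whose cocircuits — hence the admissible sets $\widehat{F}_i$ — can be listed, and one reruns the finite check, still landing on a common $3$-star, the two complementary $4$-cycles, or a split forbidden pair. The main obstacle is keeping this case analysis organized and keeping the symmetry reductions honest throughout: one may only quotient by the stabilizer of the unordered pair $\{\{\widehat{e}_{i_1i_3},\widehat{e}_{i_2i_4}\},\{\widehat{e}_{i_1i_4},\widehat{e}_{i_2i_3}\}\}$ (a dihedral group of order $8$), never by the full symmetric group on $I$.
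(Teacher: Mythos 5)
Your route is genuinely different from the paper's. The paper never computes the restriction $\widehat{M}\mid E(\widehat{H}_I)$; it works abstractly with the conclusion of Lemma \ref{lem-Mhat2} (existence of a $3$-star or a $4$-cycle in each $\widehat{F}_I(\widehat{B}_i)$) together with the translation of non-amenability into ``no forbidden pair is split between $\widehat{F}_1$ and $\widehat{F}_2$.'' You instead identify $\widehat{F}_I(\widehat{B})$ as the complement of a flat of $\widehat{M}\mid E(\widehat{H}_I)$, determine that this restriction is $M(K_4)$ in the generic case, list its cocircuits, and then run the finite check. The translation of amenability in your first paragraph is correct (disjointness is indeed automatic), and the flat/cocircuit observation is correct. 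Your argument in the nondegenerate $K_4$ case is sound, and in fact it makes explicit a point the paper leaves implicit: that $\widehat{F}_i$ cannot strictly contain a $3$-star or a $4$-cycle (e.g.\ cannot equal $\{\widehat{e}_{12},\widehat{e}_{13},\widehat{e}_{14},\widehat{e}_{34}\}$), since such a set is not the complement of a flat of $M(K_4)$. (One minor slip: in the ``$\ge 5$ edges'' case, $\widehat{F}_2$ may have up to three edges, not two, but it still contains no $3$-star and no $4$-cycle, so the contradiction with Lemma \ref{lem-Mhat2} survives.)

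The genuine gap is the one you flag yourself: the degenerate shapes of $\widehat{H}_I$. For $|I|=4$ the graph $\widehat{H}_I$ can carry stem loops (when some $e_i$ is a loop at $v$), ordinary loops and parallel classes (when some $v(e_i)$ coincide), and the restriction $\widehat{M}\mid E(\widehat{H}_I)$ is then not $M(K_4)$; its flats and cocircuits change, and whether a given small cycle of $\widehat{H}_I$ is balanced now depends on $\C$ (a $2$-cycle $\{\widehat{e}_{ik},\widehat{e}_{jk}\}$ pulls back to the $2$-cycle $\{e_i,e_j\}$ of $G$, which may be balanced or unbalanced). So the list of admissible $\widehat{F}_i$ has to be re-derived shape by shape, and it is not obvious in advance that the answer is always ``common $3$-star, or the two $4$-cycles through $\{\widehat{e}_{i_1i_2},\widehat{e}_{i_3i_4}\}$, or a split forbidden pair.'' That enumeration is the bulk of the lemma and you have not done it. This is precisely where the paper's route has a real advantage: Lemma \ref{lem-Mhat1} (and through it Lemma \ref{lem-Mhat2}) already performed the case analysis of the shapes of $\widehat{H}_J$ for $|J|=3,4$ once and for all, and the paper's proof then needs only those abstract conclusions plus the amenability constraints. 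If you want to keep your structural route, the cleanest repair is to replace ``$\widehat{M}\mid E(\widehat{H}_I)\cong M(K_4)$'' by the weaker, shape-independent facts that Lemma \ref{lem-Mhat1} already gives: for every $3$-subset $J\subset I$, $E(\widehat{H}_J)$ contains a circuit through each of its edges, hence each $\widehat{e}_{jk}\in E(\widehat{H}_J)$ lies in the closure of the other two edges of $\widehat{H}_J$. That single fact is enough to force $\widehat{F}_i$ to be exactly a $3$-star or exactly a $4$-cycle once the $\ge5$-edge case is excluded, and it holds in all degenerate shapes because Lemma \ref{lem-Mhat1} was proved for all of them.
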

 
\begin{proof}
 For convenience, we may assume that $i_j = j,\ j= 1,2,3,4$ and $(\widehat{B}_1, \widehat{B}_2)$ is not $\widehat{e}, \widehat{f}$-amenable for all $\{ \widehat{e}, \widehat{f} \} \in \left\{ \{ \widehat{e}_{13}, \widehat{e}_{24} \},  \{ \widehat{e}_{14}, \widehat{e}_{23} \} \right\}$.
 Let $\widehat{F}_i = \widehat{F}_I(\widehat{B}_i),\ i = 1,2.$  It follows by Lemma \ref{lem-Mhat2} that for $i = 1,2,$ $\widehat{B}_i$ is either $I$-cyclic or $I$-singular.  Suppose that $\widehat{B}_1$ is $I$-singular.  Without loss of generality, we may assume that $\widehat{e}_{1j} \in \widehat{F}_1,\ \forall j\in \{ 2,3,4 \} .$ If $\widehat{B}_2$ is also $I$-singular, then for some $i\in I,$ $\widehat{e}_{ij} \in \widehat{F}_2, \ \forall j\in I\backslash \{ i \}.$ If $i\in \{ 2,3 \},$ then $\widehat{e}_{14} \in \widehat{F}_1,$ and $\widehat{e}_{23}\in \widehat{F}_2.$  If $i=4,$ then $\widehat{e}_{13} \in \widehat{F}_1,$ and $\widehat{e}_{24}\in \widehat{F}_2.$  In both cases, $(\widehat{B}_1, \widehat{B}_2)$ is $\widehat{e},\widehat{f}$-amenable for some
 $\{\widehat{e}, \widehat{f}\} \in \left\{ \{ \widehat{e}_{13}, \widehat{e}_{24} \},  \{ \widehat{e}_{14}, \widehat{e}_{23} \} \right\}$,
 contradicting our assumptions.  It follows that $i =1$ and $ \widehat{e}_{1j} \in\widehat{F}_1\cap \widehat{F}_2, \ j = 1,2,3.$
Suppose instead that $\widehat{B}_2$ is $I$-cyclic.  Then it is seen that either $\{ \widehat{e}_{13}, \widehat{e}_{24} \} \subset \widehat{F}_2$ or $\{ \widehat{e}_{14}, \widehat{e}_{23} \} \subset \widehat{F}_2.$
 In the former case, $\widehat{e}_{13} \in \widehat{F}_1$ and  $\widehat{e}_{24}\in \widehat{F}_2$, implying that $(\widehat{B}_1, \widehat{B}_2)$ is $\widehat{e}_{13}, \widehat{e}_{24}$-amenable, a contradiction.
 In the latter case, $\widehat{e}_{14}\in \widehat{F}_1$ and $\widehat{e}_{23}\in \widehat{F}_2$,  implying that $(\widehat{B}_1, \widehat{B}_2)$ is $\widehat{e}_{14}, \widehat{e}_{23}$-amenable, a contradiction. We conclude that if $\widehat{B}_1$ is $I$-singular, then $\widehat{B}_2$ is strictly $I$-singular and $\widehat{F}_1 = \widehat{F}_2$ (and moreover, $\widehat{B}_1$ is strictly $I$-singular).   From this it also follows that if $\widehat{B}_1$ is $I$-cyclic, then both $\widehat{B}_i,\ i = 1,2$ are strictly $I$-cyclic and  $\{ \widehat{F}_I(\widehat{B}_1), \widehat{F}_I(\widehat{B}_2) \}  = \{ \{ \widehat{e}_{12}, \widehat{e}_{34}, \widehat{e}_{13}, \widehat{e}_{24} \},\  \{ \widehat{e}_{12}, \widehat{e}_{34}, \widehat{e}_{14}, \widehat{e}_{23} \} \}.$
 \end{proof}
  
 \begin{lemma}
 Let $\widehat{B} \in \B(\widehat{M})$ and let $\widehat{e} \in \widehat{B}$ and let $\widehat{e}' \in E(\widehat{M})\backslash \widehat{B}.$
 Suppose $\widehat{B}' = \widehat{B} - \widehat{e} + \widehat{e}' \in \B(\widehat{M}).$  Let $I \subseteq \{ 1, \dots ,m \}$ and let $\widehat{H} = \widehat{H}_I.$
 Suppose $\widehat{B}\cap E(\widehat{H}) =
  \{ \widehat{e}_{ij} \}$ and $\widehat{F}_I(\widehat{B}') \ne \emptyset.$
 
 \begin{itemize}
 \item[i)] If $\widehat{F}_I(\widehat{B})\backslash \widehat{F}_I(\widehat{B}') \ne \emptyset,$ then $\widehat{B} - \widehat{e}_{ij} + \widehat{e}' \in \B(\widehat{M}).$
 \item[ii)]  If $\widehat{e}_{i'j'} \in \widehat{F}_I(\widehat{B}) \backslash \widehat{F}_I(\widehat{B}'),$ then $\widehat{B} - \widehat{e} + \widehat{e}_{i'j'} \in \B(\widehat{M}).$
 \end{itemize}\label{lem-Mhat3.1}
 \end{lemma}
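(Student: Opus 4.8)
The plan is to argue entirely within matroid theory, via fundamental circuits; no frame‑matroid structure is needed. Throughout write $\widehat{H} = \widehat{H}_I$, and note at the outset that since $\widehat{B}\cap E(\widehat{H}) = \{\widehat{e}_{ij}\}$ we have $\widehat{B}\backslash E(\widehat{H}) = \widehat{B}-\widehat{e}_{ij}$ and $\widehat{e}_{ij}\in\widehat{F}_I(\widehat{B})$ automatically. The only tool I will use is the elementary fact that if $D\in\B(\widehat{M})$, $g\notin D$, and $C$ is a circuit contained in $D\cup\{g\}$, then $C = C(g,D)$ (the unique circuit of $D\cup\{g\}$).

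First I would dispose of the case $\widehat{e} = \widehat{e}_{ij}$: here $\widehat{B}-\widehat{e}_{ij}+\widehat{e}' = \widehat{B}'$ is a base, giving (i), and $\widehat{B}-\widehat{e}+\widehat{e}_{i'j'} = (\widehat{B}\backslash E(\widehat{H}))\cup\{\widehat{e}_{i'j'}\}$ is a base by the definition of $\widehat{e}_{i'j'}\in\widehat{F}_I(\widehat{B})$, giving (ii). So I may assume $\widehat{e}\in\widehat{B}\backslash E(\widehat{H})$. A short bookkeeping step then does most of the work: since $\widehat{F}_I(\widehat{B}')\ne\emptyset$ forces $|\widehat{B}'\cap E(\widehat{H})| = 1$ while $\widehat{e}\notin E(\widehat{H})$, we must have $\widehat{e}'\notin E(\widehat{H})$; hence $\widehat{B}'\cap E(\widehat{H}) = \{\widehat{e}_{ij}\}$, $\widehat{B}'\backslash E(\widehat{H}) = \widehat{B}'-\widehat{e}_{ij} = \widehat{B}-\widehat{e}-\widehat{e}_{ij}+\widehat{e}'$, and $\widehat{e}_{ij}\in\widehat{F}_I(\widehat{B}')$. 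Consequently any $\widehat{e}_{i'j'}\in\widehat{F}_I(\widehat{B})\backslash\widehat{F}_I(\widehat{B}')$ satisfies $\widehat{e}_{i'j'}\ne\widehat{e}_{ij}$, $\widehat{e}_{i'j'}\notin\widehat{B}\cup\widehat{B}'$, and $\widehat{e}_{i'j'}\ne\widehat{e}'$.

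Part (ii) then follows in two lines: choosing $\widehat{e}_{i'j'}\in\widehat{F}_I(\widehat{B})\backslash\widehat{F}_I(\widehat{B}')$, it suffices to show $\widehat{e}\in C(\widehat{e}_{i'j'},\widehat{B})$ (then $\widehat{B}-\widehat{e}+\widehat{e}_{i'j'}\in\B(\widehat{M})$). If not, then $C(\widehat{e}_{i'j'},\widehat{B})\subseteq(\widehat{B}-\widehat{e})+\widehat{e}_{i'j'}\subseteq\widehat{B}'\cup\{\widehat{e}_{i'j'}\}$, so $C(\widehat{e}_{i'j'},\widehat{B}) = C(\widehat{e}_{i'j'},\widehat{B}')$; since $\widehat{e}_{ij}$ lies in the former (because $\widehat{B}-\widehat{e}_{ij}+\widehat{e}_{i'j'}$ is a base), we get that $\widehat{B}'-\widehat{e}_{ij}+\widehat{e}_{i'j'} = (\widehat{B}'\backslash E(\widehat{H}))\cup\{\widehat{e}_{i'j'}\}$ is a base, contradicting $\widehat{e}_{i'j'}\notin\widehat{F}_I(\widehat{B}')$. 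For part (i), set $\widehat{B}_1 = \widehat{B}-\widehat{e}_{ij}+\widehat{e}_{i'j'}$ (a base) for such an $\widehat{e}_{i'j'}$; it suffices to show $\widehat{e}_{ij}\in C(\widehat{e}',\widehat{B})$. If not, then $C(\widehat{e}',\widehat{B})\subseteq(\widehat{B}-\widehat{e}_{ij})+\widehat{e}'\subseteq\widehat{B}_1\cup\{\widehat{e}'\}$, so $C(\widehat{e}',\widehat{B}) = C(\widehat{e}',\widehat{B}_1)$; but $\widehat{e}\in C(\widehat{e}',\widehat{B})$ (as $\widehat{B}-\widehat{e}+\widehat{e}' = \widehat{B}'\in\B(\widehat{M})$) while $\widehat{e}\notin C(\widehat{e}',\widehat{B}_1)$ (as $\widehat{B}_1-\widehat{e}+\widehat{e}' = \widehat{B}'-\widehat{e}_{ij}+\widehat{e}_{i'j'} = (\widehat{B}'\backslash E(\widehat{H}))\cup\{\widehat{e}_{i'j'}\}\notin\B(\widehat{M})$, since $\widehat{e}_{i'j'}\notin\widehat{F}_I(\widehat{B}')$), a contradiction.

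I do not expect a genuine obstacle here; the one point that needs care is the bookkeeping identifying $\widehat{B}'\cap E(\widehat{H})$ with $\{\widehat{e}_{ij}\}$ under the hypothesis $\widehat{F}_I(\widehat{B}')\ne\emptyset$, which is precisely what makes the sets $(\widehat{B}'\backslash E(\widehat{H}))\cup\{\widehat{e}_{i'j'}\}$ appearing in the definition of $\widehat{F}_I$ coincide with the bases $\widehat{B}'-\widehat{e}_{ij}+\widehat{e}_{i'j'}$ produced by the fundamental‑circuit manipulations. Once that is in place, both parts reduce to the single principle about a circuit sitting inside a base plus one extra element.
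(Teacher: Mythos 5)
Your proof is correct. Both you and the paper run on the same fuel — the identity $\widehat{B}'\cap E(\widehat{H})=\{\widehat{e}_{ij}\}$ forced by $\widehat{F}_I(\widehat{B}')\ne\emptyset$ (you derive it via $\widehat{e}\ne\widehat{e}_{ij}$, the paper via the remark that $\widehat{e}_{ij}\in\widehat{B}'$) together with fundamental circuits — but the engine that produces the contradiction is different. The paper twice invokes the strong circuit elimination axiom: from $\widehat{C}$ and $\widehat{C}'$ it manufactures a third circuit $\widehat{C}''$ and then checks by hand that $\widehat{C}''$ lies inside a base (or inside the independent set $\widehat{B}-\widehat{e}_{ij}+\widehat{e}_{i'j'}$), a contradiction. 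You instead use only the uniqueness of the circuit in a base plus one element: in (ii) you observe $C(\widehat{e}_{i'j'},\widehat{B})\subseteq\widehat{B}'\cup\{\widehat{e}_{i'j'}\}$ forces $C(\widehat{e}_{i'j'},\widehat{B})=C(\widehat{e}_{i'j'},\widehat{B}')$, and in (i) you compare $C(\widehat{e}',\widehat{B})$ with $C(\widehat{e}',\widehat{B}_1)$ against the auxiliary base $\widehat{B}_1=\widehat{B}-\widehat{e}_{ij}+\widehat{e}_{i'j'}$; in each case the contradiction is then read off directly from a disagreement in membership of a single element ($\widehat{e}_{ij}$, resp.\ $\widehat{e}$). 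This sidesteps circuit elimination entirely and makes the argument shorter, at the cost of introducing the auxiliary base $\widehat{B}_1$ in part (i). Your explicit disposal of $\widehat{e}=\widehat{e}_{ij}$ as a trivial sub-case is also sound (the paper shows that case is excluded by the hypothesis $\widehat{F}_I(\widehat{B})\setminus\widehat{F}_I(\widehat{B}')\ne\emptyset$; you note the conclusions hold anyway), so the two treatments are logically interchangeable.
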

 
 \begin{proof}  We first remark that if $\widehat{e}_{ij} \not\in \widehat{B}',$ then $\widehat{e} = \widehat{e}_{ij}$ and $\widehat{e}' \in \widehat{F}_I(\widehat{B}')$ (since $\widehat{F}_I(\widehat{B}') \ne \emptyset$).  This would imply that $\widehat{F}_I(\widehat{B}) = \widehat{F}_I(\widehat{B}')$, which is contrary to our assumptions.  Thus $\widehat{e}_{ij} \in \widehat{B}'.$
 To prove i),  let $\widehat{C} = C(\widehat{e}', \widehat{B})$ be the fundamental circuit with respect to $\widehat{B}$ containing $\widehat{e}'.$  We observe that $\widehat{e} \in \widehat{C}.$  We shall show that $\widehat{e}_{ij} \in \widehat{C},$ from which is will follow that $\widehat{B} - \widehat{e}_{ij} + \widehat{e}' \in \B(\widehat{M}).$   Suppose to the contrary that $\widehat{e}_{ij} \not\in \widehat{C}.$   Let $\widehat{e}_{i'j'} \in \widehat{F}_I(\widehat{B})\backslash \widehat{F}_I(\widehat{B}').$
 Let $\widehat{C}' = C(\widehat{e}_{i'j'}, \widehat{B}'),$ the fundamental circuit with respect to $\widehat{B}'$ containing $\widehat{e}_{i'j'}.$  We observe that $\widehat{e}_{ij} \not\in \widehat{C}'$ since $\widehat{e}_{i'j'} \not\in \widehat{F}_I(\widehat{B}').$  However, $\widehat{e}' \in \widehat{C}';$ for otherwise, $\widehat{C}' \subseteq \widehat{B} -  \widehat{e}_{ij} + \widehat{e}_{i'j'},$ implying that $\widehat{e}_{i'j'} \not\in \widehat{F}_I(\widehat{B}),$ contradicting our assumptions.  By the circuit elimination property, there exists a circuit $\widehat{C}'' \subseteq (\widehat{C} \cup \widehat{C}')\backslash \{ \widehat{e}' \}$ where $\widehat{e}_{i'j'} \in \widehat{C}''.$  However, we see that $\widehat{C}'' \subseteq \widehat{B} -  \widehat{e}_{ij} + \widehat{e}_{i'j'},$ implying that $\widehat{e}_{i'j'} \not\in \widehat{F}_I(\widehat{B}),$ again contradicting our assumptions.  Thus $\widehat{e}_{ij} \in \widehat{C}.$ This completes the proof of i).
 
 To prove ii), suppose to the contrary that $\widehat{B}'' = \widehat{B} - \widehat{e} + \widehat{e}_{i'j'} \not\in \B(\widehat{M}).$  Then 
 $\widehat{e} \not\in \widehat{C} = C(\widehat{e}_{i'j'}, \widehat{B}).$  Furthermore, since $\widehat{e}_{i'j'} \in \widehat{F}_I(\widehat{B}),$ it follows that $\widehat{e}_{ij} \in \widehat{C}.$  Since $\widehat{e}_{i'j'} \not\in \widehat{F}_I(\widehat{B}'),$ it follows that $\widehat{e}_{ij} \not\in \widehat{C}' = C(\widehat{e}_{i'j'}, \widehat{B}').$  By the circuit elimination property, there exists a circuit $\widehat{C}'' \subseteq (\widehat{C} \cup \widehat{C}')\backslash \{ \widehat{e}_{i'j'} \}$ where $\widehat{e}_{ij} \in \widehat{C}''$.  However, $\widehat{C}'' \subseteq \widehat{B}',$ a contradiction.  Thus $\widehat{B}'' \in \B(\widehat{M}).$  This proves ii).  
 \end{proof}
 
 The following is an important theorem which we will need later:
 
 \begin{theorem}
  Let $\widehat{B}_i \in \B(\widehat{M}),\ i  = 1,2$ and let $I = \{ i_1,i_2,i_3,i_4 \}.$   Let $\widehat{F}_i = \widehat{F}_I(\widehat{B}_i),\ i = 1,2$ and assume that $\widehat{e}_{i_1i_2} \in \widehat{F}_1$ and $\widehat{e}_{i_3i_4} \in \widehat{F}_2.$   Let $\widehat{H} = \widehat{H}_I.$
 Suppose that $(\widehat{B}_1, \widehat{B}_2)$ is not $\widehat{e}_{i_1i_2}, \widehat{e}_{i_3i_4}$-switchable.
 Let $(\widehat{B}_1' , \widehat{B}_2')$ be a pair of bases obtained from performing one symmetric exchange between $\widehat{B}_1$ and  $\widehat{B}_2$ and let $\widehat{F}_i' = \widehat{F}_I(\widehat{B}_i'),\ i = 1,2.$  Then either $\{ \widehat{F}_1', \widehat{F}_2'\} = \{ \widehat{F}_1, \widehat{F}_2\}$ or $\widehat{F}_1' = \widehat{F}_2' = \emptyset.$  \label{the-Mhat5}
 \end{theorem}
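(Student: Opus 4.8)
The plan is to run the argument by cases according to how the single symmetric exchange interacts with the edge set $E(\widehat{H})$ (here $\widehat{H}=\widehat{H}_I$), isolating one genuinely hard case which I then attack by showing that a change in $\{\widehat{F}_1,\widehat{F}_2\}$ would furnish a switchability certificate for $(\widehat{B}_1,\widehat{B}_2)$, contrary to hypothesis. Write $\widehat{B}_1'=\widehat{B}_1-\widehat{x}+\widehat{y}$ and $\widehat{B}_2'=\widehat{B}_2-\widehat{y}+\widehat{x}$ with $\widehat{x}\in\widehat{B}_1$, $\widehat{y}\in\widehat{B}_2$; since the exchange is nontrivial we may take $\widehat{x}\in\widehat{B}_1\setminus\widehat{B}_2$ and $\widehat{y}\in\widehat{B}_2\setminus\widehat{B}_1$ (and, as the notion of switchability presupposes, $\widehat{B}_1,\widehat{B}_2$ disjoint). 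From $\widehat{e}_{i_1i_2}\in\widehat{F}_1$ and $\widehat{e}_{i_3i_4}\in\widehat{F}_2$ we get $|\widehat{B}_i\cap E(\widehat{H})|=1$, say $\widehat{B}_i\cap E(\widehat{H})=\{\widehat{g}_i\}$. The two elementary facts I use repeatedly are that $\widehat{F}_I(\widehat{B})$ depends on $\widehat{B}$ only through $\widehat{B}\setminus E(\widehat{H})$, and that $\widehat{F}_I(\widehat{B})\neq\emptyset$ precisely when $|\widehat{B}\cap E(\widehat{H})|=1$.

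First the easy cases. Since $\widehat{g}_i$ is the only element of $\widehat{B}_i$ lying in $E(\widehat{H})$, the cases in which $\{\widehat{x},\widehat{y}\}$ meets $E(\widehat{H})$ are: (a) $\widehat{x}=\widehat{g}_1$ and $\widehat{y}=\widehat{g}_2$, where $\widehat{B}_i'\setminus E(\widehat{H})=\widehat{B}_i\setminus E(\widehat{H})$ and hence $\widehat{F}_i'=\widehat{F}_i$, so $\{\widehat{F}_1',\widehat{F}_2'\}=\{\widehat{F}_1,\widehat{F}_2\}$; and (b) exactly one of $\widehat{x},\widehat{y}$ lies in $E(\widehat{H})$, say $\widehat{x}=\widehat{g}_1$, $\widehat{y}\notin E(\widehat{H})$, where $\widehat{B}_1'\cap E(\widehat{H})=\emptyset$ and $\widehat{B}_2'\cap E(\widehat{H})=\{\widehat{g}_1,\widehat{g}_2\}$, so $\widehat{F}_1'=\widehat{F}_2'=\emptyset$ (the mirror sub-case is identical). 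This leaves the hard case $\widehat{x},\widehat{y}\notin E(\widehat{H})$, in which $\widehat{B}_i'\cap E(\widehat{H})=\{\widehat{g}_i\}$, so both $\widehat{F}_i'$ are nonempty and one must show $\{\widehat{F}_1',\widehat{F}_2'\}=\{\widehat{F}_1,\widehat{F}_2\}$.

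Suppose $\{\widehat{F}_1',\widehat{F}_2'\}\neq\{\widehat{F}_1,\widehat{F}_2\}$; the goal is to show $(\widehat{B}_1,\widehat{B}_2)$ is $\widehat{e}_{i_1i_2},\widehat{e}_{i_3i_4}$-switchable. I would first pin down $(\widehat{F}_1,\widehat{F}_2)$. Alternative (i) of Lemma \ref{lem-Mhat3} cannot hold, since its common star, containing both $\widehat{e}_{i_1i_2}$ and $\widehat{e}_{i_3i_4}$, would have to be centred at an index of $\{i_1,i_2\}\cap\{i_3,i_4\}=\emptyset$; and non-switchability precludes $(\widehat{B}_1,\widehat{B}_2)$ — and every pair $\curvearrowright$-reachable from it — from being amenable at either crossing matching $\{\widehat{e}_{i_1i_3},\widehat{e}_{i_2i_4}\}$, $\{\widehat{e}_{i_1i_4},\widehat{e}_{i_2i_3}\}$. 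So by Lemma \ref{lem-Mhat3}, $(\widehat{F}_1,\widehat{F}_2)$ is the cyclic pair of alternative (ii): $\widehat{F}_1,\widehat{F}_2$ are the two $4$-cycles of $\widehat{H}$ through $\{\widehat{e}_{i_1i_2},\widehat{e}_{i_3i_4}\}$. Next I would track the traces under the exchange. With $F=\mathrm{cl}_{\widehat{M}}(\widehat{B}_1-\widehat{g}_1-\widehat{x})$, a rank-$(r-2)$ flat contained in both $\mathrm{cl}(\widehat{B}_1-\widehat{g}_1)$ and $\mathrm{cl}(\widehat{B}_1'-\widehat{g}_1)$, one has in the rank-$2$ minor $\widehat{M}/F$ the descriptions $\widehat{F}_1=\{\widehat{e}\in E(\widehat{H})\setminus F:\mathrm{cl}_{\widehat{M}/F}(\widehat{e})\neq\mathrm{cl}_{\widehat{M}/F}(\widehat{x})\}$ and $\widehat{F}_1'=\{\widehat{e}\in E(\widehat{H})\setminus F:\mathrm{cl}_{\widehat{M}/F}(\widehat{e})\neq\mathrm{cl}_{\widehat{M}/F}(\widehat{y})\}$, and analogously for $\widehat{F}_2,\widehat{F}_2'$ with $F_2=\mathrm{cl}_{\widehat{M}}(\widehat{B}_2-\widehat{g}_2-\widehat{y})$, so $\widehat{F}_i\,\triangle\,\widehat{F}_i'$ consists of the $\widehat{H}$-edges lying in the points $\mathrm{cl}(\widehat{x})$ and $\mathrm{cl}(\widehat{y})$ of the appropriate rank-$2$ minor. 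Feeding this, together with Lemma \ref{lem-Mhat3.1} (which says exactly when a one-element exchange pushes an edge into or out of $\widehat{F}_I$) and the $4$-cycle/star dichotomy of Lemma \ref{lem-Mhat2}, into the cyclic structure of $(\widehat{F}_1,\widehat{F}_2)$, I would show that any discrepancy forces some edge of a crossing matching into the relevant $\widehat{F}_I(\widehat{B}_i')$, making $(\widehat{B}_1',\widehat{B}_2')$ amenable at a crossing matching; and since then $(\widehat{B}_1,\widehat{B}_2)\curvearrowright(\widehat{B}_1',\widehat{B}_2')$ — realised by passing through the common forms of the two pairs at a non-incident pair $\widehat{c}_1\in\widehat{F}_1\cap\widehat{F}_1'$, $\widehat{c}_2\in\widehat{F}_2\cap\widehat{F}_2'$ and performing the exchange $\widehat{x}\leftrightarrow\widehat{y}$ there — this certifies that $(\widehat{B}_1,\widehat{B}_2)$ is $\widehat{e}_{i_1i_2},\widehat{e}_{i_3i_4}$-switchable, a contradiction.

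The step I expect to be the main obstacle is precisely the last one: converting a change in the traces $\widehat{F}_i'$ into a genuine switchability certificate. Switchability is defined through chains of $\curvearrowright_1$-moves that interleave re-routings of the $\widehat{H}$-edges with symmetric exchanges while maintaining $\widehat{H}$-viability throughout, so one must carefully track viability (in particular the sub-case where $\widehat{g}_1,\widehat{g}_2$ are incident, so that $(\widehat{B}_1,\widehat{B}_2)$ is not itself $\widehat{H}$-viable and the routing through a common non-incident pair $\widehat{c}_1,\widehat{c}_2$ must be argued to exist), keep the distinguished matching $\{\widehat{e}_{i_1i_2},\widehat{e}_{i_3i_4}\}$ out of the target amenable pair, verify that amenability at a matching other than $\{\widehat{e}_{i_1i_2},\widehat{e}_{i_3i_4}\}$ is itself a form of switchability, and separately dispose of the degenerate possibility that $\widehat{F}_1'$ and $\widehat{F}_2'$ both collapse onto one common member of $\{\widehat{F}_1,\widehat{F}_2\}$. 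The bookkeeping is complicated further by the several possible shapes of $\widehat{H}$ — a $4$-cycle, a triangle with a pendant edge or a chord, or configurations involving stem loops — exactly as in the case analysis behind Lemma \ref{lem-Mhat1}.
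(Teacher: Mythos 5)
Your overall decomposition is sound and you correctly identify the relevant tools, but you have not actually proved the statement in the hard case — and you say so yourself. After disposing of the cases where the exchange touches $E(\widehat{H})$ (which the paper does not bother to separate out, but your handling of them is correct) you are left with the case $\widehat{x},\widehat{y}\notin E(\widehat{H})$, both traces nonempty, and $\{\widehat{F}_1',\widehat{F}_2'\}\neq\{\widehat{F}_1,\widehat{F}_2\}$. Here your argument stops at ``I would show that any discrepancy forces some edge of a crossing matching into the relevant $\widehat{F}_I(\widehat{B}_i')$''; that is precisely the content of the theorem in the hard case, and it is left as a promise.

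Two concrete things you are missing. First, the paper's proof does not directly try to show that a discrepancy implies amenability at a crossing matching; instead it observes up front that, because $\widehat{g}_1=\widehat{e}_{i_1i_2}$ and $\widehat{g}_2=\widehat{e}_{i_3i_4}$ are non-incident, $(\widehat{B}_1,\widehat{B}_2)\curvearrowright_1(\widehat{B}_1',\widehat{B}_2')$ holds trivially (take $\widehat{c}_1=\widehat{e}_{i_1i_2}$, $\widehat{c}_2=\widehat{e}_{i_3i_4}$ in the definition of $\curvearrowright_1$), hence $(\widehat{B}_1',\widehat{B}_2')$ is \emph{also} not $\widehat{e}_{i_1i_2},\widehat{e}_{i_3i_4}$-switchable. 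Feeding that into Lemma~\ref{lem-Mhat3} pins $\{\widehat{F}_1',\widehat{F}_2'\}$ down to either the strictly $I$-cyclic pair (in which case $\{\widehat{F}_1',\widehat{F}_2'\}=\{\widehat{F}_1,\widehat{F}_2\}$ and you are done) or the strictly $I$-singular pair with $\widehat{F}_1'=\widehat{F}_2'$ a common star. You gesture at this reachability observation but never use it to constrain $\widehat{F}_1',\widehat{F}_2'$. Second, in the remaining $I$-singular subcase, one needs an \emph{explicit construction}: from the hypothesis that $\widehat{F}_1\setminus\widehat{F}_1'\neq\emptyset$ and that some crossing-matching edge lies in $\widehat{F}_I(\widehat{B}_2)\setminus\widehat{F}_I(\widehat{B}_2')$, Lemma~\ref{lem-Mhat3.1}(i) and (ii) produce concrete bases $\widehat{B}_1''=\widehat{B}_1-\widehat{e}_{i_1i_2}+\widehat{y}$ and $\widehat{B}_2''=\widehat{B}_2-\widehat{y}+\widehat{e}_{\cdot\cdot}$, and one then verifies by hand (using Lemma~\ref{lem-Mhat1}) that $(\widehat{B}_1,\widehat{B}_2)\curvearrowright_1(\widehat{B}_1'',\widehat{B}_2'')$ and that $\widehat{B}_1''\cup\widehat{B}_2''$ meets $E(\widehat{H})$ in a crossing matching. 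Your flat/rank-$2$-minor reformulation of $\widehat{F}_I$ is fine as bookkeeping but does not substitute for this construction; the claim ``any discrepancy forces amenability'' is exactly what Lemma~\ref{lem-Mhat3.1} is invoked to certify, and it requires choosing the right elements to exchange. Until that is carried out, the theorem is unproved in the only case that matters.
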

 
 \begin{proof}
 For convenience, we may assume that $i_j =j,\ j = 1,2,3,4$ and $(\widehat{B}_1, \widehat{B}_2)$ is not $\widehat{e}_{12}, \widehat{e}_{34}$-switchable.  By Lemma \ref{lem-Mhat3}, $\widehat{B}_i,\ i= 1,2$ are strictly $I$-cyclic, and we may assume $\widehat{F}_1 = \{ \widehat{e}_{12}, \widehat{e}_{34}, \widehat{e}_{23}, \widehat{e}_{14} \}, \  \widehat{F}_2 = \{ \widehat{e}_{12}, \widehat{e}_{34}, \widehat{e}_{13}, \widehat{e}_{24} \}.$  We observe that 
 $\widehat{F}_i = E(\widehat{H}) \backslash \mathrm{cl}_{\widehat{M}}(\widehat{B}_i\backslash E(\widehat{H})),\ i = 1,2.$  Since $(\widehat{B}_1', \widehat{B}_2')$ is obtained from $\widehat{B}_1$ and $\widehat{B}_2$ by a single symmetric exchange, we 
have that $\widehat{B}_1' = \widehat{B}_1 -\widehat{e} + \widehat{f}$ and $\widehat{B}_2' = \widehat{B}_2 -\widehat{f} + \widehat{e}$ for some $\widehat{e} \in \widehat{B}_1$ and $\widehat{f}\in \widehat{B}_2.$  We may assume that $\widehat{F}_i' \ne \emptyset,\ i = 1,2$.   It remains to show that $\{ \widehat{F}_1', \widehat{F}_2' \} = \{ \widehat{F}_1, \widehat{F}_2 \}.$  We first note that $(\widehat{B}_1', \widehat{B}_2')$ is not $\widehat{e}_{12}, \widehat{e}_{34}$-switchable.  Thus it follows by Lemma \ref{lem-Mhat3} that $\widehat{B}_i',\ i = 1,2$ are strictly $I$-singular, or they are strictly $I$-cyclic.  If the latter holds, then it follows that  $\{ \widehat{F}_1', \widehat{F}_2' \} = \{ \widehat{F}_1, \widehat{F}_2 \}.$  Suppose the former holds.  Given that $(\widehat{B}_1', \widehat{B}_2')$ is not $\widehat{e}_{12}, \widehat{e}_{34}$-switchable, we may assume that for $i= 1,2,$ $\widehat{F}_i' = \{ \widehat{e}_{12}, \widehat{e}_{13}, \widehat{e}_{14} \}.$  Given that $\widehat{F}_I(\widehat{B}_1) \cap \widehat{F}_I(\widehat{B}_1') = \{ \widehat{e}_{12}, \widehat{e}_{14} \}$ and $\widehat{F}_I(\widehat{B}_2) \cap \widehat{F}_I(\widehat{B}_2') = \{ \widehat{e}_{12}, \widehat{e}_{13} \}$, we may assume that $\widehat{e}_{12} \in \widehat{B}_1$ and $\widehat{e}_{13} \in \widehat{B}_2.$

Since $\widehat{F}_1\backslash \widehat{F}_1'  \ne \emptyset,$ it follows by Lemma \ref{lem-Mhat3.1} i) that $\widehat{B}_1''= \widehat{B}_1 - \widehat{e}_{12} + \widehat{f} \in \B(\widehat{M}).$  
  Since $\widehat{e}_{24} \in \widehat{F}_I(\widehat{B}_2)\backslash \widehat{F}_I(\widehat{B}_2'),$ it follows by Lemma \ref{lem-Mhat3.1} ii) that $\widehat{B}_2'' = \widehat{B}_2 - \widehat{f} + \widehat{e}_{24}  \in \B(\widehat{M}).$  Thus $\{ \widehat{e}_{13}, \widehat{e}_{24} \} \subset \widehat{B}_2''.$  To obtain a contradiction, we need only show that $(\widehat{B}_1, \widehat{B}_2) \curvearrowright (\widehat{B}_1'', \widehat{B}_2'').$  To see this, we note that either $\widehat{B}_2'' - \widehat{e}_{24} + \widehat{e}_{12} \in \B(\widehat{M})$ or $\widehat{B}_2'' - \widehat{e}_{24} + \widehat{e}_{23} \in \B(\widehat{M}).$  We may assume the former holds.  Let $\widehat{B}_1''' = \widehat{B}_1'',$ and $\widehat{B}_2''' = \widehat{B}_2'' - \widehat{e}_{24} + \widehat{e}_{12}.$
  Then $(\widehat{B}_1, \widehat{B}_2) \sim_1 (\widehat{B}_1''', \widehat{B}_2''')$ and we see that  $(\widehat{B}_1, \widehat{B}_2) \curvearrowright_1 (\widehat{B}_1'', \widehat{B}_2'').$  However, this implies that $(\widehat{B}_1, \widehat{B}_2)$ is $\widehat{e}_{12}, \widehat{e}_{34}$-switchable, yielding a contradiction.
This completes the proof. 
 \end{proof}
 
 \subsection{Base Pairs $(\widehat{B}_1, \widehat{B}_2)$ which are not $\widehat{e}_{i_1i_2}, \widehat{e}_{i_3i_4}$-switchable}\label{sec-note12e34switch}
 
 As before, suppose $\widehat{B}_i \in \B(\widehat{M}),\ i = 1,2$ are disjoint bases.  Let $I = \{ i_1,i_2,i_3,i_4 \},\ \widehat{H} = \widehat{H}_I$ and assume that $\widehat{B}_1 \cap E(\widehat{H}) = \{ \widehat{e}_{i_1i_2} \}$ and\\ $\widehat{B}_2 \cap E(\widehat{H}) = \{ \widehat{e}_{i_3i_4} \}$.  When $(\widehat{B}_1, \widehat{B}_2)$ is not $\widehat{e}_{i_1i_2}, \widehat{e}_{i_3i_4}$-switchable, the bases $\widehat{B}_i,\ i = 1,2$ have a particular structure, something which we will exploit later on.
 
For $i = 1,2$, let $\widehat{C}_i$ be the (unique) circuit in $\widehat{B}_i \cup \{ \widehat{e}_{i_1i_2}, \widehat{e}_{i_3i_4} \}$ and let $\widehat{L}_i$ be the subgraph of $\widehat{G}$ induced by $\widehat{C}_i.$
 
 \begin{theorem}
 Suppose $(\widehat{B}_1, \widehat{B}_2)$ is not $\widehat{e}_{i_1i_2}, \widehat{e}_{i_3i_4}$-switchable.  Then for $i = 1,2,$ $\widehat{L}_i$ consists of two vertex-disjoint cycles
 in $\widehat{G}$ joined by a non-trivial path, each cycle of $\widehat{L}_i$ containing exactly one edge of $\{ \widehat{e}_{i_1i_2}, \widehat{e}_{i_3i_4} \}$.  \label{the-note12e34switch}
 \end{theorem}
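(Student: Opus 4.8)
The plan is to feed the hypothesis into Lemma~\ref{lem-Mhat3} and then read the graph structure of $\widehat{C}_i$ off the resulting description of $\widehat{F}_I(\widehat{B}_i)$. Since $(\widehat{B}_1,\widehat{B}_2)$ is not $\widehat{e}_{i_1i_2},\widehat{e}_{i_3i_4}$-switchable, it is not $\widehat{e},\widehat{f}$-amenable for either $\{\widehat{e},\widehat{f}\}\in\{\{\widehat{e}_{i_1i_3},\widehat{e}_{i_2i_4}\},\{\widehat{e}_{i_1i_4},\widehat{e}_{i_2i_3}\}\}$ (otherwise it would be switchable), so Lemma~\ref{lem-Mhat3} applies. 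Its case~i) cannot occur here: it would force $\widehat{F}_I(\widehat{B}_1)=\widehat{F}_I(\widehat{B}_2)=\{\widehat{e}_{ij}\mid j\in I\setminus\{i\}\}$ for a single $i\in I$, but $\widehat{e}_{i_1i_2}\in\widehat{B}_1\cap E(\widehat{H})\subseteq\widehat{F}_I(\widehat{B}_1)$ forces $i\in\{i_1,i_2\}$ while $\widehat{e}_{i_3i_4}\in\widehat{B}_2\cap E(\widehat{H})\subseteq\widehat{F}_I(\widehat{B}_2)=\widehat{F}_I(\widehat{B}_1)$ forces $i\in\{i_3,i_4\}$, which is impossible. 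So case~ii) holds, and after relabelling $i_3,i_4$ (and, if necessary, interchanging the two pairs) we may assume $\widehat{F}_I(\widehat{B}_1)=\{\widehat{e}_{i_1i_2},\widehat{e}_{i_3i_4},\widehat{e}_{i_1i_3},\widehat{e}_{i_2i_4}\}$ and $\widehat{F}_I(\widehat{B}_2)=\{\widehat{e}_{i_1i_2},\widehat{e}_{i_3i_4},\widehat{e}_{i_1i_4},\widehat{e}_{i_2i_3}\}$. In particular $\widehat{e}_{i_3i_4}\in\widehat{F}_I(\widehat{B}_1)$, i.e.\ $\widehat{B}_1-\widehat{e}_{i_1i_2}+\widehat{e}_{i_3i_4}\in\B(\widehat{M})$, so $\widehat{e}_{i_1i_2}\in C(\widehat{e}_{i_3i_4},\widehat{B}_1)=\widehat{C}_1$; symmetrically $\widehat{e}_{i_3i_4}\in\widehat{C}_2$. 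It therefore suffices to analyse $\widehat{C}_1$, the treatment of $\widehat{C}_2$ being identical after interchanging the roles of $(\widehat{B}_1,\widehat{e}_{i_1i_2})$ and $(\widehat{B}_2,\widehat{e}_{i_3i_4})$ and of the two cross pairs.

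Next I would pin down the shape of $\widehat{L}_1$. As a circuit of the frame matroid $\widehat{M}=M(\widehat{\Omega})$ it is one of the four subgraphs of Section~\ref{subsec-frame}, and it contains the two vertex-disjoint edges $\widehat{e}_{i_1i_2}=v(e_{i_1})v(e_{i_2})$ and $\widehat{e}_{i_3i_4}=v(e_{i_3})v(e_{i_4})$. The extra input from case~ii) is precisely a list of the legal single-edge replacements of $\widehat{e}_{i_1i_2}$ in $\widehat{B}_1$: $\widehat{B}_1-\widehat{e}_{i_1i_2}+\widehat{f}\in\B(\widehat{M})$ for $\widehat{f}\in\{\widehat{e}_{i_3i_4},\widehat{e}_{i_1i_3},\widehat{e}_{i_2i_4}\}$ but not for $\widehat{f}\in\{\widehat{e}_{i_1i_4},\widehat{e}_{i_2i_3}\}$. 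Since $\widehat{B}_1-\widehat{e}_{i_1i_2}+\widehat{f}\in\B(\widehat{M})$ iff $\widehat{e}_{i_1i_2}\in C(\widehat{f},\widehat{B}_1)$, each of these statements records whether, on deleting $\widehat{e}_{i_1i_2}$ and adding $\widehat{f}$, every component of the result is a tree or a tree plus one unbalanced cycle. Examining $\widehat{L}_1-\widehat{e}_{i_1i_2}$ and tracking which side of it contains each of $v(e_{i_1}),v(e_{i_2}),v(e_{i_3}),v(e_{i_4})$, the two ``$\widehat{e}_{i_1i_3},\widehat{e}_{i_2i_4}$ legal / $\widehat{e}_{i_1i_4},\widehat{e}_{i_2i_3}$ illegal'' facts force $v(e_{i_1})$ and $v(e_{i_2})$ onto opposite sides and push $\widehat{e}_{i_3i_4}$ onto the side-cycle opposite to $\widehat{e}_{i_1i_2}$; I would use linearity of $\widehat{\C}$ (Lemmas~\ref{lem-lemma1}, \ref{cor-lemma1}, \ref{lem-lemma2}) to decide the balancedness of the cycles these replacements create. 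If $\widehat{L}_1$ were a balanced cycle, a theta graph, two cycles meeting in a single vertex, or a handcuff with $\widehat{e}_{i_1i_2}$ and $\widehat{e}_{i_3i_4}$ on a common cycle or on the joining path, then one either exhibits a legal replacement that must be illegal (or conversely), or — using that every triangle $\{\widehat{e}_{ij},\widehat{e}_{jk},\widehat{e}_{ik}\}$ has empty pullback $\fE$ and is hence balanced in $\widehat{\Omega}$ by construction of $\widehat{\A}$ — produces so many balanced cycles among the edges of $\widehat{H}_I$ that $\mathrm{cl}_{\widehat{M}}(\widehat{B}_1\setminus E(\widehat{H}))\cap E(\widehat{H}_I)$ becomes a flat whose complement in $E(\widehat{H}_I)$ cannot be the prescribed four-element set $\widehat{F}_I(\widehat{B}_1)$, contradicting Lemma~\ref{lem-Mhat3}. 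In the residual configurations, where the component of $\widehat{B}_1$ through $v(e_{i_1}),\dots,v(e_{i_4})$ already carries an unbalanced cycle, I would instead assemble from the legal replacements, Lemmas~\ref{lem-Mhat1}--\ref{lem-Mhat3.1} and Theorem~\ref{the-Mhat5} a short chain $(\widehat{B}_1,\widehat{B}_2)\curvearrowright\cdots$ ending at a pair that is $\widehat{e}_{i_1i_3},\widehat{e}_{i_2i_4}$-amenable or $\widehat{e}_{i_1i_4},\widehat{e}_{i_2i_3}$-amenable, contradicting non-switchability. What survives is exactly that $\widehat{L}_1$ is two vertex-disjoint cycles joined by a non-trivial path, one cycle carrying $\widehat{e}_{i_1i_2}$ and the other $\widehat{e}_{i_3i_4}$.

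Throughout, the degenerate shapes of $\widehat{H}_I$ must be carried along: some $e_{i_k}$ may be a loop of $G$ at $v$ (so some $\widehat{e}_{i_ji_k}$ are stem loops), or two of the far endpoints $v(e_{i_k})$ may coincide (so some $\widehat{e}_{i_ji_k}$ are loops of $\widehat{G}$), in which case a ``cycle'' of $\widehat{L}_1$ may legitimately be a loop, consistent with the statement. I expect the main obstacle to be exactly this case analysis: it splits according to the type of $\widehat{C}_1$, the degenerate shape of $\widehat{H}_I$, and whether the relevant component of $\widehat{B}_1$ is balanced, and in each branch one must either extract the ``too many balanced cycles'' contradiction with $|\widehat{F}_I(\widehat{B}_1)|=4$ or build an explicit switching sequence. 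The theta-graph branch and the unbalanced-component branches are the delicate ones, since there linearity yields little and one is thrown back on the circuit-elimination manipulations of Lemmas~\ref{lem-Mhat1}--\ref{lem-Mhat3.1}.
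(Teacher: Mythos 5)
Your opening reduction is correct and matches the paper's: apply Lemma~\ref{lem-Mhat3}, observe that the $I$-singular alternative is incompatible with $\widehat{e}_{i_1i_2}\in\widehat{F}_I(\widehat{B}_1)$ and $\widehat{e}_{i_3i_4}\in\widehat{F}_I(\widehat{B}_2)$ (the putative common ``star centre'' $i$ would have to lie in both $\{i_1,i_2\}$ and $\{i_3,i_4\}$), and so conclude that both bases are strictly $I$-cyclic with the two complementary four-cycles as $\widehat{F}_I$. The rest of your write-up, however, is a plan rather than a proof, and the plan as stated does not go through.

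The missing ingredient is the interplay between $\widehat{L}_1$ and $\widehat{L}_2$. Your case analysis looks only at $\widehat{L}_1$ (``examining $\widehat{L}_1-\widehat{e}_{i_1i_2}$''), and you want to apply Lemma~\ref{lem-lemma2} to cycles internal to $\widehat{L}_1$ — but in the theta case all three of its cycles are unbalanced, so there is no balanced/unbalanced contrast to feed Lemma~\ref{lem-lemma2}. What the paper actually does is cross-compare the two fundamental circuits: it builds a $(v_1,v_4)$-path $\widehat{P}_1$ inside $\widehat{L}_1$ and a $(v_1,v_4)$-path $\widehat{P}_2$ inside $\widehat{L}_2$, observes that $\widehat{P}_1+\widehat{e}_{14}$ is an unbalanced cycle (since $\widehat{e}_{14}\in\widehat{F}_I(\widehat{B}_1)$) while $\widehat{P}_2+\widehat{e}_{14}$ is balanced (since $\widehat{e}_{14}\notin\widehat{F}_I(\widehat{B}_2)$), and then applies Lemma~\ref{lem-lemma2} to \emph{this} pair to manufacture an $\widehat{f}\in E(\widehat{P}_2)$ that realizes a switch $(\widehat{B}_1,\widehat{B}_2)\curvearrowright(\widehat{B}_1''',\widehat{B}_2''')$. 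You never produce this pairing. You also never establish the preliminary fact $\widehat{C}_{i_1i_3}\cap\widehat{C}_{i_2i_4}\ne\emptyset$ (the circuits created by the two illegal replacements), which the paper obtains from the symmetric exchange axiom applied to the auxiliary bases $\widehat{B}_1-\widehat{e}_{i_1i_2}+\widehat{e}_{i_2i_3}$ and $\widehat{B}_2-\widehat{e}_{i_3i_4}+\widehat{e}_{i_1i_2}$; that intersection fact is what disposes of the balanced-cycle and vertex-sharing cases and forces $\widehat{e}_{i_1i_2}$, $\widehat{e}_{i_3i_4}$ onto separate cycles in the theta and handcuff cases.

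Your fallback ``too many balanced cycles'' argument does not work. The observation that every triple $\{\widehat{e}_{ij},\widehat{e}_{jk},\widehat{e}_{ik}\}$ is a balanced triangle requires $v(e_i),v(e_j),v(e_k)$ distinct and none of $e_i,e_j,e_k$ a loop — exactly the degeneracies you acknowledge but do not rule out. Even generically, these triangles are circuits of $\widehat{M}$ sitting in $E(\widehat{H}_I)$, not in $\widehat{B}_1\cup E(\widehat{H}_I)$, and the only information you have is that $\{\widehat{e}_{i_1i_3},\widehat{e}_{i_2i_4}\}\subseteq\mathrm{cl}_{\widehat{M}}(\widehat{B}_1\setminus E(\widehat{H}_I))$; that puts exactly one element of each balanced triangle into the flat, which imposes no closure constraint at all. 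So $\widehat{F}_I(\widehat{B}_1)$ being the four-cycle is perfectly consistent with all the triangles being balanced, and no contradiction emerges from the count. The theta case genuinely needs the two-base argument with Lemma~\ref{lem-lemma2}, and your proposal does not supply it.
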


\begin{proof}
For convenience, we shall assume that $\widehat{e}_{i_j} = \widehat{e}_j,\ j=1,2,3,4.$ Let $v_i = v(e_i),\ i = 1,2,3,4.$  By Lemma \ref{lem-Mhat3}, we may assume that $\widehat{F}_I(\widehat{B}_1) = \{ \widehat{e}_{12},\widehat{e}_{34}, \widehat{e}_{14}, \widehat{e}_{23} \}$ and $\widehat{F}_I(\widehat{B}_2) = \{ \widehat{e}_{12},\widehat{e}_{34}, \widehat{e}_{13}, \widehat{e}_{24} \}$.  
By assumption,\\ $\widehat{B}_1 - \widehat{e}_{12} + \widehat{e}_{13}$ contains a (unique) circuit which we denote by $\widehat{C}_{13}.$  Likewise,  $\widehat{B}_1 - \widehat{e}_{12} + \widehat{e}_{24}$ contains a circuit which we denote by $\widehat{C}_{24}.$  Similarly, let $\widehat{D}_{14}$ (respectively, $\widehat{D}_{23}$) be the circuit contained in $\widehat{B}_2 - \widehat{e}_{34} + \widehat{e}_{14}$ (respectively, $\widehat{B}_2 - \widehat{e}_{34} + \widehat{e}_{23}$).  

Let $\widehat{B}_1' = \widehat{B}_1 - \widehat{e}_{12} + \widehat{e}_{23}$ and  $\widehat{B}_2' = \widehat{B}_2 - \widehat{e}_{34} + \widehat{e}_{12}.$  By the symmetric exchange property, there exists $\widehat{e} \in \widehat{B}_1'$ such that $\widehat{B}_1'' = \widehat{B}_1' - \widehat{e} + \widehat{e}_{12}$ and $\widehat{B}_2'' = \widehat{B}_2' - \widehat{e}_{12} + \widehat{e}$ are bases.  Clearly $\widehat{e} \ne \widehat{e}_{23}$ since $\widehat{e}_{23} \not\in \widehat{F}_I(\widehat{B}_2).$  However, it is seen that $\widehat{e} \in \widehat{C}_{13}.$


Suppose that $\widehat{C}_{13} \cap \widehat{C}_{24} = \emptyset.$  Then $\widehat{C}_{24} \backslash \{ \widehat{e}_{24} \} \subseteq \widehat{B}_1''.$  By Lemma \ref{lem-Mhat1}, either $\widehat{B}_1'' - \widehat{e}_{12} + \widehat{e}_{14} \in \B(\widehat{M})$ or $\widehat{B}_1'' - \widehat{e}_{12} + \widehat{e}_{24} \in \B(\widehat{M})$.  The latter cannot occur since $\widehat{C}_{24} \subseteq \widehat{B}_1'' - \widehat{e}_{12} + \widehat{e}_{24}.$  Thus $\widehat{B}_1''' = \widehat{B}_1'' - \widehat{e}_{12} + \widehat{e}_{14} \in \B(\widehat{M})$.  Let $\widehat{B}_2''' = \widehat{B}_2''.$  We see that $\{ \widehat{e}_{14}, \widehat{e}_{23} \} = \widehat{B}_1''' \cap E(\widehat{H})$ and $(\widehat{B}_1, \widehat{B}_2) \curvearrowright_1 (\widehat{B}_1''', \widehat{B}_2''')$. Thus it follows that $(\widehat{B}_1, \widehat{B}_2)$ is $\widehat{e}_{12}, \widehat{e}_{34}$-switchable, a contradiction.
We conclude that $\widehat{C}_{13} \cap \widehat{C}_{24} \ne \emptyset,$ and moreover, the above arguments imply that $\widehat{e} \in \widehat{C}_{13} \cap \widehat{C}_{24}.$  In a similar fashion, one can show that $\widehat{D}_{14} \cap \widehat{D}_{23} \ne \emptyset.$

%

Suppose $\widehat{L}_1$ is a balanced cycle.  Then $\widehat{L}_1 - \widehat{e}_{12} - \widehat{e}_{34}$ consists of two vertex-disjoint paths $\widehat{P}_1$ and $\widehat{P}_2$, where for $i =1,2,$ $\widehat{P}_i$ originates at $v_i$ and terminates at a vertex in $\{ v_3, v_4 \}.$  If $\widehat{P}_1$ terminates at $v_4$ (and $\widehat{P}_2$ at $v_3$), then $\widehat{B}_1 - \widehat{e}_{12} + \widehat{e}_{13}$ is seen to be a base in $\widehat{M},$ contradicting the fact that $\widehat{e}_{13} \not\in \widehat{F}_I(\widehat{B}_1).$  Thus $\widehat{P}_1$ terminates at $v_3$ and $\widehat{P}_2$ at $v_4.$  However, this implies that $\widehat{C}_{13} = E(\widehat{P}_1) \cup \{ \widehat{e}_{13} \}$ and $\widehat{C}_{24} = E(\widehat{P}_2) \cup \{ \widehat{e}_{24} \},$ implying that $\widehat{C}_{13} \cap \widehat{C}_{24} = \emptyset,$ a contradiction.  Thus $\widehat{L}_1$ is not a balanced cycle and consequently, $\widehat{L}_1$ consists of three balanced cycles forming a theta graph, or two vertex-disjoint unbalanced cycles joined by a path, or two unbalanced cycles meeting at exactly one vertex.  If the last case occurs, then one can show that either $\{ \widehat{e}_{13}, \widehat{e}_{24} \} \cap \widehat{F}_I(\widehat{B}_1)
\ne \emptyset,$ or $\widehat{C}_{13} \cap \widehat{C}_{24} = \emptyset.$  Thus only the first or second case can occur.  Arguing the same way, one can show that $\widehat{L}_2$ consists of three unbalanced cycles forming a theta graph, or it consists of two vertex-disjoint unbalanced cycles joined by a path.
  
Suppose $\widehat{L}_1$ is a theta graph.  If there is a cycle in $\widehat{L}_1$ which does not contain $\widehat{e}_{12}$ or $\widehat{e}_{34},$ then we see again that either $\{ \widehat{e}_{13}, \widehat{e}_{24} \} \cap \widehat{F}_I(\widehat{B}_1)
\ne \emptyset,$ or $\widehat{C}_{13} \cap \widehat{C}_{24} = \emptyset.$  Thus we may assume that $\widehat{L}_{11}$ and $\widehat{L}_{12}$ are distinct cycles in $\widehat{L}_1$ such that $\widehat{e}_{(2i-1)2i} \in E(\widehat{L}_{1i}),\ i = 1,2$ and $\{ \widehat{e}_{12}, \widehat{e}_{34} \} \subseteq E(\widehat{L}_{11} \triangle \widehat{L}_{12} ).$  Since $\widehat{C}_{13} \cap \widehat{C}_{24} \ne \emptyset,$ it is seen that $\widehat{L}_{11} \triangle \widehat{L}_{12} - \widehat{e}_{12} - \widehat{e}_{34}$ consists of two vertex-disjoint paths; one going from $v_1$ to $v_4$ and the other from $v_2$ to $v_3.$
Let $\widehat{P}_1$ be the path from $v_1$ to $v_4.$  Note that $\widehat{C}_{14} = E(\widehat{P}_1) \cup \{ \widehat{e}_{14}\}$ is not a circuit and corresponds to an unbalanced cycle $\widehat{C}_{14}'$ since $\widehat{e}_{14} \in \widehat{F}_I(\widehat{B}_1).$

Irregardless of whether $\widehat{L}_2$ is a theta graph or not, there is a path $\widehat{P}_2$ in $\widehat{L}_2 - \widehat{e}_{12} - \widehat{e}_{34}$ from $v_1$ to $v_4.$  We observe that $\widehat{D}_{14} = E(\widehat{P}_2) \cup \{ \widehat{e}_{14}\}$ is a circuit corresponding to a balanced cycle $\widehat{D}_{14}'$ since $\widehat{e}_{14} \not\in \widehat{F}_I(\widehat{B}_2).$  Let $\widehat{L}_1'$ be the component of $\widehat{L}_1 - \widehat{e}_{12} - \widehat{e}_{34} -e$ containing $\widehat{P}_1$.  Suppose there is a vertex of $\widehat{P}_2$ not belonging to $L_1'.$  Let $\widehat{f} = uu'$ be the first such edge we encounter as we move from $v_1$ to $v_4$ along $\widehat{P}_2$ where $u\in V(\widehat{L}_1')$ and $u' \not\in V(\widehat{L}_1').$  Then it is seen that $\widehat{B}_1''' = \widehat{B}_1'' - \widehat{e}_{12} +\widehat{f}$ and $\widehat{B}_2''' = \widehat{B}_2'' - \widehat{f} + \widehat{e}_{14}$ are bases in $\widehat{M}.$  However, we see that  $\widehat{e}_{14} \in \widehat{B}_2'''$ and $\widehat{e}_{23} \in  \widehat{B}_1'''$, and one can easily show that $(\widehat{B}_1, \widehat{B}_2) \curvearrowright  (\widehat{B}_1''', \widehat{B}_2''')$, implying that $(\widehat{B}_1, \widehat{B}_2)$ is $\widehat{e}_{12}, \widehat{e}_{34}$-switchable.  This gives a contradiction.  Thus $V(\widehat{P}_2) \subseteq V(\widehat{L}_1').$
The cycles  $\widehat{C}_{14}'$ and $\widehat{D}_{14}'$ are unbalanced and balanced, respectively, and intersect exactly on the edge $\widehat{e}_{14}.$  
Using Lemma \ref{lem-lemma2} one can show that there exists $\widehat{f} \in E(\widehat{P}_2)$ such that $C(\widehat{f}, \widehat{L}_1')$ is an unbalanced cycle.  Again, we see that 
$\widehat{B}_1''' = \widehat{B}_1'' - \widehat{e}_{12} +\widehat{f}$ and $\widehat{B}_2''' = \widehat{B}_2'' - \widehat{f} + \widehat{e}_{14}$ are bases in $\widehat{M},$
yielding a contradiction.  It follows that $\widehat{L}_1$ is not a theta graph  and thus it consists of two vertex-disjoint cycles joined by a path, each cycle containing exactly one of the edges $\widehat{e}_{12}$ or $\widehat{e}_{34}.$  By symmetry, the same applies to $\widehat{L}_2.$
\end{proof}

\section{The Pull-Back Operation}\label{sec-pullback}
 
  In this section, we describe the {\it pull-back} operation where we associate to each base $\widehat{B} \in \B(\widehat{M})$ a collection of bases $\fB_M(\widehat{B}) \subseteq \B(M).$  
  
 \subsection{The Base-Set Pull-Back of $\widehat{B}$}\label{subsec-basesetpullback} 
  For a subset $\widehat{X} \subseteq E(\widehat{G}),$ we let $\underline{\widehat{X}} = \widehat{X}\backslash \widehat{E}.$ Let $\widehat{B} \in \B(\widehat{M}).$     If $\widehat{B} \cap \widehat{E} = \emptyset,$ we define $\fB_M(\widehat{B}) = \{ \widehat{B} \cup \{ e \}\ \big| \ e\in E_G(v) \}.$  If $\widehat{B} \cap \widehat{E} \ne \emptyset,$ we define $$\fB_M(\widehat{B}) = \{ \underline{\widehat{B}} \cup A\ \big| \ A \subseteq \bigcup_{\widehat{e}\in \widehat{B} \cap \widehat{E}}\fE(\widehat{e}),\  \underline{\widehat{B}} \cup A \in \B(M) \}.$$  From the definition, we see that $\fB_M(\widehat{B}) \subseteq \B(M).$  We call $\fB_M(\widehat{B})$ the {\bf base-set pullback} of $\widehat{B}.$ 

\begin{lemmanopar}
For all $\widehat{B} \in \B(\widehat{M}),$ $\fB_M(\widehat{B}) \ne \emptyset.$\label{lem-pullbacknonempt}
\end{lemmanopar}

 \begin{proof} Let $B \in \B(\widehat{M}).$  We first observe that the lemma holds if $\widehat{B} \cap \widehat{E} = \emptyset.$   Thus we may assume that $\widehat{B} \cap \widehat{E} \ne \emptyset.$  Let $\widehat{G}_i,\ i = 1, \dots ,s$ be the components of $\widehat{G}[\widehat{B}]$ and let $\widehat{C}_i$ be the (unbalanced) cycle contained in $\widehat{G}_i.$  We claim that there exists $A \subseteq \bigcup_{\widehat{e} \in \widehat{B} \cap \widehat{E}}\fE(\widehat{e})$ where $|A| = |\widehat{B} \cap \widehat{E}| +1$ and  $\underline{\widehat{B}} \cup A \in \B(M).$  Let $\widehat{E}_i = E(\widehat{G}_i) \cap \widehat{E}, \ i = 1,\dots ,s.$  We may assume that $\widehat{E}_1 \ne \emptyset.$  If $E(\widehat{C}_1) \cap \widehat{E}_1 =\emptyset,$  then it is easy to see that one can choose a set $A_1 \subset \bigcup_{\widehat{e} \in \widehat{E}_1}\fE(\widehat{e})$ where $|A_1| = |\widehat{E}_1| +1$ and the subgraph $G_1$ induced by $(E(\widehat{G}_1)\backslash \widehat{E}_1)\cup A_1$ is connected and contains exactly one (unbalanced) cycle, namely $\widehat{C}_1.$  
It is straightforward to show that for $i = 2, \dots ,s$ one can choose $A_i \subset \bigcup_{\widehat{e} \in \widehat{E}_i}\fE(\widehat{e})$ such that $|A_i| = |\widehat{E}_i|$ and all the components of the subgraph $G_i$ induced by $(E(\widehat{G}_i)\backslash \widehat{E}_i)\cup A_i$, with the exception of the component containing $v$ (which contains no cycles), contain exactly one unbalanced cycle.  Then $A = A_1 \cup \cdots \cup A_s$ is the desired set.  On the other hand, suppose that $E(\widehat{C}_1) \cap \widehat{E} \ne \emptyset.$  Let $C_1 = \fE(\widehat{C}_1)$ and let $C_1 = C_{11} \cup C_{12} \cup \cdots \cup C_{1t}$ be the cycle decomposition for $C_1;$ that is $C_{11}, \dots ,C_{1t}$ are the petals of $C_1.$  Since $\widehat{C}_1 \not\in \widehat{\C},$ it follows that $C_{1j} \not\in \C$ for some $j\in \{ 1, \dots ,t\}.$  Without loss of generality, we may assume that $C_{11} \not\in \C$ (i.e. $C_{11}$ is unbalanced).   In this case, we may choose $A_1$ such that $E(C_{11}) \cap E_G(v) \subseteq A_1,$ $|A_1| = |\widehat{E}_1| +1,$ and the subgraph $G_1$ induced by $(E(\widehat{G}_1)\backslash \widehat{E}_1)\cup A_1$ is connected and contains exactly one (unbalanced) cycle, namely $C_{11}.$   For $i = 2, \dots ,s$ we can choose $A_i \subset \bigcup_{\widehat{e}\in \widehat{E}_i}\fE(\widehat{e})$ as before.   Then $A = A_1 \cup \cdots \cup A_s$ is the desired set.
 \end{proof}
 
 An important property that we will need in our reduction is that for any base $B$ in $M$, there is there a base $\widehat{B}$ in $\widehat{M}$ such that $B$ is in the base-set pull-back of $\widehat{B}.$  In the next lemma, we affirm this property.
 
 \begin{lemmanopar}
 Let $B\in \B(M).$  Then there is a base $\widehat{B} \in \B(\widehat{M})$ such that $B \in \fB_M(\widehat{B}).$  Furthermore, if $B$ contains a cycle containing edges $e_i$ and $e_j$, then $\widehat{B}$ can be chosen such that $\widehat{e}_{ij} \in \widehat{B}.$ \label{lem-pullbacknonempt2}
 \end{lemmanopar}
 
 \begin{proof}
 Let $\underline{B} = B\backslash \{ e_1, \dots ,e_m \}$.  By Lemma \ref{lem-newbiased}, each (unbalanced) cycle contained in $\underline{B}$ is unbalanced in $\widehat{M}.$  Thus if all the cycles in $B$ are contained also in $\underline{B}$, then it straightforward to see that there is a subset $\widehat{A} \subseteq \widehat{E}$ for which $\widehat{B} = \underline{B} \cup \widehat{A} \in \B(\widehat{M})$ and $B \in \fB_M(\widehat{B}).$  Suppose instead that a cycle $C$ in $B$ contains $v$.  If $C$ is a loop, then we can easily adapt the previous argument.  Assume without loss of generality that $\{ e_1, e_2 \} \subseteq E(C).$  Then $\widehat{C} = (C\backslash v) \cup \{ \widehat{e}_{12} \}$ is a cycle in $\widehat{G}.$  Furthermore, since $\fE(\widehat{C}) = C,$ it follows by Lemma \ref{lem-newbiased} that $\widehat{C}$ is balanced.  Now we see that there is a subset $\widehat{A} \subseteq \widehat{E}$ where $\widehat{e}_{12} \in \widehat{A},\ \widehat{B} = \underline{B} \cup \widehat{A} \in \B(\widehat{M}),$ and $B \in \fB_M(\widehat{B}).$
 \end{proof}

\subsection{The Base-Set Pull-Back of $\widehat{\B}$}\label{subsec-basesetpullback2}

For a $\kappa$-tuple of disjoint bases $\widehat{\B} = (\widehat{B}_1, \dots ,\widehat{B}_{\kappa})$ in $\widehat{M}$, we define the {\bf base-set pullback} $\fB_M(\widehat{\B})$ of $\widehat{\B}$ to be the set of $\kappa$-tuples 
$(B_1,B_2, \dots ,B_{\kappa}) \in \fB_M(\widehat{B}_1) \times \fB_M(\widehat{B}_2) \times \cdots \times \fB_M(\widehat{B}_k)$ where $B_i \cap B_j =\emptyset,$ for all $i\ne j.$  We note that because of the requirement that
the bases $B_1, \dots ,B_{\kappa}$ be pairwise disjoint, it is possible that $\fB_M(\widehat{\B}) = \emptyset.$

\section{The Biased Graph $\Omega_{\kappa} = (G_\kappa, \C_\kappa)$ and $V$-Reduced Sequences of Bases}\label{sec-reductionsI}

In this section, we initiate the reduction step of the proof of Theorem \ref{the-main2}.   Let $\Omega
= (G,\C)$ be a linear biased graph and let $\kappa$ be an integer where $\kappa \ge 2$ and $n = |V(G)|.$  Let $M = M(\Omega)$.  Given that Theorem \ref{the-main2} is true for graphic matroids, we shall assume that $M$ is non-graphic.  Furthermore, we may assume that $G$ is connected and hence $r(M) = |V(G)| =n.$  Let
$\B_i = ( B_{i1}, \dots ,B_{i\kappa} ),\ i = 1,2$ be two compatible
sequences of bases.  We shall define another linear biased graph $\Omega_{\kappa} =
(G_{\kappa},\C_\kappa)$ using $\Omega$ as follows: Let $V(G_{\kappa}) = V(G).$   For all edges
$e\in E(G)$ let $e\in E(G_{\kappa})$ and furthermore, add $\omega(e)-1$ {\it parallel} edges to
$e$ where $\omega(e)$ is the number of bases in $\B_1$ (or $\B_2$)
which contain the edge $e.$  When $e$ is a loop, each parallel edge is a loop at the same vertex as $e$. Let $E_{G_\kappa}(e)$ denote the set of $\omega(e)$ parallel edges (which includes $e$) and let $\C_{\kappa}'$ denote the set of $2$-subsets $C'$ where $C' \subseteq E_{G_\kappa}(e)$ for some $e\in E(G).$  Let $\C_\kappa$ be the linear completion of $\C \cup \C_\kappa'$ and let $\Omega_\kappa = (G_\kappa, \C_\kappa)$ and 
$M_\kappa = M(\Omega_\kappa).$

Let $\B_i = (B_{i1}, \dots ,B_{i\kappa}),\ i = 1,2$ be the corresponding base sequences in $M_\kappa$ where the bases of $\B_i$ partition $E(M_\kappa).$
\begin{lemmanopar}
There exists $v\in V(G_\kappa)$ such that $d_{G_\kappa}(v) \le
2\kappa.$\label{lemma1}
\end{lemmanopar}

\begin{proof}
Since the base sequences $\B_i,\ i = 1,2$ partition $E(G_\kappa)$, we have $|E(G_{\kappa})| =\kappa r(M) = \kappa n$.  Thus $\sum_{u\in V(G_{\kappa})}
d_{G_{\kappa}}(u) = 2|E(G_{\kappa})| = 2\kappa n.$  Accordingly, the average degree of $G_{\kappa}$ is
$2\kappa$ and hence there is a vertex of degree at most $2\kappa.$ 
\end{proof}

\subsection{$V$-Reduced Sequences}\label{subsec-vreduced}

For the remainder, we shall assume that $v$ is a vertex
of $G_{\kappa}$ having $m$ incident edges where $m \le 2\kappa.$  Let $E_{G_\kappa}(v) = \{ e_1, \dots ,e_m \}$. We
say that a base $B\in \B(M_\kappa)$ is $\mathbf{v}${\bf -reduced} if $|B \cap E_{G_\kappa}(v)|\le 2.$  Moreover, a  
sequence of disjoint bases $\B = ( B_1, \dots ,B_{\kappa} )$ in $M_{\kappa}$
is $v$-reduced if for $i = 1, \dots, \kappa,$ $B_i,$ is $v$-reduced.   Let $\widehat{\Omega}_\kappa = (\widehat{G}_\kappa, \widehat{\C}_\kappa)$ be the $v$-deleted biased graph obtained from $\Omega_\kappa$ and let $\widehat{M}_\kappa = M(\widehat{\Omega}_\kappa)$.  We shall need the following lemma:

\begin{lemma}
Let $\B = ( B_1, \dots ,B_{\kappa} )$ be a sequence of disjoint bases of $M_\kappa$. Then there exists a $v$-reduced sequence
of bases $\B'$ for which $\B \sim \B'.$
\label{lemma2}
\end{lemma}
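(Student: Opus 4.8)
The plan is to reduce the base sequence $\B$ to a $v$-reduced one by repeatedly decreasing a suitable potential function measuring how far $\B$ is from being $v$-reduced. Define, for a sequence of disjoint bases $\B = (B_1,\dots,B_\kappa)$, the quantity $\Phi(\B) = \sum_{i=1}^\kappa \max\{|B_i \cap E_{G_\kappa}(v)| - 2,\, 0\}$. Then $\B$ is $v$-reduced if and only if $\Phi(\B) = 0$. So it suffices to show that whenever $\Phi(\B) > 0$ there is a single symmetric exchange (a $\sim_1$ step between two of the bases of $\B$) that strictly decreases $\Phi$; iterating then produces the desired $v$-reduced $\B'$ with $\B \sim \B'$. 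Since $m = d_{G_\kappa}(v) \le 2\kappa$ and the bases $B_1,\dots,B_\kappa$ partition $E(M_\kappa)$ (or at least are disjoint — in the partition case $\sum_i |B_i \cap E_{G_\kappa}(v)| = m \le 2\kappa$), an averaging argument guarantees that if some base has more than two edges at $v$, some other base has fewer than two; these will be the two bases we exchange between.

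First I would fix a base $B_i$ with $|B_i \cap E_{G_\kappa}(v)| \ge 3$ and a base $B_j$ with $|B_j \cap E_{G_\kappa}(v)| \le 1$. The goal is to move an edge incident with $v$ out of $B_i$ and into $B_j$ by a symmetric exchange, i.e.\ to find $e \in B_i$ with $e \in E_{G_\kappa}(v)$ and $f \in B_j$ with $f \notin E_{G_\kappa}(v)$ such that $B_i - e + f$ and $B_j - f + e$ are both bases. The key structural fact to exploit is the description of bases of a frame matroid: $B_i$ induces a subgraph each of whose components has at most one cycle (unbalanced if present). If $B_i$ has three edges at $v$, look at the component(s) of $G_\kappa[B_i]$ meeting $v$: since that component is a tree plus possibly one unbalanced cycle, and $v$ has degree $\ge 3$ in it, there is an edge $e \in B_i \cap E_{G_\kappa}(v)$ that is a "tree edge" in the sense that $B_i - e$ is still independent and $C^*(e, B_i)$, the fundamental cocircuit, is controlled. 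Then by the symmetric exchange property applied to $B_i$ and $B_j$ with this $e$, there is $f \in B_j$ with $B_i - e + f$, $B_j - f + e \in \B(M_\kappa)$. The point is to choose $e$ so that $f$ can be taken \emph{not} incident with $v$: since $B_j$ has at most one edge at $v$, the fundamental circuit $C(e, B_j)$ — if it used only $v$-edges of $B_j$ — would be too small, so some valid $f$ avoids $E_{G_\kappa}(v)$. Carrying out the exchange decreases $|B_i \cap E_{G_\kappa}(v)|$ by one and increases $|B_j \cap E_{G_\kappa}(v)|$ by at most one; since $|B_j \cap E_{G_\kappa}(v)| \le 1$ before the exchange, the $B_j$-term of $\Phi$ stays $0$, while the $B_i$-term drops by one, so $\Phi$ strictly decreases.

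The main obstacle I expect is the last refinement: guaranteeing that the exchange partner $f$ can be chosen outside $E_{G_\kappa}(v)$, rather than merely that \emph{some} symmetric exchange exists. This requires a careful case analysis of how the cycle of the component of $G_\kappa[B_i]$ at $v$ interacts with $v$ (is $v$ on the unique cycle? is the cycle a loop at $v$? how many of the $\ge 3$ edges at $v$ lie on it?), combined with knowing the cocircuit $C^*(e, B_i)$ well enough to see which $e$ gives us freedom in choosing $f$. Here the biased-graph lemmas of Section~3 (Lemmas~\ref{lem-lemma1}, \ref{cor-lemma1}, \ref{lem-lemma2}) on balanced vs.\ unbalanced fundamental cycles are the right tools: they let me argue that after deleting a well-chosen $v$-edge $e$ from $B_i$ and adding a non-$v$-edge $f$ from $B_j$, the resulting edge set still has the "one unbalanced cycle per component" structure, hence is a base. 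Once this single-step reduction is established, the lemma follows by induction on $\Phi(\B)$, with the base case $\Phi(\B) = 0$ being exactly the statement that $\B$ is already $v$-reduced.
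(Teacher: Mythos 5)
Your overall strategy is the same as the paper's: pick $B_i$ with at least three edges at $v$ and $B_j$ with at most one, then find a single symmetric exchange moving a $v$-edge out of $B_i$ without pulling a $v$-edge out of $B_j$, and iterate on a potential function. But there is a genuine gap in the pivotal step. To carry out the exchange you need an $f$ in $C^*(e, B_i) \cap C(e, B_j)$ that avoids $E_{G_\kappa}(v)$. Your argument only looks at $C(e, B_j)$ and observes it has non-$v$-edges available; this says nothing about whether any of them also lie in $C^*(e, B_i)$. In a concrete configuration — $v$ lying on the unbalanced cycle $C_1$ of the component $K_1$ of $G_\kappa[B_i]$, with $K_1 - v$ having exactly two pieces, one carrying $C_1$ and the other carrying both $v_3$ and the endvertex $u$ of the unique $v$-edge $f$ of $B_j$ — the symmetric-exchange partner of a cycle edge $e_1 \in E(C_1)$ can indeed be forced to be $f$ itself, and the simple ``pick $e$ in a cycle-free, $u$-free branch'' argument that handles the easy cases is unavailable because there is no such branch.

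The paper resolves exactly this case not with the biased-graph lemmas of Section~3 alone, but by passing to the $v$-deleted matroid $\widehat{M}_\kappa$: it lifts $B_1, B_2$ to bases $\widehat{B}_1, \widehat{B}_2$, uses Lemma~\ref{lem-newbiased} to verify certain lifted cycles remain unbalanced, performs a symmetric exchange in $\widehat{M}_\kappa$ between $\widehat{B}_1' = \widehat{B}_1 - \widehat{e}_{23} + \widehat{e}_{34}$ and $\widehat{B}_2$, and then reads off a valid exchange $B_1 - e_3 + \widehat{e}$, $B_2 - \widehat{e} + e_3$ downstairs. So although your final conclusion (a single good exchange always exists) is correct, and although you flag that this refinement is ``the main obstacle,'' the justification you offer for it would fail in the hard case, and the actual resolution requires the $\widehat{M}_\kappa$ machinery of Section~4 rather than the cycle lemmas of Section~3 that you cite.
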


\begin{proof}
For $i = 1,\dots ,\kappa,$ let $G_i = G_\kappa[B_i]$ denote the subgraph of $G_\kappa$ induced by $B_i.$
We may assume that $\B$ is not $v$-reduced and contains a base, say
$B_1$, where $|B_1 \cap E_{G_\kappa}(v)| \ge 3.$ Since
$m \le 2\kappa,$ there is a base in $\B$, say $B_2$, for which $|B_2 \cap E_{G_\kappa}(v)| =1.$
Let $B_1 \cap E_{G_{\kappa}}(v)
= \{ e_1, \dots ,e_s \}$ and let $B_2 \cap E_{G_k}(v) = \{ f \}.$  We remark that $\{ e_1, \dots ,e_s \}$ may contain at most one loop,  and if it does, then we shall assume that it is $e_1.$
Let $e_i = vv_i,\ i = 1, \dots ,s$ and let $f = vu$, noting that possibly $v_1 = v.$   It suffices to show that we can replace
$B_i,\ i =1,2$ in $\B$ with bases $B_i',\ i = 1,2$ such that $(B_1', B_2') \sim (B_1,B_2),$ and  $|B_i'\cap E_{G_\kappa}(v)| = |B_i\cap E_{G_\kappa}(v)| + 2i-3,\ i = 1,2.$ 

Let $K_1, \dots ,K_p$ (resp. $L_1,\dots ,L_q$) denote the
components of $G_{\kappa}[B_1]$ (resp. $G_\kappa[B_2]$).  We may assume that $v\in V(K_1)$ and $v\in V(L_1).$    For $i=1, \dots ,p$ let $C_i$ denote the (unbalanced)
cycle in $K_i$ and for $i = 1, \dots ,q$, let $D_i$ denote the (unbalanced) cycle in $L_i$.   Let $K_{1i},\ i = 1, \dots ,s'$ denote the components of $K_1 -v.$

\vspace{0.1in}

\noindent{\bf Case 1}:  Suppose $v\not\in V(C_1).$

\vspace{0.1in}

In this case, $s' =s \ge 3$ and there exists $i\in \{ 1, \dots ,s \}$ such that $u \not\in V(K_{1i})$ and $C_1$ is not contained in $K_{1i}.$    We may assume $v_i \in V(K_{1i}).$  By the symmetric exchange property, there exists $e\in B_2$ such $B_1' = B_1 -e_i + e$ and $B_2' = B_2-e+e_i$ are bases in $M_\kappa.$
Since $u\not\in V(K_{1i})$ and $C_1$ is not contained in $K_{1i},$ it follows that $e \ne f.$  Thus $B_i', \ i = 1,2$ are seen to be the desired bases.

\vspace{0.1in}

\noindent{\bf Case 2}:  Suppose $v\in V(C_1).$

\vspace{0.1in}

If $e_1$ is a loop, then $E(C_1) = \{ e_1 \}$ and none of the components $K_{1i}, \ i = 1, \dots ,s'$ contain $C_1.$  Also, $s' \ge 2,$ and hence there is a component $K_{1i}$ where $u\not\in V(K_{1i}).$  Now we can find $B_i',\ i = 1,2$ as in Case 1.  

By the above, we may assume that $e_1$ is not a loop, and hence none of $e_1,\dots ,e_s$ are loops.   Without loss of generality, we may assume $e_1, e_2 \in E(C_1).$  If $s > 3$, then $s'\ge 3$ and we can argue as in Case 1.  Thus we may assume that $s'=2,$ $C_1$ is a subgraph of $K_{11}$, and $\{ u,v_3 \}  \subseteq V(K_{12}).$    For convenience, we shall let $f = e_4$ and $u=v_4$.  
Let $\widehat{C}_1 = (C_1\backslash \{ e_1, e_2 \}) \cup \{ \widehat{e}_{12} \}$, which is seen to be a cycle in $\widehat{G}_{\kappa}.$  It follows by Lemma \ref{lem-newbiased} that $\widehat{C}_1$ is unbalanced in $\widehat{M}_{\kappa}.$  Moreover, every cycle in $\widehat{M}_{\kappa}$ which is an unbalanced cycle in $M_{\kappa}$ is also unbalanced in $\widehat{M}_{\kappa}.$  Thus
$\widehat{B}_1 = (B_1\backslash \{ e_1, e_2, e_3 \}) \cup \{ \widehat{e}_{12}, \widehat{e}_{23}\}$ is a base in $\widehat{M}_{\kappa}$ as is $\widehat{B}_2 = B_2-f.$ 
By the symmetric exchange property, there exists $e\in B_2$ such that $B_1^* = B_1 -e_1 + e$ and $B_2^* = B_2 - e + e_1$ are bases.  If $e \ne e_4,$ then $B_i^*,\ i = 1,2$ are the desired bases.   Thus we may assume that $e=e_4$ and consequently $B_1^* = B_1 -e_1 + e_4$ and $B_2^* = B_2 - e_4 + e_1$.  We observe that since $v_3, v_4 \in V(K_{12})$, $e_3$ and $e_4$ belong to a cycle in $G_\kappa[B_1^*]$, say $C$, which must be unbalanced.  However, this implies that $\widehat{C} = (C\backslash \{ e_3, e_4 \} ) \cup \{ \widehat{e}_{34} \}$ is an unbalanced cycle in $\widehat{M}_\kappa$ (by Lemma \ref{lem-newbiased}), implying that $\widehat{B}_1' = \widehat{B}_1 - \widehat{e}_{23} + \widehat{e}_{34} \in \B(\widehat{M}_\kappa).$ 
 Applying the symmetric exchange property to $\widehat{B}_1' $ and $\widehat{B}_2,$  there exists $\widehat{e} \in \widehat{B}_2$ such that $\widehat{B}_1''= \widehat{B}_1' - \widehat{e}_{34} + \widehat{e}$ and $\widehat{B}_2' = \widehat{B}_2 - \widehat{e} + \widehat{e}_{34}$ are bases in $\widehat{M}_\kappa.$  Now $B_1' = B_1-e_3 + \widehat{e}$ and $B_2' = B_2 -\widehat{e} + e_3$ are seen to be the desired bases.   
\end{proof}

\section{The Graph $\mathfrak{M}$ of a $v$-reduced Sequence}\label{sec-matchings}

Following Blasiak's proof for graphic matroids, we shall construct a
{\it matching graph} of a $v$-reduced sequence of bases.   Let $\B = ( B_1,
\dots ,B_{\kappa} )$ be a $v$-reduced sequence of bases of $M_\kappa.$   Let $E_{G_{\kappa}}(v) = \{ e_1, \dots ,e_m \}.$ We shall construct
a graph $\mathfrak{M}=\mathfrak{M}_{\B}$ for $\B$ as follows: Let $V(\mathfrak{M}) =
\{ \mathsf{x}_1, \dots ,\mathsf{x}_m \}.$  For $i = 1, \dots, \kappa$: 
\begin{itemize}
\item If $B_i \cap E_{G_\kappa}(v) = \{ e_j \},$ then $\mathsf{x}_j$ will be an isolated vertex.
\item If $B_i \cap E_{G_\kappa}(v) = \{ e_j, e_k \},$ then there is an edge from $x_j$ to $x_k$,  denoted $\mathsf{x}_j\mathsf{x}_k.$
\end{itemize}
\noindent We see that each vertex in $\mathfrak{M}$ has degree $0$ or $1$.
Ideally, we would like to show that if $\B_i,\ i = 1,2$ are two
$v$-reduced, compatible sequences of bases, then there exist $v$-reduced
sequences $\B_i',\ i =1,2$ such that $\B_i \sim \B_i',\ i = 1,2$
and $\mathfrak{M}_{\B_1'} = \mathfrak{M}_{\B_2'}.$   In \cite{Bla}, Blasiak was able to do this in the case where $M$ is graphic.  Unfortunately, examples show that this is not always possible for frame matroids.  The main goal of this section is to show that one can find $\B_i', \ i = 1,2$ for which $\mathfrak{M}_{\B_1'}$ and $\mathfrak{M}_{\B_2'}$ differ by at most two edges. 

For $v$-reduced sequences of bases $\B$ and $\B'$, we observe that  $|E(\fM_{\B})| = |E(\fM_{\B'})|.$  Let $E(\B, \B') = E(\fM_{\B}) \cap E(\fM_{\B'})$ and let $\varepsilon(\B, \B') = |E(\B, \B')|.$  We let $\fM_{\B} \triangle \fM_{\B'}$ denote the graph where
$V(\fM_{\B} \triangle \fM_{\B'}) = \{ \mathsf{x}_1, \dots ,\mathsf{x}_m \}$ and $E(\fM_{\B} \triangle \fM_{\B'}) = E(\fM_{\B}) \triangle E(\fM_{\B'}).$  We observe that the non-trivial components of $\fM_{\B} \triangle \fM_{\B'}$ are either
paths or even cycles whose edges alternate between. $\fM_{\B}$ and $\fM_{\B'}.$  The main theorem of this section is the following:

\begin{theorem}
Let $\B_i,\ i = 1,2$ be $v$-reduced sequences of bases.   Then there exist $v$-reduced sequences of bases $\B_i',\ i = 1,2$ where $\B_i' \sim \B_i,\ i = 1,2$ and either $\fM_{\B_1'} = \fM_{\B_2'},$ or $\fM_{\B_1'} \ne \fM_{\B_2'}$ and 
$\fM_{\B_1'} \triangle \fM_{\B_2'}$ has exactly one non-trivial component which is a $4$-cycle.\label{theorem3.2}
\end{theorem}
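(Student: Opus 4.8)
The plan is to work entirely with the symmetric difference graph $\fM_{\B_1}\triangle\fM_{\B_2}$ and to reduce it, component by component, to a single $4$-cycle (or nothing) by a sequence of symmetric exchanges applied to the two sequences. Since the non-trivial components of $\fM_{\B_1}\triangle\fM_{\B_2}$ are alternating paths and even alternating cycles, and since $\fM_{\B_1}$ and $\fM_{\B_2}$ have the same number of edges, it suffices to design local moves that, given such a component, produce $v$-reduced sequences $\B_1',\B_2'$ with $\B_i'\sim\B_i$ and strictly larger overlap $\varepsilon(\B_1',\B_2')$, \emph{unless} we are already in the exceptional configuration. The key quantity to induct on is $|E(\fM_{\B_1})| - \varepsilon(\B_1,\B_2)$, the number of edges of $\fM_{\B_1}$ not shared with $\fM_{\B_2}$; I would aim to show this can always be decreased to $0$ or $2$, and that the value $2$ forces a $4$-cycle.

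The heart of the argument is the analysis of how an edge $\sx_j\sx_k$ of $\fM_{\B_1}$, recording a pair $\{e_j,e_k\}=B_{1\ell}\cap E_{G_\kappa}(v)$ lying in some base $B_{1\ell}$, can be ``rotated'' toward the corresponding pattern in $\fM_{\B_2}$. Given a component of $\fM_{\B_1}\triangle\fM_{\B_2}$, pick an endpoint situation: say $\sx_i\sx_j\in E(\fM_{\B_1})$ and $\sx_j\sx_k\in E(\fM_{\B_2})$ with $\sx_i$ either isolated in $\fM_{\B_2}$ or matched elsewhere. Let $B_{1\ell}$ be the base of $\B_1$ containing $\{e_i,e_j\}$ and let $B_{1\ell'}$ be the base containing $e_k$ (which exists since $e_k$ is in some base of $\B_1$, $v$-reducedness allowing it to sit alone or paired). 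I would apply the symmetric exchange property to move $e_k$ into $B_{1\ell}$ in exchange for some edge; the subtlety special to frame matroids (absent in Blasiak's graphic case) is that the edge leaving $B_{1\ell}$ need not be $e_i$ — it could be $e_j$, or an internal edge — because the relevant fundamental circuit may be a theta graph or a pair of unbalanced cycles joined by a path rather than a single cycle. This is exactly the phenomenon isolated in Lemma~\ref{lem-Mhat1}, Lemma~\ref{lem-Mhat2}, and Theorem~\ref{the-note12e34switch}, and I would invoke those (applied in $\widehat{M}_\kappa$ via the $v$-deleted biased graph, passing back and forth using Lemma~\ref{lem-newbiased} and the pull-back lemmas) to control which edge is expelled. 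The serial-exchange machinery of Section~\ref{sec-serexch}, in particular Theorem~\ref{the-KotZiv}, is what lets me move a \emph{pair} of incidences at $v$ in a coordinated way when a single exchange would not preserve $v$-reducedness.

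The induction then runs as follows. If some non-trivial component of $\fM_{\B_1}\triangle\fM_{\B_2}$ is a path, or a cycle of length $\ge 6$, I would show that the rotation move above strictly increases the overlap while keeping both sequences $v$-reduced, so by induction on $|E(\fM_{\B_1})|-\varepsilon(\B_1,\B_2)$ we may assume every non-trivial component is a $4$-cycle. Then, if there are two or more $4$-cycles (or one $4$-cycle together with leftover structure), I would again find a move merging or eliminating incidences — here is where the ``$\le$ two edges'' in the section's informal goal gets sharpened to ``exactly one $4$-cycle'': one shows a pair of $4$-cycles in distinct index-sets can always be collapsed because the four bases involved give enough room for a double symmetric exchange. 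The residual single $4$-cycle is genuinely irreducible (the examples alluded to after Theorem~\ref{theorem3.2}'s motivating discussion show it cannot in general be removed), so we stop there.

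\textbf{Main obstacle.} The hard part will be the frame-matroid-specific case analysis in the rotation step: verifying that when the expelled edge is \emph{not} the one we wanted, we can nonetheless recover a good move, possibly after first applying a $\curvearrowright$-type switch in $\widehat{M}_\kappa$ (Section~\ref{subsec-switchamen}) to change the local circuit structure, and that in the genuinely stuck cases the structure is forced by Theorem~\ref{the-note12e34switch} to be precisely a pair of unbalanced cycles joined by a path — which is what pins the obstruction down to a single $4$-cycle rather than something larger. Managing the bookkeeping so that each move is simultaneously (i) realizable by honest symmetric exchanges in $M_\kappa$, (ii) $v$-reduced-preserving, and (iii) monotone in the overlap parameter, is where essentially all the work lies.
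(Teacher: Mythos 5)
Your high-level plan — reduce $\fM_{\B_1}\triangle\fM_{\B_2}$ via symmetric exchanges, inducting on how far the two matching graphs are from agreeing, with a single $4$-cycle as the irreducible obstruction — is the right one, and it matches the paper's strategy in outline. But there are two concrete gaps.

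First, the lemma that actually drives the reduction is never identified. The engine of the paper's proof is Lemma~\ref{lem-notswitchable}: \emph{for each edge $e$ of $\fM_{\B}$, at most one other edge $f\in E(\fM_{\B})$ fails to be $\B$-switchable with $e$.} This is what guarantees that in any alternating component with three or more $\fM_{\B_1'}$-edges (resp.\ $\fM_{\B_2'}$-edges) you can always find \emph{some} switchable pair, and its proof is a short but delicate argument involving three bases simultaneously via Lemma~\ref{lem-Mhat3}, not the per-pair structural result of Theorem~\ref{the-note12e34switch} that you cite. Without this ``at most one bad partner'' bound, your rotation step has no guarantee of a usable move; Theorem~\ref{the-note12e34switch} tells you what a stuck pair \emph{looks like}, but not that stuck pairs are rare enough for the induction to progress, and it is in fact used later in the paper for the pull-back, not here.

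Second, a pure overlap induction on $|E(\fM_{\B_1})|-\varepsilon(\B_1,\B_2)$ does not close. The paper maximizes $\varepsilon$ first (your quantity) and \emph{then} maximizes a secondary parameter $c_2(\fM_{\B_1'}\triangle\fM_{\B_2'})$, the sum of squares of component orders, among sequences achieving the maximum overlap. Several of the key merging moves (Claims~\ref{claim2lemma3.1} and \ref{claim5lemma3.1}, eliminating multiple components) keep $\varepsilon$ fixed and only increase $c_2$, so a strict-overlap induction stalls precisely there. Relatedly, the intermediate outcome after the extremal argument is \emph{either} a $4$-cycle \emph{or} a $4$-path, and the final paragraph of the paper's proof spends real effort converting a $4$-path into a $4$-cycle (or into nothing) using the single-swap move $\singswaptri{x_i}{x_j}{x_k}$ of Lemma~\ref{lemma3}; your proposal does not engage with the path case at all. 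Finally, Theorem~\ref{the-KotZiv} plays no role in this theorem (it is used elsewhere in the paper, in the pull-back for Observation~1), so invoking serial exchange here is a red herring.
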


For a $v$-reduced sequence of bases $\B = (B_1, \dots ,B_{\kappa})$ and edges $e=\mathsf{x}_i\mathsf{x}_j$ and $f=\mathsf{x}_{i'}\mathsf{x}_{j'}$ in $\mathfrak{M}_{\B}$, we say that $e,f$ are
$\mathbf{\B}${\bf -switchable} if there is a $v$-reduced sequence $\B' = (B_1', \dots ,B_{\kappa}')$
for which $\B \sim \B'$ and $\mathfrak{M}_{\B'} = (\mathfrak{M}_{\B} \backslash \{ e,f \}
) \cup \{ e',f' \}$, where $e' = \mathsf{x}_i\mathsf{x}_{i'}$ and $f' = \mathsf{x}_j\mathsf{x}_{j'}$, or $e' = \mathsf{x}_i\mathsf{x}_{j'}$ and
$f' = \mathsf{x}_j\mathsf{x}_{i'}.$ 
  
The following lemma will be instrumental in proving the main theorem.  

\begin{lemma}
Let $\B = (B_1,\dots ,B_{\kappa})$ be a $v$-reduced sequence of bases in $M_\kappa.$  For all edges $e=\mathsf{x}_i\mathsf{x}_j \in E(\fM_{\B})$ there is at most one edge $f=\mathsf{x}_k\mathsf{x}_l \in E(\fM_{\B})\backslash \{ e \}$ for which
$e,f$ are not $\B$-switchable.\label{lem-notswitchable}
\end{lemma}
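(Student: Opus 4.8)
The plan is to fix an edge $e = \mathsf{x}_i\mathsf{x}_j \in E(\fM_{\B})$ and two distinct candidate edges $f = \mathsf{x}_k\mathsf{x}_l$ and $g = \mathsf{x}_p\mathsf{x}_q$ in $E(\fM_{\B})\backslash\{e\}$, and to show that at least one of the pairs $(e,f)$, $(e,g)$ is $\B$-switchable. Since each $\mathsf{x}_s$ has degree at most one in $\fM_{\B}$, the four indices appearing in $f$ together with the four appearing in $e,g$ overlap in at most limited ways; after a short case analysis on these overlaps one reduces to the case where $e,f,g$ are pairwise vertex-disjoint in $\fM_{\B}$ (if $f$ and $g$ share a vertex, or one of them shares a vertex with $e$, the relevant index set has size $\le 5$ and the argument below still applies after relabelling). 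So assume $\{i,j\}, \{k,l\}, \{p,q\}$ are pairwise disjoint, and let $B_a, B_b, B_c$ be the bases of $\B$ with $B_a \cap E_{G_\kappa}(v) = \{e_i,e_j\}$, $B_b \cap E_{G_\kappa}(v) = \{e_k,e_l\}$, $B_c \cap E_{G_\kappa}(v) = \{e_p,e_q\}$.

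The key device is to translate $\B$-switchability of $(e,f)$ into switchability of a base pair in $\widehat{M}_\kappa$. Concretely, the edge $e_i e_j$ of $\fM_{\B}$ corresponds, after passing to $\widehat{G}_\kappa$, to the edge $\widehat{e}_{ij}$ lying in a base $\widehat{B}_a$ of $\widehat{M}_\kappa$ obtained from $B_a$ by the pull-back correspondence of Section~\ref{sec-newbiased}; similarly $e_k e_l$ becomes $\widehat{e}_{kl} \in \widehat{B}_b$. Taking $I = \{i,j,k,l\}$, the pair $(\widehat{B}_a, \widehat{B}_b)$ has $\widehat{e}_{ij} \in \widehat{F}_I(\widehat{B}_a)$ and $\widehat{e}_{kl} \in \widehat{F}_I(\widehat{B}_b)$, and one checks that $(e,f)$ being $\B$-switchable is implied by $(\widehat{B}_a,\widehat{B}_b)$ being $\widehat{e}_{ij},\widehat{e}_{kl}$-switchable: a $\curvearrowright$-sequence in $\widehat{M}_\kappa$ followed by an amenable move to non-incident edges $\{\widehat{e}', \widehat{f}'\}\subseteq E(\widehat{H}_I)\backslash\{\widehat{e}_{ij},\widehat{e}_{kl}\}$ pulls back, via the base-set pull-back of Section~\ref{sec-pullback}, to a sequence of symmetric exchanges in $M_\kappa$ realizing the required change of $\fM_{\B}$ on the pair $\{e,f\}$ (note $\{\widehat{e}',\widehat{f}'\}$ non-incident forces the two new edges of $\fM$ to be $\mathsf{x}_i\mathsf{x}_k, \mathsf{x}_j\mathsf{x}_l$ or $\mathsf{x}_i\mathsf{x}_l, \mathsf{x}_j\mathsf{x}_k$, exactly the switches allowed in the definition).

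Now suppose, for contradiction, that neither $(e,f)$ nor $(e,g)$ is $\B$-switchable. By the translation, neither $(\widehat{B}_a, \widehat{B}_b)$ (relative to $I = \{i,j,k,l\}$) nor $(\widehat{B}_a, \widehat{B}_c)$ (relative to $J = \{i,j,p,q\}$) is switchable on the respective distinguished edge pairs. Applying Lemma~\ref{lem-Mhat3} to each, both pairs fall into case i) or case ii): either the relevant $\widehat{B}$'s are strictly $I$-singular with a common $\widehat{F}_I$ of the form $\{\widehat{e}_{\alpha\beta} : \beta \in I\backslash\{\alpha\}\}$, or they are strictly $I$-cyclic with $\widehat{F}_I(\widehat{B}_a), \widehat{F}_I(\widehat{B}_b)$ the two possible ``cyclic'' sets. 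The crux is that these conclusions force incompatible structure on the single base $\widehat{B}_a$: in the $I$-singular case the distinguished vertex $\alpha$ must be $i$ or $j$ and $\widehat{F}_I(\widehat{B}_a)$ is determined, while in the $J$-analysis $\widehat{F}_J(\widehat{B}_a)$ is likewise pinned down; and since $\widehat{F}_I(\widehat{B}_a) = E(\widehat{H}_I)\backslash \mathrm{cl}_{\widehat{M}_\kappa}(\widehat{B}_a\backslash E(\widehat{H}_I))$ and similarly for $J$, both are governed by the same closure/cycle data of $\widehat{B}_a$ at $v$. I expect the main obstacle to be precisely this bookkeeping: showing that the structural alternatives handed back by Lemma~\ref{lem-Mhat3} for the two index sets $I$ and $J$ cannot hold simultaneously for one base $\widehat{B}_a$ without making one of the two pairs switchable after all — essentially a pigeonhole on which vertex of $\fM_{\B}$ can play the ``singular center'' role, combined with Theorem~\ref{the-note12e34switch} to rule out the cyclic-cyclic coincidence. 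Handling the degenerate overlap cases (where $f,g$ are not disjoint from $e$ or from each other, so $|I\cup J| \le 5$) requires the same tools applied to smaller index sets and should follow by the same reasoning with fewer cases.
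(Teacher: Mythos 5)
Your high-level framing is right: translate non-$\B$-switchability of $e,f$ into the failure of an amenability condition for the corresponding pair $(\widehat{B}_a,\widehat{B}_b)$ in $\widehat{M}_\kappa$, then invoke Lemma~\ref{lem-Mhat3} to constrain the sets $\widehat{F}_I(\cdot)$. That is exactly how the paper begins. Two observations about the setup: first, you do not need a case analysis to reduce to pairwise-disjoint $e,f,g$ — every vertex of $\fM_{\B}$ has degree at most one, so $\fM_{\B}$ is a matching and any three of its distinct edges are automatically disjoint. Second, your sentence ``$(e,f)$ being $\B$-switchable is implied by $(\widehat{B}_a,\widehat{B}_b)$ being $\widehat{e}_{ij},\widehat{e}_{kl}$-switchable'' is correct, but the paper uses the cleaner and stronger contrapositive that non-$\B$-switchability forces non-\emph{amenability}, which is what Lemma~\ref{lem-Mhat3}'s hypothesis actually requires.

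The real gap is in the ``crux'' step, which you flag but do not resolve. Applying Lemma~\ref{lem-Mhat3} to $(\widehat{B}_1,\widehat{B}_2)$ with $I_1=\{1,2,3,4\}$ and to $(\widehat{B}_1,\widehat{B}_3)$ with $I_2=\{1,2,5,6\}$ gives constraints on $\widehat{F}_{I_1}$ and $\widehat{F}_{I_2}$, but these concern different subgraphs $\widehat{H}_{I_1}$ and $\widehat{H}_{I_2}$ that overlap only in the stars at $v(e_1),v(e_2)$; there is \emph{no} direct incompatibility, no pigeonhole, and no role for Theorem~\ref{the-note12e34switch} here. The contradiction in the paper's proof comes from an entirely different mechanism: one performs a sequence of auxiliary base modifications in $\widehat{M}_\kappa$ — setting $\widehat{B}_1'=\widehat{B}_1-\widehat{e}_{12}+\widehat{e}_{56}$, $\widehat{B}_2'=\widehat{B}_2-\widehat{e}_{34}+\widehat{e}_{12}$, $\widehat{B}_2''=\widehat{B}_2-\widehat{e}_{34}+\widehat{e}_{56}$, $\widehat{B}_3'=\widehat{B}_3-\widehat{e}_{56}+\widehat{e}_{12}$, $\widehat{B}_3''=\widehat{B}_3-\widehat{e}_{56}+\widehat{e}_{34}$ — so as to bring $\widehat{B}_2$ and $\widehat{B}_3$ (which a priori carry \emph{no} non-switchability hypothesis between them) into interaction via Lemma~\ref{lem-Mhat3} applied to the modified pairs. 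One first shows $\widehat{F}_{I_1}(\widehat{B}_1)=\widehat{F}_{I_1}(\widehat{B}_3')=\widehat{F}_{I_1}(\widehat{B}_3'')$, and then applying Lemma~\ref{lem-Mhat3} to the pair $(\widehat{B}_3'',\widehat{B}_2'')$ (via $\widehat{B}_1$) forces $\widehat{F}_{I_1}(\widehat{B}_1)\neq\widehat{F}_{I_1}(\widehat{B}_3'')$, yielding the contradiction. Without this device of passing the edges $\widehat{e}_{12},\widehat{e}_{34},\widehat{e}_{56}$ around to create fresh two-element configurations on the same $\widehat{H}_I$, the argument does not close, and your proposal as written would stall at precisely the point you label the ``main obstacle.''
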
 

\begin{proof}
Suppose the lemma is false.  Without loss of generality, we may assume that for $i = 1,2,$ $\mathsf{x}_1\mathsf{x}_2$ and $\mathsf{x}_{2i +1}\mathsf{x}_{2(i+i)}$ are not $\B$-switchable and $\{ e_{2i-1}, e_{2i} \} \subset B_i,\ i =1,2,3.$  Let $\widehat{\Omega}_\kappa = (\widehat{G}_\kappa, \widehat{\C}_\kappa)$ be the $v$-deleted biased graph obtained from $\Omega_\kappa$ and let $\widehat{M}_\kappa = M(\widehat{\Omega}_\kappa).$
 By Lemma \ref{lem-pullbacknonempt2}, there is a sequence of bases
$\widehat{\B} = (\widehat{B}_1, \dots ,\widehat{B}_\kappa)\in \B(\widehat{M}_\kappa)$ where $\B  \in \B_{M_\kappa}(\widehat{\B})$ and $\widehat{e}_{(2j-1)2j} \in \widehat{B}_j,\ j = 1,2,3.$  Given that  $\mathsf{x}_1\mathsf{x}_2$ and $\mathsf{x}_{2i +1}\mathsf{x}_{2(i+i)}$ are not $\B$-switchable, for $i = 1,2,$ it follows that $(\widehat{B}_1, \widehat{B}_{i+1})$ is not 
$\widehat{e}, \widehat{f}$-amenable for all $\{ \widehat{e}, \widehat{f} \} \in \left\{ \{ \widehat{e}_{1(2i+2)}, \widehat{e}_{2(2i+1)} \}, \{ \widehat{e}_{1(2i+1)}, \widehat{e}_{2(2i+2)} \} \right\}.$
  Let $I_1 = \{ 1,2,3,4 \},\ I_2 = \{ 1,2,5,6\}$ and let 
$$\begin{array}{ccc}\widehat{E}_1 &= \{ \widehat{e}_{12}, \widehat{e}_{34}, \widehat{e}_{23}, \widehat{e}_{14} \}, \ \widehat{E}_2 &= \{ \widehat{e}_{12}, \widehat{e}_{34}, \widehat{e}_{13}, \widehat{e}_{24} \}\\
\widehat{E}_3 &= \{ \widehat{e}_{12}, \widehat{e}_{56}, \widehat{e}_{25}, \widehat{e}_{16} \}, \ \widehat{E}_4 &= \{ \widehat{e}_{12}, \widehat{e}_{56}, \widehat{e}_{15}, \widehat{e}_{26} \}\end{array}
.$$   Let 
$$\begin{array}{ccc}
&\widehat{B}_1' = \widehat{B}_1 - \widehat{e}_{12} + \widehat{e}_{56}&\\
&\widehat{B}_2' = \widehat{B}_2 - \widehat{e}_{34} + \widehat{e}_{12}, &\widehat{B}_2'' = \widehat{B}_2 - \widehat{e}_{34} + \widehat{e}_{56}\\
&\widehat{B}_3' = \widehat{B}_3 - \widehat{e}_{56} + \widehat{e}_{12}, &\widehat{B}_3'' = \widehat{B}_3 - \widehat{e}_{56} + \widehat{e}_{34}\end{array}.$$
 
\noindent By Lemma \ref{lem-Mhat3}, we have that $\{ \widehat{F}_{I_1}(\widehat{B}_1), \widehat{F}_{I_1}(\widehat{B}_2) \} = \{ \widehat{E}_1,  \widehat{E}_2 \}$  
and\\  $\{ \widehat{F}_{I_2}(\widehat{B}_1), \widehat{F}_{I_2}(\widehat{B}_3) \} = \{ \widehat{E}_3,  \widehat{E}_4 \}.$   Thus $\widehat{B}_i',\ i = 1,2,3$ are bases. Letting $\widehat{B}_3'$  play the role of $\widehat{B}_1$ and $\widehat{B}_1'$ play the role of $\widehat{B}_3,$  it follows by Lemma \ref{lem-Mhat3} that  $\{ \widehat{F}_{I_1}(\widehat{B}_3'), \widehat{F}_{I_1}(\widehat{B}_2) \} = \{ \widehat{E}_1,  \widehat{E}_2 \}.$  In particular, we see that $\widehat{B}_3''\in \B(\widehat{M}_\kappa).$   Since $\{ \widehat{F}_{I_1}(\widehat{B}_1), \widehat{F}_{I_1}(\widehat{B}_2) \} = \{ \widehat{E}_1, \widehat{E}_2 \},$ it follows that
$\widehat{F}_{I_1}(\widehat{B}_1) = \widehat{F}_{I_1}(\widehat{B}_3').$
On the other hand, letting $\widehat{B}_2'$ and $\widehat{B}_1'$ play the roles of $\widehat{B}_1$ and $\widehat{B}_3$, it follows by Lemma \ref{lem-Mhat3}  that  $\{ \widehat{F}_{I_2}(\widehat{B}_2'), \widehat{F}_{I_2}(\widehat{B}_1') \} = \{ \widehat{E}_3,  \widehat{E}_4 \}.$   Thus $\widehat{B}_2''\in \B(\widehat{M}_\kappa).$  Now letting  $\widehat{B}_3''$ and $\widehat{B}_2''$ play the roles of $\widehat{B}_2$ and  $\widehat{B}_3,$ we see that (by Lemma \ref{lem-Mhat3}) $\{ \widehat{F}_{I_1}(\widehat{B}_1), \widehat{F}_{I_1}(\widehat{B}_3'') \} = \{ \widehat{E}_1,  \widehat{E}_2 \}.$  However, this is impossible since $\widehat{F}_{I_1}(\widehat{B}_1) = \widehat{F}_{I_1}(\widehat{B}_3') = \widehat{F}_{I_1}(\widehat{B}_3'').$
\end{proof}

For a graph $G$, let $C_0(G)$ denote the set of components of $G$ and let $C_1(G)$ denote the set of non-trivial components of $G.$  Let $c_0(G) = |C_0(G)|,\ c_1(G) = |C_1(G)|,$ and $c_2(G) = \sum_{C\in C_1(G)}|V(C)|^2.$
Let $\B$ be a $v$-reduced sequence of bases in $M_\kappa$ and let $\mathsf{x}_i\mathsf{x}_j, \mathsf{x}_k\mathsf{x}_l \in E(\fM_{\B}).$  Suppose that $\mathsf{x}_i\mathsf{x}_j, \mathsf{x}_k\mathsf{x}_l$ are $\B$-switchable. If $\B'$ is a $v$-reduced sequence of bases where $\B \sim \B'$ and  $E(\fM_{\B'}) = (E(\fM_{\B}) \backslash \{ \mathsf{x}_i\mathsf{x}_j , \mathsf{x}_k\mathsf{x}_l \} ) \cup  \{ \mathsf{x}_i\mathsf{x}_k, \mathsf{x}_j\mathsf{x}_l\} ,$ then we write $\B \swapxyzw{x_i}{x_j}{x_k}{x_l} \B'.$  If $\B'$ is a $v$-reduced sequence of bases where $E(\fM_{\B'}) = (E(\fM_{\B}) \backslash \{ \mathsf{x}_i\mathsf{x}_j , \mathsf{x}_k\mathsf{x}_l \} ) \cup \{ \mathsf{x}_i\mathsf{x}_l, \mathsf{x}_j\mathsf{x}_k \}$ then we write $\B \crossswapxyzw{x_i}{x_j}{x_k}{x_l} \B'.$  If  $\B \swapxyzw{x_i}{x_j}{x_k}{x_l} \B'$ or $\B \crossswapxyzw{x_i}{x_j}{x_k}{x_l} \B'$, then we write $\B \exchangexyzw{x_i}{x_j}{x_k}{x_l} \B'.$  Suppose $\mathsf{x}_i\mathsf{x}_j \in E(\fM_{\B})$ and let $\mathsf{x}_k$ be an isolated vertex of 
$\fM_{\B}$.  If there exists a $v$-reduced sequence $\B'$ such that $\B \sim \B'$ and $E(\fM_{\B'}) = (E(\fM_{\B}) \backslash \{ \mathsf{x}_i\mathsf{x}_j\} ) \cup \{ \mathsf{x}_j\mathsf{x}_k\},$ then we write
$\B \singswapxyz{x_i}{x_j}{x_k} \B'$.  We write $\B \singswaptri{x_i}{x_j}{x_k} \B'$ if either $\B \singswapxyz{x_i}{x_j}{x_k} \B'$ or $\B \singswapxyz{x_j}{x_i}{x_k} \B'.$

\begin{lemma}
Let $\B = (B_1, \dots ,B_{\kappa})$ be a $v$-reduced sequence of bases of $M$ and let $\sx_i\sx_j \in E(\mathfrak{M}_{\B})$.
 If $\sx_l$ is an isolated vertex of $\fM_{\B},$
then there exists a $v$-reduced sequence $\B' = (B_1', \dots ,B_{\kappa}')$ such that $\B \sim \B'$ and
$\B \singswaptri {x_i}{x_j}{x_k} \B'$.\label{lemma3}
\end{lemma}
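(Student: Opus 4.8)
The plan is to realize $\B'$ by a single symmetric exchange. First I would locate the two relevant bases of the sequence. By the construction of $\fM_{\B}$, the edge $\sx_i\sx_j\in E(\fM_{\B})$ is contributed by a (unique) base $B_a$ of $\B$ with $B_a\cap E_{G_\kappa}(v)=\{e_i,e_j\}$; and since $\sx_l$ is an isolated vertex of $\fM_{\B}$ and each edge of $E_{G_\kappa}(v)$ belongs to some base of $\B$, there is a base $B_b$ of $\B$ with $B_b\cap E_{G_\kappa}(v)=\{e_l\}$. Note that $a\ne b$ and $l\notin\{i,j\}$. I would then apply the symmetric exchange property to $e_i\in B_a$ and the base $B_b$ to obtain $f\in B_b$ with $B_a':=B_a-e_i+f$ and $B_b':=B_b-f+e_i$ bases of $M_\kappa$, and take $\B'$ to be the sequence obtained from $\B$ by replacing $B_a$ with $B_a'$ and $B_b$ with $B_b'$, so that $\B\sim_1\B'$ and hence $\B\sim\B'$.

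The verification then splits on whether $f=e_l$. Since $e_l$ is the only edge of $B_b$ incident with $v$, in the case $f\ne e_l$ we have $f\notin E_{G_\kappa}(v)$, so $B_a'\cap E_{G_\kappa}(v)=\{e_j\}$ and $B_b'\cap E_{G_\kappa}(v)=\{e_i,e_l\}$; in the case $f=e_l$ we have $B_a'\cap E_{G_\kappa}(v)=\{e_j,e_l\}$ and $B_b'\cap E_{G_\kappa}(v)=\{e_i\}$. In either case each of the two altered bases still meets $E_{G_\kappa}(v)$ in at most two edges, so $\B'$ is again a $v$-reduced sequence of pairwise disjoint bases; and since the bases at positions other than $a$ and $b$ are untouched, reading off $\fM_{\B'}$ gives $E(\fM_{\B'})=(E(\fM_{\B})\setminus\{\sx_i\sx_j\})\cup\{\sx_j\sx_l\}$ when $f=e_l$ and $E(\fM_{\B'})=(E(\fM_{\B})\setminus\{\sx_i\sx_j\})\cup\{\sx_i\sx_l\}$ when $f\ne e_l$. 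These are exactly $\B\singswapxyz{x_i}{x_j}{x_l}\B'$ and $\B\singswapxyz{x_j}{x_i}{x_l}\B'$ respectively, so in both cases $\B\singswaptri{x_i}{x_j}{x_l}\B'$, which together with $\B\sim\B'$ is the claim.

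I do not expect a genuine obstacle here: the argument is a single use of the symmetric exchange property together with the bookkeeping of how $\fM_{\B}$ encodes the edges of $E_{G_\kappa}(v)$ lying in each base. The one point to be careful about is that the symmetric exchange property offers no control over which edge $f$ it returns, so $f=e_l$ cannot be forced; this is precisely why the conclusion must be stated with the symmetric relation $\singswaptri{x_i}{x_j}{x_l}$ rather than $\singswapxyz{x_i}{x_j}{x_l}$, since the alternative case $f\ne e_l$ yields the opposite orientation $\singswapxyz{x_j}{x_i}{x_l}$.
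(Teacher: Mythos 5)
Your proof is correct, and it takes a genuinely different route from the paper's. The paper passes to the $v$-deleted biased matroid $\widehat{M}_\kappa$: it forms $\widehat{B}_1=(B_1\setminus\{e_i,e_j\})\cup\{\widehat{e}_{ij}\}$, invokes Lemma \ref{lem-Mhat1} on $\widehat{H}_{\{i,j,k\}}$ to find that either $\widehat{B}_1-\widehat{e}_{ij}+\widehat{e}_{ik}$ or $\widehat{B}_1-\widehat{e}_{ij}+\widehat{e}_{jk}$ is a base of $\widehat{M}_\kappa$, and then pulls the chosen replacement back to $M_\kappa$ as a swap of $e_k$ into $B_1$ against $e_j$ (or $e_i$) into $B_2$. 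You instead stay entirely in $M_\kappa$, apply the symmetric exchange property once to $e_i\in B_a$ against $B_b$, and case-split on whether the returned $f\in B_b$ equals $e_l$; because $B_b\cap E_{G_\kappa}(v)=\{e_l\}$, the dichotomy $f=e_l$ versus $f\notin E_{G_\kappa}(v)$ is exhaustive, and both branches produce a $v$-reduced sequence realizing one of the two orientations allowed by $\singswaptri{x_i}{x_j}{x_l}$. Your bookkeeping of the resulting matching graph is right in both branches, and disjointness is preserved since the move is a genuine swap. Your argument is shorter and avoids the $\widehat{M}_\kappa$ machinery entirely; the trade-off is that the paper's route keeps the lemma uniform with the surrounding development, where $\widehat{M}_\kappa$ and Lemma \ref{lem-Mhat1} are already the working tools, and it also produces a swap that deterministically moves $e_k$ into the two-edge base rather than relying on whatever $f$ the exchange axiom happens to return.
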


\begin{proof}
Let $\widehat{\Omega}_\kappa = (\widehat{G}_\kappa, \widehat{\C}_\kappa)$ be the $v$-deleted linear biased graph obtained from $\Omega_\kappa$ and let $\widehat{M}_\kappa = M(\widehat{\Omega}_\kappa).$  We may assume that
$e_i, e_j \in B_1$ and $e_k \in B_2.$  Then $\widehat{B}_1 = (B_1\backslash \{ e_i, e_j \}) \cup \{ \widehat{e}_{ij} \} \in \B(\widehat{M}_\kappa).$  By Lemma \ref{lem-Mhat1}, either $\widehat{B}_1' = \widehat{B}_1 - \widehat{e}_{ij} + \widehat{e}_{ik}$ or
$\widehat{B}_1'' = \widehat{B}_1 - \widehat{e}_{ij} + \widehat{e}_{jk}$ are bases in $\widehat{M}_\kappa.$  If $\widehat{B}_1' \in \B(\widehat{M}_\kappa)$, then $B_1' = B_1-e_j + e_k$ and  $B_2' = B_2 - e_k + e_j$, are bases in $M_\kappa.$  Otherwise, if $\widehat{B}_1'' \in \B(\widehat{M}_\kappa),$ then $B_1' = B_1-e_i + e_k$ and  $B_2' = B_2 - e_k + e_i$ are bases in $M_\kappa$.  Let $\B' = (B_1', \dots ,B_\kappa'),$ where $B_i' = B_i,\ i = 3, \dots ,\kappa .$  Then $\B' \sim \B$ and $\B \singswaptri {x_i}{x_j}{x_k} \B'$.
\end{proof}

\begin{lemma}
Let $\B_i, \ i =1,2$ be $v$-reduced sequence of bases.  Let $\B_i',\ i = 1,2$ be $v$-reduced sequence of bases where

\begin{itemize}
\item[(i)]  $\B_i \sim \B_i', \ i = 1,2,$
\item[(ii)]  $\varepsilon(\B_1',\B_2')$ is maximum, and 
\item[(iii)]  $c_2(\fM_{\B_1'} \triangle \fM_{\B_2'})$ is maximum subject to (i) and (ii).
\end{itemize}
Then either $\fM_{\B_1'} = \fM_{\B_2'},$ or $\fM_{\B_1'} \triangle \fM_{\B_2'}$ has exactly one non-trivial component which is either a $4$-cycle or a $4$-path.\label{lemma3.1}
\end{lemma}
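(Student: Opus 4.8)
The plan is an extremal argument. Fix a pair $(\B_1',\B_2')$ satisfying (i)--(iii), suppose $\fM_{\B_1'}\neq\fM_{\B_2'}$, and set $D=\fM_{\B_1'}\triangle\fM_{\B_2'}$. Since $\fM_{\B_1'}$ and $\fM_{\B_2'}$ are matchings, every vertex has degree at most $2$ in $D$, and since the edges of $D$ alternate between the two matchings, each non-trivial component of $D$ is an alternating path or an even alternating cycle; moreover $|E(\fM_{\B_1'})|=|E(\fM_{\B_2'})|$ forces $|E(\fM_{\B_1'})\setminus E(\fM_{\B_2'})|=|E(\fM_{\B_2'})\setminus E(\fM_{\B_1'})|$, so any component of $D$ that is a path has an even number of edges. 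Every move below is a $\B$-switch (as in Lemma~\ref{lem-notswitchable}) or a reattachment of an isolated vertex (as in Lemma~\ref{lemma3}) performed on $\B_1'$ or on $\B_2'$; it produces a pair $(\B_1'',\B_2'')$ with $\B_i''\sim\B_i$, and the strategy is always to arrange that this pair either strictly increases $\varepsilon$ --- contradicting (ii) --- or leaves $\varepsilon$ unchanged while strictly increasing $c_2(\fM_{\B_1''}\triangle\fM_{\B_2''})$ --- contradicting (iii). The conclusion will be that $D$ consists of a single $4$-cycle or a single $4$-path.

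\emph{Step 1: $D$ has at most one non-trivial component.} Suppose $C$ and $C'$ are distinct non-trivial components; by symmetry we may assume each meets $\fM_{\B_1'}$. If neither consists of a single $\fM_{\B_1'}$-edge, then by Lemma~\ref{lem-notswitchable} (an edge of $\fM_{\B_1'}$ is non-$\B_1'$-switchable with at most one other edge, while $C'$ offers several $\fM_{\B_1'}$-edges) we may pick $\B_1'$-switchable $\fM_{\B_1'}$-edges $e\in C$ and $f\in C'$ and switch them; if instead $C$ is a single $\fM_{\B_1'}$-edge, the edge-balance of $D$ supplies a component with a vertex isolated in $\fM_{\B_1'}$, to which Lemma~\ref{lemma3} lets me reattach $C$. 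In either case $C$ and $C'$ merge into one non-trivial component, with at most one vertex becoming isolated; since the removed edges lie outside $\fM_{\B_2'}$, either a new edge lies in $\fM_{\B_2'}$ and $\varepsilon$ increases, contradicting (ii), or $\varepsilon$ is unchanged, the number of non-trivial components of $D$ drops by one, and an elementary estimate shows $c_2$ strictly increases, contradicting (iii). Hence $D$ has a unique non-trivial component $C$.

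\emph{Step 2: $|E(C)|=4$.} As $|E(C)|$ is even, it suffices to rule out $|E(C)|\geq 6$ and $|E(C)|=2$. If $|E(C)|\geq 6$, then $C$ contains at least three $\fM_{\B_1'}$-edges, so by Lemma~\ref{lem-notswitchable} it contains two consecutive $\fM_{\B_1'}$-edges $e$ and $f$ --- separated along $C$ by a single $\fM_{\B_2'}$-edge $g$ --- that are $\B_1'$-switchable. One of the two reconnections of $e,f$ recreates $g$ as a new $\fM_{\B_1'}$-edge, increasing $\varepsilon$ and contradicting (ii); the other merely flips a segment of $C$, leaving $\varepsilon$, $c_2$, and the shape of $C$ unchanged. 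Ruling out the possibility of being permanently confined to such flips, so that no reconnection at any stage ever recreates an $\fM_{\B_2'}$-edge, is where the structural description of base pairs that fail to be $\widehat{e},\widehat{f}$-switchable from Section~\ref{sec-note12e34switch} (Theorems~\ref{the-note12e34switch} and~\ref{the-Mhat5}) is used; it forces $|E(C)|\leq 4$. If $|E(C)|=2$, write $C$ as the path $\sx_i-\sx_j-\sx_k$ with $\sx_i\sx_j\in\fM_{\B_1'}$ and $\sx_j\sx_k\in\fM_{\B_2'}$, so that $\sx_k$ is isolated in $\fM_{\B_1'}$ and $\sx_i$ in $\fM_{\B_2'}$. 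By Lemma~\ref{lemma3}, $\fM_{\B_1'}$ can be moved so as to replace $\sx_i\sx_j$ by $\sx_j\sx_k$ or by $\sx_i\sx_k$, and $\fM_{\B_2'}$ can be moved so as to replace $\sx_j\sx_k$ by $\sx_i\sx_j$ or by $\sx_i\sx_k$: in the first alternative on either side the matching graphs become equal, and otherwise both sides can reach the common edge $\sx_i\sx_k$. In every case we obtain $\B_i''\sim\B_i$ with $\fM_{\B_1''}=\fM_{\B_2''}$, so $\varepsilon$ attains its maximum value $|E(\fM_{\B_1'})|$, which contradicts (ii) since $\fM_{\B_1'}\neq\fM_{\B_2'}$. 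Therefore $|E(C)|=4$, and since an alternating path or cycle with four edges is a $4$-path or a $4$-cycle, the lemma follows.

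The main obstacle is the case $|E(C)|\geq 6$: a $\B$-switch guarantees only that \emph{one} of the two reconnections is available, so one must control precisely how the available reconnections transform the unique component $C$ and show one cannot be trapped cycling through relabellings of $C$ without ever exposing a reconnection that creates a common edge. This is exactly where the detailed analysis of non-$\widehat{e},\widehat{f}$-switchable base pairs developed in Section~\ref{sec-note12e34switch} must be brought to bear. By contrast, Step~1 and the $|E(C)|=2$ subcase of Step~2 come down to careful bookkeeping with $\varepsilon$, $c_2$, and the component structure of $D$.
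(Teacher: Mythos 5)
Your high-level strategy — an extremal argument against the maximizers $\B_1',\B_2'$ using $\varepsilon$ and $c_2$, via Lemma~\ref{lem-notswitchable} and Lemma~\ref{lemma3} — matches the paper's, and your Step~1 (uniqueness of the non-trivial component) and your disposal of the $|E(C)|=2$ case are sound in outline, though less carefully broken into cases than the paper's sequence of Claims~2--5 (you don't, for instance, separately rule out odd-length paths via the parity count of isolated vertices). The real problem is the centerpiece of Step~2: ruling out $|E(C)|\geq 6$.

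You assert that after a $\B_1'$-switch of two consecutive $\fM_{\B_1'}$-edges of $C$, the ``bad'' alternative ``merely flips a segment of $C$, leaving $\varepsilon$, $c_2$, and the shape of $C$ unchanged,'' and that escaping this loop requires the structural machinery of Section~\ref{sec-note12e34switch} (Theorems~\ref{the-note12e34switch} and~\ref{the-Mhat5}). This is not correct, and it is not what the paper does. First, the swap reconnection does change the shape of $C$ (it does not leave it unchanged), and more importantly, there is a short self-contained escape: after the swap $\B_1' \swapxyzw{x_1}{x_2}{x_3}{x_4} \B_1''$ — which must be the swap rather than the cross-swap, or else $\mathsf{x}_2\mathsf{x}_3 \in E(\fM_{\B_2'})$ would become common and $\varepsilon$ increases — one then applies Lemma~\ref{lem-notswitchable} on the $\B_2'$ side to the edge $\mathsf{x}_2\mathsf{x}_3$, pairing it with one of its two $C$-neighbours in $\fM_{\B_2'}$. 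Whichever $\B_2'$-switch occurs, one of the two reconnection outcomes recreates an $\fM_{\B_1'}$-edge (increasing $\varepsilon(\B_1',\B_2'')$), and the other outcome produces the edge $\mathsf{x}_2\mathsf{x}_4$, which lies in $\fM_{\B_1''}$ after the first swap, giving $\varepsilon(\B_1'',\B_2'') > \varepsilon(\B_1',\B_2')$. That observation — that the first swap's own new edge is exactly what the second swap hands to the other side — is what closes the argument in two moves. No appeal to the $\widehat{e},\widehat{f}$-switchability results of Section~\ref{sec-note12e34switch} is made (those results concern base pairs in $\widehat{M}$ and are used much later in the paper, in the pull-back analysis, not here). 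As written, your proposal identifies the right target but leaves the decisive step unproved and points to the wrong tools for finishing it.
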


\begin{proof}
Assume $\fM_{\B_1'} \ne \fM_{\B_2'}$ and let $\fM' = \fM_{\B_1'} \triangle \fM_{\B_2'}.$  As noted before, the non-trivial components of $\fM'$ are either paths or even cycles.  By Lemma \ref{lem-notswitchable}, each edge of $\fM_{\B_i'},\ i =1,2$ is $\B_i'$-switchable with all other edges of $\fM_{\B_i'}$, with the exception of at most one edge.  

\begin{claim}
Every cycle of $\fM'$ is a $4$-cycle.\label{claim1lemma3.1}
\end{claim}  

\begin{proof}
Let $C\in C_1(\fM')$ where $C$ is a cycle.  We may assume\\ $C = \mathsf{x}_1f_1\mathsf{x}_2 \cdots \mathsf{x}_{2k}f_{2k}\mathsf{x}_1.$  We may also assume that $f_i \in E(\fM_{\B_1'})$ (resp. $f_i \in E(\fM_{\B_2'})$) when $i$ is
odd (resp. even).    Suppose $k\ge 3.$  We have that either $\mathsf{x}_1\mathsf{x}_2,  \mathsf{x}_3\mathsf{x}_4$ are $\B_1'$-switchable, or $\mathsf{x}_1\mathsf{x}_2,  \mathsf{x}_{2k-1}\mathsf{x}_{2k}$ are $\B_1'$-switchable.
Suppose the former occurs and let $\B_1''$ be a $v$-reduced sequence where $\B_1' \exchangexyzw {x_1}{x_2}{x_3}{x_4} \B_1''.$  If $\B_1' \crossswapxyzw{x_1}{x_2}{x_3}{x_4} \B_1'',$  then $\mathsf{x}_2\mathsf{x}_3 \in 
E(\B_1'', \B_2')$ and hence $\varepsilon (\B_1'', \B_2') > \varepsilon (\B_1', \B_2'),$ contradicting the choice of $\B_1'$ and $\B_2'$ (by (ii)).  It follows that $\B_1' \swapxyzw {x_1}{x_2}{x_3}{x_4} \B_1''.$  
We also have that either $\mathsf{x}_2\mathsf{x}_3,\ \mathsf{x}_4\mathsf{x}_5$ or $\mathsf{x}_2\mathsf{x_3},\ \mathsf{x}_{2k}\mathsf{x}_1$ are $\B_2'$-switchable.    Suppose $\mathsf{x}_2\mathsf{x}_3,\ \mathsf{x}_4\mathsf{x}_5$
are $\B_2'$-switchable, and let $\B_2''$ be a $v$-reduced sequence such that $\B_2' \exchangexyzw{x_2}{x_3}{x_4}{x_5} \B_2''.$  If $\B_2' \crossswapxyzw{x_2}{x_3}{x_4}{x_5} \B_2'',$  then $\mathsf{x}_3\mathsf{x}_4 \in E(\B_1', \B_2'')$ and hence
$\varepsilon(\B_1', \B_2'') > \varepsilon(\B_1', \B_2'),$ a contradiction.  If $\B_2' \swapxyzw{x_2}{x_3}{x_4}{x_5} \B_2'',$ then $\mathsf{x}_2\mathsf{x}_4 \in E(\B_1'', \B_2'')$ and hence $\varepsilon(\B_1'', \B_2'') > \varepsilon(\B_1', \B_2'),$ a contradiction.  If $\mathsf{x}_2\mathsf{x}_3, \ \mathsf{x}_{2k}\mathsf{x}_1$ are $\B_2'$-switchable, then we obtain a contradiction using similar arguments
\end{proof} 

\begin{claim}
$\fM'$ does not contain components $P$ and $Q$ which are paths where a terminal vertex of $P$ is an isolated vertex in $\fM_{\B_2'}$ and a terminal vertex of $Q$ is an isolated vertex in $\fM_{\B_1'}.$\label{claim2lemma3.1}  
\end{claim}

\begin{proof}
Suppose to the contrary that such components $P$ and $Q$ exist.  We may assume that $P = \mathsf{x}_1f_1\mathsf{x}_2 f_2 \cdots \mathsf{x}_{k-1}f_{k-1}\mathsf{x}_k$ where $\mathsf{x}_1$ is an isolated vertex in $\fM_{\B_2'}.$
It follows that $\mathsf{x}_1\mathsf{x}_2 \in E(\fM_{\B_1'}).$  Let $\mathsf{x}_l$ be the terminal vertex of $Q$ which is an isolated vertex in $\fM_{\B_1'}.$  We may assume that the length of $Q$ is at least the length of $P$ (otherwise, we can simply interchange the roles of $\B_1'$ and $\B_2'$).  By Lemma \ref{lemma3}, there exists a $v$-reduced sequence $\B_1''$ such that $\B_1' \singswaptri{x_1}{x_2}{x_l} \B_1''.$  We have
$\varepsilon(\B_1'',\B_2') = \varepsilon(\B_1', \B_2').$ However, given that the length of $Q$ is at least the length of $P,$ it follows that $c_2(\fM_{\B_1''}\triangle \fM_{\B_2'}) > c_2( \fM_{\B_1''}\triangle \fM_{\B_2'}) ,$ contradicting our choice of $\B_1'$ and $\B_2'$ (for which (iii) applies).  Thus no such paths
$P$ and $Q$ can exist.
\end{proof}

As a result of the above claim, we obtain the following:

\begin{claim}
$\fM'$ has no component which is an odd path.\label{claim3lemma3.1}
\end{claim}

\begin{proof}
Suppose to the contrary that $P$ is a component of $\fM'$ which is an odd path.   Then it is seen that both of the terminal vertices in $P$ are either isolated vertices in $\fM_{\B_1'}$ or they are isolated vertices in $\fM_{\B_2'}.$
Without loss of generality, we may assume that both terminal vertices are isolated vertices in $\fM_{\B_1'}.$  Since $\fM_{\B_1'}$ and $\fM_{\B_2'}$ have the same number of edges, they have the same number of isolated vertices.
Now it follows by a simple parity argument that there exists a component of $\fM'$ which is an odd path $Q$ whose terminal vertices are isolated vertices in $\fM_{\B_2'}.$  However, $P$ and $Q$ contradict Claim \ref{claim2lemma3.1}.  Thus no such path $P$ exists.
\end{proof}

\begin{claim}
$\fM'$ does not contain two or more components which are paths.\label{claim4lemma3.1}
\end{claim}

\begin{proof}
Suppose $P$ and $Q$ are components of $\fM'$ which are paths.  Then by Claim \ref{claim3lemma3.1}, they are even paths.  Now it is seen that one terminal vertex of $P$ belongs to $\fM_{\B_1'},$ and the other belongs to
$\fM_{\B_2'}.$  The same applies to $Q.$  However, $P$ and $Q$ contradict Claim \ref{claim2lemma3.1}.
\end{proof}

\begin{claim}
$\fM'$ has at most one non-trivial component.\label{claim5lemma3.1}
\end{claim}

\begin{proof}
Suppose to the contrary that $\fM'$ contains two non-trivial components, say $C$ and $P$.  By Claims \ref{claim1lemma3.1} and  \ref{claim4lemma3.1}, we may assume that $C$ is a $4$-cycle.  Furthermore, we may assume $C = \mathsf{x}_1f_1\mathsf{x}_2f_2\mathsf{x}_3f_3\mathsf{x}_4f_4\mathsf{x}_1$ where $\mathsf{x}_1\mathsf{x}_2, \mathsf{x}_3\mathsf{x}_4 \in E(\fM_{\B_1'})$.  Suppose $P$ is a path.  By Claim \ref{claim3lemma3.1}, $P$ is an even path.  Then one of the terminal vertices of $P$ is an isolated vertex in $\fM_{\B_1'},$ say $\mathsf{x}_l.$  By Lemma \ref{lemma3}, there exists a $v$-reduced sequence $\B_1''$ such that 
$\B_1' \singswaptri{x_1}{x_2}{x_l} \B_1''$.  However, one sees that $\varepsilon(\B_1'', \B_2') = \varepsilon(\B_1', \B_2')$ and $c_2(\fM_{\B_1''} \triangle \fM_{\B_2'}) > c_2(\fM'),$ contradicting our choice of $\B_1'$ and $\B_2'.$
Suppose $P$ is a cycle.  Then by Claim \ref{claim1lemma3.1}, $P$ is a $4$-cycle.  We may assume $\mathsf{x}_5\mathsf{x}_6 \in E(P)$ and $\mathsf{x}_5\mathsf{x}_6 \in E(\fM_{\B_1'}).$  We observe that either $\mathsf{x}_1\mathsf{x}_2, \ \mathsf{x}_5\mathsf{x}_6$ is $\B_1'$-switchable, or $\mathsf{x}_3\mathsf{x}_4, \ \mathsf{x}_5\mathsf{x}_6$ is $\B_1'$-switchable.  Without loss of generality, we may assume $\mathsf{x}_1\mathsf{x}_2, \ \mathsf{x}_5\mathsf{x}_6$ is $\B_1'$-switchable.  Then there exists a $v$-reduced sequence $\B_1''$ such that $\B_1' \exchangexyzw{x_1}{x_2}{x_5}{x_6} \B_1''$.  However, one sees that $\varepsilon(\B_1'', \B_2') = \varepsilon(\B_1', \B_2')$ and $c_2(\fM_{\B_1''} \triangle \fM_{\B_2'}) > c_2(\fM'),$ contradicting our choice of $\B_1'$ and $\B_2'.$  Thus $\fM'$ has at most one non-trivial component.
\end{proof}

\begin{claim}
If $P$ is a non-trivial component of $\fM'$ which is a path, then it is a $4$-path.\label{claim6lemma3.1}
\end{claim}

\begin{proof}
Suppose $P$ is a component of $\fM'$ which a path.  Then by Claim \ref{claim3lemma3.1}, $P$ is an even path.  We may assume $P = \mathsf{x}_1f_1\mathsf{x}_2 f_2 \cdots \mathsf{x}_{2k}f_{2k}\mathsf{x}_{2k+1}$
where $f_i \in E(\fM_{\B_1'})$ (resp. $f_i \in E(\fM_{\B_2'})$) when $i$ is odd (resp. even).  Suppose $k\ge3.$  We observe that either $\mathsf{x}_1\mathsf{x}_2,\  \mathsf{x}_3\mathsf{x}_4$ are $\B_1'$-switchable or $\mathsf{x}_3\mathsf{x}_4,\  \mathsf{x}_5\mathsf{x}_6$ are $\B_1'$-switchable.  

Suppose $\mathsf{x}_1\mathsf{x}_2,\  \mathsf{x}_3\mathsf{x}_4$ are $\B_1'$-switchable.  Then there exists a $v$-reduced sequence $\B_1''$ such that
$\B_1' \ \exchangexyzw{x_1}{x_2}{x_3}{x_4} \B_1''.$  If $\B_1' \ \crossswapxyzw{x_1}{x_2}{x_3}{x_4} \B_1'',$  then $\mathsf{x}_2\mathsf{x}_3 \in E(\B_1'', \B_2')$ and hence $\varepsilon(\B_1'', \B_2') > \varepsilon(\B_1', \B_2'),$ contradicting the choice of $\B_1'$ and $\B_2'$.  Thus $\B_1' \ \swapxyzw{x_1}{x_2}{x_3}{x_4} \B_1''.$   Observing that $\mathsf{x}_1$ is an isolated vertex of $\fM_{\B_2'},$ if follows by Lemma \ref{lemma3} that there exists a $v$-reduced sequence $\B_2''$ for which $\B_2'\  \singswaptri{x_2}{x_3}{x_1} \B_2''.$  If $\B_2' \singswapxyz{x_3}{x_2}{x_1} \B_2'',$ then $\mathsf{x}_1\mathsf{x}_2 \in E(\B_1', \B_2'')$ and hence $\varepsilon(\B_1', \B_2'') > \varepsilon(\B_1', \B_2'),$ a contradiction.  Thus $\B_2' \singswapxyz{x_2}{x_3}{x_1} \B_2''.$  However, we now see that $\mathsf{x}_1\mathsf{x}_3 \in E(\B_1'', \B_2'')$ and hence $\varepsilon(\B_1'', \B_2'') > \varepsilon(\B_1', \B_2'),$ a contradiction.

Suppose  $\mathsf{x}_3\mathsf{x}_4,\  \mathsf{x}_5\mathsf{x}_6$ are $\B_1'$-switchable.  Then there exists a $v$-reduced sequence $\B_1''$ for which $\B_1' \ \exchangexyzw{x_3}{x_4}{x_5}{x_6} \B_1''.$  If $\B_1' \ \crossswapxyzw{x_3}{x_4}{x_5}{x_6} \B_1'',$  then $\mathsf{x}_4\mathsf{x}_5 \in E(\B_1'', \B_2')$ and hence $\varepsilon(\B_1'', \B_2') > \varepsilon(\B_1', \B_2'),$ contradicting the choice of $\B_1'$ and $\B_2'$.  Thus $\B_1' \ \swapxyzw{x_3}{x_4}{x_5}{x_6} \B_1''.$  We observe that either $\mathsf{x}_2\mathsf{x}_3,\  \mathsf{x}_4\mathsf{x}_5$ are $\B_2'$-switchable or $\mathsf{x}_4\mathsf{x}_5,\  \mathsf{x}_6\mathsf{x}_7$ are $\B_2'$-switchable.  Suppose the former is true.  There there exists a $v$-reduced sequence $\B_2''$ such that $\B_2' \exchangexyzw{x_2}{x_3}{x_4}{x_5} \B_2''.$  If $\B_2' \crossswapxyzw{x_2}{x_3}{x_4}{x_5} \B_2'',$  then $\mathsf{x}_3\mathsf{x}_4 \in 
E(\B_1', \B_2''),$ and hence $\varepsilon(\B_1', \B_2'') > \varepsilon(\B_1', \B_2'),$ a contradiction.  Thus  $\B_2' \swapxyzw{x_2}{x_3}{x_4}{x_5} \B_2''.$  However, one sees that $\mathsf{x}_3\mathsf{x}_5 \in E(\B_1'', \B_2'')$ and hence $\varepsilon(\B_1'', \B_2'') > \varepsilon(\B_1', \B_2'),$ a contradiction.

If $\mathsf{x}_4\mathsf{x}_5,\  \mathsf{x}_6\mathsf{x}_7$ are $\B_2'$-switchable, then we obtain a contradiction in a similar fashion.  Thus $k\le 2.$  Suppose $k=1.$  Then $P = \mathsf{x}_1f_1\mathsf{x}_2 f_2 \mathsf{x}_{3}.$
There are $v$-reduced sequences $\B_i'',\ i = 1,2$ where $\B_1' \singswaptri {x_1}{x_2}{x_3} \B_1''$ and $\B_2' \singswaptri {x_2}{x_3}{x_1} \B_2''.$  As before, we may assume
$\B_1' \singswapxyz{x_2}{x_1}{x_3} \B_1''$ and $\B_2' \singswapxyz{x_2}{x_3}{x_1} \B_2''.$
However, we see that $\mathsf{x}_1\mathsf{x}_3 \in E(\B_1'', \B_2'')$ and hence $\varepsilon(\B_1'', \B_2'') > \varepsilon(\B_1', \B_2'),$ a contradiction.  We conclude that $k=2$ and $P$ is a $4$-path.
\end{proof}

The proof of the lemma now follows by Claims \ref{claim1lemma3.1}, \ref{claim5lemma3.1}, and \ref{claim6lemma3.1}.
\end{proof}

\subsection{Proof of Theorem \ref{theorem3.2}}

By Lemma \ref{lemma3.1}, there exist $v$-reduced sequences $\B_i', \ i =1,2$ such that $\B_i' \sim \B_i,\ i = 1,2$ and either $\fM_{\B_1'} = \fM_{\B_2'},$ or $\fM_{\B_1'} \triangle \fM_{\B_2'}$ has exactly one non-trivial component, say $P$, which is either a $4$-path or a $4$-cycle.   Suppose $P$ is a $4$-path.  We may assume $P = \mathsf{x}_1f_1\mathsf{x}_2f_2\mathsf{x}_3f_3\mathsf{x}_4f_4\mathsf{x}_5$ where $f_i \in E(\fM_{\B_1'})$ (resp. $f_i \in E(\fM_{\B_2'})$) when $i$ is odd (resp. even).  In particular, $\mathsf{x}_1$ is an isolated vertex of $\fM_{\B_2'}$ and $\mathsf{x}_5$ is an isolated vertex of $\fM_{\B_1'}.$  By Lemma \ref{lemma3}, there exist $v$-reduced sequences $\B_i'',\ i = 1,2$ such that $\B_1' \singswaptri{x_1}{x_2}{x_5}\ \B_1''$ and $\B_2' \singswaptri{x_4}{x_5}{x_1}\B_2''.$  If $\B_1' \singswapxyz{x_1}{x_2}{x_5}\ \B_1''$, then we see that $\fM_{\B_1''} \triangle \fM_{\B_2'}$ has exactly one non-trivial component which is a $4$-cycle.  Thus we may assume $\B_1' \singswapxyz{x_2}{x_1}{x_5}\ \B_1''$. Similarly, we may assume $\B_2' \singswapxyz{x_4}{x_5}{x_1}\ \B_2''$.    Now $\fM_{\B_1''} \triangle \fM_{\B_2''}$ has exactly one non-trivial path $Q = \mathsf{x}_2f_2\mathsf{x}_3f_3\mathsf{x}_4$ where $\mathsf{x}_2\mathsf{x}_3 \in E(\fM_{\B_2''})$ and $\mathsf{x}_3\mathsf{x}_4 \in E(\fM_{\B_1''}).$  Thus $\mathsf{x}_4$ is an isolated vertex in
$\fM_{\B_2''}$ and thus (by Lemma \ref{lemma3}), there exists a $v$-reduced sequence $\B_2'''$ such that $\B_2'' \singswaptri{x_2}{x_3}{x_4}\B_2'''.$  Similarly, there exists a $v$-reduced sequence $\B_1'''$ such that 
$\B_1'' \singswaptri{x_3}{x_4}{x_1} \B_1'''.$  If $\B_2'' \singswapxyz{x_2}{x_3}{x_4} \B_2''',$ then $\fM_{\B_1''} = \fM_{\B_2'''}.$  Thus we may assume $\B_2'' \singswapxyz{x_3}{x_2}{x_4} \B_2'''.$  Likewise, we may assume
$\B_1'' \singswapxyz{x_3}{x_4}{x_2}\B_1'''.$  However, we now see that $\fM_{\B_1'''} = \fM_{\B_2'''}.$
This completes the proof.

\section{Linked Sequences}\label{sec-linked}

A base $B\in \B(M_\kappa)$ is said to be $\mathbf{v}${\bf-anchored} if $B$ contains exactly one edge of $E_{G_\kappa}(v)$ (which could be a loop); otherwise, $B$ is $\mathbf{v}${\bf-unanchored}.  For all bases $B \in \B(M_\kappa),$ let $\tilde{B} = B\backslash E_{G_\kappa}(v).$
For  compatible sequences of bases $\B = (B_1, \dots ,B_\kappa)$ 
and $\B' = (B_1', \dots ,B_\kappa')$, we say that $\B$ and $\B'$ are {\bf linked} if exist $j,j'\in \{ 1, \dots, \kappa \}$ such that either $B_j = B_{j'}'$ or $B_j$ and $B_{j'}$ are $v$-anchored and $\tilde{B}_j = \tilde{B}_{j'}.$  We write $\B \bumpeq \B'$. 
We shall need the following theorem.

\begin{theorem}
Assume that $|E_{G_\kappa}(v)| = m = 2\kappa.$  Furthermore, assume that Theorem \ref{the-kappa31} holds for all $k<\kappa$ and Theorem \ref{the-main2} holds for all $k < \kappa$, if $\kappa \ge 3.$  If $\B \bumpeq \B',$ then $\B \sim \B'.$\label{the-linked}
\end{theorem}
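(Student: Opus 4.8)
The plan is to use induction on $\varepsilon(\B,\B')$, the number of common edges of the matching graphs, combined with the hypothesis that Theorems \ref{the-main2} and \ref{the-kappa31} hold below $\kappa$. First I would reduce to the case where $\B$ and $\B'$ are $v$-reduced: by Lemma \ref{lemma2} we may assume this. If $\B \bumpeq \B'$ because $B_j = B_{j'}'$ for some $j,j'$, then after relabelling we may delete this common base from both sequences. The remaining $\kappa-1$ bases of $\B$ and of $\B'$ are still compatible, and the union of their edges misses exactly the (multi)set $E_{G_\kappa}(v)$-edges of the deleted base. If that deleted base is $v$-unanchored, its two $v$-edges are removed, leaving a sub-biased-graph on $2(\kappa-1)$ edges at $v$ to which Theorem \ref{the-kappa31} applies (with $k = \kappa - 1$) after reinterpreting the leftover edge $h$; if it is $v$-anchored with $v$-edge $e$, one $v$-edge is removed, and this is exactly the extended-base-sequence situation with $d_{G}(v) = 2(\kappa-1)+1$, again handled by Theorem \ref{the-kappa31} for $k = \kappa-1$. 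In either case we conclude $(B_i)_{i\ne j} \sim (B_i')_{i\ne j'}$ and hence $\B \sim \B'$, re-inserting the common base untouched.

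The harder sub-case is when $\B \bumpeq \B'$ only via $v$-anchored bases $B_j, B_{j'}'$ with $\tilde B_j = \tilde B_{j'}'$ but $B_j \ne B_{j'}'$, so that $B_j = \tilde B_j + e$ and $B_{j'}' = \tilde B_j + e'$ with $e \ne e'$ both edges of $E_{G_\kappa}(v)$. Here I would argue that we can perform symmetric exchanges to move $e'$ into $B_j$ in place of $e$: since $\tilde B_j + e$ and $\tilde B_j + e'$ are both bases, $e$ and $e'$ together with a subset of $\tilde B_j$ form a circuit (or $e,e'$ are parallel, the easy case). We need to find, among the other bases $B_i$, one that contains $e'$; since the sequences are compatible and $e'$ appears the same number of times in $\B$ as in $\B'$, and $e'$ already appears once in $\B'$ (in $B_{j'}'$), it appears in $\B$ in some base $B_i$, $i \ne j$. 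A symmetric exchange between $B_i$ and $B_j$ swapping $e'$ for a suitable edge $f$ of $B_j$ must be arranged so that $f = e$, i.e. so that after the exchange $B_j$ becomes $\tilde B_j + e' = B_{j'}'$; this requires knowing that $e \in C(e', B_j)$, which follows from $\tilde B_j + e'$ being a base and $\tilde B_j + e$ being a base — the fundamental circuit of $e'$ in $B_j = \tilde B_j + e$ must meet $\tilde B_j$ in a set that, when $e'$ replaces it... actually the cleanest route is: $C(e', B_j) \subseteq \tilde B_j + e$ and contains $e'$; if it missed $e$ it would lie in $\tilde B_j + e'$'s complement-structure forcing $\tilde B_j + e' - (\text{something})$ issues — one shows directly $e \in C(e',B_j)$ because otherwise $C(e',B_j) \subseteq \tilde B_j$, contradicting that $\tilde B_j$ is independent. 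So the exchange $B_j \to B_j - e + e' = B_{j'}'$, $B_i \to B_i - e' + e$ is legal, reducing us to the previous ($B_j = B_{j'}'$) case.

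The main obstacle I anticipate is the bookkeeping in the first sub-case: after deleting a common base, the leftover edge and the reduced count of $v$-edges must be matched precisely to the hypotheses of Theorem \ref{the-kappa31} (which demands $d_G(u) = 2k+2$, i.e. pairwise disjoint bases plus one leftover edge incident with $u$), and one must verify the deleted-biased-graph construction and $v$-reducedness are compatible with peeling off a base. A secondary subtlety is that in the $v$-anchored sub-case one must ensure the base $B_i$ we exchange against does not already equal some $B_{i'}'$ in a way that breaks an inductive invariant — but since we only need $\B \sim \B'$ and the exchange strictly increases the number of shared bases (it creates the common base $B_{j'}'$ while possibly destroying at most a controlled amount elsewhere), an induction on $\kappa - (\text{number of common bases})$ closes the argument. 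I would organize the write-up as: (1) reduce to $v$-reduced sequences; (2) the $v$-anchored case reduces to the equal-base case by one symmetric exchange; (3) the equal-base case reduces to Theorem \ref{the-kappa31} at level $\kappa-1$ by deleting the shared base and re-reading the remainder as an extended base sequence.
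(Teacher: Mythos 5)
Your split into the two clauses of the $\bumpeq$-definition is right, but both branches have problems.

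\textbf{Case $B_j = B_{j'}'$.} After deleting the common base, $\D$ and $\D'$ are compatible sequences of $\kappa-1$ bases of $M_\kappa$, and you should simply apply Theorem~\ref{the-main2} at level $k=\kappa-1$ (together with the observation that if $\kappa=2$ then $\D=\D'$ and $\B=\B'$). Instead you try to route this through Theorem~\ref{the-kappa31}, but the hypotheses do not match: there is no ``leftover edge $h$'' (the remaining bases exactly partition $E(M_\kappa)\setminus B_j$), and the degree counts you give, $2(\kappa-1)$ and $2(\kappa-1)+1$, disagree with the $2k+2 = 2\kappa$ that Theorem~\ref{the-kappa31} requires. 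This branch does not need the extended-sequence machinery at all.

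\textbf{Case $\tilde{B}_j = \tilde{B}_{j'}'$ with different anchors $e,e'$.} Here there is a genuine gap. You locate a base $B_i\ni e'$ in $\B$ with $i\ne j$, show correctly that $e\in C(e',B_j)$, and therefore that $B_j-e+e'=\tilde{B}_j+e'\in\B(M_\kappa)$. But a symmetric exchange also requires $B_i-e'+e\in\B(M_\kappa)$, equivalently $e'\in C(e,B_i)$, and nothing in your argument gives this. The fact that $\tilde{B}_j+e$ and $\tilde{B}_j+e'$ are both bases constrains the circuit in $B_j+e'$, not the circuit in $B_i+e$; it is easy to build matroids where $B_1-x+y$ is a base yet $B_2-y+x$ is not, even with $y\in B_2\setminus B_1$. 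So the exchange you declare ``legal'' is not verified, and the induction in your last paragraph does not repair this --- it addresses a different worry (common bases being destroyed), not the missing base condition. The paper avoids the problem by deleting $B_j$ and $B_{j'}'$, forming the $v$-extended base sequences $\D^+=(\D,h)$ and $\D'^+=(\D',h')$ of $\kappa-1$ bases in $G_\kappa'=G_\kappa\setminus\tilde{B}_j$, where $d_{G_\kappa'}(v)=2\kappa=2(\kappa-1)+2$, applying Theorem~\ref{the-kappa31} at level $\kappa-1$ to get $\D^+\sim\D'^+$, and then lifting back using that $\tilde{B}_j\cup\{e\}\in\B(M_\kappa)$ for every $e\in E_{G_\kappa}(v)$, so that each EB-exchange of the extended sequences corresponds to an honest symmetric exchange of $\B$. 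Replacing your direct-exchange step by this detour is essential, not optional.
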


\begin{proof}
Suppose $\B \bumpeq \B'.$  Suppose first that for some $j,j' \in \{ 1, \dots, \kappa \}$, we have $B_j = B_{j'}'.$  Let $\D$ (resp. $\D'$) be the sequence of bases resulting from deleting $B_j$ (resp. $B_{j'}'$).  Then $\D$ and $\D'$ are seen to be compatible sequences of $\kappa-1$ bases.  Note that if $\kappa =2$, then $\D = \D'$ and hence $\B = \B'.$  Assuming  $\kappa \ge 3,$ by assumption, Theorem \ref{the-main2} holds for $k = \kappa -1$ and consequently $\D \sim \D'.$
It now follows that $\B \sim \B'.$  Suppose instead that for some $j,j' \in \{ 1, \dots, \kappa \}$, $B_j$ and $B_{j'}$ are $v$-anchored and $\tilde{B}_j = \tilde{B}_{j'}.$
Let $\{ h \} = B_j \cap E_{G_\kappa}(v)$ and $\{ h' \} = B_{j'} \cap E_{G_\kappa}(v).$  As before, let $\D$ (resp. $\D'$) be the sequence of base resulting from deleting $B_j$ (resp. $B_{j'}'$).  Now we construct $v$-extended base sequences $\D^+ = (\D, h)$ and $\D'^+ = (\D', h'),$ noting that $|d_{G_\kappa}(v)| = m = 2\kappa$ and $\D^+$ and $\D'^+$ each contain $\kappa -1$ bases.  Let $G_{\kappa}' = G_{\kappa}\backslash \tilde{B}_j.$  Then $d_{G_{\kappa}'}(v) = d_{G_\kappa}(v) = 2\kappa.$  Since Theorem \ref{the-kappa31} holds for $k= \kappa-1,$ it follows that $\D^+ \sim \D'^+.$  However, given that $B_j$ and $B_{j'}$ are $v$-anchored, we see that for all $e\in E_{G_\kappa}(v)$, $\tilde{B}_j \cup \{ e \} \in \B(M_\kappa)$ and $\tilde{B}_{j'}' \cup \{ e \} \in \B(M_{\kappa}).$  Thus it is seen that $\B \sim \B'.$
\end{proof}

\subsection{Non-Incidental Bases in $\widehat{M}_{\kappa}$}\label{subsec-nonincidental}

We say that two edges $\widehat{e}$ and $\widehat{f}$ in $\widehat{G}_\kappa$ are {\bf incidental} if $\fE(\widehat{e}) \cap \fE(\widehat{f}) \ne  \emptyset.$  Otherwise, they are said to be {\bf non-incidental}.  Extending this to sets, we say that a set $\widehat{A} \subseteq \widehat{E}$ is incidental if two of its edges are incidental;  otherwise, it is non-incidental. We say that two subsets of edges $\widehat{S} \subseteq \widehat{E}$ and $\widehat{T} \subseteq \widehat{E}$ are incidental if for some $\widehat{e} \in \widehat{S}$ and $\widehat{f} \in \widehat{T},$ $\widehat{e}$ and $\widehat{f}$ are incidental; otherwise, $\widehat{S}$ and $\widehat{T}$ are non-incidental.  Finally, we say that a sequence of disjoint bases
$\widehat{\B} = (\widehat{B}_1, \dots ,\widehat{B}_\kappa)$ in $\widehat{M}_\kappa$ is {\bf incidental} if for some distinct $i,j$ the sets $\widehat{B}_i \cap \widehat{E}$ and $\widehat{B}_j \cap \widehat{E}$ are incidental; otherwise, it is non-incidental.   It follows from Lemma \ref{lem-pullbacknonempt} and the definition of the pull-back that for any non-incidental sequence of bases $\widehat{\B}$, $\fB_{M_{\kappa}}(\widehat{\B}) \ne \emptyset.$

\begin{lemma}
Assume that $|E_{G_\kappa}(v)| = m = 2\kappa.$  Furthermore, assume that Theorem \ref{the-kappa31} holds for all $k<\kappa$ and Theorem \ref{the-main2} holds for all $k < \kappa$, if $\kappa \ge 3.$  Let $\widehat{\B} = (\widehat{B}_1, \dots ,\widehat{B}_\kappa)$ be a disjoint non-incidental sequence of bases in $\widehat{M}_{\kappa}.$
Then for all $\B, \B' \in \fB_{M_\kappa}(\widehat{\B}),$  $\B \sim \B'.$\label{lem-linkedpullback}
\end{lemma}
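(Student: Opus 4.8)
The plan is to reduce the assertion to Theorem~\ref{the-linked}. Fix $\B=(B_1,\dots,B_\kappa)$ and $\B'=(B_1',\dots,B_\kappa')$ in $\fB_{M_\kappa}(\widehat{\B})$; since writing $\B\sim\B'$ presupposes it, we may take $\B$ and $\B'$ to be compatible. By the definition of the base--set pull-back, for each $k$ we may write $B_k=\underline{\widehat B_k}\cup A_k$ and $B_k'=\underline{\widehat B_k}\cup A_k'$, where $\underline{\widehat B_k}=\widehat B_k\setminus\widehat E\subseteq E(G_\kappa-v)$ is common to the two bases and lies off $E_{G_\kappa}(v)$, while $A_k=B_k\cap E_{G_\kappa}(v)$ and $A_k'=B_k'\cap E_{G_\kappa}(v)$ consist of edges incident with $v$. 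Call $k$ \emph{anchored} if $\widehat B_k\cap\widehat E=\emptyset$ and \emph{unanchored} otherwise. If $k$ is anchored, then (by the first clause in the definition of $\fB_{M_\kappa}(\widehat B_k)$) $B_k=\widehat B_k\cup\{e\}$ and $B_k'=\widehat B_k\cup\{e'\}$ for single edges $e,e'\in E_{G_\kappa}(v)$; in particular $B_k$ and $B_k'$ are $v$-anchored in the sense of Section~\ref{sec-linked} and $\tilde B_k=\widehat B_k=\tilde B_k'$. If $k$ is unanchored, then $A_k,A_k'\subseteq S_k:=\bigcup_{\widehat e\in\widehat B_k\cap\widehat E}\fE(\widehat e)$, and since $\widehat{\B}$ is non-incidental the sets $S_k$ (over unanchored $k$) are pairwise disjoint subsets of $E_{G_\kappa}(v)$.

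First suppose some index $\ell$ is anchored. Then, as noted, $B_\ell$ and $B_\ell'$ are $v$-anchored and $\tilde B_\ell=\tilde B_\ell'$, so taking $j=j'=\ell$ shows that $\B\bumpeq\B'$. The standing hypotheses of the lemma --- namely $m=2\kappa$, and the validity of Theorems~\ref{the-main2} and~\ref{the-kappa31} for all $k<\kappa$ --- are exactly the hypotheses of Theorem~\ref{the-linked}, which therefore gives $\B\sim\B'$.

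Now suppose that every index is unanchored. Since $\B$ and $\B'$ are compatible and, for each $k$, $B_k$ and $B_k'$ agree off $E_{G_\kappa}(v)$, the two sequences use the same multiset of edges incident with $v$; in symbols $\bigcup_k A_k=\bigcup_k A_k'$ as multisets. Because $A_k,A_k'\subseteq S_k$ with the $S_k$ pairwise disjoint, every edge occurs with multiplicity at most one in this union, and intersecting the two sides with a fixed $S_k$ forces $A_k=A_k'$. Hence $B_k=B_k'$ for all $k$, so $\B=\B'$ and $\B\sim\B'$ trivially.

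The real content is the anchored case, and there the only idea is the observation that an anchored index automatically supplies a pair of $v$-anchored bases with equal non-$v$ parts, which is precisely what Theorem~\ref{the-linked} consumes; the unanchored case is forced by non-incidentality, which confines the $v$-incident parts of distinct pull-back bases to disjoint edge sets. The point that needs the most care is the compatibility bookkeeping --- checking that $\bigcup_k A_k=\bigcup_k A_k'$ --- together with the fact, established inside the proof of Theorem~\ref{the-linked}, that deleting the matched anchored base leaves genuinely compatible (extended) base sequences; both of these rest on the disjointness of the $S_k$ and on the common non-$v$ parts $\underline{\widehat B_k}$ of $B_k$ and $B_k'$.
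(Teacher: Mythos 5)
Your proof is correct and follows essentially the same two-case strategy as the paper: if some index $\ell$ is anchored, note that $B_\ell$ and $B_\ell'$ are $v$-anchored with $\tilde B_\ell=\tilde B_\ell'$, so $\B\bumpeq\B'$ and Theorem~\ref{the-linked} applies; if every index is unanchored, conclude $\B=\B'$. The only (cosmetic) differences from the paper's proof are that you do not single out $\kappa=2$ (the paper handles it directly by observing $|B_1'\setminus B_1|\le1$, whereas you route it through Theorem~\ref{the-linked}, which indeed already covers $\kappa=2$), and in the all-unanchored case you derive $A_k=A_k'$ via the pairwise disjointness of the sets $S_k$ rather than the paper's observation that $|\widehat B_i\cap\widehat E|=1$ forces each $B_i$ to be unique --- both arguments give $\B=\B'$.
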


\begin{proof}
Let $\B, \B' \in \fB_{M_\kappa}(\widehat{\B})$ where $\B = (B_1, \dots ,B_\kappa)$ and $\B' = (B_1', \dots ,B_\kappa').$    We observe that $\tilde{B}_j = \tilde{B}_j',\ j = 1, \dots ,\kappa$ since $\B, \B' \in \B_{M_\kappa}(\widehat{\B}).$  If $\kappa =2,$ then it is seen that $|B_1'\backslash B_1| \le 1$ and hence
$\B \sim_1 \B'$.  We may assume $\kappa \ge 3$ and Theorem \ref{the-main2} holds for all $k < \kappa$.  If $B_i$ is $v$-anchored for some $i,$ then $B_i'$ is also $v$-anchored and given that $\tilde{B}_i = \tilde{B}_i'$, it follows that $\B \bumpeq \B'.$  Thus $\B \sim \B'$ (by Theorem \ref{the-linked}).   Assuming that $\B$ (and hence also $\B'$) has no $v$-anchored bases, it follows that $|\widehat{B}_i \cap \widehat{E}| = 1,$ for all $i.$   It follows by the construction of the pullback that for all $i$, $B_i = B_i'$ and hence $\B = \B'.$  We conclude that  $\B \sim \B'.$
\end{proof}

\subsection{Induced Sequences of Bases}\label{subsec-induced}

Suppose $\widehat{\B} = (\widehat{B}_1, \dots ,\widehat{B}_{\kappa})$ and $\widehat{\B}' = (\widehat{B}_1', \dots ,\widehat{B}_{\kappa}')$ are disjoint, non-incidental sequences of bases in $\widehat{M}$ where $\widehat{\B}$ and $\widehat{\B}'$ need not be compatible.
Let $\B = (B_1, \dots ,B_\kappa) \in \fB_{M_\kappa}(\widehat{\B}).$   It follows by the definition of the pull-back, there is a sequence $\B' = (B_1', \dots ,B_\kappa') \in \fB_{M_\kappa}(\widehat{\B})$ where for $i = 1, \dots ,\kappa,$ $B_i' = B_i$ if $\widehat{B}_i' = \widehat{B}_i$ and $\widehat{B}_i \cap \widehat{E} \ne \emptyset.$  We refer to $\B'$ as a pull-back of $\widehat{\B}'$ {\bf induced} by $\B$ and write $\B \vdash\B'.$  We have the following useful observations.

\sms
\noindent{\bf Observation 1}:  Suppose $\widehat{\B}$ and $\widehat{\B}'$ are non-incidental sequences and $|E_{G_\kappa}(v)| = m = 2\kappa.$ Let $\B \in \fB_{M_\kappa}(\widehat{\B})$ and $\B' \in \fB_{M_\kappa}(\widehat{\B}')$ where $\B \vdash \B'.$  Suppose that Theorem \ref{the-kappa31} holds for all $k < \kappa$ and Theorem \ref{the-main2} holds for all $k < \kappa$, if $\kappa \ge 3.$
If $\widehat{\B} \sim_1 \widehat{\B}',$  then $\B \sim \B'.$  
\begin{proof}
Suppose $\kappa \ge 3.$   Then it follows that $\widehat{B}_i = \widehat{B}_i',$ for some $i.$  If $\widehat{B}_i \cap \widehat{E} \ne \emptyset$, then $B_i = B_i'$ since $\B \vdash \B'.$ Otherwise, if $\widehat{B}_i \cap \widehat{E} = \emptyset,$ then 
$B_i$ is $v$-anchored and $\tilde{B}_i = \tilde{B}_i'.$  In either case, we $\B \bumpeq \B'$ and hence $\B \sim \B'$ by Theorem \ref{the-linked}.

Suppose $\kappa =2.$ Then $\widehat{\B} = (\widehat{B}_1, \widehat{B}_2),\ \widehat{\B}' = (\widehat{B}_1', \widehat{B}_2')$ and $m =4.$  We may assume that $(\widehat{B}_1 \cup \widehat{B}_2) \cap \widehat{E} = \{ \widehat{e}_{12}, \widehat{e}_{34} \}.$  Let $I = \{ 1,2,3,4 \}$. Since $\widehat{\B} \sim_1 \widehat{\B}'$, there exists $\widehat{e} \in \widehat{B}_1$ and $\widehat{f} \in \widehat{B}_2$ such that 
$\widehat{B}_1' = \widehat{B}_1 - \widehat{e} + \widehat{f}$ and $\widehat{B}_2' = \widehat{B}_1 - \widehat{f} + \widehat{e}.$ If $\{ \widehat{e},\widehat{f} \} \cap \{ \widehat{e}_{12}, \widehat{e}_{34} \} = \emptyset,$ then it is easy to show that $\B \sim_1 \B'.$  Without loss of generality, we may assume that $\widehat{f} = \widehat{e}_{34}.$  

Suppose $\widehat{e} = \widehat{e}_{12}.$  Then $\fB_{M_\kappa}(\widehat{\B}) = \{ \B \}$ where $\B = (B_1, B_2),$ $B_1 = (\widehat{B}_1\backslash \widehat{e}_{12} ) \cup \{ e_1, e_2 \}$ and $B_2 = (\widehat{B}_2\backslash \widehat{e}_{34} ) \cup \{ e_3, e_4\}.$  Similarly, $\fB_{M_\kappa}(\widehat{\B}') = \{ \B' \}$ where $\B' = (B_1', B_2'),$ $B_1' = (\widehat{B}_1'\backslash \widehat{e}_{34} ) \cup \{ e_3, e_4 \}$ and $B_2' = (\widehat{B}_2'\backslash \widehat{e}_{12} ) \cup \{ e_1, e_2\}.$
Suppose that $(\widehat{B}_1, \widehat{B}_2)$ is $\widehat{e}', \widehat{f}'$-amenable for some $\{ \widehat{e}', \widehat{f}' \} \in \{ \{ \widehat{e}_{14}, \widehat{e}_{23} \}, \{ \widehat{e}_{13}, \widehat{e}_{24} \} \}.$  Without loss of generality, we may assume $\widehat{\B}$ is $\widehat{e}_{14}, \widehat{e}_{23}$-amenable and $\widehat{e}_{14} \in \widehat{F}_I(\widehat{B}_1)$ and $\widehat{e}_{23} \in \widehat{F}_I(\widehat{B}_2).$
Then $\B'' = (B_1'', B_2'')$ where $B_1'' = B_1 -e_2 + e_4$ and $B_2'' = B_2- e_4 + e_2$ is seen to be a base sequence in $M_\kappa.$  We also see that\\ $\B \sim_1 \B'' \sim_1 \B'$ and hence $\B \sim \B'.$  Thus we may assume that $(\widehat{B}_1, \widehat{B}_2)$ is not $\widehat{e}', \widehat{f}'$-amenable for all $\{ \widehat{e}', \widehat{f}' \} \in \{ \{ \widehat{e}_{14}, \widehat{e}_{23} \}, \{ \widehat{e}_{13}, \widehat{e}_{24} \} \}.$  By Theorem \ref{the-KotZiv},  there exists $B = \{ f_1, f_2 \} \subset B_2$ such that $A = \{ e_1, e_2 \}$ is serially-exchangeable with $B.$  We may assume that $e_1 \prec e_2$ and $f_1 \prec f_2$ are serial orderings for $A$ and $B$, respectively.  Then $\B'' = (B_1'',B_2'')$ where $B_1'' = B_1 - e_1 + f_1$ and $B_2'' = B_2 - f_1 +e_1$ is a base sequence in $M_\kappa.$ If $f_1 \in \{ e_3, e_4 \}$, Then $(\widehat{B}_1, \widehat{B}_2)$ is seen to be $\widehat{e}', \widehat{f}'$-amenable for some $\{ \widehat{e}', \widehat{f}' \} \in \{ \{ \widehat{e}_{14}, \widehat{e}_{23} \}, \{ \widehat{e}_{13}, \widehat{e}_{24} \} \},$ contradicting our assumptions. Thus $f_1 \not\in \{ e_3, e_4 \}.$  Thus $B_1''$ is $v$-anchored, and we must have that $f_2 \in \{ e_3, e_4 \}.$  Without loss of generality, we may assume $f_1 = e_3.$
Let $\B''' = (B_1''', B_2'''),$ where $B_1''' = B_1 - e_2 - e_2 + f_1 + f_2$ and $B_2''' = B_2 - f_1 -f_2 + e_1 + e_2.$  Now we see that $B_1' = B_1''' - f_1 + e_4$ and $B_2' = B_2''' -e_4 + f_1.$
Thus it follows that $\B \sim_1 \B'' \sim_1 \B''' \sim_1 \B'.$  Consequently, $\B \sim \B'.$

From the above, we may assume that $\widehat{e} \ne \widehat{e}_{12}.$  Let $\B = (B_1, B_2) \in \fB_{M_\kappa}(\B)$ and let $\B' = (B_1', B_2') \in \fB_{M_\kappa}(\B).$
Suppose $\widehat{e}_{12} \in \widehat{B}_1.$  Then $\{ \widehat{e}_{12}, \widehat{e}_{34} \} \subset \widehat{B}_1'$ and consequently,
$B_2'$ is $v$-anchored and $|B_1' \cap \{ e_1,e_2,e_3, e_4 \} | =3.$  We also have
$B_1 = (\widehat{B}_1\backslash \widehat{e}_{12} ) \cup \{ e_1, e_2 \}$ and $B_2 = (\widehat{B}_2\backslash \widehat{e}_{34} ) \cup \{ e_3, e_4\}.$  If $\{ e_1, e_2 \} \subset B_1',$ then it is easy to show that
$\B \sim_1 \B'.$  Thus we may assume that $\{e_3, e_4 \} \subset B_1'.$  In addition, we may assume that $e_1\in B_1'$ and $e_2 \in B_2'.$  Since $\widehat{e}_{34} \in \widehat{B}_1',$ it follows by Lemma \ref{lem-Mhat1} that either $\widehat{e}_{23} \in \widehat{F}_I(\widehat{B}_1')$ or $\widehat{e}_{24} \in \widehat{F}_I(\widehat{B}_1').$  Suppose the former holds, then it seen that $\B'' = (B_1'', B_2'')$ where $B_1'' = B_1' - e_4 + e_2$ and $B_2'' = B_2' - e_2 + e_4$, is a base sequence.  Moreover, it is seen that $\B \sim_1 \B'' \sim_1 \B'$ and hence $\B \sim \B'.$  A similar argument can be used if $\widehat{e}_{24} \in \widehat{F}_I(\widehat{B}_1').$   From the above, we may assume that $\widehat{e}_{12} \in \widehat{B}_2.$  But now one can use the previous argument with the roles of $\B$ and $\B'$ switched to show that $\B' \sim \B.$ 
\end{proof}

\sms
\noindent{\bf Observation 2}:  Suppose $|E_{G_\kappa}(v)| = m = 2\kappa$, and $\widehat{\B}$ and $\widehat{\B}'$ are non-incidental sequences of bases.   Let $\B \in \fB_{M_\kappa}(\widehat{\B})$ and $\B' \in \fB_{M_\kappa}(\widehat{\B}')$.  Assume that Theorem \ref{the-kappa31} holds for all $k < \kappa$, and Theorem \ref{the-main2} holds for all $k < \kappa,$ if $\kappa \ge 3.$  If $\widehat{\B}$ and $\widehat{\B}'$ share at least one common base, then $\B \sim \B'.$

\begin{proof}
Suppose $\widehat{\B}$ and $\widehat{\B}'$ share at least one common base.  Let $\B'' \in \fB_{M_\kappa}(\widehat{\B}')$ where $\B \vdash \B''.$  Then it is seen that $\B \bumpeq \B''.$  It follows by Theorem \ref{the-linked} that $\B \sim \B''.$  However, by Lemma \ref{lem-linkedpullback}, $\B'' \sim \B'.$  Thus it follows that $\B \sim \B'.$
\end{proof} 

\begin{lemma}
Assume that $|E_{G_\kappa}(v)| = m = 2\kappa$.   Furthermore, assume that Theorem \ref{the-kappa31} holds for all $k < \kappa$ and Theorem \ref{the-main2} holds for all $k < \kappa,$ if $\kappa \ge 3.$   Suppose that $\widehat{\B} = (\widehat{B}_1, \dots ,\widehat{B}_\kappa)$ and  $\widehat{\B}' = (\widehat{B}_1', \dots ,\widehat{B}_\kappa')$ are compatible, non-incidental sequences of bases in $\widehat{M}_\kappa$ where $\widehat{\B} \sim \widehat{\B}'.$  Then  $\forall \B \in \fB_{M_\kappa}(\widehat{\B})$ and $\forall \B' \in \fB_{M_\kappa}(\widehat{\B}')$ we have
$\B \sim \B'$.
 \label{lem-noninbases}
\end{lemma}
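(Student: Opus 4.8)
The plan is to chain together the single-step result (Observation 1) along a witnessing sequence of single symmetric exchanges in $\widehat{M}_\kappa$, using the ``induced pull-back'' relation $\vdash$ to carry a pull-back along at each step and Lemma \ref{lem-linkedpullback} to absorb the ambiguity in the choice of pull-back at the endpoints. Concretely, since $\widehat{\B} \sim \widehat{\B}'$, fix a sequence of non-incidental (this needs checking — see below) disjoint base sequences $\widehat{\B} = \widehat{\D}_0 \sim_1 \widehat{\D}_1 \sim_1 \cdots \sim_1 \widehat{\D}_p = \widehat{\B}'$. Given $\B \in \fB_{M_\kappa}(\widehat{\B})$, I would build pull-backs $\D_i \in \fB_{M_\kappa}(\widehat{\D}_i)$ inductively: set $\D_0 = \B$, and having constructed $\D_{i-1} \in \fB_{M_\kappa}(\widehat{\D}_{i-1})$, use the definition of the induced pull-back to choose $\D_i \in \fB_{M_\kappa}(\widehat{\D}_i)$ with $\D_{i-1} \vdash \D_i$. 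Observation 1 (whose hypotheses hold by the standing assumption that Theorems \ref{the-kappa31} and \ref{the-main2} hold for all $k < \kappa$, and $m = 2\kappa$) then gives $\D_{i-1} \sim \D_i$ for each $i$, so $\B = \D_0 \sim \D_1 \sim \cdots \sim \D_p =: \D$ with $\D \in \fB_{M_\kappa}(\widehat{\B}')$. Finally, since both $\D$ and the given $\B'$ lie in $\fB_{M_\kappa}(\widehat{\B}')$ and $\widehat{\B}'$ is non-incidental, Lemma \ref{lem-linkedpullback} gives $\D \sim \B'$, whence $\B \sim \B'$.

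I also need to handle the base case $\kappa = 2$ separately if it is not already covered: when $\kappa = 2$ we have $m = 4$, and here Observation 1 still applies directly (its proof treats $\kappa = 2$ explicitly), so the same chaining argument goes through; alternatively, one notes $|B_1' \setminus B_1| \le 1$ when $\B, \B'$ pull back from the \emph{same} $\widehat{\B}$, which is exactly what Lemma \ref{lem-linkedpullback} already packages. So the only real content beyond bookkeeping is the induction/chaining, plus the two technical verifications below.

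The main obstacle I anticipate is ensuring that the intermediate sequences $\widehat{\D}_i$ along the chosen path $\widehat{\B} \sim \widehat{\B}'$ can be taken to be non-incidental disjoint base sequences — Observation 1 and Lemma \ref{lem-linkedpullback} are both stated only for non-incidental sequences, and a priori a sequence of single symmetric exchanges witnessing $\widehat{\B} \sim \widehat{\B}'$ need not pass only through non-incidental sequences. I would address this by arguing that $\widehat{\B} \sim \widehat{\B}'$ in $\widehat{M}_\kappa$ is, by definition of $\sim$ on base sequences, witnessed by single symmetric exchanges among the bases, and that at each stage the relevant sets $\widehat{\D}_{i-1} \vdash \widehat{\D}_i$ only ever modify one pair of bases; since $\widehat{\B}$ is non-incidental and a single symmetric exchange affects only $\widehat{B}_i \cap \widehat{E}$ for two indices, one can either (a) show directly that incidentality of a pair $\widehat{B}_i \cap \widehat{E}, \widehat{B}_j \cap \widehat{E}$ cannot be created by such an exchange while keeping the sequence disjoint, or (b) invoke the fact, already noted after the definition of non-incidental sequences, that $\fB_{M_\kappa}(\widehat{\D}_i) \ne \emptyset$ so that the induced pull-back is always defined, and observe that the argument of Observation 1 in fact only requires the \emph{pair} being exchanged at step $i$ to be non-incidental, which is inherited from $\widehat{\B}$ along the chain. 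If neither route is clean, the fallback is to prove the lemma by induction on the number of single exchanges needed, invoking Observation 1 for the first step and the inductive hypothesis (together with Lemma \ref{lem-linkedpullback} to reconcile pull-back choices) for the remaining $p-1$ steps — this reduces the non-incidentality requirement to just the endpoints and the single pair touched at the first step.
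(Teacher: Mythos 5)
Your chaining plan is exactly the paper's proof of this lemma: write $\widehat{\B}\sim\widehat{\B}'$ as a chain $\widehat{\D}_1\sim_1\cdots\sim_1\widehat{\D}_p$, carry a pull-back forward along $\vdash$ one step at a time, cite Observation 1 for each $\D_i\sim\D_{i+1}$, and finish with Lemma \ref{lem-linkedpullback} to reconcile $\D_p$ with the given $\B'$. So structurally you have the argument.

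The worry you raise, however, is real, and the paper's own proof glosses over it too: nothing in the definition of $\sim$ forces the intermediate sequences $\widehat{\D}_i$ to be non-incidental, yet Observation 1 and the relation $\vdash$ (and even the non-emptiness of $\fB_{M_\kappa}(\widehat{\D}_i)$) are only guaranteed for non-incidental sequences. Unfortunately none of your three ways out actually closes this. Route (a) is false: a base of $\widehat{M}_\kappa$ may perfectly well contain two incidental edges, say $\widehat{e}_{12},\widehat{e}_{13}\in\widehat{B}_1$ — this does not make $\widehat{\B}$ incidental, since incidentality of a sequence is a \emph{cross-base} condition — and one symmetric exchange can then move $\widehat{e}_{13}$ to another base, producing an incidental intermediate. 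Route (b) misidentifies what fails: Observation 1 uses non-incidentality of the whole sequences to know $\fB_{M_\kappa}(\widehat{\D}_{i+1})\ne\emptyset$ and that $\D_i\vdash\D_{i+1}$ is even defined, not merely to control the one exchanged pair. Route (c), the induction on $p$, just pushes the problem one step in: the inductive hypothesis applied to $(\widehat{\D}_2,\dots,\widehat{\D}_p)$ again needs $\widehat{\D}_2$ non-incidental. The fact that actually saves the argument in every place the lemma is invoked is that the multiset of $\widehat{E}$-edges carried by the compatible sequences is itself \emph{pairwise} non-incidental — in Section \ref{subsec-equal} those edges are $\widehat{e}_{12},\widehat{e}_{34},\dots,\widehat{e}_{(2l-1)2l}$, and in the perturbed sequences of Proposition \ref{pro-extend3} they are $\widehat{e}_{12},\widehat{e}_{34},\widehat{e}_{56},\dots$ — so, since compatibility fixes this multiset along the entire chain, every intermediate $\widehat{\D}_i$ is automatically non-incidental no matter how the edges are redistributed. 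The clean fix is therefore to strengthen the hypothesis of the lemma to require the set $\bigcup_j(\widehat{B}_j\cap\widehat{E})$ to be pairwise non-incidental (a condition that holds in all applications), after which your argument, which is the paper's, goes through with no further work.
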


\begin{proof}
Let $\B \in \fB_{M_\kappa}(\widehat{\B})$ and let $\B' \in \fB_{M_\kappa}(\widehat{\B}').$  By assumption, we have $\widehat{\B} \sim \widehat{\B}'.$     Thus for some integer $p$, we can transform $\widehat{\B}$ into $\widehat{\B}'$ by $p$ successive symmetric exchanges.  Let $\widehat{\D}_1 = \widehat{\B}$ and for $i = 1, \dots ,p-1$, let
$\widehat{\D}_{i+1} = (\widehat{D}_{(i+1)1}, \dots ,\widehat{D}_{(i+1)\kappa})$ denote the sequence of bases obtained after the $i$'th symmetric exchange, where $\widehat{\D}_p = \widehat{\B}';$ that is, $\widehat{\D}_i \sim_1 \widehat{\D}_{i+1},\ i =1, \dots ,p-1.$  We shall show that for $i = 1, \dots ,p-1$ and for all $\D_i \in \fB_{M_\kappa}(\widehat{\D}_i),$ there exists $\D_{i+1} \in \fB_{M_\kappa}(\widehat{\D}_{i+1})$ such that $\D_i \sim \D_{i+1}.$  Assume that we have already chosen $\D_j = (D_{j1}, \dots ,D_{j\kappa}) \in \B_{M_\kappa}(\widehat{\D}_j), \ j=1, \dots,i$ where $\D_1= \B$ and $\D_j \sim \D_{j+1},\ j = 1, \dots ,i-1.$   Choose $\D_{i+1}$ such that $\D_i \vdash \D_{i+1}.$
Since $\widehat{\D}_i \sim_1 \widehat{\D}_{i+1},$ it follows by Observation 1 that $\D_i \sim \D_{i+1}.$
Continuing, we may choose $\D_i \in \fB_{M_\kappa}(\widehat{\D}_i),\ i = 1, \dots ,p$ such that $\D_1 = \B$ and $\D_i \sim \D_{i+1}, \ i=1, \dots ,p-1.$   Thus $\B \sim \D_p$.     It now follows by Lemma \ref{lem-linkedpullback} that $\B\sim \D_p \sim \B'$ and thus $\B \sim \B'.$
\end{proof}

\section{The Proof of Theorems \ref{the-main2} and \ref{the-kappa31}: Part 1}\label{sec-mainproof1}
The remainder of this paper is devoted to proving Theorems \ref{the-main2} and \ref{the-kappa31} which we will do by induction on $k + r(M).$   When $k+r(M) =3$, the theorem is clearly true.   To complete the proofs of Theorems \ref{the-main2} and \ref{the-kappa31}, it will suffice to prove the following theorem: 

\begin{theorem}
Suppose that Theorem \ref{the-main2} is true for all $k < \kappa$, if $\kappa \ge 3.$  In addition, suppose that Theorem \ref{the-kappa31} is true for all $k<\kappa.$
Then 
\begin{itemize}
\item[a)] Theorem \ref{the-main2} is true for $k=\kappa$

and
\item[b)] Theorem \ref{the-kappa31} is true for $k=\kappa.$
\end{itemize}
\label{the-inductivestep}
\end{theorem}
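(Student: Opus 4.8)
The plan is to carry out the two-part induction described in the overview, proving part a) first and then using it, together with the hypotheses, to obtain part b). For part a), fix a linear frame matroid $M = M(\Omega)$ with $r(M) = n$ and two compatible sequences $\B_1, \B_2$ of $\kappa$ bases; I want to show $\B_1 \sim \B_2$. Since Theorem \ref{the-main2} is already known for graphic matroids (Blasiak), I may assume $M$ is non-graphic and, by restricting to components, that $G$ is connected so $r(M) = |V(G)| = n$. By duplicating edges I pass to $M_\kappa = M(\Omega_\kappa)$, where the bases of $\B_i$ partition $E(M_\kappa)$; by Lemma \ref{lemma1} there is a vertex $v$ with $d_{G_\kappa}(v) \le 2\kappa$, and after further edge-duplication at $v$ I may assume $d_{G_\kappa}(v) = m = 2\kappa$ exactly (adding parallel edges at $v$ used by no base does not affect the $\sim$-relation once we observe those new edges behave like any anchored loop). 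By Lemma \ref{lemma2} I replace $\B_1, \B_2$ by $v$-reduced sequences, and by Theorem \ref{theorem3.2} I may further assume that either $\fM_{\B_1} = \fM_{\B_2}$, or $\fM_{\B_1} \triangle \fM_{\B_2}$ is a single $4$-cycle.

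Next I pass to the $v$-deleted biased graph $\widehat{\Omega}_\kappa$ and the matroid $\widehat{M}_\kappa = M(\widehat{\Omega}_\kappa)$, whose rank is $n-1$, so the inductive hypothesis on $r(M)$ applies to $\widehat{M}_\kappa$ (here I use that $k + r(\widehat{M}_\kappa) = \kappa + n - 1 < \kappa + n$). Using the matching graphs I choose, via Lemma \ref{lem-pullbacknonempt2}, sequences $\widehat{\B}_i \in$ (a pull-back preimage of) $\B_i$ in $\widehat{M}_\kappa$ with prescribed $\widehat{e}_{ij}$'s. In the case $\fM_{\B_1} = \fM_{\B_2}$, the sequences $\widehat{\B}_1, \widehat{\B}_2$ are compatible and non-incidental, so by the induction hypothesis $\widehat{\B}_1 \sim \widehat{\B}_2$; Lemma \ref{lem-noninbases} then transports this back to $\B_1 \sim \B_2$. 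In the case where $\fM_{\B_1} \triangle \fM_{\B_2}$ is a $4$-cycle, the two $\widehat{\B}_i$ are no longer compatible, so I modify them to compatible non-incidental sequences $\widehat{\B}_i'$ (changing one base pair to fix the mismatch on the four relevant edges), apply induction to get $\widehat{\B}_1' \sim \widehat{\B}_2'$ through a chain $\widehat{\D}_1' \sim_1 \cdots \sim_1 \widehat{\D}_p'$, and then pull back along this chain. The pull-back along $\sim_1$ steps is handled by Observation 1 and Lemma \ref{lem-noninbases} when all intermediate sequences are non-incidental; the obstruction is precisely when some $\widehat{\D}_j'$ fails to have a pull-back (becomes incidental). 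This is where the machinery of Sections \ref{sec-BaseChange}--\ref{sec-pullback} enters: using switchability (Theorem \ref{the-Mhat5}) and the structural dichotomy of Lemma \ref{lem-Mhat3} and Theorem \ref{the-note12e34switch}, I replace the offending portion of the chain by a \emph{perturbed} chain $\widehat{\D}_j''$ whose pull-backs $\D_j''$ exist and satisfy $\D_1'' \sim \cdots \sim \D_p''$; amenability guarantees the perturbations can be threaded together consistently, and then $\B_1 \sim \D_1'' \sim \cdots \sim \D_p'' \sim \B_2$.

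For part b), I am given $u$-extended base sequences $\B_i^+ = (\B_i, h_i)$, $i = 1,2$, each with $\kappa$ pairwise disjoint bases and $d_G(u) = 2\kappa + 2$. Here I use part a) (now available for $k = \kappa$) together with the hypothesis that Theorem \ref{the-kappa31} holds for all $k < \kappa$. The idea is to absorb the extra edge $h_i$ into a $(\kappa+1)$st base: adding a new base consisting of $h_i$ plus a spanning set avoiding it (possible since $d_G(u) = 2\kappa+2$ leaves room to make this $(\kappa+1)$st base $u$-anchored), I get compatible sequences $\widetilde{\B}_i$ of $\kappa+1$ bases in a suitable duplicated matroid, apply part a) to conclude $\widetilde{\B}_1 \sim \widetilde{\B}_2$, and then show that the symmetric exchanges in that chain can be realized as (BB) and (EB) exchanges on the extended sequences — the (EB) exchanges arising exactly from exchanges that involve the auxiliary base or the edge $h_i$. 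Keeping track of the anchored/unanchored status at $u$ throughout (via the $\tilde{B}$-notation and Theorem \ref{the-linked}) shows no exchange leaves the extended-sequence format, giving $\B_1^+ \sim \B_2^+$.

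I expect the main obstacle to be the incidental case in part a): guaranteeing that a chain $\widehat{\D}_1' \sim_1 \cdots \sim_1 \widehat{\D}_p'$ in $\widehat{M}_\kappa$ can be \emph{perturbed} into one all of whose members are non-incidental (hence have pull-backs) while preserving $\sim_1$ between consecutive pull-backs. Controlling this requires the full force of switchability/amenability — in particular Theorem \ref{the-Mhat5}, which says that along a non-switchable exchange the fundamental-completion sets $\widehat{F}_I$ either are preserved or collapse, and Theorem \ref{the-note12e34switch}, which pins down the two-unbalanced-cycles-plus-path structure forcing the bad case — so that the perturbation can be localized to the four edges $e_{i_1}, \dots, e_{i_4}$ at $v$ involved in each offending step and then patched consistently over the whole chain.
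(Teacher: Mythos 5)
Your proposal for part a) follows the paper's strategy: pass to $M_\kappa$, find a low-degree vertex $v$, $v$-reduce, equalize the matching graphs up to a single $4$-cycle, push down to $\widehat{M}_\kappa$, and pull back along the exchange chain using switchability/amenability to circumvent non-pullbackable (incidental) sequences. One detail does not match: you claim to reduce to $m = d_{G_\kappa}(v) = 2\kappa$ by adding parallel edges at $v$ that appear in no base. Such edges do not preserve the property that the bases of $\B_1,\B_2$ partition $E(M_\kappa)$, and the construction of $\widehat{G}_\kappa$ and of the pull-backs is calibrated to that partition; the paper instead handles $m < 2\kappa$ directly (the Notes in Sections \ref{subsec-equal}, \ref{subsec-Dj''}, and the analogous remark in Section \ref{sec-mainproof3}) by observing that the surplus $e_j$'s sit in anchored bases, forcing a shared base and a shortcut via Theorem \ref{the-linked}. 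Your shortcut needs a real argument, but it does not change the overall logic.

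Part b), however, is a genuinely different route and it has a gap. You propose to absorb $h_i$ into a new $(\kappa{+}1)$-st base, invoke part a) for the resulting $(\kappa{+}1)$-tuples in a suitably duplicated matroid, and then claim the exchange chain ``can be realized as (BB) and (EB) exchanges on the extended sequences.'' But the $(\mathbf{EB})$ move is extremely constrained: it must exchange $h$ with an edge $e \in B_i \cap E_G(u)$, i.e.\ only edges incident to $u$ may trade places with $h$, and $h$ never lives in a base. In contrast, once you embed $h_i$ into an auxiliary base $\widetilde{B}_{i,\kappa+1}$, part a) only guarantees a chain of arbitrary symmetric exchanges: an exchange between $\widetilde{B}_{\kappa+1}$ and some $B_j$ may replace $h$ by an element far from $u$, or may replace a filler element of $\widetilde{B}_{\kappa+1}$ rather than $h$, and neither of these corresponds to $(\mathbf{EB})$ or $(\mathbf{BB})$. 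You assert that ``keeping track of the anchored/unanchored status at $u$'' takes care of this, but the assertion is exactly what needs proof, and I see no reason the exchange chain produced by a) would cooperate. Contrast this with what the paper actually does for b): it leaves the extended-sequence format intact, uses the degree hypothesis $d_G(u) = 2\kappa + 2$ to find a \emph{second} vertex $v \ne u$ of low degree, $v$-reduces, descends to $\widehat{u}$-extended sequences in $\widehat{M}$ (where the induction hypothesis on $r(M)$ gives $\widehat{\B}_1^+ \sim \widehat{\B}_2^+$), and then pulls back the $(\mathbf{BB})/(\mathbf{EB})$ chain one step at a time with the same perturbation machinery as part a). In other words, part b) is proved by the same rank-induction as part a), not by reduction of b) to a). Your proposal would need a lemma converting arbitrary symmetric exchanges touching the auxiliary base into $(\mathbf{EB})$ exchanges at $u$; without it the argument does not close.
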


We shall start by proving a), which we will use later to prove part b). 
Let $\B_i = (B_{i1}, \dots ,B_{i\kappa})\ i = 1,2$ be $v$-reduced, compatible sequences of bases in $M_\kappa.$    
By Theorem \ref{theorem3.2}, we may assume that either $\fM_{\B_1} = \fM_{\B_2}$, or $\fM_{\B_1} \ne \fM_{\B_2}$ and $\fM_{\B_1} \triangle \fM_{\B_2}$ has exactly one non-trivial component which is a $4$-cycle.  We need to consider both cases separately, the former case being much simpler than the latter.

Let $\widehat{\Omega}_\kappa = (\widehat{G}_\kappa, \widehat{\C}_\kappa)$ be the $v$-deleted linear biased graph obtained from $\Omega_\kappa$ and let $\widehat{M}_\kappa = M(\widehat{\Omega}_\kappa).$  We shall assume that $\B_i$ contains exactly $l$ $v$-unanchored bases (and $\kappa - l$ $v$-anchored ones).  Let $\{ e_1, \dots ,e_m \}$ denote the edges incident with $v$, noting that $m = \kappa + l.$  Our strategy to show that $\B_1 \sim \B_2$ involves first reducing $\B_i,\ i = 1,2$ to compatible sequences of bases $\widehat{\B}_i,\ i = 1,2$ in $\widehat{M}_\kappa.$   Next, assuming Theorem \ref{the-main2} holds for $\widehat{M}_\kappa,$ we have $\widehat{\B}_1 \sim \widehat{\B}_2$ and thus there are sequences of bases $\widehat{\D}_i,\ i = 1, \dots ,p$ in $\widehat{M}_\kappa$ such that 
$\widehat{\B}_1 = \widehat{\D}_1 \sim_1 \widehat{\D}_2 \sim_1 \cdots \sim_1 \widehat{\D}_p = \widehat{\B}_2.$ The biggest technical hurdle in the proof is showing that, in the case where $\fM_{\B_1} \ne \fM_{\
B_2},$ each sequence $\widehat{\D}_i$, has an associated sequence of bases $\D_i$ in $M_\kappa$ where $\D_1 = \B_1,\ \D_p = \B_2,$ and $\D_i \sim \D_{i+1},\ i = 1, \dots ,p-1.$
 
 \subsection{The Proof of Theorem \ref{the-inductivestep} a): The Case  $\fM_{\B_1} = \fM_{\B_2}$}\label{subsec-equal}
 
Suppose that $\fM_{\B_1} = \fM_{\B_2} = \fM.$  For simplicity, we may assume $\fM = \{\mathsf{x}_{2j-1}\mathsf{x}_{2j}\ \big| \ j = 1, \dots ,l \}$.  By swapping pairs of bases, we may assume that $\{ e_{2j-1}, e_{2j} \} \subset B_{ij},\ i = 1,2; j = 1, \dots ,l.$   For $i = 1,2$ and $j = 1, \dots ,l$ let $\widehat{B}_{ij} = (B_{ij}\backslash \{ e_{2j-1}, e_{2j} \} ) \cup \{ \widehat{e}_{(2j-1)2j} \}.$  For $i=1,2$ and $j= l+1, \dots ,\kappa$, let $\widehat{B}_{ij} = B_{ij}\backslash E_{G_\kappa}(v).$   We observe that if there is an unbalanced cycle $C$ where $E(C) \subseteq B_{ij}$ and $\{ e_{2j-1,2j} \} \subset E(C),$ then there is a cycle $\widehat{C}$ in $\widehat{G}$ corresponding to $C$ where $E(\widehat{C}) = (E(C)\backslash \{ e_{2j-1}, e_{2j} \} ) \cup \{ \widehat{e}_{(2j-1)2j} \}.$  It follows by Lemma \ref{lem-newbiased} that $\widehat{C}$ is unbalanced since $\fE(\widehat{C}) = C.$  In particular, $\widehat{B}_{ij}$ is seen to be a base of $\widehat{M}_{\kappa}.$
It follows that $\widehat{\B}_i = (\widehat{B}_{i1}, \dots ,\widehat{B}_{i\kappa}),\ i =1,2$ are compatible, non-incidental sequences of bases in $\widehat{M}_\kappa.$  By the inductive assumption, Theorem \ref{the-main2} holds for $\widehat{M}_\kappa.$  Thus there are base sequences $\widehat{\D}_i = (\widehat{D}_{i1}, \dots ,\widehat{D}_{i\kappa}),\ i = 1, \dots ,p$ in $\widehat{M}_\kappa$,  where $\widehat{\B}_1 = \widehat{\D}_1 \sim_1 \widehat{\D}_2 \sim_1 \cdots \sim_1 \widehat{\D}_p = \widehat{\B}_2.$  For $i=1, \dots ,p$, let $\D_i = (D_{i1}, \dots ,D_{i\kappa}) \in \fB_{M_\kappa}(\widehat{\D}_i).$

\sms
\noindent {\bf Note}:  If $l < \kappa,$ then by construction of the pull-back, the edges $e_{2l+1}, \dots ,e_m$ will belong to $v$-anchored bases in $\D_i,\ i =1, \dots ,p.$  If $\kappa =2,$ then $m = \kappa + l \le 3$ and it is fairly straightforward to show that $\D_i \sim \D_{i+1}$. 
Suppose $\kappa \ge 3.$  Since $\widehat{\D}_i \sim_1 \widehat{\D}_{i+1},$ it follows that $\widehat{D}_{ij} = \widehat{D}_{(i+1)j},$ for some $j\in \{ 1, \dots ,\kappa \}.$  Assuming that $\D_i \vdash \D_{i+1},$ if $\widehat{D}_{ij} \cap \widehat{E} \ne \emptyset,$ then $D_{(i+1)j} = D_{ij}.$  If $\widehat{D}_{ij} \cap \widehat{E} = \emptyset,$ then $D_{ij}$ and $D_{(i+1)j}$ are both $v$-anchored.
By switching edges, if necessary, we may assume that $e_{2l+1} \in D_{ij} \cap D_{(i+1)j}.$  It then follows that $D_{ij} = D_{(i+1)j}.$  Let $\D_i'$ (resp. $\D_{i+1}'$) be the base sequence obtained by deleting $D_{ij}$ (resp. $D_{(i+1)j}$) from $\D_i$ (resp. $\D_{i+1}$).  Then $\D_i'$ and $\D_{i+1}'$ are compatible sequences of $\kappa -1$ bases in $M_\kappa$.  Therefore, it follows that $\D_i' \sim \D_{i+1}'$ and consequently, $\D_i \sim \D_{i+1}.$  Since this holds for $i = 1, \dots ,p-1$, it follows that $\D_1 \sim \D_p$ and hence $\B_1 \sim \B_2.$  Because of this, we may assume that $\kappa =l$ and hence $m = 2\kappa.$
 
 \sms  

Given $m=2\kappa$ (from the above note) and
$\B_i\in \fB_{M_\kappa}(\widehat{\B}_i),\ i =1,2$, it follows by Lemma \ref{lem-noninbases} that $\B_1 \sim \B_2.$  This completes the case where $\fM_{\B_1} = \fM_{\B_2}$.

\subsection{The Proof of Theorem \ref{the-inductivestep} a): The Case  $\fM_{\B_1} \ne \fM_{\B_2}$}\label{subsec-notequal}

In this section, we shall assume $\fM_{\B_1} \ne \fM_{\B_2}$ and $\fM_{\B_1} \triangle \fM_{\B_2}$ has exactly one non-trivial component which is a $4$-cycle.  Furthermore,
we will assume throughout that there are no $v$-reduced sequences of bases $\B_i',\ i = 1,2$ for which $\B_i' \sim \B_i,\ i = 1,2$ and $\fM_{\B_1'} = \fM_{\B_2'}.$ 
Without loss of generality, we may assume that $\mathsf{x}_1\mathsf{x}_2\mathsf{x}_3\mathsf{x}_4\mathsf{x}_1$ is the $4$-cycle component of $\fM_{\B_1} \triangle \fM_{\B_2}$ where $\mathsf{x}_1\mathsf{x}_2, \mathsf{x}_3\mathsf{x}_4 \in \fM_{\B_1},$ and $\mathsf{x}_1\mathsf{x}_4, \mathsf{x}_2\mathsf{x}_3 \in \fM_{\B_2}.$   If $\mathsf{x}_1\mathsf{x}_2,\ \mathsf{x}_3\mathsf{x}_4$ is $\B_1$-switchable and $\mathsf{x}_1\mathsf{x}_4,\ \mathsf{x}_2\mathsf{x}_3$ is $\B_2$-switchable, then one can find sequences of bases $\B_i',\ i = 1,2$ where $\B_i' \sim \B_i,\ i = 1,2$ and
$\fM_{\B_1'} = \fM_{\B_2'}.$  Thus we may assume that $\mathsf{x}_1\mathsf{x}_2,\ \mathsf{x}_3\mathsf{x}_4$ is not $\B_1$-switchable.  We may also assume that $\{ e_{2j-1}, e_{2j} \} \in B_{1j},$
$\{ e_{j}, e_{5-j} \} \in B_{2j},\ j = 1,2$ and $\{ e_{2j-1}, e_{2j} \} \subset B_{1j} \cap B_{2j},\ j = 3, \dots ,l.$  We define $\widehat{\B}_i = (\widehat{B}_{i1}, \dots ,\widehat{B}_{i\kappa}),\ i = 1,2$ where 

\begin{align*}
\widehat{B}_{1j} &= (B_{1j}\backslash \{ e_{2j-1}, e_{2j} \} ) \cup \{ \widehat{e}_{(2j-1)2j} \},\ j = 1,2.\\ 
\widehat{B}_{2j} &= (B_{2j}\backslash \{ e_j, e_{5-j} \} ) \cup \{ \widehat{e}_{j(5-j)} \},\ j = 1,2.\\ 
\widehat{B}_{ij} &= (B_{ij}\backslash \{ e_{2j-1}, e_{2j} \} ) \cup \{ \widehat{e}_{(2j-1)2j} \},\ i =1,2; \ j = 3,\dots ,l.\\ 
\widehat{B}_{ij} &= B_{ij}\backslash E_{G_\kappa}(v),\ i =1,2;\ j = l+1, \dots ,\kappa.\\
\end{align*}

 Let $I = \{ 1,2,3,4\}$ and let $\widehat{H} = \widehat{H}_I.$  We note that $\widehat{\B}_i,\ i = 1,2$ are not compatible sequences of bases in $\widehat{M}_\kappa$.  Also, since $\mathsf{x}_1\mathsf{x}_2, \mathsf{x}_3\mathsf{x}_4$ are not $\B_1$-switchable, it follows that $(\widehat{B}_{11}, \widehat{B}_{12})$ is not $\widehat{e}_{12}, \widehat{e}_{34}$-switchable.  
 Thus Lemma \ref{lem-Mhat3} implies that $$\{ \widehat{F}_I(\widehat{B}_{11}), \widehat{F}_I(\widehat{B}_{12}) \} = \{ \{ \widehat{e}_{12}, \widehat{e}_{34}, \widehat{e}_{14}, \widehat{e}_{23} \}, \ \{ \widehat{e}_{12}, \widehat{e}_{34}, \widehat{e}_{13}, \widehat{e}_{24} \} \}.$$  We may assume
$\widehat{F}_I(\widehat{B}_{11}) = \{ \widehat{e}_{12}, \widehat{e}_{34}, \widehat{e}_{14}, \widehat{e}_{23} \}$ and $\widehat{F}_I(\widehat{B}_{12}) = \{ \widehat{e}_{12}, \widehat{e}_{34}, \widehat{e}_{13}, \widehat{e}_{24} \}.$  We observe that $(\widehat{B}_{21}, \widehat{B}_{22})$ is not $\widehat{e}_{12}, \widehat{e}_{34}$-amenable as (by assumption) there is no $v$-reduced sequence $\B_2'$ for which $\B_2 \sim \B_2'$ and $\fM_{\B_1} = \fM_{\B_2'}.$  

\subsubsection{Defining Compatible Sequences $\widehat{\B}_1'$ and $\widehat{\B}_2'$}\label{subsubsec-B1'B2'}

Our first task is to alter the sequences $\widehat{\B}_i,\ i =1,2$ slightly so that they become compatible sequences.  We shall need the following simple lemma.

\begin{lemma}
There exists $j\in \{ 1,2 \}$ such that 
\begin{equation}  \widehat{F}_I(\widehat{B}_{11}) \cap \widehat{F}_I(\widehat{B}_{2j}) \cap \{ \widehat{e}_{14}, \widehat{e}_{23} \} \ne \emptyset \label{eqn1}\end{equation}
and
\begin{equation}  \widehat{F}_I(\widehat{B}_{12}) \cap \widehat{F}_I(\widehat{B}_{2(3-j)}) \cap \{ \widehat{e}_{13}, \widehat{e}_{24} \}\ne \emptyset. \label{eqn2}\end{equation}
\label{lem-efhat}
\end{lemma}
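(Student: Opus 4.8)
The plan is to argue by contradiction, assuming that for \emph{both} $j=1$ and $j=2$ at least one of \eqref{eqn1}, \eqref{eqn2} fails, and then to derive a contradiction with one of the structural facts already established — most likely with the non-switchability of $(\widehat{B}_{11},\widehat{B}_{12})$, or with the assumption that $(\widehat{B}_{21},\widehat{B}_{22})$ is not $\widehat{e}_{12},\widehat{e}_{34}$-amenable, or with the hypothesis that $\B_2$ cannot be moved so that $\fM_{\B_2'}=\fM_{\B_1}$. First I would record the data we already have: $\widehat{F}_I(\widehat{B}_{11})=\{\widehat{e}_{12},\widehat{e}_{34},\widehat{e}_{14},\widehat{e}_{23}\}$ and $\widehat{F}_I(\widehat{B}_{12})=\{\widehat{e}_{12},\widehat{e}_{34},\widehat{e}_{13},\widehat{e}_{24}\}$, so the two intersections in \eqref{eqn1}, \eqref{eqn2} simplify to $\widehat{F}_I(\widehat{B}_{2j})\cap\{\widehat{e}_{14},\widehat{e}_{23}\}\ne\emptyset$ and $\widehat{F}_I(\widehat{B}_{2(3-j)})\cap\{\widehat{e}_{13},\widehat{e}_{24}\}\ne\emptyset$ respectively. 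Thus the statement to prove is purely about the two sets $\widehat{F}_I(\widehat{B}_{21})$ and $\widehat{F}_I(\widehat{B}_{22})$: we must show that one of them meets $\{\widehat{e}_{14},\widehat{e}_{23}\}$ while the other meets $\{\widehat{e}_{13},\widehat{e}_{24}\}$.

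Next I would apply Lemma \ref{lem-Mhat2} to each of $\widehat{B}_{21}$ and $\widehat{B}_{22}$ (after checking the hypothesis $|\widehat{B}_{2i}\cap E(\widehat{H})|=1$, which holds since $\widehat{B}_{2i}\cap E(\widehat{H})=\{\widehat{e}_{i(5-i)}\}$): each is either $I$-cyclic or $I$-singular, and moreover the $I$-cyclic case forces $\widehat{F}_I(\widehat{B}_{2i})$ to contain a full $4$-cycle's worth of edges, hence to meet both $\{\widehat{e}_{14},\widehat{e}_{23}\}$ and $\{\widehat{e}_{13},\widehat{e}_{24}\}$. So if either $\widehat{B}_{21}$ or $\widehat{B}_{22}$ is $I$-cyclic, that one alone already meets both pairs and the conclusion is immediate (choose $j$ so the cyclic base handles whichever pair the other base happens to miss, or note both choices work). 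The remaining case is that both $\widehat{B}_{21}$ and $\widehat{B}_{22}$ are $I$-singular. Here $\widehat{F}_I(\widehat{B}_{21})=\{\widehat{e}_{aj}\mid j\in I\setminus\{a\}\}$ for some $a\in I$ containing $\widehat{e}_{14}$ (so $a\in\{1,4\}$), and similarly $\widehat{F}_I(\widehat{B}_{22})=\{\widehat{e}_{bj}\mid j\in I\setminus\{b\}\}$ for some $b\in I$ containing $\widehat{e}_{23}$ (so $b\in\{2,3\}$). A short case analysis over $(a,b)\in\{1,4\}\times\{2,3\}$ shows that in every case one of $\widehat{F}_I(\widehat{B}_{21})$, $\widehat{F}_I(\widehat{B}_{22})$ meets $\{\widehat{e}_{14},\widehat{e}_{23}\}$ and the other meets $\{\widehat{e}_{13},\widehat{e}_{24}\}$: e.g. if $a=1$ then $\widehat{F}_I(\widehat{B}_{21})\ni\widehat{e}_{13}$, and if $b=2$ then $\widehat{F}_I(\widehat{B}_{22})\ni\widehat{e}_{23}$, so $j=2$ works; the other three subcases are handled the same way, and in the degenerate subcases where a single base meets both pairs we get even more freedom in the choice of $j$.

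The main obstacle I anticipate is not the combinatorics above — which is essentially a finite check once Lemma \ref{lem-Mhat2} has been invoked — but rather verifying the \emph{hypotheses} needed to apply that lemma, namely that $\widehat{B}_{21},\widehat{B}_{22}$ are genuine bases of $\widehat{M}_\kappa$ with exactly one edge in $E(\widehat{H})$; this uses Lemma \ref{lem-newbiased} to guarantee that the unbalanced cycles of $B_{2j}$ survive in $\widehat{M}_\kappa$ and that the stem/petal structure is as claimed. A secondary subtlety is making sure the "both $I$-singular" case does not already contradict one of the standing assumptions (e.g. the non-amenability of $(\widehat{B}_{21},\widehat{B}_{22})$ via Lemma \ref{lem-Mhat3}); if it did, the lemma would be vacuously fine, but I expect instead that the plain case analysis goes through directly without needing Lemma \ref{lem-Mhat3} at all, which keeps the proof short. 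I would present the argument as: reduce to the two sets $\widehat{F}_I(\widehat{B}_{2i})$, dispatch the $I$-cyclic case in one line, then run the $2\times2$ table for the $I$-singular case.
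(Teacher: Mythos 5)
There is a genuine gap in your dispatch of the $I$-cyclic case. You claim that ``the $I$-cyclic case forces $\widehat{F}_I(\widehat{B}_{2i})$ to contain a full $4$-cycle's worth of edges, hence to meet both $\{\widehat{e}_{14},\widehat{e}_{23}\}$ and $\{\widehat{e}_{13},\widehat{e}_{24}\}$,'' but this is false. The three $4$-cycles on $\{1,2,3,4\}$ are $\{\widehat{e}_{12},\widehat{e}_{23},\widehat{e}_{34},\widehat{e}_{14}\}$, $\{\widehat{e}_{12},\widehat{e}_{24},\widehat{e}_{34},\widehat{e}_{13}\}$, and $\{\widehat{e}_{13},\widehat{e}_{23},\widehat{e}_{24},\widehat{e}_{14}\}$, and each one is \emph{disjoint} from exactly one of the three perfect matchings of $K_4$: the first misses $\{\widehat{e}_{13},\widehat{e}_{24}\}$ entirely. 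Since $\widehat{e}_{14}\in\widehat{B}_{21}$ and $\widehat{e}_{23}\in\widehat{B}_{22}$, the first $4$-cycle is a legitimate candidate for both, so the configuration $\widehat{F}_I(\widehat{B}_{21})=\widehat{F}_I(\widehat{B}_{22})=\{\widehat{e}_{12},\widehat{e}_{23},\widehat{e}_{34},\widehat{e}_{14}\}$ (both strictly $I$-cyclic) is not excluded by Lemma \ref{lem-Mhat2} alone, and in that configuration neither $\widehat{F}_I$-set meets $\{\widehat{e}_{13},\widehat{e}_{24}\}$, so both \eqref{eqn1}--\eqref{eqn2} choices of $j$ fail. (Your observation that the lemma collapses to ``some $\widehat{F}_I(\widehat{B}_{2i})$ meets $\{\widehat{e}_{13},\widehat{e}_{24}\}$'' is correct and makes this failure transparent.)

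The only thing that kills this bad case is the standing hypothesis, stated just before the lemma, that $(\widehat{B}_{21},\widehat{B}_{22})$ is not $\widehat{e}_{12},\widehat{e}_{34}$-amenable: in the bad configuration $\widehat{e}_{12}\in\widehat{F}_I(\widehat{B}_{21})$ and $\widehat{e}_{34}\in\widehat{F}_I(\widehat{B}_{22})$ are non-incident, so the pair would be $\widehat{e}_{12},\widehat{e}_{34}$-amenable, contradiction. You anticipated this as a ``secondary subtlety'' and guessed the plain case analysis goes through without Lemma \ref{lem-Mhat3} or the non-amenability hypothesis; it does not. The paper's proof invokes exactly this hypothesis (packaged via Lemma \ref{lem-Mhat3}) to pin down the $\widehat{F}_I$-sets in the sub-case where $\widehat{e}_{13},\widehat{e}_{24}$-amenability also fails, which is precisely the sub-case your argument cannot handle on its own. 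Your treatment of the $I$-singular case is fine (every star on $\{1,2,3,4\}$ meets both matchings), but the cyclic branch needs to explicitly rule out the $\{\widehat{e}_{12},\widehat{e}_{23},\widehat{e}_{34},\widehat{e}_{14}\}$ possibility using non-amenability before the finite check closes.
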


\begin{proof}
Suppose first that $(\widehat{B}_{21}, \widehat{B}_{22})$ is not $\widehat{e}_{13}, \widehat{e}_{24}$-amenable.  Then (noting that $(\widehat{B}_{21}, \widehat{B}_{22})$ is not $\widehat{e}_{12}, \widehat{e}_{34}$-amenable) Lemma \ref{lem-Mhat3} implies that 
$$\{\widehat{F}_I(\widehat{B}_{21}), \widehat{F}_I(\widehat{B}_{22}) \} = \{ \{ \widehat{e}_{14}, \widehat{e}_{23}, \widehat{e}_{12}, \widehat{e}_{34} \}, \{ \widehat{e}_{14}, \widehat{e}_{23}, \widehat{e}_{13}, \widehat{e}_{24} \} \}.$$
If $\widehat{F}_I(\widehat{B}_{21}) = \{ \widehat{e}_{14}, \widehat{e}_{23}, \widehat{e}_{12}, \widehat{e}_{34} \}$ and $\widehat{F}_I(\widehat{B}_{22}) = \{ \widehat{e}_{14}, \widehat{e}_{23}, \widehat{e}_{13}, \widehat{e}_{24} \}$,
then (\ref{eqn1}) and (\ref{eqn2}) hold for $j=1.$  Otherwise, if $\widehat{F}_I(\widehat{B}_{21}) = \{ \widehat{e}_{14}, \widehat{e}_{23}, \widehat{e}_{13}, \widehat{e}_{24} \}$ and $\widehat{F}_I(\widehat{B}_{22}) = \{ \widehat{e}_{14}, \widehat{e}_{23}, \widehat{e}_{12}, \widehat{e}_{34} \}$,
then (\ref{eqn1}) and (\ref{eqn2}) hold for $j=2.$  Suppose now that $(\widehat{B}_{21}, \widehat{B}_{22})$ is $\widehat{e}_{13}, \widehat{e}_{24}$-amenable.   Thus either $\widehat{e}_{13} \in \widehat{B}_{21}$ and $\widehat{e}_{24} \in \widehat{B}_{22},$ or $\widehat{e}_{24} \in \widehat{B}_{21}$ and $\widehat{e}_{13} \in \widehat{B}_{22}.$ 
In both cases, (\ref{eqn1}) and (\ref{eqn2}) hold when $j=1$.
\end{proof}

\subsubsection{The Sequences $\B_i',\ i = 1,2$}

By Lemma \ref{lem-efhat}, we may assume $\widehat{e}_{23} \in \widehat{F}_I(\widehat{B}_{11}) \cap \widehat{F}_I(\widehat{B}_{21})$ and $\widehat{e}_{24} \in \widehat{F}_I(\widehat{B}_{12}) \cap \widehat{F}_I(\widehat{B}_{22}).$ Let

\begin{itemize} 
\item $\widehat{B}_{11}' = \widehat{B}_{11} - \widehat{e}_{12} + \widehat{e}_{23}$, $\widehat{B}_{12}' = \widehat{B}_{12} - \widehat{e}_{34} + \widehat{e}_{24},$ and $\widehat{B}_{1j}' = \widehat{B}_{1j},\ j = 3, \dots ,\kappa.$
\item $\widehat{B}_{21}' = \widehat{B}_{21} - \widehat{e}_{14} + \widehat{e}_{23}$, $\widehat{B}_{22}' = \widehat{B}_{22} - \widehat{e}_{23} + \widehat{e}_{24},$ and $\widehat{B}_{2j}' = \widehat{B}_{1j},\ j = 3, \dots ,\kappa.$
\item $\widehat{\B}_i' = (\widehat{B}_{i1}', \dots ,\widehat{B}_{i\kappa}'),\ i = 1,2$ 
\end{itemize}

We observe that $\widehat{\B}_i',\ i = 1,2$ are compatible sequences of bases in $\widehat{M}_{\kappa}$ and $\widehat{F}_I(\widehat{B}_{ij}') = \widehat{F}_I(\widehat{B}_{ij}),$ $\forall i,j.$

\subsubsection{The Sequences $\widehat{\D}_i',\ i = 1, \dots ,p$}\label{subsubsec-Di'}

Given that Theorem \ref{the-main2} holds for $\widehat{M}_\kappa,$ we have $\widehat{\B}_1' \sim \widehat{\B}_2'$ and thus there are sequences of bases $\widehat{\D}_i',\ i = 1, \dots ,p$ in $\widehat{M}_\kappa$ such that $\widehat{\B}_1' =\widehat{\D}_1' \sim_1 \widehat{\D}_2' \sim_1 \cdots \sim_1 \widehat{\D}_p' = \widehat{\B}_2'.$
Our immediate task is to show that for each sequence $\widehat{\D}_i'$, one can associate a sequence of bases in $M_\kappa.$ 
The biggest technical problem here is that the base sequence $\widehat{\D}_i'$ may be incidental since its bases use the edges $\widehat{e}_{23}$ and $\widehat{e}_{24}$ which are incidental.  As a result, it is possible that
$\fB_{M_\kappa}(\widehat{\D}_i') = \emptyset.$  To get around this problem, we shall look {\it perturbations} of the sequence $\widehat{\D}_i'.$

\section{Perturbations, Amenability, and Switchability of 
Base Sequences in $\widehat{M}_\kappa$}\label{subsubsec-perturbation}

Let $\widehat{\D} = (\widehat{D}_1, \dots ,\widehat{D}_\kappa)$ be a sequence of bases in $\widehat{M}_\kappa$ which is compatible with $\widehat{\B}_i',\ i = 1,2.$ 

\subsection{Split and Fused Base Sequences}\label{subsec-splitfused}
 
We say that $\widehat{\D}$ is {\bf split} if $\widehat{e}_{23}$ and $\widehat{e}_{24}$ belong to different bases in $\widehat{\D};$  otherwise, we say that $\widehat{\D}$ is {\bf fused}.   Note that if $\widehat{\D}$ is fused, then it is non-incidental and hence $\fB_{M_\kappa}(\widehat{\D}) \ne \emptyset.$

\subsection{$\widehat{e}, \widehat{f}$-Amenable, Switchable  Sequences}\label{subsec-efhatamenable}
 
Let $I = \{ 1,2,3,4 \},\ \widehat{H} = \widehat{H}_I,$ and let  $\widehat{e}, \widehat{f} \in \widehat{E} = \{ \widehat{e}_{ij}\ \big| \ i,j \in \{ 1, \dots ,m \}$ be distinct edges.   
Assume  $(\widehat{D}_{j_1} \cup \widehat{D}_{j_2}) \cap \widehat{E} = \{ \widehat{e}_{23}, \widehat{e}_{24} \}.$ Let $\widehat{\D}' = (\widehat{D}_1', \dots ,\widehat{D}_\kappa')$ where $\widehat{D}_j' = \widehat{D}_j,\ \forall j\in \{ 1,2, \dots ,\kappa\} \backslash \{ j_1, j_2 \}$, and $(\widehat{D}_{j_1}', \widehat{D}_{j_2}')$ is $\widehat{H}$-viable. 

If $(\widehat{D}_{j_1}, \widehat{D}_{j_2}) \xrightarrow[\widehat{e}, \widehat{f}]{\widehat{e}_{23}, \widehat{e}_{24}} (\widehat{D}_{j_1}', \widehat{D}_{j_2}'),$  then we refer to $\widehat{\D}'$ as a $\mathbf{\widehat{e}, \widehat{f}}${\bf- perturbation} of $\widehat{\D}.$

If $(\widehat{D}_{j_1}, \widehat{D}_{j_2}) \curvearrowright (\widehat{D}_{j_1}', \widehat{D}_{j_2}')$ and $(\widehat{D}_{j_1}' \cup \widehat{D}_{j_2}') \cap \widehat{E} \in \left\{ \{ \widehat{e}_{13}, \widehat{e}_{24} \}, \{ \{ \widehat{e}_{14}, \widehat{e}_{23} \} \right\}$, then $\widehat{\D}'$ is called a $\mathbf{\widehat{e}_{12}, \widehat{e}_{34}}${\bf-switch} of $\widehat{\D}$ and we say that $\widehat{\D}$ is $\mathbf{\widehat{e}_{12}, \widehat{e}_{34}}${\bf-switchable}.

%

As a first step towards completing the proof of Theorem \ref{the-inductivestep} a), we shall prove the following theorem:

\begin{theorem} 
Suppose that for some $i\in \{ 1, \dots , \kappa \}$, $\widehat{\D}_i'$ is not 
$\widehat{e}_{12}, \widehat{e}_{34}$-switchable but either fused or $\widehat{e}_{12}, \widehat{e}_{34}$-amenable.  Then one of following hold:
\begin{itemize}
\item $\widehat{\D}_{i+1}'$ is $\widehat{e}_{12}, \widehat{e}_{34}$-switchable.
\item $\widehat{\D}_{i+1}'$ is fused and not $\widehat{e}_{12}, \widehat{e}_{34}$-switchable.
\item $\widehat{\D}_{i+1}'$ is split, not $\widehat{e}_{12}, \widehat{e}_{34}$-switchable, but $\widehat{e}_{12}, \widehat{e}_{34}$-amenable.
\end{itemize}\label{the-preproextend2}
\end{theorem}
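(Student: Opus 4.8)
The plan is to analyze the single symmetric exchange $\widehat{\D}_i' \sim_1 \widehat{\D}_{i+1}'$ according to which bases of $\widehat{\D}_i'$ are involved and how they interact with the edges $\widehat{e}_{23}, \widehat{e}_{24}$ (in the fused case, both lying in one base $\widehat{D}$) or with $\widehat{e}_{23}$ and $\widehat{e}_{24}$ separately (in the split-but-amenable case). Since $\widehat{\D}_i'$ is compatible with $\widehat{\B}_i'$, the only edges of $\widehat{E}$ appearing among the relevant bases are those of $E(\widehat{H}_I)$ together with the fixed edges $\widehat{e}_{(2j-1)2j}$, $j \ge 3$, which play no role here; so we may localize attention to the one or two bases of $\widehat{\D}_i'$ meeting $E(\widehat{H}_I)$. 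I would let $j_1, j_2$ (possibly equal) index those bases, write $\widehat{D}_{i+1}' = \widehat{D}_i' - \widehat{e} + \widehat{f}$, and split into cases by whether $\{\widehat{e},\widehat{f}\}$ meets the bases indexed by $j_1,j_2$ and whether it meets $E(\widehat{H}_I)$.

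\textbf{Key steps.} First, if the symmetric exchange producing $\widehat{\D}_{i+1}'$ involves neither of the bases $\widehat{D}_{ij_1}', \widehat{D}_{ij_2}'$, then $\widehat{\D}_{i+1}'$ has the same two bases meeting $E(\widehat{H}_I)$ as $\widehat{\D}_i'$, so it retains whichever property ($\widehat{e}_{12},\widehat{e}_{34}$-amenable or fused, and not switchable) $\widehat{\D}_i'$ had, landing in the second or third bullet. Second, suppose the exchange involves exactly one such base, say $\widehat{D}_{ij_1}'$, exchanging $\widehat{e} \in \widehat{D}_{ij_1}'$ for $\widehat{f}$ in some other base $\widehat{D}_{ik}'$ with $k \ne j_2$. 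Here the key is to control $\widehat{F}_I$ of the modified base: I would invoke Lemma \ref{lem-Mhat3.1} to track how $\widehat{F}_I(\widehat{D}_{ij_1}')$ changes, and Theorem \ref{the-Mhat5} (applied to the pair $(\widehat{D}_{ij_1}', \widehat{D}_{ij_2}')$, which is not $\widehat{e}_{12},\widehat{e}_{34}$-switchable by hypothesis together with the structural dichotomy of Lemma \ref{lem-Mhat3} and Theorem \ref{the-note12e34switch}) to conclude that either the pair $\{\widehat{F}_I(\widehat{D}_{(i+1)j_1}'), \widehat{F}_I(\widehat{D}_{(i+1)j_2}')\}$ equals the old pair — keeping us non-switchable, and one checks the fused/amenable status is preserved or the sequence becomes switchable — or one of the $\widehat{F}_I$'s becomes empty, which forces $\widehat{e}_{23}$ or $\widehat{e}_{24}$ to have moved so that the two designated edges now lie in a single base, i.e. $\widehat{\D}_{i+1}'$ is fused; then re-examining switchability via Lemma \ref{lem-Mhat3} gives the first or second bullet. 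Third, the case where the exchange involves both $\widehat{D}_{ij_1}'$ and $\widehat{D}_{ij_2}'$ (only possible when $\widehat{\D}_i'$ is split, so $j_1 \ne j_2$): this is exactly a single symmetric exchange between the pair meeting $E(\widehat{H}_I)$, and Theorem \ref{the-Mhat5} directly gives that either $\{\widehat{F}_1',\widehat{F}_2'\} = \{\widehat{F}_1,\widehat{F}_2\}$ (preserving non-switchability; amenability of the new pair is then checked by the definition, or if it fails one shows switchability, recovering the first or third bullet) or both $\widehat{F}_i' = \emptyset$, in which case the single exchanged edge sits in whichever base received $\widehat{e}$ while the other designated edge stays put — so $\widehat{\D}_{i+1}'$ is fused, and Lemma \ref{lem-Mhat3} decides between the first and second bullets.

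\textbf{Main obstacle.} The delicate point will be the middle case: when one base meeting $E(\widehat{H}_I)$ is perturbed by an exchange with a third base, I must argue carefully that the hypothesis ``$\widehat{\D}_i'$ is not $\widehat{e}_{12},\widehat{e}_{34}$-switchable'' genuinely transfers to a statement about the pair $(\widehat{D}_{ij_1}', \widehat{D}_{ij_2}')$ to which Theorem \ref{the-Mhat5} applies — this requires unwinding the definition of $\curvearrowright$ and checking that a putative switch of $\widehat{\D}_{i+1}'$ could be pulled back to a switch of $\widehat{\D}_i'$, using the $\curvearrowright_1$-composition structure. I expect the bulk of the work to be this bookkeeping of $\widehat{H}_I$-viability across the composed operations, together with verifying in each surviving branch that the ``fused'' and ``$\widehat{e}_{12},\widehat{e}_{34}$-amenable'' labels attach correctly; the matroid-theoretic inputs (Lemmas \ref{lem-Mhat3}, \ref{lem-Mhat3.1}, Theorems \ref{the-Mhat5}, \ref{the-note12e34switch}) should do the real lifting once the case division is set up.
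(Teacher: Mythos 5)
Your high-level case split (the symmetric exchange touches neither, exactly one, or both of the bases meeting $E(\widehat{H}_I)$) is the right one, and it matches how the paper structures things: the paper reduces Theorem \ref{the-preproextend2} to Proposition \ref{pro-extend2}, whose proof walks through exactly these sub-cases according to $|\{j_1,j_2\}\cap\{1,2\}|$. However, your central invocation of Theorem \ref{the-Mhat5} in the ``exactly one base touched'' case does not go through, and this is precisely where the real work lives. When $\widehat{\D}_i'$ is fused, the single base $\widehat{D}_{ij_1}'$ meets $E(\widehat{H}_I)$ in \emph{two} edges, so $\widehat{F}_I(\widehat{D}_{ij_1}') = \emptyset$ (recall $\widehat{F}_I(\widehat{B})\ne\emptyset$ iff $|\widehat{B}\cap E(\widehat{H}_I)|=1$); thus the hypothesis ``$\widehat{e}_{12}\in\widehat{F}_1$, $\widehat{e}_{34}\in\widehat{F}_2$'' of Theorem \ref{the-Mhat5} simply fails, and there is no pair to which that theorem applies. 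Even in the split version of this sub-case, the exchange at hand is between $\widehat{D}_{ij_1}'$ and a \emph{third} base $\widehat{D}_{ik}'$, not between $\widehat{D}_{ij_1}'$ and $\widehat{D}_{ij_2}'$, so Theorem \ref{the-Mhat5} (which concerns a single symmetric exchange \emph{between} the two $\widehat{H}$-bases) is the wrong tool; the paper instead uses that the untouched $\widehat{H}$-base keeps its $\widehat{F}_I$ and then forces the other $\widehat{F}_I$ via Lemma \ref{lem-Mhat3} and non-switchability.

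The genuine gap is therefore the fused-to-split sub-case, which the paper isolates as Proposition \ref{pro-extend2}~b) and handles by a direct, multi-stage fundamental-circuit/cocircuit argument: assuming the split successor $\widehat{\X}_2$ is not $\widehat{e}_{12},\widehat{e}_{34}$-amenable, one first shows (via Lemma \ref{lem-Mhat3}) that both new $\widehat{F}_I$'s equal $\{\widehat{e}_{12},\widehat{e}_{23},\widehat{e}_{24}\}$, then manufactures explicit amenable transformations and a symmetric exchange landing on $\{\widehat{e}_{13},\widehat{e}_{24}\}$ or $\{\widehat{e}_{14},\widehat{e}_{23}\}$, contradicting ``not $\widehat{e}_{12},\widehat{e}_{34}$-switchable.'' That construction, which produces the actual witness to switchability from circuit-elimination on $C(\widehat{e},\widehat{X}_{11})$, $C(\widehat{e}_{12},\widehat{X}_{21})$, $C(\widehat{e}_{13},\widehat{X}_{22})$ and the cocircuit $C^*(\widehat{e}_{23},\widehat{X}_{11}')$, is not recoverable from ``track $\widehat{F}_I$ via Lemma \ref{lem-Mhat3.1} and apply Theorem \ref{the-Mhat5}''; you need a new argument here. (Your citation of Theorem \ref{the-note12e34switch} is also not used in the paper's proof and does not obviously help, since it characterizes the subgraph $\widehat{L}_i$ rather than the fused configuration.) Your handling of cases~1 and~3 is essentially correct and matches the paper.
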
  
 
This theorem will follow directly from the following proposition.
\begin{proposition}
Let $\widehat{\X}_i = (\widehat{X}_{i1}, \dots ,\widehat{X}_{i\kappa}),\ i = 1,2$ be sequences of bases in $\widehat{M}_\kappa$ where\\ $\widehat{\X}_i$ is compatible with $\B_i'$, $\widehat{\X}_1 \sim_1 \widehat{\X}_2$ and for $i = 1,2,$ $\widehat{\X}_i$ is not $\widehat{e}_{12}, \widehat{e}_{34}$-switchable.
Then we have the following: 
\begin{itemize}
\item[a)]  If $\widehat{\X}_1$ is $\widehat{e}_{12}, \widehat{e}_{34}$-amenable and $\widehat{\X}_i,\ i = 1,2$ are split, then $\widehat{\X}_2$ is $\widehat{e}_{12}, \widehat{e}_{34}$-amenable.
\item[b)]  If $\widehat{\X}_1$ is fused and $\widehat{\X}_2$ is split, then $\widehat{\X}_2$ is $\widehat{e}_{12}, \widehat{e}_{34}$-amenable.
\end{itemize}
\label{pro-extend2}
\end{proposition}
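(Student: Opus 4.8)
The plan is to reduce everything to the local picture on $\widehat{H} = \widehat{H}_I$ with $I = \{1,2,3,4\}$, tracking only the pair of bases in $\widehat{\X}_1$ (and $\widehat{\X}_2$) that carry the edges of $\widehat{E}$ with indices in $I$, and to exploit Theorem \ref{the-Mhat5} together with Lemma \ref{lem-Mhat3} and Lemma \ref{lem-Mhat3.1}. Write $\widehat{\X}_1 \sim_1 \widehat{\X}_2$ as a single symmetric exchange between bases $\widehat{X}_{1a}$ and $\widehat{X}_{1b}$ of $\widehat{\X}_1$. Because $\widehat{\X}_1$ is not $\widehat{e}_{12}, \widehat{e}_{34}$-switchable, Lemma \ref{lem-Mhat3} forces the relevant $\widehat{F}_I$-sets to be in the strictly $I$-cyclic configuration $\{\widehat{e}_{12},\widehat{e}_{34},\widehat{e}_{13},\widehat{e}_{24}\}$ and $\{\widehat{e}_{12},\widehat{e}_{34},\widehat{e}_{14},\widehat{e}_{23}\}$ on the two bases meeting $\widehat{H}$ nontrivially, or else both those bases already carry all the edges with indices in $I$ on a single base (the "fused" situation). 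The key structural fact I will invoke is Theorem \ref{the-Mhat5}: since $\widehat{\X}_1$ (hence the base pair inside it) is not $\widehat{e}_{12}, \widehat{e}_{34}$-switchable, a single symmetric exchange either preserves the pair $\{\widehat{F}_1, \widehat{F}_2\}$ of these $\widehat{F}_I$-sets, or collapses both to $\emptyset$. I will use exactly this dichotomy to control what $\widehat{\X}_2$ can look like.

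For part a), both $\widehat{\X}_1$ and $\widehat{\X}_2$ are split, so $\widehat{e}_{23}$ and $\widehat{e}_{24}$ lie in distinct bases throughout; in particular each of the two bases meeting $\widehat{H}$ carries exactly one edge of $\widehat{H}$, so $|\widehat{X}_{ij} \cap E(\widehat{H})| = 1$ for the two relevant indices $j$, for $i=1,2$. Amenability of $\widehat{\X}_1$ to $\{\widehat{e}_{12}, \widehat{e}_{34}\}$ says the two relevant $\widehat{F}_I$-sets of $\widehat{\X}_1$ both contain $\widehat{e}_{12}$ and $\widehat{e}_{34}$ (one contains $\widehat{e}_{12}$, the other $\widehat{e}_{34}$, and by Lemma \ref{lem-Mhat3} both contain both). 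The single symmetric exchange producing $\widehat{\X}_2$ either touches none of the edges in $E(\widehat{H})$ and one of the two relevant bases — in which case the $\widehat{F}_I$-sets are literally preserved and $\widehat{\X}_2$ is still amenable — or it touches one of those bases in a way that, by Theorem \ref{the-Mhat5}, preserves $\{\widehat{F}_1, \widehat{F}_2\}$ (the $\emptyset$-alternative is excluded because $\widehat{\X}_2$ is split, so each relevant base still carries an edge of $\widehat{H}$, forcing its $\widehat{F}_I$-set to be nonempty). Either way $\{\widehat{F}_1, \widehat{F}_2\}$ is unchanged, which exhibits the $\widehat{H}$-viable pair realizing $\{\widehat{e}_{12}, \widehat{e}_{34}\}$ and hence the $\widehat{e}_{12}, \widehat{e}_{34}$-amenability of $\widehat{\X}_2$. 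I will have to check the case where the symmetric exchange moves $\widehat{e}_{12}$ or $\widehat{e}_{34}$ itself between the two bases (keeping the sequence split); there Lemma \ref{lem-Mhat3.1} is the right tool to show the needed base with $\widehat{e}_{12}$ in one slot and $\widehat{e}_{34}$ in the other still exists.

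For part b), $\widehat{\X}_1$ is fused, so one base, say $\widehat{X}_{1a}$, carries both $\widehat{e}_{23}$ and $\widehat{e}_{24}$ — and then, since it is a base, the circuit it forms with any third $\widehat{e}_{kl}$, $kl \in I$, is controlled, and one shows via Lemma \ref{lem-Mhat1} / Lemma \ref{lem-Mhat2} that in fact $\widehat{X}_{1a}$ carries all $\binom{I}{2}$-edges modulo base exchanges, i.e.\ the pair is (degenerately) $I$-singular; the other base $\widehat{X}_{1b}$ carries nothing from $E(\widehat{H})$. To go from fused to split in one symmetric exchange, the exchange must move one of $\widehat{e}_{23}, \widehat{e}_{24}$ off of $\widehat{X}_{1a}$ onto $\widehat{X}_{1b}$ (say $\widehat{e}_{24} \in \widehat{X}_{1a}$ goes to $\widehat{X}_{1b}$ in exchange for some $\widehat{e} \in \widehat{X}_{1b}$, with $\widehat{X}_{1b} - \widehat{e} + \widehat{e}_{24}$ a base). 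After the exchange, $\widehat{X}_{2a}$ still carries $\widehat{e}_{23}$; I claim $\widehat{F}_I(\widehat{X}_{2a})$ is the full $I$-singular set based at vertex $2$ (it was before, and removing $\widehat{e}_{24}$ and adding $\widehat{e}$, which lies outside $E(\widehat{H})$, can only enlarge $\widehat{F}_I$, so it stays full), hence it contains $\widehat{e}_{12}$; and $\widehat{X}_{2b} = \widehat{X}_{1b} - \widehat{e} + \widehat{e}_{24}$ carries $\widehat{e}_{24}$, so by Lemma \ref{lem-Mhat3.1} applied to the single exchange $\widehat{X}_{1b} \to \widehat{X}_{2b}$, either $\widehat{F}_I(\widehat{X}_{2b})$ still contains $\widehat{e}_{34}$ or one can re-route to obtain a base carrying $\widehat{e}_{34}$ — giving a viable pair realizing $\{\widehat{e}_{12}, \widehat{e}_{34}\}$ and the amenability of $\widehat{\X}_2$. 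The main obstacle I anticipate is precisely this last sub-case analysis in part b): ruling out that the passage from fused to split destroys access to $\widehat{e}_{34}$ while simultaneously respecting the hypothesis that $\widehat{\X}_2$ is not $\widehat{e}_{12}, \widehat{e}_{34}$-switchable. Here I expect to argue by contradiction — if no viable pair realizes $\{\widehat{e}_{12}, \widehat{e}_{34}\}$, then by Lemma \ref{lem-Mhat3} the new split pair is strictly $I$-cyclic in the forbidden configuration, and then exhibiting the intermediate base pair (as in the proof of Theorem \ref{the-Mhat5}, via Lemma \ref{lem-Mhat3.1} and one more symmetric exchange back into the $\{\widehat{e}_{13},\widehat{e}_{24}\}$ or $\{\widehat{e}_{14},\widehat{e}_{23}\}$ world) shows $\widehat{\X}_2$ would be $\widehat{e}_{12}, \widehat{e}_{34}$-switchable after all, a contradiction.
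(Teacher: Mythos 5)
Your outline follows the paper's strategy in spirit — reduce to the local $\widehat{H}_I$ picture, use Lemma~\ref{lem-Mhat3} to pin down the strictly $I$-cyclic configuration, and in b) argue by contradiction towards a hidden switch — but there are genuine gaps in both parts. In part a), you try to run everything through Theorem~\ref{the-Mhat5}, but that theorem applies only when the single symmetric exchange is \emph{between} the two bases meeting $\widehat{H}$. There remains the case where the exchange is between one of those two bases and a third base carrying no edge of $\widehat{H}$; Theorem~\ref{the-Mhat5} says nothing there, and your fallback ("the $\widehat{F}_I$-sets are literally preserved") is also wrong in general, because $\widehat{F}_I(\widehat{B})$ depends on $\widehat{B}\setminus E(\widehat{H})$, so swapping an element outside $\widehat{H}$ can change it. The paper closes this case by using only the fact that one of the two relevant bases is unchanged (so its $\widehat{F}_I$ is unchanged), and then re-applying Lemma~\ref{lem-Mhat3} to the new pair to force the other $\widehat{F}_I$-set.

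Part b) has more serious problems. A fused base $\widehat{X}_{1a}$ contains both $\widehat{e}_{23}$ and $\widehat{e}_{24}$, so $|\widehat{X}_{1a}\cap E(\widehat{H})|\ge 2$ and by the note after the definition of $\widehat{F}_I$ we have $\widehat{F}_I(\widehat{X}_{1a})=\emptyset$; your claim that it "was full before" is false, and the accompanying claim that removing $\widehat{e}_{24}$ and adding an edge outside $E(\widehat{H})$ "can only enlarge $\widehat{F}_I$" is also false — enlarging $\widehat{B}\setminus E(\widehat{H})$ makes the basis condition more restrictive, not less. You also apply Lemma~\ref{lem-Mhat3.1} to the exchange $\widehat{X}_{1b}\to\widehat{X}_{2b}$, but that lemma requires $\widehat{B}\cap E(\widehat{H})=\{\widehat{e}_{ij}\}$, whereas $\widehat{X}_{1b}\cap E(\widehat{H})=\emptyset$ in the fused picture, so it does not apply. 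The actual proof of b) in the paper is a multi-step argument: it first shows (via circuit elimination in $C(\widehat{e},\widehat{X}_{11})$ and $C(\widehat{e}_{12},\widehat{X}_{21})$) that $\widehat{X}_{11}-\widehat{e}_{24}+\widehat{e}_{12}$ or $\widehat{X}_{11}-\widehat{e}_{23}+\widehat{e}_{12}$ is a base, then performs a further exchange using a fundamental cocircuit $C^*(\widehat{e}_{23},\widehat{X}_{11}')$ intersected with $C(\widehat{e}_{13},\widehat{X}_{22})$, and then splits into the cyclic and singular subcases, each requiring Lemma~\ref{lem-Mhat3.1} in a specific way. None of this is present in your sketch, and the false claims above show the shortcuts you propose do not bridge that distance.
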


\begin{proof}
Let $j_1, j_2 \in \{ 1, \dots ,\kappa \}$ where a symmetric exchange between $\widehat{X}_{1j_1}$ and $\widehat{X}_{1j_2}$ transforms $\widehat{\X}_1$ into $\widehat{\X}_{2}.$  To prove a), assume that $\widehat{\X}_i,\ i = 1,2$ are split.  We may assume that $\widehat{e}_{23} \in \widehat{X}_{11}$ and $\widehat{e}_{24} \in \widehat{X}_{12}.$
It follows from Lemma \ref{lem-Mhat3} that $\widehat{F}_I(\widehat{X}_{11}) = \{ \widehat{e}_{12}, \widehat{e}_{34}, \widehat{e}_{23}, \widehat{e}_{14} \}$ and $\widehat{F}_I(\widehat{X}_{12}) = \{ \widehat{e}_{12}, \widehat{e}_{34}, \widehat{e}_{13}, \widehat{e}_{24} \}.$  If $\{j_1,j_2\} = \{ 1,2\},$ then it follows from Theorem \ref{the-Mhat5} that $(\widehat{X}_{21}, \widehat{X}_{22})$ is $\widehat{e}_{12}, \widehat{e}_{34}$-amenable, implying that $\widehat{\X}_2$ is $\widehat{e}_{12}, \widehat{e}_{34}$-amenable.  Thus we may assume that $\{ j_1, j_2 \} \ne \{ 1,2 \}.$  If $\{ j_1,  j_2 \} \cap \{ 1,2 \} = \emptyset,$ then $\widehat{X}_{1j} = \widehat{X}_{2j},\ j = 1,2$ and thus it is clear that $\widehat{\X}_2$ is $\widehat{e}_{12}, \widehat{e}_{34}$-amenable, since $\widehat{\X}_1$ is.  Thus we may assume $| \{ j_1, j_2 \} \cap \{ 1,2 \} | =1.$
In particular, $\widehat{X}_{2j} = \widehat{X}_{1j}$, for some $j\in \{ 1,2 \}.$  Without loss of generality, we may assume that $\widehat{X}_{11} = \widehat{X}_{21}$ and consequently $\widehat{F}_I(\widehat{X}_{21}) = \widehat{F}_I(\widehat{X}_{11}) = \{ \widehat{e}_{12}, \widehat{e}_{34}, \widehat{e}_{23}, \widehat{e}_{14} \}$.  Since $\widehat{\X}_2$ is not $\widehat{e}_{12}, \widehat{e}_{34}$-switchable, it follows from Lemma \ref{lem-Mhat3} that $\widehat{F}_I(\widehat{X}_{22}) = \{ \widehat{e}_{12}, \widehat{e}_{34}, \widehat{e}_{13}, \widehat{e}_{24} \}.$  It now follows that $\widehat{\X}_2$ is $\widehat{e}_{12}, \widehat{e}_{34}$-amenable.  This proves a).

To prove b),  we may assume that $\widehat{e}_{23}, \widehat{e}_{24} \in \widehat{X}_{11}$ and $j_1 = 1, j_2=2.$   Furthermore, we may assume that there exists $\widehat{e} \in \widehat{X}_{12}$ such that $\widehat{X}_{21} = \widehat{X}_{11} -\widehat{e}_{24} +\widehat{e}$ and $\widehat{X}_{22} = \widehat{X}_{12} -\widehat{e} + \widehat{e}_{24}.$  Note that $\widehat{e}_{24} \in C(\widehat{e},\widehat{X}_{11}),$ the fundamental circuit with respect to $\widehat{X}_{11}$ which contains $\widehat{e}.$
Suppose that $\widehat{\X}_{2}$ is not $\widehat{e}_{12}, \widehat{e}_{34}$-amenable.  Then it follows from Lemma \ref{lem-Mhat3} that $\widehat{F}_I (\widehat{X}_{2j}) = \{ \widehat{e}_{12}, \widehat{e}_{23}, \widehat{e}_{24} \},\ j = 1,2.$
  
We shall first show that either $\widehat{X}_{11} - \widehat{e}_{24} + \widehat{e}_{12} \in \B(\widehat{M}_\kappa)$ or  $\widehat{X}_{11} - \widehat{e}_{23} + \widehat{e}_{12} \in \B(\widehat{M}_\kappa).$
 Since $\widehat{e}_{12} \in \widehat{F}_I(\widehat{X}_{21})$, it follows that $\widehat{X}_{21}' = \widehat{X}_{21} - \widehat{e}_{23} + \widehat{e}_{12} \in \B(\widehat{M}_\kappa).$  Thus $\widehat{e}_{23} \in C(\widehat{e}_{12}, \widehat{X}_{21}),$ the fundamental circuit with respect to $\widehat{X}_{21}$ which contains $\widehat{e}_{12}.$
If  $\widehat{e} \in C(\widehat{e}_{12}, \widehat{X}_{21}),$ then by the circuit elimination axiom there is a circuit $\widehat{C} \subseteq (C(\widehat{e}, \widehat{X}_{11})\cup C(\widehat{e}_{12}, \widehat{X}_{21})\backslash \{ \widehat{e} \}$ containing $\widehat{e}_{24}$ (noting that $\widehat{e}_{24} \in C(\widehat{e}, \widehat{X}_{11})\backslash C(\widehat{e}_{12}, \widehat{X}_{21})$).  If $\widehat{e}_{12} \not\in \widehat{C},$ then $\widehat{C} \subset \widehat{X}_{11},$ a contradiction.  Thus $\widehat{e}_{12} \in \widehat{C},$ and hence $\widehat{C} = C(\widehat{e}_{12}, \widehat{X}_{11})$.  Now $\widehat{X}_{11} - \widehat{e}_{24} + \widehat{e}_{12} \in \B(\widehat{M}_\kappa)$ since $\widehat{e}_{24} \in \widehat{C}.$
   In this case, our assertion is true.  We suppose therefore that $\widehat{e} \not\in C(\widehat{e}_{12}, \widehat{X}_{21}).$   Then $C(\widehat{e}_{12}, \widehat{X}_{11}) = C(\widehat{e}_{12}, \widehat{X}_{21}).$  Since $\widehat{e}_{12} \in \widehat{F}_I(\widehat{X}_{21}),$ it follows that $\widehat{e}_{23} \in C(\widehat{e}_{12}, \widehat{X}_{21}) = C(\widehat{e}_{12}, \widehat{X}_{11}).$  Thus $\widehat{X}_{11} - \widehat{e}_{23} + \widehat{e}_{21} \in \B(\widehat{M}_\kappa).$
   
   From the above, we may assume (without loss of generality) that $\widehat{X}_{11}' = \widehat{X}_{11} - \widehat{e}_{24} + \widehat{e}_{12} \in \B(\widehat{M}_\kappa).$  Let $\widehat{X}_{12}' = \widehat{X}_{12}.$  Then $(\widehat{X}_{11}, \widehat{X}_{12}) \xrightarrow[\widehat{e}_{12},\widehat{e}_{23}]{\widehat{e}_{23},\widehat{e}_{24}} (\widehat{X}_{11}', \widehat{X}_{12}').$ Note that $(\widehat{X}_{11}', \widehat{X}_{12}')$ is also $\widehat{e}_{12}, \widehat{e}_{13}$-amenable since $\{ \widehat{e}_{12}, \widehat{e}_{13}, \widehat{e}_{23} \}$ is a circuit.
Consider the fundamental circuit $C(\widehat{e}_{13}, \widehat{X}_{22}).$  Since $\widehat{e}_{13} \not\in \widehat{F}_I(\widehat{X}_{22}),$ it follows that $\widehat{e}_{24} \not\in C(\widehat{e}_{13}, \widehat{X}_{22}).$   Thus $C(\widehat{e}_{13}, \widehat{X}_{12}) = C(\widehat{e}_{13}, \widehat{X}_{22}).$ 
Let $\widehat{C}^* = C^*(\widehat{e}_{23}, \widehat{X}_{11}')$ be the fundamental cocircuit with respect to $\widehat{X}_{11}'$ which contains $\widehat{e}_{23}.$ 
  We observe that $\widehat{e}_{13} \not\in \widehat{L} = E(\widehat{M}_\kappa)\backslash \widehat{C}^*;$ a hyperplane; otherwise, $\widehat{e}_{12}, \widehat{e}_{13} \in \widehat{L},$ implying that $\widehat{e}_{23} \in \widehat{L},$ a contradiction. Thus $\widehat{e}_{13} \in \widehat{C}^*$ and hence $|C(\widehat{e}_{13}, \widehat{X}_{22}) \cap \widehat{C}^*| \ge 2.$  Thus there exists $\widehat{f} \in (C(\widehat{e}_{13}, \widehat{X}_{22}) \cap \widehat{C}^*)\backslash \{ \widehat{e}_{13} \}.$  We see that $\widehat{X}_{11}'' = \widehat{X}_{11}' -\widehat{e}_{23} + \widehat{f} \in \B(\widehat{M}_\kappa)$  and $\widehat{X}_{12}'' = \widehat{X}_{12} - \widehat{f} + \widehat{e}_{13} \in \B(\widehat{M}_\kappa).$  Note that $\widehat{e}_{12} \in \widehat{X}_{11}''$ and $\widehat{e}_{13} \in \widehat{X}_{12}''.$  Furthermore, $(\widehat{X}_{11}', \widehat{X}_{12}') \curvearrowright_1 (\widehat{X}_{11}'', \widehat{X}_{12}'').$  Since $\widehat{\X}_1$ is not
 $\widehat{e}_{12}, \widehat{e}_{34}$-switchable, $(\widehat{X}_{11}'', \widehat{X}_{12}'')$ is not $\widehat{e}_{12}, \widehat{e}_{34}$-switchable and it follows that either
\begin{itemize}
\item[i)] $\left\{ \widehat{F}_I(\widehat{X}_{11}''), \widehat{F}_I(\widehat{X}_{12}'') \right\} = \left\{ \{\widehat{e}_{12}, \widehat{e}_{34}, \widehat{e}_{13}, \widehat{e}_{24} \}, \ \{\widehat{e}_{12}, \widehat{e}_{34}, \widehat{e}_{23}, \widehat{e}_{14} \} \right\}$ or
 \item[ii)] $\widehat{F}_I(\widehat{X}_{11}'') =  \widehat{F}_I(\widehat{X}_{12}'') = \{ \widehat{e}_{12}, \widehat{e}_{13}, \widehat{e}_{14} \}.$
 \end{itemize}
  
Suppose i) holds.  Then $$\widehat{F}_I(\widehat{X}_{11}'') = \{ \widehat{e}_{12}, \widehat{e}_{34}, \widehat{e}_{23}, \widehat{e}_{14} \},\ \widehat{F}_I(\widehat{X}_{12}'') = \{ \widehat{e}_{12}, \widehat{e}_{34}, \widehat{e}_{13}, \widehat{e}_{24} \} .$$   Consider the fundamental circuit $C(\widehat{e}_{24}, \widehat{X}_{12}'').$   Since $\widehat{e}_{24} \in \widehat{F}_I(\widehat{X}_{12}''),$ it follows that $\widehat{e}_{13} \in C(\widehat{e}_{24}, \widehat{X}_{12}'').$  If $\widehat{e} \not\in C(\widehat{e}_{24}, \widehat{X}_{12}''),$ then 
$C(\widehat{e}_{13}, \widehat{X}_{22}) = C(\widehat{e}_{24}, \widehat{X}_{12}'')$ and hence
$\widehat{e}_{13} \in \widehat{F}_I(\widehat{X}_{22}),$ a contradiction.  Thus $\widehat{e} \in C(\widehat{e}_{24}, \widehat{X}_{12}'').$  This means that $\widehat{X}_{12}''' = \widehat{X}_{12}'' - \widehat{e} + \widehat{e}_{24} = \widehat{X}_{12} - \widehat{e} - \widehat{f} + \widehat{e}_{13} + \widehat{e}_{24} \in \B(\widehat{M}_\kappa).$
To obtain a contradiction, it suffices to show that $\widehat{X}_{11}''' = \widehat{X}_{11} - \widehat{e}_{23} - \widehat{e}_{24} + \widehat{e} + \widehat{f} \in \B(\widehat{M}_\kappa);$  this is because 
$(\widehat{X}_{11}'', \widehat{X}_{12}'') \curvearrowright_1 (\widehat{X}_{11}''', \widehat{X}_{12}''')$ (straightforward to show) and $(\widehat{X}_{11}''' \cup \widehat{X}_{12}''') \cap E(\widehat{H}) = \{ \widehat{e}_{13}, \widehat{e}_{24} \}.$ It will then follow that $(\widehat{X}_{11}, \widehat{X}_{12})$ is $\widehat{e}_{12}, \widehat{e}_{34}$-switchable, contradicting our assumptions.  Let $\widehat{X}_{11}^{(4)} = \widehat{X}_{11}'' - \widehat{f} + \widehat{e}.$  Then $\widehat{X}_{11}^{(4)} = \widehat{X}_{21} - \widehat{e}_{23} + \widehat{e}_{12}.$  Since $\widehat{e}_{12} \in \widehat{F}_I(\widehat{X}_{21}),$ it follows that $\widehat{X}_{11}^{(4)} \in \B(\widehat{M}_\kappa).$  Furthermore, $\widehat{F}_I(\widehat{X}_{11}^{(4)}) = \widehat{F}_I(\widehat{X}_{21}).$  Thus we see that $\widehat{F}_I(\widehat{X}_{11}'')\backslash \widehat{F}_I(\widehat{X}_{11}^{(4)} \ne \emptyset.$  It now follows by Lemma \ref{lem-Mhat3.1} i) that $\widehat{X}_{11}'' - \widehat{e}_{12} + \widehat{e} \in \B(\widehat{M}_\kappa);$ that is, $\widehat{X}_{11} - \widehat{e}_{23} - \widehat{e}_{24} + \widehat{e} + \widehat{f} \in \B(\widehat{M}_\kappa).$

Suppose ii) holds instead.    The proof that $\widehat{X}_{11} - \widehat{e}_{23} - \widehat{e}_{24} + \widehat{e} + \widehat{f} \in \B(\widehat{M}_\kappa)$ is the same as before.   To show that $\widehat{X}_{12} - \widehat{e} - \widehat{f} + \widehat{e}_{13} + \widehat{e}_{24} \in \B(\widehat{M}_\kappa)$, consider the fundamental circuit $C(\widehat{e}_{24}, \widehat{X}_{12}'').$  Since $\widehat{e}_{24} \not\in \widehat{F}_I(\widehat{X}_{12}''),$ it follows that $\widehat{e}_{13} \not\in C(\widehat{e}_{24}, \widehat{X}_{12}'').$  If $\widehat{e} \not\in C(\widehat{e}_{24}, \widehat{X}_{12}''),$ then 
$C(\widehat{e}_{24}, \widehat{X}_{12}'') \subseteq \widehat{X}_{22},$ a contradiction.  Thus $\widehat{e} \in C(\widehat{e}_{24}, \widehat{X}_{12}'').$  This means that $\widehat{X}_{12}'' - \widehat{e} + \widehat{e}_{24} = \widehat{X}_{12} - \widehat{e} - \widehat{f} + \widehat{e}_{13} + \widehat{e}_{24} \in \B(\widehat{M}_\kappa).$
\end{proof}

\section{The Proof of Theorem \ref{the-inductivestep} a): Part 2}\label{sec-mainproof2}

Recall the sequences $\widehat{\B}_1' = \widehat{\D}_1' \sim_1 \widehat{\D}_2' \sim_1 \cdots \sim_1 \widehat{\D}_p' = \widehat{\B}_2'.$  In this section, we shall assume $\kappa \ge 2$ and
$\widehat{\D}_j',\ j = 1, \dots ,i$ are not $\widehat{e}_{12}, \widehat{e}_{34}$-switchable.  Since $\widehat{\B}_1'$ is not $\widehat{e}_{12}, \widehat{e}_{34}$-switchable and $\widehat{\B}_1' = \widehat{\D}_1' \sim_1 \widehat{\D}_2' \sim_1 \cdots \sim_1 \widehat{\D}_i'$, it follows by Theorem \ref{the-preproextend2} that for $j = 1, \dots ,i,$ if $\widehat{\D}_j'$ is split, then it is $\widehat{e}_{12}, \widehat{e}_{34}$-amenable.  

\subsection{The Sequences $\widehat{\D}_j'',\ j = 1, \dots i$}\label{subsec-Dj''}

We define sequences $\widehat{\D}_j'',\ j = 1, \dots ,i$ as follows:  if $\widehat{\D}_j'$ is fused then $\widehat{\D}_j'' := \widehat{\D}_j';$ otherwise, $\widehat{\D}_j'$ is split and we define $\widehat{\D}_j''$ to be a $\widehat{e}_{12}, \widehat{e}_{34}$-perturbation of $\widehat{\D}_j'.$  The main theorem in this section is the following:

\begin{theorem}
Assume $\kappa \ge 2$  and Theorem \ref{the-kappa31} holds for all $k < \kappa$ and Theorem \ref{the-main2} holds for all $k<\kappa$, if $\kappa \ge 3.$  Suppose $\widehat{\D}_j',\ j = 1,2, \dots ,i$ are not $\widehat{e}_{12}, \widehat{e}_{34}$-switchable.  Let $\D_j'' \in \fB_{M_\kappa}(\widehat{\D}_j''),\ j = 1, \dots ,i.$  Then $\D_1'' \sim \D_2'' \sim \cdots \sim \D_i''.$
\label{the-preproextend3}
\end{theorem}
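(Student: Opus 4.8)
The plan is to prove the chain $\D_1'' \sim \D_2'' \sim \cdots \sim \D_i''$ one link at a time, showing that for each consecutive pair $\widehat{\D}_j'$ and $\widehat{\D}_{j+1}'$ (with $j < i$, both not $\widehat{e}_{12}, \widehat{e}_{34}$-switchable) and the associated perturbed sequences $\widehat{\D}_j'', \widehat{\D}_{j+1}''$, any pull-backs $\D_j'' \in \fB_{M_\kappa}(\widehat{\D}_j'')$ and $\D_{j+1}'' \in \fB_{M_\kappa}(\widehat{\D}_{j+1}'')$ satisfy $\D_j'' \sim \D_{j+1}''$. Since $\sim$ is transitive, chaining these gives the result. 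The key is that both $\widehat{\D}_j''$ and $\widehat{\D}_{j+1}''$ are, by construction and by Theorem \ref{the-preproextend2}, \emph{fused} (equivalently, non-incidental), so their base-set pull-backs are non-empty (Lemma \ref{lem-pullbacknonempt}, and the remark following the definition of split/fused sequences).

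\textbf{Reducing to a single link.} Fix $j$ with $1 \le j \le i-1$. I would split into cases according to whether $\widehat{\D}_j'$ and $\widehat{\D}_{j+1}'$ are split or fused. By Theorem \ref{the-preproextend2}, since $\widehat{\D}_j'$ is not $\widehat{e}_{12}, \widehat{e}_{34}$-switchable but (being among $\widehat{\D}_1', \dots, \widehat{\D}_i'$) is fused or $\widehat{e}_{12},\widehat{e}_{34}$-amenable, the successor $\widehat{\D}_{j+1}'$ is either fused (and not switchable) or split, not switchable, but $\widehat{e}_{12},\widehat{e}_{34}$-amenable. In either case $\widehat{\D}_j''$ and $\widehat{\D}_{j+1}''$ are fused: if $\widehat{\D}_k'$ is fused we set $\widehat{\D}_k'' = \widehat{\D}_k'$, and if $\widehat{\D}_k'$ is split and amenable we take $\widehat{\D}_k''$ to be a $\widehat{e}_{12},\widehat{e}_{34}$-perturbation, which by definition of the $\widehat{H}$-viability constraint has $(\widehat{D}_{j_1}'' \cup \widehat{D}_{j_2}'')\cap \widehat{E} = \{\widehat{e}_{12},\widehat{e}_{34}\}$ and so is fused. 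Now one establishes that $\widehat{\D}_j''$ and $\widehat{\D}_{j+1}''$ are \emph{compatible} non-incidental sequences in $\widehat{M}_\kappa$ differing by a bounded amount, and that $\widehat{\D}_j'' \sim \widehat{\D}_{j+1}''$ in $\widehat{M}_\kappa$. The latter should follow because $\widehat{\D}_j' \sim_1 \widehat{\D}_{j+1}'$ already, and the perturbations on each side change only the two bases meeting $E(\widehat{H})$ in a controlled, reversible way (the perturbation is itself realized by a sequence of $\curvearrowright_1$ moves, hence by $\sim$); once compatibility is known, Lemma \ref{lem-noninbases} gives $\D_j'' \sim \D_{j+1}''$ for all choices of pull-backs.

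\textbf{The main obstacle.} The hard part will be the bookkeeping that shows $\widehat{\D}_j''$ and $\widehat{\D}_{j+1}''$ are genuinely compatible and connected by $\sim$ \emph{inside} $\widehat{M}_\kappa$ after the perturbations, and then transferring this down to $M_\kappa$. When both $\widehat{\D}_j'$ and $\widehat{\D}_{j+1}'$ are split, the two perturbations may affect different pairs of bases, or the symmetric exchange carrying $\widehat{\D}_j'$ to $\widehat{\D}_{j+1}'$ may involve one of the two edges $\widehat{e}_{23}$ or $\widehat{e}_{24}$, so the perturbed sequences need not differ by a single symmetric exchange. One must argue, using Lemma \ref{lem-Mhat3}, Lemma \ref{lem-Mhat3.1}, and Theorem \ref{the-Mhat5} (exactly as in the proof of Proposition \ref{pro-extend2}), that the $\widehat{F}_I$-data on the affected bases is rigid enough that a short explicit sequence of symmetric exchanges in $\widehat{M}_\kappa$ connects $\widehat{\D}_j''$ to $\widehat{\D}_{j+1}''$ through compatible non-incidental sequences. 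When one of $\widehat{\D}_j', \widehat{\D}_{j+1}'$ is fused and the other split, one additionally needs that the perturbation "undoes" the fusion consistently with the symmetric exchange, which is the content already extracted in Proposition \ref{pro-extend2}(b). Having reduced each link to a statement about compatible non-incidental sequences in $\widehat{M}_\kappa$ related by $\sim$, Lemma \ref{lem-noninbases} finishes, and transitivity of $\sim$ assembles the full chain $\D_1'' \sim \D_2'' \sim \cdots \sim \D_i''$.
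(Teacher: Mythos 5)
Your high-level strategy (go one link at a time, pull back to $M_\kappa$, invoke Lemma~\ref{lem-noninbases}) matches the paper's both-split and both-fused cases, but the mixed case is where the paper does real additional work, and your sketch doesn't supply it. The gap is this: when $\widehat{\D}_j'$ is fused and $\widehat{\D}_{j+1}'$ is split (or vice versa), the sequences $\widehat{\D}_j''$ and $\widehat{\D}_{j+1}''$ are \emph{not} compatible in $\widehat{M}_\kappa$. The fused one is left untouched, so it still carries $\{\widehat{e}_{23},\widehat{e}_{24}\}$, while the split one is perturbed to carry $\{\widehat{e}_{12},\widehat{e}_{34}\}$ instead. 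Since these multisets of $\widehat{E}$-edges differ, the hypothesis of Lemma~\ref{lem-noninbases} fails outright, and there is no "short explicit sequence of symmetric exchanges in $\widehat{M}_\kappa$" connecting them \emph{inside} $\widehat{M}_\kappa$ — indeed the definition of $\sim$ presupposes compatibility. Your appeal to Proposition~\ref{pro-extend2}(b) does not rescue this: that proposition only delivers $\widehat{e}_{12},\widehat{e}_{34}$-amenability of the split sequence (i.e., the \emph{existence} of a usable perturbation). It says nothing about how the pull-backs of a fused sequence and a perturbed split sequence are related in $M_\kappa$.

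What the paper actually does in the mixed cases (Proposition~\ref{pro-extend3}(b),(c)) is drop back into $M_\kappa$ and exhibit an explicit symmetric exchange: writing $\X_1' \in \fB_{M_\kappa}(\widehat{\X}_1')$ and $\X_2 \in \fB_{M_\kappa}(\widehat{\X}_2)$, it tracks the edges $e_1,\dots,e_4$ concretely and shows $X_{21} = X_{11}' - e_2 + \widehat{f}$ and $X_{22} = X_{12}' - \widehat{f} + e_2$, giving $\X_1' \sim_1 \X_2$ directly. This cannot be reached through $\widehat{M}_\kappa$ because the two "liftings" of the boundary element differ, and only when you pull back to $M_\kappa$ do the sequences become compatible again. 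So the part of your argument that reduces everything to "compatible non-incidental sequences in $\widehat{M}_\kappa$ related by $\sim$" is wrong for the mixed case. You are also silently assuming $m = 2\kappa$ throughout (so that Lemma~\ref{lem-noninbases} applies); the paper inserts an explicit reduction to that case (the Note in Section~\ref{subsec-Dj''}) that uses an additional induction on $\kappa$ when some edges $e_j$ sit in $v$-anchored bases, and your write-up should have that reduction as well.

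Where your proposal is on firm ground: when both $\widehat{\D}_j'$ and $\widehat{\D}_{j+1}'$ are split (both perturbed to carry $\{\widehat{e}_{12},\widehat{e}_{34}\}$), or both fused (unperturbed, carrying $\{\widehat{e}_{23},\widehat{e}_{24}\}$), the sequences $\widehat{\D}_j''$ and $\widehat{\D}_{j+1}''$ are genuinely compatible and non-incidental, and one gets $\widehat{\D}_j'' \sim \widehat{\D}_{j+1}''$ from the inductive validity of Theorem~\ref{the-main2} for $\widehat{M}_\kappa$ (smaller rank), after which Lemma~\ref{lem-noninbases} delivers $\D_j'' \sim \D_{j+1}''$. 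That is precisely cases a) and d) of Proposition~\ref{pro-extend3}. You need to add the explicit $M_\kappa$-level computation for the mixed cases.
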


\noindent{\bf Note}:   We will be able to assume $|E_{G_\kappa}(v)| = m = 2\kappa.$  For suppose that $m<2\kappa.$  Then the edge $e_m$ belongs to $v$-anchored bases in $\D_j'',\ j = 1, \dots ,i.$   We shall show that for all $i' \in \{ 1, \dots, i-1 \}$, $\D_{i'}'' \sim \D_{i'+1}''.$   
Let $i'\in \{ 1, \dots ,i-1\}$ and assume that $\D_{i'}'' \vdash \D_{i'+1}''.$   If $\kappa = 2,$ then $m \le 3$, contrary to our assumptions.  Thus $\kappa \ge 3$ and hence
there exists $j\in \{ 1, \dots ,\kappa \}$ for which $\widehat{D}_{i'j}' = \widehat{D}_{(i'+1)j}'.$  Then we may assume that $\widehat{D}_{i'j}'' = \widehat{D}_{(i'+1)j}''.$  If $\widehat{D}_{i'j}'' \cap \widehat{E} \ne \emptyset,$ then we may choose $D_{(i'+1)j}'' = D_{i'j}''.$  Otherwise, if $\widehat{D}_{i'j}'' \cap \widehat{E} = \emptyset,$  then $D_{i'j}''$ and $D_{(i'+1)j}''$ are $v$-anchored.  By swapping edges in $E_{G_\kappa}(v)$, if necessary, we may assume that $e_m \in D_{i'j}'' \cap D_{(i'+1)j}''$.  Then $D_{i'j}'' = D_{(i'+1)j}''.$  Let $\D_{i'}'''$ (resp. $\D_{i'+1}'''$) be the base sequence obtained from $\D_{i'}''$ (resp. $\D_{i'+1}''$) by deleting $D_{i'j}''$ (resp. $D_{(i'+1)j}''$).  Then $\D_{i'}'''$ and $\D_{i'+1}'''$ are compatible sequences of $\kappa -1$ bases.  Since we are assuming that Theorem \ref{the-main2} holds for all $k< \kappa,$ it follows that $\D_{i'}''' \sim \D_{i'+1}'''.$  We now see that $\D_{i'}'' \sim \D_{i'+1}''.$  

For the rest of this section, we shall assume that  $m = 2\kappa.$ The proof of Theorem \ref{the-preproextend3} will follow directly from the following proposition:

\begin{proposition}
Assume $\kappa \ge 2$  and Theorem \ref{the-kappa31} holds for all $k < \kappa$ and Theorem \ref{the-main2} holds for all $k<\kappa$, if $\kappa \ge 3.$
Let $\widehat{\X}_i = (\widehat{X}_{i1}, \dots , \widehat{X}_{i\kappa})\ i = 1,2$ be sequences of bases in $\widehat{M}_\kappa$ where $\widehat{\X}_i$ is compatible with $\widehat{\B}_i',\ i =1,2,$ and $\widehat{\X}_1 \sim_1 \widehat{\X}_2.$  Suppose $\widehat{\X}_i,\ i = 1,2$ are not $\widehat{e}_{12}, \widehat{e}_{34}$-switchable and $\widehat{\X}_1$ is $\widehat{e}_{12}, \widehat{e}_{34}$-amenable, if it is split.
Then the following hold:  
\begin{itemize}
\item[a)]  If $\widehat{\X}_i,\ i = 1,2$ are split, then $\widehat{\X}_2$ is $\widehat{e}_{12}, \widehat{e}_{34}$-amenable.  Moreover, for $\widehat{e}_{12}, \widehat{e}_{34}$-perturbations $\widehat{\X}_i'$ of $\widehat{\X}_i,\ i = 1,2$ we have
$\X_1' \sim \X_2'$, $\forall \X_1' \in \fB_{M_\kappa}(\widehat{\X}_1')$ and $\forall \X_2' \in \fB_{M_\kappa}(\widehat{\X}_2').$

\item[b)]   If $\widehat{\X}_1$ is split and $\widehat{\X}_2$ is fused, then for the $\widehat{e}_{12}, \widehat{e}_{34}$-perturbations $\widehat{\X}_1'$ of $\widehat{\X}_1$, we have $\X_1' \sim \X_2,$ $\forall \X_1' \in \fB_{M_\kappa}(\widehat{\X}_1')$ and $\forall \X_2 \in \fB_{M_\kappa}(\widehat{\X}_2).$

\item[c)]  If $\widehat{\X}_1$ is fused and $\widehat{\X}_2$ is split, then $\widehat{\X}_2$ is $\widehat{e}_{12}, \widehat{e}_{34}$-amenable.  Moreover, for the $\widehat{e}_{12}, \widehat{e}_{34}$-perturbations $\widehat{\X}_2'$ of $\widehat{\X}_2,$ we have $\X_1 \sim \X_2',$ $\forall \X_1 \in \fB_{M_\kappa}(\widehat{\X}_1)$ and $\forall \X_2' \in \fB_{M_\kappa}(\widehat{\X}_2').$

\item[d)]  If $\widehat{\X}_1$ is fused and $\widehat{\X}_2$ is fused, then $\X_1 \sim \X_2,$ $\forall \X_1 \in \fB_{M_\kappa}(\widehat{\X}_1)$ and $\forall \X_2 \in \fB_{M_\kappa}(\widehat{\X}_2).$
\end{itemize}
\label{pro-extend3}
\end{proposition}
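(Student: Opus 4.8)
The plan is to handle the four cases (a)–(d) by the same underlying strategy: reduce everything to a single symmetric exchange in $\widehat{M}_\kappa$ localized in two bases, use the structural dichotomy of Lemma~\ref{lem-Mhat3} and the ``switchability is preserved under exchange'' statement of Theorem~\ref{the-Mhat5}, and then pull the resulting sequences back to $M_\kappa$ using Observations~1 and~2 and Lemma~\ref{lem-noninbases}. Let $j_1,j_2\in\{1,\dots,\kappa\}$ be the two indices moved by the symmetric exchange $\widehat{\X}_1\sim_1\widehat{\X}_2$; all other bases of $\widehat{\X}_1$ and $\widehat{\X}_2$ coincide. The edges $\widehat e_{23},\widehat e_{24}$ (which are incidental, being the obstruction to pulling back) sit in one or two bases of each $\widehat{\X}_i$; tracking which of $j_1,j_2$ touch those bases gives a small finite number of subcases, exactly as in the proof of Proposition~\ref{pro-extend2}.

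First I would dispose of the ``easy'' sub-configuration in every case: if $\{j_1,j_2\}$ is disjoint from the indices carrying $\widehat e_{23},\widehat e_{24}$, or if $\{j_1,j_2\}$ equals exactly those indices, then either $\widehat{\X}_2$ inherits amenability/fusedness directly from $\widehat{\X}_1$, or Theorem~\ref{the-Mhat5} applies verbatim to the pair $(\widehat{X}_{2j_1},\widehat{X}_{2j_2})$ and forces $\{\widehat F_{I}(\widehat X_{2j_1}),\widehat F_I(\widehat X_{2j_2})\}=\{\widehat F_I(\widehat X_{1j_1}),\widehat F_I(\widehat X_{1j_2})\}$, whence $\widehat{\X}_2$ is $\widehat e_{12},\widehat e_{34}$-amenable (being not $\widehat e_{12},\widehat e_{34}$-switchable, by hypothesis). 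Parts a) and c) for the ``amenability'' half are then just Proposition~\ref{pro-extend2}(a) and (b); in part a) one must additionally observe that a $\widehat e_{12},\widehat e_{34}$-perturbation exists because $\widehat e_{12}\in\widehat F_I$ of both relevant bases, so one can legitimately move to the pair carrying $\{\widehat e_{13},\widehat e_{24}\}$ or $\{\widehat e_{14},\widehat e_{23}\}$.

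The substantive content is the ``Moreover'' clauses: once $\widehat{\X}_i'$ (perturbations, or $\widehat{\X}_i$ itself when fused) are non-incidental sequences, I would show $\widehat{\X}_1'\sim_1\widehat{\X}_2'$ in $\widehat M_\kappa$ — or, when the single exchange gets ``absorbed'' into the perturbation, $\widehat{\X}_1'=\widehat{\X}_2'$ or $\widehat{\X}_1'\sim\widehat{\X}_2'$ by a short explicit chain — and then invoke Lemma~\ref{lem-noninbases} (together with Observation~1 to bridge a single exchange, and Lemma~\ref{lem-linkedpullback}/Observation~2 to bridge distinct pull-backs of the same sequence). Concretely: in d), both $\widehat{\X}_i$ are non-incidental and $\widehat{\X}_1\sim_1\widehat{\X}_2$, so for $\X_1\in\fB_{M_\kappa}(\widehat{\X}_1)$ pick $\X_2\in\fB_{M_\kappa}(\widehat{\X}_2)$ with $\X_1\vdash\X_2$; Observation~1 gives $\X_1\sim\X_2$, and Lemma~\ref{lem-linkedpullback} gives $\X_2\sim\X_2'$ for any other $\X_2'\in\fB_{M_\kappa}(\widehat{\X}_2)$. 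In b) and c), where exactly one side is split, the perturbation on the split side must be chosen compatibly with the fused side: I would argue (as in the last part of the proof of Proposition~\ref{pro-extend2}, using circuit/cocircuit elimination and Lemma~\ref{lem-Mhat3.1}) that the perturbation can be taken so that the resulting non-incidental sequence differs from the fused one by at most one symmetric exchange, then close as before. In a), both perturbations land among the pairs $\{\widehat e_{13},\widehat e_{24}\},\{\widehat e_{14},\widehat e_{23}\}$, and since $\widehat{\X}_1$ and $\widehat{\X}_2$ already agree outside $\{j_1,j_2\}$ and have matching $\widehat F_I$-sets, one checks the two perturbed sequences are related by a single exchange (or are equal), and finishes with Observation~1 and Lemma~\ref{lem-linkedpullback}.

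The main obstacle — and the reason this proposition, rather than Proposition~\ref{pro-extend2} alone, is what we actually need — is the interaction in parts b) and c) between the ``forced'' choice of perturbation on the split side and the freedom of pull-back on the fused side: one has to verify that there is a single perturbation of the split sequence whose pull-back is $\sim$-equivalent to \emph{every} pull-back of the fused sequence, which requires showing the two non-incidental sequences are linked (share a base or differ by one exchange) rather than merely both being non-incidental. This is exactly where the detailed circuit-elimination bookkeeping of Proposition~\ref{pro-extend2}(b) gets reused, now with the extra burden of keeping the $v$-anchored bookkeeping ($m=2\kappa$, edge-swapping in $E_{G_\kappa}(v)$) consistent so that Theorem~\ref{the-linked} and Lemma~\ref{lem-noninbases} apply. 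Everything else is a routine, if lengthy, case analysis driven by Lemmas~\ref{lem-Mhat1}–\ref{lem-Mhat3.1} and Theorem~\ref{the-Mhat5}.
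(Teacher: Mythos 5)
Your high-level strategy (track which of the two moved indices carry $\widehat e_{23},\widehat e_{24}$; split by fused/split status; invoke Lemma~\ref{lem-noninbases} and the pull-back observations) matches the paper for parts a) and d), and your use of Proposition~\ref{pro-extend2} for the amenability halves of a) and c) is exactly what the paper does. However, your plan for the ``Moreover'' clauses in b) and c) has a real gap. You propose to choose the perturbation on the split side so that the resulting sequence ``differs from the fused one by at most one symmetric exchange, then close as before,'' meaning close via Observation~1 and Lemma~\ref{lem-noninbases}. But both of those tools require the two sequences in $\widehat M_\kappa$ to be \emph{compatible}. A $\widehat e_{12},\widehat e_{34}$-perturbation $\widehat{\X}_1'$ carries $\{\widehat e_{12},\widehat e_{34}\}$, whereas the fused $\widehat{\X}_2$ still carries $\{\widehat e_{23},\widehat e_{24}\}$; these sequences are \emph{not} compatible in $\widehat M_\kappa$, so Lemma~\ref{lem-noninbases} and Observation~1 simply do not apply. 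The route through $\widehat M_\kappa$ is blocked.

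The paper sidesteps this by not staying in $\widehat M_\kappa$ at all for b): after normalizing ($i^*=1$, $j^*=2$, $\widehat e=\widehat e_{23}$, $\{\widehat e_{23},\widehat e_{24}\}\subset\widehat X_{22}$), it computes the pull-backs directly in $M_\kappa$ and observes that $\X_1'$ and $\X_2$ differ by exactly one symmetric exchange: $X_{21}=X_{11}'-e_2+\widehat f$, $X_{22}=X_{12}'-\widehat f+e_2$, and $X_{1j}'=X_{2j}$ for $j\ge 3$. That yields $\X_1'\sim_1\X_2$ with no appeal to the $\widehat M_\kappa$-level equivalences, and c) is symmetric. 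You also propose to reuse the circuit/cocircuit elimination bookkeeping of Proposition~\ref{pro-extend2}(b) here; that machinery is only needed to establish \emph{amenability} (which the paper does cite Proposition~\ref{pro-extend2} for in c)), not for the pull-back comparison, which in the paper is an elementary two-line computation. To repair your argument, replace the ``close as before'' step in b) and c) with a direct pull-back comparison in $M_\kappa$; the Lemma~\ref{lem-linkedpullback} bookkeeping you mention for the freedom of pull-back on the fused side is correct and stays.
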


We shall assume that for some $i^* < j^*$, $\widehat{\X}_2$ is obtained from $\widehat{\X}_1$ by exchanging $\widehat{e} \in \widehat{X}_{1i^*}$ with $\widehat{f} \in \widehat{X}_{2j^*};$  that is, $\widehat{X}_{2i^*} = \widehat{X}_{1i^*} - \widehat{e} + \widehat{f}$ and  $\widehat{X}_{2j^*} = \widehat{X}_{1j^*} - \widehat{f} + \widehat{e}$.

\begin{proof}{ a)}  Suppose $\widehat{\X}_i,\ i = 1,2$ are split.  Given that $\widehat{\X}_2$ is split and not $\widehat{e}_{12}, \widehat{e}_{34}$-switchable, it follows by Proposition \ref{pro-extend2} a) that $\widehat{\X}_2$ is also $\widehat{e}_{12}, \widehat{e}_{34}$-amenable.
For $i = 1,2,$ let $\widehat{\X}_i'$ be a $\widehat{e}_{12},\widehat{e}_{34}$-perturbation of $\widehat{\X}_i$.  It now follows by Lemma \ref{lem-noninbases}, that $\X_1' \sim \X_2'$, $\forall \X_1' \in \fB_{M_\kappa}(\widehat{\X}_1')$ and $\forall \X_2' \in \fB_{M_\kappa}(\widehat{\X}_2').$
\end{proof}

\begin{proof}{b)}  Suppose that $\widehat{\X}_1$ is split and $\widehat{\X}_2$ is fused.  We may assume that $i^* = 1, \ j^* = 2,$ $\widehat{e}_{23} \in \widehat{X}_{11}$ and $\widehat{e}_{24} \in \widehat{X}_{12}$.  Furthermore, we may assume $\widehat{e} = \widehat{e}_{23}$ and $\{ \widehat{e}_{23}, \widehat{e}_{24} \} \subset \widehat{X}_{22}.$  Let $\X_1' = (X_{11}', \dots ,X_{1\kappa}') \in \fB_{M_\kappa}(\widehat{\X}_1')$ and let $\X_2 = (X_{11}, \dots ,X_{2\kappa}) \in \fB_{M_\kappa}(\widehat{\X}_2).$ Then we see that $\{ e_{2j-1}, e_{2j} \} \subset X_{1j}',\ j = 1,2.$  Furthermore, we have $\{ e_1, \widehat{f} \}  \subset X_{21}$ and $\{ e_2, e_3 , e_4 \} \subset X_{22}$. Thus $X_{21} = X_{11} - e_2 + \widehat{f}$ and $X_{22} = X_{12} - \widehat{f} + e_2$.  It follows that $\X_1' \sim_1 \X_2.$

\end{proof}

\begin{proof}{c)}  Suppose $\widehat{\X}_1$ is fused and $\widehat{\X}_2$ is split.  Given that $\widehat{\X}_2$ is not $\widehat{e}_{12}, \widehat{e}_{34}$-switchable, it follows from Proposition \ref{pro-extend2} b) that $\widehat{\X}_2$ is $\widehat{e}_{12}, \widehat{e}_{34}$-amenable.  Let $\widehat{\X}_2'$ be a $\widehat{e}_{12}, \widehat{e}_{34}$-perturbation of $\widehat{\X}_2.$  One can now use very similar arguments as in the proof of b) to show that $\X_1 \sim \X_2',$ $\forall \X_1 \in \fB_{M_\kappa}(\widehat{\X}_1)$ and $\forall \X_2' \in \fB_{M_\kappa}(\widehat{\X}_2').$

\end{proof}

\begin{proof}{d)}  Suppose that $\widehat{\X}_i,\ i = 1,2$ are fused. Then $\widehat{X}_i,\ i = 1,2$ are seen to be compatible, non-incidental sequences of bases.  Let $\X_i = (X_{i1}, \dots ,X_{i\kappa}) \in \fB_{M_\kappa}(\widehat{\X}_i),\ i = 1,2$.  Now it follows by Lemma \ref{lem-noninbases} that $\X_1 \sim \X_2.$
%
\end{proof}

\section{The Proof of Theorem \ref{the-inductivestep} a): Part 3}\label{sec-mainproof3}

Suppose that for some $i$, $\widehat{\D}_j',\ j = 1, \dots ,i$ are not $\widehat{e}_{12}, \widehat{e}_{34}$-switchable, but $\widehat{\D}_{i+1}'$ is $\widehat{e}_{12}, \widehat{e}_{34}$-switchable.
Let $\widehat{\D}_j'',\ j = 1, \dots ,i$ be sequences defined as in Section \ref{sec-mainproof2}.  Let $\widehat{\D}_{i+1}''$ be an $\widehat{e}_{12}, \widehat{e}_{34}$-switch of $\widehat{\D}_{i+1}'$ and let $\D_{i+1}'' \in \fB_{M_\kappa}(\widehat{\D}_{i+1}'').$
The main theorem in this section is the following:

\begin{theorem}
Assume $\kappa \ge 2$  and Theorem \ref{the-kappa31} holds for all $k < \kappa$ and Theorem \ref{the-main2} holds for all $k<\kappa$, if $\kappa \ge 3.$  
Suppose that for some $i$, $\widehat{\D}_j',\ j = 1, \dots ,i$ are not $\widehat{e}_{12}, \widehat{e}_{34}$-switchable, but $\widehat{\D}_{i+1}'$ is $\widehat{e}_{12}, \widehat{e}_{34}$-switchable.  Then $\D_1'' \sim \D_2'' \sim \cdots \sim \D_{i+1}''.$\label{the-preproextend4}
\end{theorem}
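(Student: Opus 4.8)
The claim is that, under the inductive hypotheses, the pull‑back sequences $\D_1'', \D_2'', \ldots, \D_{i+1}''$ are related by $\sim$. We already know, by Theorem \ref{the-preproextend3}, that $\D_1'' \sim \D_2'' \sim \cdots \sim \D_i''$. So the entire content of Theorem \ref{the-preproextend4} is the single extra step $\D_i'' \sim \D_{i+1}''$, where $\widehat{\D}_{i+1}'$ is $\widehat{e}_{12}, \widehat{e}_{34}$-switchable and $\widehat{\D}_{i+1}''$ is a chosen $\widehat{e}_{12}, \widehat{e}_{34}$-switch of it. The plan is to reduce this to a statement about the two‑base subsequence affected by the symmetric exchange that takes $\widehat{\D}_i'$ to $\widehat{\D}_{i+1}'$, and then to chase pull‑backs.

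**Main reduction.** Let me think about the structure. $\widehat{\D}_i' \sim_1 \widehat{\D}_{i+1}'$ via a symmetric exchange on two coordinates, say $j_1 < j_2$. On all other coordinates the bases of $\widehat{\D}_i'$ and $\widehat{\D}_{i+1}'$ agree, and (as in the Note before Proposition \ref{pro-extend3}) we may assume $m = 2\kappa$, handling $m < 2\kappa$ by deleting a common $v$-anchored base and invoking Theorem \ref{the-main2} for $k < \kappa$. Since $\widehat{\D}_i'$ is not $\widehat{e}_{12}, \widehat{e}_{34}$-switchable but $\widehat{\D}_{i+1}'$ is, the edges $\widehat{e}_{23}, \widehat{e}_{24}$ (carried by the $\widehat{\D}$-sequences) live on the coordinates involved, and the switch $\widehat{\D}_{i+1}''$ replaces the pair $\{\widehat{e}_{23}, \widehat{e}_{24}\}$ on two of its bases by one of $\{\widehat{e}_{13}, \widehat{e}_{24}\}$ or $\{\widehat{e}_{14}, \widehat{e}_{23}\}$ — in particular $\widehat{\D}_{i+1}''$ is non-incidental so $\fB_{M_\kappa}(\widehat{\D}_{i+1}'') \neq \emptyset$.

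**The step $\D_i'' \sim \D_{i+1}''$.** I would split into the four cases according to whether $\widehat{\D}_i'$ and $\widehat{\D}_{i+1}'$ are split or fused, exactly mirroring the case analysis of Proposition \ref{pro-extend3}. The key local fact is the definition of $\widehat{e}_{12}, \widehat{e}_{34}$-switchability of $\widehat{\D}_{i+1}'$ itself: it says precisely that the relevant base pair $(\widehat{D}_{i+1,j_1}, \widehat{D}_{i+1,j_2})$ satisfies $(\widehat{D}_{i+1,j_1}, \widehat{D}_{i+1,j_2}) \curvearrowright (\widehat{D}'_{i+1,j_1}, \widehat{D}'_{i+1,j_2})$ with the target pair hitting $\widehat{H}$ in $\{\widehat{e}_{13},\widehat{e}_{24}\}$ or $\{\widehat{e}_{14},\widehat{e}_{23}\}$. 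Unwinding $\curvearrowright$ through its definition (a chain of $\curvearrowright_1$'s, each of which is a pair of amenability moves followed by one $\sim_1$), one gets that at the level of $M_\kappa$, after pulling back, each $\curvearrowright_1$ step corresponds to a sequence of $\sim_1$ moves between the pull‑backs — this is where Observation 1 / Lemma \ref{lem-noninbases} and the amenability bookkeeping of Section \ref{subsec-nonincidental} do the work. Concretely: I would pick $\D_i'' \vdash \D$, an induced pull‑back tracking the unchanged coordinates, show $\D_i'' \sim \D$ (coordinates agree except on the affected pair, a single $\sim_1$ or a short chain via Theorem \ref{the-KotZiv} when a $v$-anchored intermediate appears), then show $\D \sim \D_{i+1}''$ by translating the $\curvearrowright$-chain witnessing switchability into a $\sim$-chain in $M_\kappa$, again using that all intermediate $\widehat{M}_\kappa$-base pairs are $\widehat{H}$-viable hence their pull‑backs are non‑incidental, so Lemma \ref{lem-noninbases} applies coordinatewise. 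Finally chain with the already‑established $\D_1'' \sim \cdots \sim \D_i''$.

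**Expected main obstacle.** The delicate point is the bookkeeping when the symmetric exchange $\widehat{\D}_i' \sim_1 \widehat{\D}_{i+1}'$ and the switch operation interact on overlapping coordinates — i.e., when $\{j_1, j_2\}$ is exactly the pair of coordinates carrying $\{\widehat{e}_{23}, \widehat{e}_{24}\}$. In that situation one cannot cleanly separate "the exchange" from "the switch," and one must argue, using Theorem \ref{the-Mhat5} and Lemma \ref{lem-Mhat3}, that either the two base pairs already have matching $\widehat{F}_I$-sets (so the switch of $\widehat{\D}_{i+1}'$ transfers to a switch of $\widehat{\D}_i'$, contradicting non‑switchability of $\widehat{\D}_i'$ — ruling the case out) or the pull‑backs can be connected by an explicit short chain of symmetric exchanges in $M_\kappa$ passing through a $v$-anchored intermediate, whose existence is guaranteed by the $2$-serial exchange property (Theorem \ref{the-KotZiv}). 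I expect that verifying this last family of explicit $M_\kappa$-level chains — and confirming at each stage that the required intermediate frame‑matroid bases genuinely exist, via Lemma \ref{lem-newbiased} applied to the unbalanced cycles involved — is the technically heaviest part, though it is entirely parallel to the arguments already carried out in Observation 1 and in the proof of Proposition \ref{pro-extend3}.
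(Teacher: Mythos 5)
Your high-level framing is right: Theorem \ref{the-preproextend3} already gives $\D_1'' \sim \cdots \sim \D_i''$, so everything reduces to the single step $\D_i'' \sim \D_{i+1}''$, and it is also correct that one reduces to $m = 2\kappa$ and then cases on fused/split. However, there are two substantive gaps that make the proposal fall short of the paper's argument.

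First, you propose ``four cases according to whether $\widehat{\D}_i'$ and $\widehat{\D}_{i+1}'$ are split or fused,'' but a crucial structural observation in the paper's proof of Proposition \ref{pro-extend4} is that the mixed cases cannot occur: because $\widehat{\X}_1$ is not $\widehat{e}_{12}, \widehat{e}_{34}$-switchable while $\widehat{\X}_2$ is, the two sequences are either both fused or both split. (Proposition \ref{pro-extend2} already forbids a fused-to-split step without becoming amenable, and Theorem \ref{the-Mhat5} pins down the $\widehat{F}_I$-sets when the exchange hits both coordinates carrying $\{\widehat{e}_{23},\widehat{e}_{24}\}$.) Without this observation, you would be chasing cases that do not exist while missing the constraint that actually drives the argument.

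Second — and this is the larger gap — you misidentify the main obstacle. You anticipate that the hard case is when the exchange coordinates $\{j_1,j_2\}$ are exactly those carrying $\{\widehat{e}_{23}, \widehat{e}_{24}\}$. In fact, the paper shows that this case is \emph{impossible}: in the both-split case, non-switchability of $\widehat{\X}_1$ plus switchability of $\widehat{\X}_2$ forces $i^* \le 2$ and $j^* \ge 3$ (if both indices were $\le 2$, Theorem \ref{the-Mhat5} would transfer non-switchability to $\widehat{\X}_2$, and if both were $\ge 3$ the pair $(\widehat{X}_{21},\widehat{X}_{22})$ would equal $(\widehat{X}_{11},\widehat{X}_{12})$). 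After a linking reduction to $\kappa=3$, the genuinely difficult work is the explicit $\kappa=3$ sub-case analysis on the location of $\widehat{e}_{56}$ (in $\widehat{X}_{11}'$, $\widehat{X}_{12}'$, or $\widehat{X}_{13}'$) and whether $\widehat{e}=\widehat{e}_{24}$, each of which is resolved by a concrete hand-built chain of $\sim_1$-exchanges in $M_\kappa$ aided by Lemma \ref{lem-Mhat1}, Theorem \ref{the-note12e34switch}, and the $\bumpeq$/$\vdash$ bookkeeping. Your proposed strategy of abstractly ``unwinding the $\curvearrowright$-chain witnessing switchability into a $\sim$-chain'' does not appear in the paper and, as stated, is too vague to verify — there is no general mechanism for converting a $\curvearrowright$-step into a chain of symmetric exchanges at the $M_\kappa$ level; the paper instead constructs the required intermediate base sequences explicitly in each sub-case.
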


To prove the above theorem, we note that by Theorem \ref{the-preproextend3}, $\D_1'' \sim \D_2'' \sim \cdots \sim \D_{i}''.$  Thus it remains to show that $\D_i'' \sim \D_{i+1}.$  This will follow from the next proposition.

\sms
\noindent{\bf Note}  We may assume that $|E_{G_\kappa}| =m = 2\kappa.$  We refer the reader to the note in Section \ref{subsec-Dj''}. 
\sms

\begin{proposition}
Assume $\kappa \ge 2$  and Theorem \ref{the-kappa31} holds for all $k < \kappa$ and Theorem \ref{the-main2} holds for all $k<\kappa$, if $\kappa \ge 3.$ Let $\widehat{\X}_i= (\widehat{X}_{i1}, \dots ,\widehat{X}_{i\kappa}), \ i = 1,2$ be sequences of bases in $\widehat{M}_\kappa$ where $\widehat{\X}_i$ is compatible with $\widehat{\B}_i',\ i =1,2,$ $\widehat{\X}_1 \sim_1 \widehat{\X}_2,$
$\widehat{\X}_1$ is not $\widehat{e}_{12}, \widehat{e}_{34}$-switchable and $\widehat{\X}_2$ is $\widehat{e}_{12}, \widehat{e}_{34}$-switchable.
\begin{itemize}
\item Assume that if $\widehat{\X}_1$ is split, then it is $\widehat{e}_{12}, \widehat{e}_{34}$-amenable, and we let $\widehat{\X}_1' = (\widehat{X}_{11}', \dots ,\widehat{X}_{1\kappa}')$ be a $\widehat{e}_{12}, \widehat{e}_{34}$-perturbation of $\widehat{\X}_1$ in such a case.  
\item If $\widehat{\X}_1$ is fused, we define $\widehat{\X}_1' := \widehat{\X}_1.$
\item Let $\widehat{\X}_2' = (\widehat{X}_{21}', \dots ,\widehat{X}_{2\kappa}')$ be a $\widehat{e}_{12}, \widehat{e}_{34}$-switch of $\widehat{\X}_2.$
\item For $i = 1,2,$ let $\X_i' = (X_{i1}', \dots ,X_{i\kappa}') \in \fB_{M_\kappa}(\widehat{\X}_i')$.
\end{itemize}
Then $\X_1' \sim \X_2'.$
\label{pro-extend4}
\end{proposition}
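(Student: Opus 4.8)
The strategy is to pull everything back to $M_\kappa$ and to follow each elementary move there, exploiting that an $\widehat{M}_\kappa$-move descends to the relation $\sim$ in $M_\kappa$ whenever the sequences involved are non-incidental. I would begin with the reductions: the only incidental pair of edges that a sequence compatible with $\widehat{\B}_i'$ can use is $\{\widehat{e}_{23},\widehat{e}_{24}\}$, and this pair survives neither a $\widehat{e}_{12},\widehat{e}_{34}$-perturbation (which replaces it by the non-incidental pair $\{\widehat{e}_{12},\widehat{e}_{34}\}$, or else leaves a fused --- hence non-incidental --- sequence) nor a $\widehat{e}_{12},\widehat{e}_{34}$-switch (which replaces it by $\{\widehat{e}_{13},\widehat{e}_{24}\}$ or $\{\widehat{e}_{14},\widehat{e}_{23}\}$). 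Hence $\widehat{\X}_1'$ and $\widehat{\X}_2'$ are non-incidental, so $\fB_{M_\kappa}(\widehat{\X}_i')\neq\emptyset$ by Lemma~\ref{lem-pullbacknonempt}, and by Lemma~\ref{lem-linkedpullback} it suffices to produce \emph{one} $\X_1'\in\fB_{M_\kappa}(\widehat{\X}_1')$ and \emph{one} $\X_2'\in\fB_{M_\kappa}(\widehat{\X}_2')$ with $\X_1'\sim\X_2'$. I would then split into cases according to whether each of $\widehat{\X}_1,\widehat{\X}_2$ is fused or split.

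The engine of the argument is that the three basic moves all descend, for non-incidental sequences with $m=2\kappa$: (i) a symmetric exchange between two bases of $\widehat{M}_\kappa$ descends along an induced pull-back $\vdash$ to $\sim$ in $M_\kappa$ --- this is Observation~1 of Section~\ref{subsec-induced}; (ii) an amenability move between $\widehat{H}$-viable pairs, both of which are realizable, descends to $\sim$ in $M_\kappa$, since on $E_{G_\kappa}(v)$ it either moves a single edge between the two bases (a symmetric exchange, valid because the two new sets are pull-backs of $\widehat{M}_\kappa$-bases) or moves a two-element set, handled exactly as in the $\kappa=2$ part of the proof of Observation~1 via Theorem~\ref{the-KotZiv}; and (iii) if two such sequences share a base, their pull-backs are $\sim$ (Observation~2, via Theorem~\ref{the-linked}). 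A $\widehat{e}_{12},\widehat{e}_{34}$-switch of $\widehat{\X}_2$ is, by definition, a composition of $\curvearrowright_1$-moves, each of which is an amenability move, then a symmetric exchange, then a reverse amenability move; applying (i)--(iii) along this chain, starting from its first realizable configuration, yields $\X_2^{\circ}\sim\X_2'$ for a suitable pull-back $\X_2^{\circ}$, and since the perturbation carrying $\widehat{\X}_1$ to $\widehat{\X}_1'$ is a single amenability move, $\X_1^{\circ}\sim\X_1'$ for a suitable $\X_1^{\circ}$ (which is a pull-back of $\widehat{\X}_1$ itself whenever $\widehat{\X}_1$ is fused).

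It remains to connect the $\X_1$-side to the $\X_2$-side, and this is where the difficulty lies. When $\widehat{\X}_1$ and $\widehat{\X}_2$ are both fused they are both realizable, the exchange $\widehat{\X}_1\sim_1\widehat{\X}_2$ descends to $\X_1^{\circ}\sim\X_2^{\circ}$ by Observation~1, and chaining finishes the argument. The obstacle is the cases in which $\widehat{\X}_1$ or $\widehat{\X}_2$ is split and therefore incidental, so that $\fB_{M_\kappa}(\widehat{\X}_i)$ may be empty and $\widehat{\X}_1\sim_1\widehat{\X}_2$ cannot be pulled back verbatim; a Theorem~\ref{the-Mhat5}-style analysis of the exchange shows that the essential such case is $\widehat{\X}_1$ split (hence $\widehat{e}_{12},\widehat{e}_{34}$-amenable, by the standing hypothesis) and $\widehat{\X}_2$ fused, to which Proposition~\ref{pro-extend2} does not directly apply since it assumes neither sequence switchable. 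For this I would use the rigidity of the non-switchable pair $(\widehat{X}_{1a},\widehat{X}_{1b})$: by Lemma~\ref{lem-Mhat3} its fundamental-circuit data is the strictly $I$-cyclic one, and by Theorem~\ref{the-note12e34switch} each circuit of $\widehat{X}_{1a}\cup\{\widehat{e}_{12},\widehat{e}_{34}\}$ consists of two vertex-disjoint unbalanced cycles joined by a path, one cycle containing $\widehat{e}_{12}$ and the other $\widehat{e}_{34}$. Tracing the exchanged element $\widehat{f}$ of $\widehat{\X}_1\sim_1\widehat{\X}_2$ along the relevant fundamental circuit --- a circuit-elimination computation in the style of the proof of Theorem~\ref{the-Mhat5} --- and following its pull-back $f$, one verifies that the realized sequence on the $\X_1$-side and that on the $\X_2$-side differ by a bounded number of symmetric exchanges in $M_\kappa$, each validated by the known circuit structure, with Proposition~\ref{pro-extend3} keeping intermediate pull-backs $\sim$-related and, where convenient, with a base common to both sides being deleted so as to invoke Theorem~\ref{the-main2} or Theorem~\ref{the-kappa31} for $k=\kappa-1$. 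Putting the pieces together gives $\X_1'\sim\X_1^{\circ}\sim\cdots\sim\X_2^{\circ}\sim\X_2'$, hence $\X_1'\sim\X_2'$; Theorem~\ref{the-preproextend4} then follows by combining this with Theorem~\ref{the-preproextend3}, the relation $\D_i''\sim\D_{i+1}''$ there being an instance of $\X_1'\sim\X_2'$. I expect this last step --- and within it the split/fused case --- to carry essentially all of the technical weight.
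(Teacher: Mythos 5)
The proposal misidentifies the case structure and hence the location of the real technical work. The paper's first observation after fixing the exchange positions $(i^*, j^*)$ is that because $\widehat{\X}_1$ is not $\widehat{e}_{12},\widehat{e}_{34}$-switchable while $\widehat{\X}_2$ is, the two sequences are \emph{either both fused or both split}: the mixed fused/split cases simply do not occur under the hypotheses of this proposition (contrast with Proposition~\ref{pro-extend2}, where neither side is switchable and the mixed cases are precisely what is handled). You declare ``the essential such case is $\widehat{\X}_1$ split and $\widehat{\X}_2$ fused'' and devote most of your analysis to it, but that case is vacuous here, so the work you allocate to it (tracing $\widehat{f}$ via Theorem~\ref{the-note12e34switch}, etc.) is pointed at nothing.

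The technical weight actually lies in the \emph{both split} case, which your sketch barely touches. There, both $\widehat{\X}_1$ and $\widehat{\X}_2$ are incidental, so neither admits a pull-back, and one has to compare $\widehat{\X}_1'$ (a perturbation of $\widehat{\X}_1$) with $\widehat{\X}_2'$ (a switch of $\widehat{\X}_2$) without ever passing through a realizable version of $\widehat{\X}_1$ or $\widehat{\X}_2$. The paper first argues that $i^* \le 2$ and $j^* \ge 3$, so a shared base exists and linking via Theorem~\ref{the-linked} disposes of $\kappa \ge 4$; the residual case is $\kappa=3$, $m=6$, and the proof then runs a careful case analysis on which base of $\widehat{\X}_1'$ contains $\widehat{e}_{56}$ and on whether $\widehat{e}=\widehat{e}_{24}$, using Lemma~\ref{lem-Mhat1}, Lemma~\ref{lem-Mhat3}, and (in the subcase $\widehat{e}\ne\widehat{e}_{24}$) the circuit structure from Theorem~\ref{the-note12e34switch}. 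Your sketch mentions most of the right tools, but because it never confronts $\kappa=3$ split--split head-on, it omits the part of the argument that is genuinely hard to produce, and the steps you do describe cannot be assembled into a complete proof as written.
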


\begin{proof}   
We may assume that  $\X_1' \vdash \X_2'$ and for some $i^* < j^*,$ $\widehat{X}_{2i^*} = \widehat{X}_{1i^*} - \widehat{e} + \widehat{f}$ and
 $\widehat{X}_{2j^*} = \widehat{X}_{1j^*} - \widehat{f} + \widehat{e}.$   We observe that since $\widehat{\X}_1$ is not $\widehat{e}_{12}, \widehat{e}_{34}$-switchable, whereas $\widehat{\X}_2$ is $\widehat{e}_{12}, \widehat{e}_{34}$-switchable, either $\widehat{\X}_i,\ i = 1,2$ are both fused, or both are split.
 
 \begin{noname}
 Suppose $\widehat{\X}_i,\ i = 1,2$ are fused.
 \end{noname}
 
 In this case, $\widehat{\X}_i,\ i = 1,2$ are compatible, non-incidental sequences of bases.  Let $\X_i \in \fB_{M_\kappa}(\widehat{\X}_i),\ i= 1,2.$  Thus it follows by Lemma \ref{lem-noninbases} that $\X_1 \sim \X_2.$  However, we also see that $\X_i \sim \X_i',\ i = 1,2.$  Thus $\X_1' \sim \X_2'.$

\begin{noname}
Suppose $\widehat{\X}_i,\ i = 1,2$ are split.
\end{noname}

We may assume that $\widehat{e}_{23} \in \widehat{X}_{11},$ $\widehat{e}_{24} \in \widehat{X}_{12}.$  Since $\widehat{\X}_1$ is not $\widehat{e}_{12}, \widehat{e}_{34}$-switchable and $\widehat{\X}_2$ is $\widehat{e}_{12}, \widehat{e}_{34}$-switchable, it follows that $i^* \le 2$ and $j^* \ge 3.$  Without loss of generality, we may assume that $i^* =2$ (i.e. $\widehat{e} \in \widehat{X}_{12}$) and $j^* = 3$ (i.e. $\widehat{f} \in \widehat{X}_{13}$).  If $\kappa \ge 4,$ then $\widehat{X}_{1\kappa}' = \widehat{X}_{2\kappa}',$ and (given $\X_1' \vdash \X_2'$)  $\X_1' \bumpeq \X_2'.$  Thus $\X_1' \sim \X_2'$, if $\kappa \ge 4.$  Therefore, we may assume that $\kappa =3,$ $m = 6,$ and $\displaystyle{\cup_{j=1}^3\widehat{X}_{1j} \cap \widehat{E} = \{ \widehat{e}_{23}, \widehat{e}_{24}, \widehat{e}_{56} \}.}$  Let $I_1 = \{ 1,2,3,4 \},\ I_2 = \{ 1,2,5,6 \},$ and $I_3 = \{ 3,4,5,6 \}.$
Since $\widehat{\X}_1$ is $\widehat{e}_{12}, \widehat{e}_{34}$-amenable (and not $\widehat{e}_{12}, \widehat{e}_{34}$-switchable) it follows that $\widehat{F}_{I_1}(\widehat{X}_{11}) = \{ \widehat{e}_{12}, \widehat{e}_{34}, \widehat{e}_{23}, \widehat{e}_{14} \}$ and $\widehat{F}_{I_1}(\widehat{X}_{12}) = \{ \widehat{e}_{12}, \widehat{e}_{34}, \widehat{e}_{13}, \widehat{e}_{24} \}.$  We may assume that $\widehat{e}_{12} \in \widehat{X}_{11}'$ and $\widehat{e}_{34} \in \widehat{X}_{12}'.$  Since $\widehat{X}_{11} = \widehat{X}_{21},$ it follows that $\widehat{F}_{I_1}(\widehat{X}_{21}) = \widehat{F}_{I_1}(\widehat{X}_{11}) = \{ \widehat{e}_{12}, \widehat{e}_{34}, \widehat{e}_{23}, \widehat{e}_{14} \}.$

\begin{nonamenonamenonameb}
Suppose that $\widehat{e} = \widehat{e}_{24}.$
\end{nonamenonamenonameb}

   Then $\widehat{e}_{24} \in \widehat{X}_{23}$ and hence $(\widehat{X}_{21}, \widehat{X}_{23})$ is $\widehat{e}_{12}, \widehat{e}_{34}$-switchable. We may assume that $\widehat{e}_{14} \in \widehat{X}_{21}'$ and $\widehat{e}_{23} \in \widehat{X}_{23}'.$  Suppose $\widehat{e}_{12} \in \widehat{F}_{I_1}(\widehat{X}_{23}).$  Let 
$\widehat{\X}_2'' = (\widehat{X}_{21}'', \widehat{X}_{22}'', \widehat{X}_{23}''),$ where 
$\widehat{X}_{21}'' = \widehat{X}_{21} - \widehat{e}_{23} + \widehat{e}_{34},\ \widehat{X}_{22}'' = \widehat{X}_{22},$ and $\widehat{X}_{23}'' = \widehat{X}_{23} - \widehat{e}_{24} + \widehat{e}_{12}.$  Let $\X_2'' = (X_{21}'', X_{22}'', X_{23}'') \in \fB_{M_\kappa}(\widehat{\X}_2''),$ where $\X_2' \vdash \X_2''.$  Since $\widehat{X}_{22}'' = \widehat{X}_{22} = \widehat{X}_{22}',$ it follows that $\X_2' \bumpeq \X_2''$ and hence $\X_2' \sim \X_2''$.  Let $\widehat{\X}_1'' = (\widehat{X}_{11}'', \widehat{X}_{12}'', \widehat{X}_{13}'')$, where $\widehat{X}_{11}'' = \widehat{X}_{11}' - \widehat{e}_{12} + \widehat{e}_{34}, \ \widehat{X}_{12}'' = \widehat{X}_{12}' - \widehat{e}_{34} + \widehat{e}_{12},$ and $\widehat{X}_{13}'' = \widehat{X}_{13}'.$  Let 
$\X_1'' = (X_{11}'', X_{12}'', X_{13}'') \in \fB_{M_\kappa}(\widehat{\X}_1''),$ where we may assume $\X_1' \vdash \X_1''$ and $X_{11}'' = X_{21}''$.  Then $\X_1'' \bumpeq \X_2''.$  However, we also see that since $\X_1' \bumpeq \X_1''$ since $\widehat{X}_{13}' = \widehat{X}_{13}''$ (and $\X_1' \vdash \X_1''$). Thus it follows that $\X_1' \sim \X_1'' \sim \X_2'' \sim \X_2'.$

From the above, we may assume that $\widehat{e}_{12} \not\in \widehat{F}_{I_1}(\widehat{X}_{23}).$ Arguing in a similar fashion to the above, we may also assume $\widehat{e}_{34} \not\in \widehat{F}_{I_1}(\widehat{X}_{23}).$  From these assumption it follows that $\widehat{F}_{I_1}(\widehat{X}_{23}) = \{ \widehat{e}_{14}, \widehat{e}_{13}, \widehat{e}_{23}, \widehat{e}_{24} \}.$  
Moreover, we may assume that $\widehat{X}_{21}' = \widehat{X}_{21} - \widehat{e}_{23} + \widehat{e}_{14},$ $\widehat{X}_{23}' = \widehat{X}_{23} - \widehat{e}_{24} + \widehat{e}_{23}$, and 
hence $\widehat{F}_{I_1}(\widehat{X}_{23}') = \widehat{F}_{I_1}(\widehat{X}_{23}).$

\begin{nonamenonamenonamebc}
Suppose $\widehat{e}_{56} \in \widehat{X}_{11}'.$
\end{nonamenonamenonamebc}

We have $\{ \widehat{e}_{12}, \widehat{e}_{56} \} \subset \widehat{X}_{11}'$, $\widehat{e}_{34} \in \widehat{X}_{12}'$, and $\widehat{X}_{13}' \cap \widehat{E} = \emptyset.$  Thus $\{ e_3, e_4 \} \subset X_{12}'$ and $X_{13}'$ is $v$-anchored.  We also have $\{ \widehat{e}_{14}, \widehat{e}_{56} \} \subset \widehat{X}_{21}'$, $\widehat{e}_{23} \in \widehat{X}_{23}'$, and $\widehat{X}_{22}' \cap \widehat{E} = \emptyset.$  Thus $\{ e_2, e_3 \} \subset X_{23}'$ and $X_{22}'$ is $v$-anchored.  We see that $X_{13}' \cap E_{G_\kappa}(v) = \{ e_i \}$, for some $i \in \{ 1,2,5,6 \}.$  
Suppose $i \not\in \{ 5,6 \}.$  Since $\widehat{e}_{56} \in \widehat{X}_{11}',$ it follows by Lemma \ref{lem-Mhat1} that either $\widehat{X}_{11}' - \widehat{e}_{56} + \widehat{e}_{i5} \in \B(\widehat{M}_\kappa)$ or $\widehat{X}_{11}' - \widehat{e}_{56} + \widehat{e}_{i6} \in \B(\widehat{M}_\kappa)$.
Thus either $X_{11}' - e_6+ e_i \in \B(M_\kappa)$ or  $X_{11}' - e_5+ e_i \in \B(M_\kappa).$  If the former holds, then, by a symmetric exchange between $X_{11}'$ and $X_{13}'$, we can exchange $e_i$ with $e_6.$  Otherwise, if the latter holds, we can exchange $e_i$ with $e_5$.  Because of this, we may assume that $i \in \{ 5, 6 \}.$  We shall treat only the case $i=5$ (i.e. $e_5 \in X_{13}'$); the case $i=6$ is similar and we leave the details to the reader.

Since $\widehat{e}_{34} \in \widehat{X}_{12}',$ it follows by Lemma \ref{lem-Mhat1} that either $\widehat{X}_{12}' - \widehat{e}_{34} + \widehat{e}_{35} \in \B(\widehat{M}_\kappa)$ or $\widehat{X}_{12}' - \widehat{e}_{34} + \widehat{e}_{45} \in \B(\widehat{M}_\kappa).$
Thus either $X_{12}' - e_4 + e_5 \in \B(M_\kappa)$ or $X_{12}' - e_3 + e_5 \in \B(M_\kappa).$  We may assume that the former holds -- the case where the latter holds can be dealt with similarly.  Let $\X_1'' = ( X_{11}'', X_{12}'', X_{13}''),$ where $X_{11}'' = X_{11}',\ X_{12}'' = X_{12}' - e_4 + e_5$, and $X_{13}'' = X_{13}' - e_5 + e_4.$  

 Given $\widehat{e}_{12} \in \widehat{X}_{11}',$ it follows by Lemma \ref{lem-Mhat1}, that either $\widehat{X}_{11}' - \widehat{e}_{12} + \widehat{e}_{14} \in \B(\widehat{M}_\kappa)$ or $\widehat{X}_{11}' - \widehat{e}_{12} + \widehat{e}_{24} \in \B(\widehat{M}_\kappa)$.  The latter cannot hold since $\widehat{e}_{24} \not\in \widehat{F}_{I_1}(\widehat{X}_{11}').$ Thus the former holds.
 Let $\widehat{\X}_1^{(3)} = (\widehat{X}_{11}^{(3)}, \widehat{X}_{12}^{(3)}, \widehat{X}_{13}^{(3)}),$ where $\widehat{X}_{11}^{(3)} = \widehat{X}_{11}' - \widehat{e}_{12} + \widehat{e}_{14},$ $\widehat{X}_{12}^{(3)} = \widehat{X}_{12}' - \widehat{e}_{34} + \widehat{e}_{35}$ and $\widehat{X}_{13}^{(3)} = \widehat{X}_{13}'.$  Let $\X_1^{(3)} = (X_{11}^{(3)}, X_{12}^{(3)}, X_{13}^{(3)}) \in \fB_{M_\kappa}(\widehat{\X}_1^{(3)}),$ where $\X_2' \vdash \X_1^{(3)}.$  We observe that $X_{12}^{(3)} = X_{12}''$ and $\widehat{X}_{11}^{(3)} = \widehat{X}_{21}'.$  Thus we have $\X_1'' \bumpeq \X_1^{(3)},$ and $X_{11}^{(3)} = X_{21}',$ since $\X_2' \vdash \X_1^{(3)}.$ Thus $\X_1' \sim_1 \X_1'' \bumpeq \X_1^{(3)} \bumpeq \X_2'.$  Consequently, $\X_1' \sim_1 \X_1'' \sim \X_1^{(3)} \sim \X_2'$ and thus $\X_1' \sim \X_2'.$

\begin{nonamenonamenonamebc}
Suppose $\widehat{e}_{56} \in \widehat{X}_{12}'.$
\end{nonamenonamenonamebc}

We have that $\widehat{e}_{12} \in \widehat{X}_{11}',$ $\{ \widehat{e}_{34}, \widehat{e}_{56} \} \subset \widehat{X}_{12}',$ and $\widehat{X}_{13} \cap \widehat{E} = \emptyset.$
Thus $\{ e_1, e_2 \} \subset X_{11}'$ and $X_{13}'$ is $v$-anchored.  We also have $\widehat{e}_{14} \in \widehat{X}_{21}', \ \widehat{e}_{56} \in \widehat{X}_{22}',$ and $\widehat{e}_{23} \in \widehat{X}_{23}'.$  It follows that $\{ e_1, e_4 \} \subset X_{21}', \ \{ e_5, e_6 \} \subset X_{22}',$ and $\{ e_2, e_3 \} \subset X_{23}'.$  Since $X_{13}'$ is $v$-anchored and $\widehat{e}_{34} \in \widehat{X}_{12}',$ we may assume, similar to the previous case, that $e_3 \in X_{13}'$ or $e_4 \in X_{13}'.$  We shall assume $e_3 \in X_{13}'$ and $\{ e_4, e_5, e_6 \} \subset X_{12}'$; the case where $e_4 \in X_{13}'$ can be dealt with by similar arguments.
Since $\widehat{e}_{12} \in \widehat{X}_{11}',$ Lemma \ref{lem-Mhat1} implies that either $\widehat{X}_{11}' - \widehat{e}_{12} + \widehat{e}_{13} \in \B(\widehat{M}_\kappa),$ or $\widehat{X}_{11}' - \widehat{e}_{12} + \widehat{e}_{23} \in \B(\widehat{M}_\kappa).$  The former cannot happen since $\widehat{e}_{13} \not\in \widehat{F}_{I_1}(\widehat{X}_{11}')$ and thus the latter holds.  It follows that $X_{11}' - e_1 + e_3 \in \B(M_\kappa).$   Let $\X_1'' = (X_{11}'', X_{12}'', X_{13}''),$ where $X_{11}'' = X_{11}' - e_1 + e_3,\ X_{12}'' = X_{12}',$ and $X_{13}'' = X_{13}' - e_3 + e_1.$  Then $\X_1' \sim_1 \X_1''.$   Recalling that $\widehat{F}_{I_1}(\widehat{X}_{23}') = \{ \widehat{e}_{14}, \widehat{e}_{13}, \widehat{e}_{23}, \widehat{e}_{24} \},$ we have that
$\widehat{e}_{14} \in \widehat{F}_{I_1}(\widehat{X}_{23}').$  Thus $X_{13}'' - \widehat{f} + e_4 \in \B(M_\kappa).$  Now $\X_1^{(3)} = (X_{11}^{(3)}, X_{12}^{(3)}, X_{13}^{(3)}),$ where $X_{11}^{(3)} = X_{11}'',$ $X_{12}^{(3)} = X_{12}'' -e_4 + \widehat{f},$ and $X_{13}^{(3)} = X_{13}'' - \widehat{f} + e_4$, is seen to be a sequence of bases.  Furthermore, $\X_1'' \sim_1 \X_1^{(3)}.$  Now $\X_1^{(3)} \bumpeq \X_2'$ since $X_{12}^{(3)} = X_{22}'.$  Thus we have $\X_1' \sim_1 \X_1'' \sim_1 \X_1^{(3)} \sim \X_2'.$

\begin{nonamenonamenonamebc}
Suppose $\widehat{e}_{56} \in \widehat{X}_{13}'.$
\end{nonamenonamenonamebc}

In this case, $\widehat{e}_{(2j-1)(2j)} \in \widehat{X}_{1j}',\ j = 1,2,3.$  Thus $\{ e_1, e_2 \} \subset X_{11}'$, $\{ e_3, e_4 \} \subset X_{12}',$ and $\{ e_5, e_6 \} \subset X_{13}'.$  We also see that $\widehat{e}_{14} \in \widehat{X}_{21}'$ and $\{ \widehat{e}_{23}, \widehat{e}_{56} \} \subset \widehat{X}_{23}'.$  Thus we have $\{ e_1, e_4 \} \subset X_{21}'$ and $X_{22}'$ is $v$-anchored.  Given $\widehat{e}_{23} \in \widehat{X}_{23}',$ it follows by applying Lemma \ref{lem-Mhat1}, that we may assume $e_2 \in X_{22}'$ or $e_3 \in X_{22}'.$
Suppose  $e_2 \in X_{22}'$.  Then $\{ e_3, e_5,e_6 \} \subset X_{23}'.$  Now we have that $X_{11}' = X_{21}' - e_4 + e_2.$  Thus $\X_2'' = (X_{21}'', X_{22}'', X_{23}''),$ where $X_{21}'' = X_{11}', \ X_{22}'' = X_{22}' - e_2 + e_4,$ $X_{23}'' = X_{23}'$ is seen to be a base sequence.  Furthermore, $\X_2'' \bumpeq \X_1',$ since $X_{21}'' = X_{11}'.$  Thus we have $\X_2' \sim_1 \X_2'' \sim \X_1'.$  The case where 
$e_3 \in X_{22}$ can be dealt with in a similar fashion.

\begin{nonamenonamenonameb}
Suppose that $\widehat{e} \ne \widehat{e}_{24}.$
\end{nonamenonamenonameb} 

As before, we have $\widehat{F}_{I_1}(\widehat{X}_{21}) = \widehat{F}_{I_1}(\widehat{X}_{11}) = \{ \widehat{e}_{12}, \widehat{e}_{34}, \widehat{e}_{14}, \widehat{e}_{23} \}$, since $\widehat{X}_{21} = \widehat{X}_{11}.$  Since $(\widehat{X}_{11}', \widehat{X}_{12}')$ is not $\widehat{e}_{12}, \widehat{e}_{34}$-switchable, it follows by Theorem \ref{the-note12e34switch}, that the (unique) circuit in $\widehat{X}_{12}' \cup \{ \widehat{e}_{12} \}$ corresponds to a subgraph of $\widehat{G}_\kappa$ consisting of two vertex-disjoint unbalanced cycles joined by a path, each cycle containing exactly one of the edges $\widehat{e}_{12}$ or $\widehat{e}_{34}.$  Using this, it is straight-forward to show that 
$\{ \widehat{e}_{12}, \widehat{e}_{34} \} \cap \widehat{F}_{I_1}(\widehat{X}_{22}) \ne \emptyset.$

Without loss of generality, we may assume $\widehat{e}_{12} \in \widehat{F}_{I_1}(\widehat{X}_{22}).$ Let $\widehat{\X}_1'' = (\widehat{X}_{11}'', \widehat{X}_{12}'', \widehat{X}_{13}''),$ be the base sequence where $\widehat{X}_{11}'' = \widehat{X}_{11}' - \widehat{e}_{12} + \widehat{e}_{34},$ $\widehat{X}_{12}'' = \widehat{X}_{12}' - \widehat{e}_{34} + \widehat{e}_{12},$ and $\widehat{X}_{13}'' = \widehat{X}_{13}'.$  Let $\X_1'' = (X_{11}'', X_{12}'', X_{13}'') \in \fB_{M_\kappa}(\widehat{\X}_1'')$ where $\X_1' \vdash \X_1''.$  Since $\widehat{X}_{13}'' = \widehat{X}_{13}',$ it follows that $\X_1' \bumpeq \X_1''$ and hence $\X_1' \sim \X_1''.$  Let $\widehat{\X}_2'' = (\widehat{X}_{21}'', \widehat{X}_{22}'', \widehat{X}_{23}''),$ be the base sequence where $\widehat{X}_{21}'' = \widehat{X}_{21} - \widehat{e}_{23} + \widehat{e}_{34},$ $\widehat{X}_{22}'' = \widehat{X}_{22} - \widehat{e}_{24} + \widehat{e}_{12},$ and $\widehat{X}_{23}'' = \widehat{X}_{23}.$  Let $\X_2'' = (X_{21}'', X_{22}'', X_{23}'') \in \fB_{M_\kappa}(\widehat{\X}_1'')$ where $\X_2' \vdash \X_2''.$  Since $\widehat{X}_{23}'' = \widehat{X}_{23} = \widehat{X}_{23}',$ it follows that $\X_2' \bumpeq \X_2''$ and thus $\X_2' \sim \X_2''.$  Moreover, we see that $\widehat{X}_{21}'' = \widehat{X}_{11}''$ and $\widehat{X}_{11}'' \cap \widehat{E} = \widehat{X}_{21}'' \cap \widehat{E} = \{ \widehat{e}_{34} \}.$  Thus $X_{11}'' = X_{22}''$ and consequently, $\X_1'' \bumpeq \X_2''$ and hence $\X_1'' \sim \X_2''.$  Now we have $\X_1' \sim \X_1'' \sim \X_2'' \sim \X_2'.$ 

\end{proof}

\subsection{The proof of Theorem \ref{the-inductivestep} a):  Final Step}\label{sec-mainprooffinal}

Noting that $\widehat{\B}_2'$ is $\widehat{e}_{12}, \widehat{e}_{34}$-switchable, let $i_1$ be the smallest index $i' \in \{ 2, \dots ,p \}$ for which $\widehat{\D}_{i'}'$ is $\widehat{e}_{12}, \widehat{e}_{34}$- switchable.
Define $\widehat{\D}_i'',\ i = 1, \dots ,i_1-1$ as before and let $\widehat{\D}_{i_1}''$ be a $\widehat{e}_{12},\widehat{e}_{34}$-switch of $\widehat{\D}_{i_1}'.$  Let $\D_j'' \in \fB_{M_\kappa}(\widehat{\D}_j''),\ j = 1, \dots i_1$, where $\D_1'' = \B_1.$   Then it follows by Theorem \ref{the-preproextend4} that $\B_1  = \D_1'' \sim \D_{i_1}''.$    
 We can apply the same reasoning by reversing the roles of $\B_1$ and $\B_2.$   Let $i_2$ be the largest value of $i \in \{ 1, \dots ,p \}$ for which $\widehat{\D}_i'$ is $\widehat{e}_{14}, \widehat{e}_{23}$-switchable.  Let $\widehat{\D}_{i_2}''$ be a $\widehat{e}_{14}, \widehat{e}_{23}$-switch of $\widehat{\D}_{i_2}'$ and let $\D_{i_2}'' \in \fB_{M_\kappa}(\widehat{\D}_{i_2}'').$  Then $\B_2 \sim \D_{i_2}''$ (using Theorem \ref{the-preproextend4}).  Now it is seen that either 
$\fM_{\D_{i_1}''} = \fM_{\B_2},$ or $\fM_{\D_{i_1}''} = \fM_{\D_{i_2}''}.$  In the former case, we have $\B_1 \sim \D_{i_1}'' \sim \B_2$, and in the latter case, $\B_1 \sim \D_{i_1}'' \sim \D_{i_2}'' \sim \B_2.$   Thus $\B_1 \sim \B_2$ and this completes the proof of Theorem \ref{the-inductivestep} a).

\section{The proof of Theorem \ref{the-inductivestep} b)}\label{sec-kappa31proof}

Theorem \ref{the-kappa31} is clearly true when $\kappa = 1$. We leave it to the reader to verify the theorem when $n=r(M)=2$.  We shall assume that $\kappa \ge 2,$ $n \ge 3$ and Theorem \ref{the-inductivestep} b) holds when $r(M) < n$.  Furthermore, we may assume that Theorem \ref{the-main2} holds for $k = \kappa$, as this follows from Theorem \ref{the-inductivestep} a), proven in the previous sections.  If $h_1 = h_2$, then 
$\B_i,\ i = 1,2$ are seen to be compatible sequences of bases, and thus $\B_1 \sim \B_2$, since we given that Theorem \ref{the-main2} holds $k = \kappa.$  It then follows that $\B_1^+ \sim \B_2^+.$  Thus we shall assume that $h_1 \ne h_2.$

We note that since the bases in $\B_i,\ i = 1,2$ are pairwise disjoint,  it is unnecessary to define $G_\kappa$ and $M_\kappa$  as we did in Section \ref{sec-reductionsI} (since $G = G_\kappa$ and $M = M_\kappa$ here).
Since each base has $n$ edges, the graph $G-h_1$ has $n\kappa$ edges in all and total degree $2n\kappa.$  Furthermore, since $d_{G}(u) = 2\kappa +2,$ it follows by averaging that there is a vertex $v \in V(G)\backslash \{ u \}$ where $d_{G-h_1}(v) \le 2\kappa-1.$  If $d_{G}(v) \le 2\kappa -1,$ or $G$ has a loop at $v,$ then the ensuing arguments will actually be simpler.  For this reason,  we will assume 

\begin{itemize}\item $|E_{G-h_1} (v) | = d_{G-h_1}(v) = 2\kappa-1$, $h_1 \in E_G(v),$ and there are no loops at $v$.\end{itemize}       

\subsection{The $\widehat{u}$-extended sequences $\widehat{\B}_i^,\ i = 1,2$}\label{subsec-widehatuextend}

By Lemma \ref{lemma2}, we may assume that for $i=1,2,$ $\B_i$ is a $v$-reduced sequence of bases.  
Let $\widehat{\Omega} = (\widehat{G}, \widehat{\C})$ be the $v$-deleted biased graph and we let $\widehat{M} = M(\widehat{\Omega}).$   Let $\widehat{u} \in V(\widehat{G})$ be the vertex in $\widehat{G}$ corresponding to $u.$  For $i= 1,2$ we shall define a $\widehat{u}$-extended base sequence $\widehat{\B}_i^+ = (\widehat{\B}_i,  \widehat{h}_i )$ of $\widehat{M}$ where $\widehat{\B}_i = (\widehat{B}_{i1}, \dots , \widehat{B}_{i\kappa})$ and $\widehat{h}_i \in E_{\widehat{G}}(\widehat{u}).$   For $i= 1,2$ and $j = 1, \dots ,\kappa,$ if $B_{ij} \cap E_{G}(v) = \{ e_{k}, e_l \},$ then let $\widehat{B}_{ij}= B_{ij} - e_k - e_l + \widehat{e}_{kl};$ otherwise, if $B_{ij} \cap E_{G}(v) = \{ e_{k} \},$ then let $\widehat{B}_{ij}= B_{ij} \backslash \{ e_k \}.$ 
For $i=1,2,$ if $h_i \not\in E_G(v),$ then let $\widehat{h}_i = h_i.$   Otherwise, if $h_i \in E_{G}(v)$ (which is the case for $h_1$), then we define $\widehat{h}_i$ in the following manner:  Assume $h_i = e_{i^*}$, for some $i^*$.
Since $d_{G-h_i}(v) = 2\kappa -1,$ there exists  exactly one edge $e_{j^*}\in E_{G}(v)$ which belongs to a $v$-anchored base in $\B_i.$  We define
$\widehat{h}_i := \widehat{e}_{i^*j^*}.$   An important observation, to be used later on, is that because $h_1 \in E_{G}(v),$ the vertex $\widehat{u}$ is an endvertex of some edge $\widehat{e}_{ij}.$ 

\subsection{Amenability and switchability of $\widehat{u}$-extended sequences}

Recall that $\widehat{E} = \{ \widehat{e}_{ij}\ \big| \ i,j \in \{ 1, \dots ,m \} \}.$  Let $I \subseteq \{ 1, \dots ,m \}$ where $|I| \ge 2,$ and let $\widehat{H} = \widehat{H}_I.$   Let $\widehat{h} \in E_{\widehat{G}}(\widehat{u}).$  We define $\widehat{F}_I(\widehat{h}) = E_{\widehat{H}}(\widehat{u}),$ if $\widehat{h} \in \widehat{E};$ otherwise, $\widehat{F}_I(\widehat{h}) := \emptyset.$   Suppose $|I| =4$ and $\widehat{e}, \widehat{f}$ are non-incident edges in $E(\widehat{H}).$    Let $\widehat{B} \in \B(\widehat{M})$ and let $\widehat{h} \in E_{\widehat{G}}(\widehat{u})\backslash \widehat{B}.$   We say that the pair $(\widehat{B}, \widehat{h})$ is $\widehat{e}, \widehat{f}$-amenable if $$\{ (\widehat{e}, \widehat{f}), (\widehat{f}, \widehat{e}) \} \cap \left(\widehat{F}_I(\widehat{h}) \times \widehat{F}_I(\widehat{B}) \right) \ne \emptyset.$$  
Furthermore, we say that $(\widehat{B}, \widehat{h})$ is $\widehat{e}, \widehat{f}$-switchable if for some non-incident $\{ \widehat{e}', \widehat{f}' \} \subset E(\widehat{H})\backslash \{ \widehat{e}, \widehat{f} \},$  $(\widehat{B}, \widehat{h})$ is $\widehat{e}', \widehat{f}'$-amenable.  Let $\widehat{\B}^+ = (\widehat{\B}, \widehat{h} )$ be a $\widehat{u}$-extended base sequence where $\widehat{\B} = (\widehat{B}_1, \dots,\widehat{B}_\kappa).$   We say that $\widehat{\B}^+$ is $\widehat{e}, \widehat{f}$-amenable (resp. $\widehat{e}, \widehat{f}$-switchable) if either $\widehat{\B}$ is $\widehat{e}, \widehat{f}$-amenable (resp. $\widehat{e}, \widehat{f}$-switchable) or $(\widehat{B}_i,\widehat{h})$ is $\widehat{e}, \widehat{f}$-amenable (resp. $\widehat{e}, \widehat{f}$-switchable) for some $i \in \{ 1,\dots ,\kappa \}.$   If $\widehat{h} \in \widehat{E}$ and $\widehat{F}_I(\widehat{B}) \ne \emptyset,$ then it is straightforward to show (using Lemma \ref{lem-Mhat2}) that $(\widehat{B}, \widehat{h})$ is $\widehat{e}, \widehat{f}$-switchable iff $\widehat{F}_I(\widehat{B}) \ne \widehat{F}_I(\widehat{h}) = E_{\widehat{H}}(\widehat{u}).$

\subsection{The pull-back of a $\widehat{u}$-extended sequence and the matching graph $\fM_{\B^+}$}

Let $\widehat{\B}^+ = (\widehat{\B},  \widehat{h} )$ be a $\widehat{u}$-extended base sequence of $\widehat{M}.$ 
We define the pull-back $\widehat{\B}^+$ to be the set 
$\fB_M(\widehat{\B}^+)$ of $u$-extended base sequences in $M$ where
$$\fB_M(\widehat{\B}^+) := \left\{ (\B, h ) \ \big| \ \B\in \fB_M(\widehat{\B}),\ h \in \fE_M(\widehat{h}) \cap E_{G}(u) \right\} .$$

\subsubsection{The matching graph $\fM_{\B^+}$}

For a $u$-extended base sequence $\B^+ = (\B, h),$ where $\B = (B_1, \dots ,B_{\kappa}),$ we define a matching graph $\fM_{\B^+}$ in a similar fashion as before, with a slight difference.  If $h\not\in E_{G}(v),$ then we define $\fM_{\B^+} := \fM_{\B}.$  Suppose
$h\in E_{G}(v).$  We may assume that $h = e_{i^*}$, for some $i^*$.  Since $d_{G}(v) = 2\kappa,$ there is exactly one integer $j$ where $B_j$ has exactly one edge incident to $v$, and such an edge is not a loop (since we are assuming that there are no loops at $v$).  Assume $B_j \cap E_{G_\kappa}(v) = \{ e_{j^*} \}.$  Then we define $\sx_{i^*}\sx_{j^*}$ to be an edge in $\fM_{\B^+}$. Furthermore, for all $i$ where $i\ne j,$ if $B_i \cap E_G(v) = \{ e_k, e_l \},$ then we define $\sx_k\sx_l$ to be an edge in $\fM_{\B^+}.$  
 
 For $i = 1,2$, let $\fM_i = \fM_{\B_i^+}.$  We note that $|\fM_1| = |\fM_2| = \kappa.$   By modifying the proof of Theorem \ref{theorem3.2}, one can show that there are extended base sequences $\B_i'^+,\ i= 1,2$ where $\B_i'^+\sim \B_i^+,\ i = 1,2$ and either $\fM_{\B_1'^+} = \fM_{\B_2'^+},$ or $\fM_{\B_1'^+} \ne \fM_{\B_2'^+}$  and  $\fM_{\B_1'^+} \triangle \fM_{\B_2'^+}$ is a $4$-cycle.  We leave the details to the reader.  As such, we may assume that either $\fM_1 = \fM_2,$ or $\fM_1 \ne \fM_2$ and $\fM_1 \triangle \fM_2$ is a $4$-cycle.
 
 \subsection{Induced and linked extended sequences}

Let $\widehat{\B}^+ = (\widehat{\B}, \widehat{h})$ and $\widehat{\B}'^+ = (\widehat{\B'}, \widehat{h}')$ be extended base sequences in $\widehat{M}.$  Let $\B^+ = (\B, h) \in \fB_M(\widehat{\B}^+)$ and
$\B'^+ = (\B', h') \in \fB_M(\widehat{\B}'^+).$  If i) $\B \vdash \B'$ and ii) $h' = h$ if $\widehat{h} = \widehat{h}',$ then we write $\B^+ \vdash \B'^+$ and we say that $\B'^+$ is {\bf induced} by $\B^+.$  We say that $\B^+$ and $\B'^+$ are {\bf linked} if either $h=h'$ or $B = B'$ for some $B\in \B$ and $B' \in \B'.$  We write $\B^+ \bumpeq \B'^+.$  If $h = h',$ then $\B$ and $\B'$ are compatible.  Thus by assumption, $\B \sim \B',$ and consequently $\B^+ \sim \B'^+.$  On the other hand, if $B = B'$ for some $B\in \B$ and $B' \in \B',$ then deleting $B$ from $\B$ and $B'$ from $\B'$ results in extended sequences $\D^+$ and $\D'^+$ respectively, which by assumption satisfy $\D^+ \sim \D'^+.$  It now follows that $\B^+ \sim \B'^+.$  To summarize, if $\B^+ \bumpeq \B'^+,$ then $\B^+ \sim \B'^+.$   

\section{The Proof of Theorem \ref{the-inductivestep} b): The case $\fM_1 = \fM_2$}
  
In this section, we shall assume that $\fM_1 = \fM_2$.  
We may assume that $E(\fM_1) = E(\fM_2) = \{ \sx_{2j-1}\sx_{2j}\ \big| \ j =1, \dots ,\kappa \}.$  Thus $\widehat{e}_{(2j-1)(2j)},\ j= 1, \dots ,\kappa $ are exactly the edges in 
$\widehat{E} \cap \left( \widehat{h}_i \cup \bigcup_{j=1}^\kappa \widehat{B}_{ij} \right).$     Since $r(\widehat{M}) < r(M),$  it follows by assumption that $\widehat{\B}_1^+ \sim \widehat{\B}_2^+.$  Thus there are $\widehat{u}$-extended bases sequences $\widehat{\D}_i^+ = (\widehat{\D}_i, \widehat{g}_i ),\ i = 1, \dots ,p$ in $\widehat{M}$ where $\widehat{\B}_1^+ = \widehat{\D}_1^+ \sim_1 \widehat{\D}_2^+ \sim_1 \cdots \sim_1 \ \widehat{\D}_p^+ = \widehat{\B}_2^+.$   Let $\widehat{\D}_i^+ = (\widehat{\D}_i, \widehat{g}_i),\ i = 1, \dots ,p$ where $\widehat{\D}_i = (\widehat{D}_{i1}, \dots ,\widehat{D}_{i\kappa}).$ We shall show that one can choose $\widehat{u}$-extended sequences $\D_i^+ \in \fB_M(\widehat{\D}_i^+),\ i = 1, \dots ,p$ such that $\B_1^+ = \D_1^+ \sim \D_2^+ \sim \cdots \sim \D_p^+ = \B_2^+.$   

Let  $\D_1^+ = \B_1^+$ and choose $\D_i^+\in \fB_M(\widehat{\D}_i^+),\ i = 2, \dots ,p$ iteratively so that, given $\D_{i-1}^+$ has be chosen, we choose $\D_i^+$ so that $\D_{i-1}^+ \vdash \D_i^+.$ 
Let $\D_j^+ = (\D_j, g_i),\ j = 1, \dots ,p$, where $\D_j = (D_{j1}, \dots ,D_{j\kappa}).$  Suppose that $\D_1^+ \sim \D_2^+ \sim \cdots \sim \D_i^+.$  Our task is to show that $\D_i^+ \sim \D_{i+1}^+.$  First, if $\widehat{g}_{i+1} = \widehat{g}_i,$ then $g_{i+1} = g_i$ (since $\D_i^+ \vdash \D_{i+1}^+$) and hence $\D_i^+ \sim \D_{i+1}^+.$   Thus we may assume that $\widehat{g}_{i+1} \ne \widehat{g}_i.$  This means that $\widehat{\D}_{i+1}^+$ is obtained from $\widehat{\D}_i^+$ by an ({\bf EB}) exchange where $\widehat{g}_i$ is exchanged with an element $\widehat{e}.$   Without loss of generality, we may assume that $\widehat{e} \in \widehat{D}_{i1}.$  Then $\widehat{D}_{(i+1)1} = \widehat{D}_{i1} - \widehat{e} + \widehat{g}_i$, $\widehat{D}_{(i+1)j} = \widehat{D}_{ij},\ j = 2, \dots ,\kappa,$ and $\widehat{g}_{i+1} = \widehat{e}.$   If $\widehat{D}_{ij} \cap \widehat{E} \ne \emptyset,$ for some $j\in \{ 2, \dots ,\kappa\},$ then $D_{(i+1)j} = D_{ij}$ (since $\D_i^+ \vdash \D_{i+1}^+)$ and hence $\D_i^+ \bumpeq \D_{i+1}^+$. Thus in this case, $\D_i^+ \sim \D_{i+1}^+.$
Because of this, we may assume that $\widehat{D}_{ij} \cap \widehat{E} = \emptyset, \ j=2, \dots,\kappa.$  This in turn implies that the bases $D_{ij},\ j = 2,\dots ,\kappa$ and $D_{(i+1)j},\ j = 2,\dots ,\kappa$ are $v$-anchored.

Let $D_{ij} \cap E_{G}(v)  = \{ e_{k_j} \}, \ j = 2, \dots ,\kappa$ and let $D_{(i+1)j} \cap E_{G}(v)  = \{ e_{l_j} \}, \ j = 2, \dots ,\kappa.$  If $e_{k_j} = e_{l_{j'}},$ for some $j,j',$ then by exchanging $e_{k_j}$ with $e_{k_{j'}}$ between $D_{ij}$ and $D_{ij'}$,  we see that $D_{ij'} - e_{k_{j'}} + e_{k_j} = D_{(i+1)j'}.$  Thus we see that $\D_i^+ \sim \D_{i+1}^+.$  As such, we may assume that $\{ e_{k_j} \ \big| \ j = 2, \dots ,\kappa \} \cap \{ e_{l_j} \ \big| \ j = 2, \dots ,\kappa \} = \emptyset.$  

Suppose first that $\widehat{g}_i, \widehat{g}_{i+1} \not\in \widehat{E}.$  Then $\widehat{e}_{(2j-1)2j} \in \widehat{D}_{i1}\cap \widehat{D}_{(i+1)1},\ j = 1, \dots ,\kappa.$  
Suppose that $\{ e_{2j-1}, e_{2j} \} \cap  D_{(i+1)1} = \emptyset$, for some $j \in \{ 2, \dots ,\kappa \}.$  Then $e_{2j-1}$ and $e_{2j}$ belong to $v$-anchored bases in $\D_{i+1}.$   Also, by Lemma \ref{lem-Mhat1}, either $\widehat{D}_{i1} - \widehat{e}_{(2j-1)2j} + \widehat{e}_{1(2j-1)} \in \B(\widehat{M}_\kappa)$ or $\widehat{D}_{i1} - \widehat{e}_{(2j-1)(2j)} + \widehat{e}_{1(2j)} \in \B(\widehat{M}_\kappa).$  Depending on whether the former or latter holds, we may choose $D_{i1}$ such that either $\{ e_1, e_2, e_{2j-1} \} \subset D_{i1}$ and $e_{2j} \not\in D_{i1},$ or $\{ e_1, e_2, e_{2j} \} \subset D_{i1}$ and $e_{2j-1} \not\in D_{i1}.$  In other words, we may choose $\D_{i}^+$ such that at least one of $e_{2j-1}$ or $e_{2j}$ belongs to an anchored base in $\D_i^+.$   Since both these edges belong to anchored bases in $\D_{i+1}^+,$ we see that $\D_i^+ \sim \D_{i+1}^+.$  Because of this, we may assume that $\{ e_{2j-1}, e_{2j} \} \cap  D_{(i+1)1} \ne \emptyset$, for $j =2, \dots ,\kappa.$  In fact, by symmetry, we may assume that
$\{ e_{2j-1}, e_{2j} \} \cap  D_{(i+1)1} \ne \emptyset$, for $j =1, \dots ,\kappa$ and the same assumption can be made with $D_{i1}$ in place of $D_{(i+1)1}.$  Since $|D_{i1} \cap E_{G}(v)| = \kappa +1,$ we may assume that $\{ e_1, e_2 \} \subset D_{i1}$ and $|D_{i1} \cap \{ e_{2j-1}, e_{2j} \} | =1,\ j = 2, \dots ,\kappa.$  In addition, we may assume that $e_3 \in D_{i1}$ and $e_4 \not\in D_{i1}.$ We have that $\{ e_{2j-1}, e_{2j} \} \subset D_{(i+1)1},$ for some $j \in \{ 1, \dots ,\kappa \}.$  

Suppose $\{ e_1, e_2 \} \subset D_{(i+1)1}.$  Since $\{ e_{k_j} \ \big| \ j = 2, \dots ,\kappa \} \cap \{ e_{l_j} \ \big| \ j = 2, \dots ,\kappa \} = \emptyset$, it follows that $e_4 \in D_{(i+1)1}$ and $e_3 \not\in D_{(i+1)1}.$  Using Lemma \ref{lem-Mhat1}, one can show that either $D_{i1} - e_1 +e_4 \in \B(M)$ or $D_{i1} - e_3 +e_4 \in \B(M).$  If the latter holds, then we could choose $D_{i1}$ such that $\{ e_1, e_2, e_4 \} \subset D_{i1}$ and $e_3 \not\in D_{i1}.$  In this case, $e_3$ belongs to unanchored bases in both $\D_i^+$ and $\D_{i+1}^+$ and hence it follows that $\D_i^+ \sim \D_{i+1}^+$ by our previous argument.  Thus we may assume that $D_{i1} - e_1 +e_4 \in \B(M).$  Using the same argument with $e_1$ in place of $e_2$, we may assume $D_{i1} - e_2 +e_4 \in \B(M).$  Thus we can choose 
$\D_i^+$ so that either $e_1$ or $e_2$ belongs to an unanchored base.  However, by using the same arguments, we can make the same assumptions about $\D_{i+1}^+.$  Thus in this case, we can choose $\D_i^+$ and $\D_{i+1}^+$ so that they have a common unanchored base and consequently, $\D_i^+ \sim \D_{i+1}^+.$  

Suppose instead the $\{ e_1, e_2 \} \not\subset D_{(i+1)1}.$
Without loss of generality, we may assume that $\{ e_3, e_4 \} \subset D_{(i+1)1}.$  Furthermore, we may assume $e_2 \in D_{(i+1)1}$ and $e_1 \not\in D_{(i+1)1}.$  Using Lemma \ref{lem-Mhat1}, one can show that either\\ $D_{i1} - e_1 +e_4 \in \B(M)$ or $D_{i1} - e_2 +e_4 \in \B(M).$  If the former holds, then we could choose $\D_i^+$ so that $e_1$ belongs to a $v$- unanchored base, which is the case for $\D_{i+1}^+.$  Thus $\D_i^+ \sim \D_{i+1}^+$ in this case.  Because of this we may assume $D_{i1} - e_2 +e_4 \in \B(M).$
Using Lemma \ref{lem-Mhat1}, it can be shown that either $D_{(i+1)1} - e_4 + e_1 \in \B(M)$ or $D_{(i+1)1} - e_2 + e_1 \in \B(M).$  If the former holds, then we could choose $\D_{i+1}^+$ so that $e_4$ belongs to an $v$-unanchored base, which is the case for $\D_i^+.$  Thus $\D_i^+ \sim \D_{i+1}^+$ in this case.  Therefore, we may assume that $D_{(i+1)1} - e_2 + e_1 \in \B(M).$  However, since $D_{i1} - e_2 +e_4 \in \B(M)$, we may choose $\D_i^+$ and $\D_{i+1}^+$ so that $e_2$ belongs to an $v$-unanchored base in both.  Thus we see that $\D_i^+ \sim \D_{i+1}^+.$  This concludes that case where $\{ \widehat{g}_i, \widehat{g}_{i+1} \} \cap \widehat{E} = \emptyset.$

Suppose $| \{ \widehat{g}_i, \widehat{g}_{i+1} \} \cap \widehat{E}| = 1$.  Without loss of generality, we may assume $\widehat{g}_i \in \widehat{E}$ and $\widehat{g}_{i+1} \not\in \widehat{E}.$  We may assume that $\widehat{g}_i = \widehat{e}_{34}$ and $g_i = e_4.$  Furthermore, we may assume that $\{ e_1, e_2 \} \subset D_{i1}.$  By construction, $e_3 \not\in D_{i1}$, since $\widehat{e}_{34} \not\in \widehat{D}_{i1}.$  In particular, $e_3$ belongs to a $v$-unanchored base in $\D_i.$  By exchanging edges if necessary, we may assume that $\{ e_1, e_2 \} \subset D_{(i+1)1}.$  Moreover, $|\{ e_3, e_4 \} \cap D_{(i+1)1}| =1$.  We may assume $e_3 \in D_{(i+1)1}$ ; otherwise, if $e_3 \not\in D_{(i+1)1},$ then $e_3$ belongs to a $v$-unanchored bases in $\D_i^+$ and $\D_{i+1}^+$, in which case, $\D_i^+ \sim \D_{i+1}^+.$   Using  Lemma \ref{lem-Mhat1}, it can be shown that either $D_{(i+1)1} - e_1 + e_4 \in \B(M),$ or $D_{(i+1)1} - e_3 + e_4 \in \B(M).$  If the latter occurs, then we can choose $\D_{i+1}^+$ so that $e_3$ belongs to an $v$-unanchored base, which also holds for $\D_i^+.$  Thus we see that $\D_i^+ \sim \D_{i+1}^+.$  As such, we may assume that $D_{(i+1)1} - e_1 + e_4 \in \B(M).$  Similarly, the same assumption can be made if one replaces $e_1$ with $e_2.$  Using Lemma \ref{lem-Mhat1}, we see that either $D_{i1} - e_1 + e_3 \in \B(M)$ or $D_{i1} - e_2 + e_3 \in \B(M).$  Thus we can choose $\D_i^+$ so that at least of $e_1$ or $e_2$ belongs to a $v$-anchored base.  However, we may choose $\D_{i+1}^+$ so that either one of $e_1$ or $e_2$ belongs to a $v$-anchored base.  Thus we see that $\D_i^+$ and $\D_{i+1}^+$ can be chosen so that they have a common $v$-anchored base.  Therefore $\D_i^+ \sim \D_{i+1}^+.$  This concludes the case where $| \{ \widehat{g}_i, \widehat{g}_{i+1} \} \cap \widehat{E}| = 1$.

Lastly, suppose $\widehat{g}_i, \widehat{g}_{i+1} \in \widehat{E}.$  We may assume that $\widehat{g}_i = \widehat{e}_{34}$ and $\widehat{g}_{i+1} = \widehat{e}_{12}.$  Furthermore, we may assume that $g_i = e_4$ and $g_{i+1} = e_1.$  Note that here $g_i = e_4$ and $g_{i+1}= e_1$ are parallel edges.  We also have that $e_3 \not\in D_{i1}$ and $e_2 \not\in D_{(i+1)1}.$  By exchanging edges if necessary, we may choose $\D_i^+$ such that $\{ e_1, e_2\} \subset D_{i1}.$  Using Lemma \ref{lem-Mhat1}, we see that either $D_{i1} - e_1 + e_3 \in \B(M)$ or $D_{i1} - e_2 + e_3 \in \B(M).$
Suppose that latter occurs.  Then we can transform $\D_i^+$ by exchanging the edges $e_2$ and $e_3$, so that $e_2$ belongs to a $v$-anchored base in the resulting sequence of bases.  Given that $e_2$ belongs to an anchored base in $\D_{i+1}^+,$ we see that $\D_i^+ \sim \D_{i+1}^+.$  Thus we may assume that $D_{i1} - e_1 + e_3 \in \B(M)$.  Thus we may exchange $e_1$ and $e_3$ in $\D_i$, obtaining a sequence of bases $\D_i'$ where $e_1$ belongs to an anchored base in $\D_i'.$  Given that $e_1$ and $e_4$ are parallel, we may then perform an ({\bf EB}) exchange, exchanging $e_1$ and $e_4$. 
Thus $\D_i^+ \sim \D_{i+1}^+.$

\section{The Proof of Theorem \ref{the-inductivestep} b): The case $\fM_1 \ne \fM_2$}

In this section, we shall assume that $\fM_1 \ne \fM_2$
We will use a number of concepts and their properties introduced in Section \ref{subsec-notequal}.  As before, we may assume that $\fM_1 \triangle \fM_2$ is a $4$-cycle $\sx_1\sx_2\sx_3\sx_4\sx_1$ where
$\sx_1\sx_2, \sx_3\sx_4 \in E(\fM_1)$ and $\sx_1\sx_4, \sx_2\sx_3 \in E(\fM_2)$.   Our construction of $\widehat{\B}_i^+ = (\widehat{\B}_i, \widehat{h}_i),\ i = 1,2$ is the same as before except that now we shall assume that
\begin{itemize}
\item $\widehat{e}_{(2j-1)(2j)} \in \widehat{B}_{ij} \cup \{ \widehat{h}_i \},\ i = 1,2; \ j = 3, \dots ,\kappa.$
\item $\{ \widehat{e}_{12}, \widehat{e}_{34} \} \subset \widehat{B}_{11} \cup \widehat{B}_{12} \cup \{ \widehat{h}_1 \}.$
\item $\{ \widehat{e}_{14}, \widehat{e}_{23} \} \subset \widehat{B}_{21} \cup \widehat{B}_{22} \cup \{ \widehat{h}_2 \}.$
\end{itemize}

\subsection{The $\widehat{u}$-extended sequences $\widehat{\B}_i'^+ = (\widehat{\B}_i', \widehat{h}_i'),\ i = 1,2$}

Let $I = \{ 1,2,3,4 \}$ and let $\widehat{H} = \widehat{H}_I.$   If $\widehat{\B}_1^+$ is $\widehat{e}_{12}, \widehat{e}_{34}$-switchable and $\widehat{\B}_2^+$ is $\widehat{e}_{14}, \widehat{e}_{23}$-switchable, then one can find sequences of bases $\B_i'^+,\ i = 1,2$ where $\fM_{\B_1'^+} = \fM_{\B_2'^+}$ and $\B_i^+\sim \B_i'^+,\ i = 1,2$.    Thus we may assume that $\widehat{\B}_1^+$ is not $\widehat{e}_{12}, \widehat{e}_{34}$-switchable.  Suppose $\widehat{h}_1 \in \{ \widehat{e}_{12}, \widehat{e}_{34} \}.$  Then $\widehat{F}_I(\widehat{h}_1) = E_{\widehat{H}}(\widehat{u}).$  If $\widehat{h}_1 = \widehat{e}_{12},$ then $\widehat{e}_{34} \in \widehat{B}_{1j}$, for some $j\in \{ 1,2 \}.$  Clearly $\widehat{F}_I(\widehat{B}_{1j}) \ne \widehat{F}_I(\widehat{h}_1)$ and hence $\widehat{\B}_1^+$ is $\widehat{e}_{12}, \widehat{e}_{34}$-switchable,  contradicting our assumptions.  A similar argument applies if $\widehat{h}_1 = \widehat{e}_{34}.$  Thus we have that $\widehat{h}_1 \not\in \{ \widehat{e}_{12}, \widehat{e}_{34} \}.$
By Lemma \ref{lem-Mhat3}, we may assume  for $j=1,2,$ $\widehat{F}_I (\widehat{B}_{1j}) = \{ \widehat{e}_{12}, \widehat{e}_{34}, \widehat{e}_{2(j+2)}, \widehat{e}_{1(5-j)} \}$.  
We may also assume that $\widehat{e}_{23} \in \widehat{B}_{21}$ and $\widehat{e}_{14} \in \widehat{B}_{22} \cup \{ \widehat{h}_2 \}.$  Noting that the extended sequences $\widehat{\B}_i^+,\ i = 1,2$ are not compatible, we need to change them slightly to obtain extended sequences $\widehat{\B}_i'^+,\ i = 1,2$ which are compatible. To do this, we may use a construction similar to that in Section \ref{subsec-notequal}.   Arguing as we did there, we may assume that  $\widehat{\B}_i'^+ = (\widehat{\B}_i', \widehat{h}_i'),\ i = 1,2$ where $\widehat{\B}_i' = (\widehat{B}_{i1}', \dots ,\widehat{B}_{i\kappa}'),\ i = 1,2$ are defined such that:
\begin{itemize}
\item $\widehat{B}_{11}' = \widehat{B}_{11} -\widehat{e}_{12} + \widehat{e}_{23},$ $\widehat{B}_{12}' = \widehat{B}_{12} - \widehat{e}_{34} + \widehat{e}_{24}.$ 
\item $\widehat{B}_{21}' = \widehat{B}_{21},$ $\widehat{B}_{22}' = \left\{ \begin{array}{lr} \widehat{B}_{22} -\widehat{e}_{14} + \widehat{e}_{24} & \mathrm{if}\  \widehat{h}_2 \ne \widehat{e}_{14}\\ \widehat{B}_{22} &\mathrm{if}\  \widehat{h}_2 = \widehat{e}_{14} \end{array}\right\}$.
\item $\widehat{h}_1' = \widehat{h}_1,$ $\widehat{h}_2' = \left\{ \begin{array}{lr} \widehat{h}_2 & \mathrm{if}\  \widehat{h}_2 \ne \widehat{e}_{14}\\ \widehat{e}_{24} &\mathrm{if}\  \widehat{h}_2 = \widehat{e}_{14} \end{array}\right\}$.
\item $\widehat{B}_{ij}' = \widehat{B}_{ij},\ i = 1,2;\ j = 3, \dots, \kappa.$
\end{itemize} 

\subsection{The $\widehat{u}$-extended sequences $\widehat{\D}_i'^+ = (\widehat{\D}_i', \widehat{g}_i'),\ i = 1, \dots ,p$}

By construction, $\widehat{\B}_1'^+ \sim \widehat{\B}_2'^+.$   Thus by assumption, there are $\widehat{u}$-extended base sequences $\widehat{\D}_i'^+ = (\widehat{\D}_i', \widehat{g}_i'),\ i = 1, \dots ,p,$  where $\widehat{\B}_1'^+ = \widehat{\D}_1'^+ \sim_1 \widehat{\D}_2'^+ \sim_1 \cdots \sim_1 \widehat{\D}_p'^+= \widehat{\B}_2'^+.$   We shall find $u$-extended base sequences $\D_i'^+ = (\D_i', g_i') \in \fB_M(\widehat{\D}_i'^+),\ i = 1, \dots ,p$ where $\B_1^+ = \D_1'^+ \sim \D_2'^+ \sim \cdots \sim \D_p'^+= \B_2^+.$
Let $\widehat{\D}_i' = (\widehat{D}_{i1}', \dots ,\widehat{D}_{i\kappa}'),$  $i = 1, \dots ,p.$

\begin{nonamenoname}  Suppose for some $i \in \{ 2,\dots ,p\}$, $\widehat{\D}_{i-1}'^+$ is not $\widehat{e}_{12}, \widehat{e}_{34}$-switchable and $\widehat{\D}_{i}'^+$ is split.  Assuming $\widehat{\D}_{i-1}'^+$ is $\widehat{e}_{12}, \widehat{e}_{34}$-amenable if it is split, then $\widehat{\D}_{i}'^+$ is $\widehat{e}_{12}, \widehat{e}_{34}$-switchable or $\widehat{e}_{12}, \widehat{e}_{34}$-amenable.\label{nonanona-notequal1}
\end{nonamenoname}

\begin{proof}
We may assume that $\widehat{\D}_{i}'^+ $ is not $\widehat{e}_{12}, \widehat{e}_{34}$ - switchable.  Therefore, our task is to show that $\widehat{\D}_i'^+$ is $\widehat{e}_{12}, \widehat{e}_{34}$-amenable.  
If $\widehat{g}_{i-1}' \in\{  \widehat{e}_{23}, \widehat{e}_{24} \}$, then $\widehat{\D}_{i-1}'^+$ is split and hence also $\widehat{e}_{12}, \widehat{e}_{34}$-amenable.  However, we also have that $\widehat{F}_I(\widehat{g}_{i-1}') = E_{\widehat{H}}(\widehat{u}).$  If say, we assume $\widehat{D}_{(i-1)1}' \cap \{ \widehat{e}_{23}, \widehat{e}_{24} \} \ne \emptyset,$ then $\widehat{F}_I(\widehat{D}_{(i-1)1}') \ne \widehat{F}_I(\widehat{g}_{i-1}')$, since $\widehat{\D}_{i-1}'^+$ is $\widehat{e}_{12}, \widehat{e}_{34}$-amenable.  From this, it is seen that $\widehat{\D}_{i-1}'^+$ is $\widehat{e}_{12}, \widehat{e}_{34}$-switchable, contradicting our assumptions.  Thus  $\widehat{g}_{i-1}' \not\in \{ \widehat{e}_{23}, \widehat{e}_{24} \}.$  Since $\widehat{\D}_i'^+$ is also not $\widehat{e}_{12}, \widehat{e}_{34}$-switchable,  we also have $\widehat{g}_i' \not\in \{ \widehat{e}_{23}, \widehat{e}_{24} \}.$  

If $\widehat{g}_{i-1}' = \widehat{g}_i,$ then since $\widehat{\D}_{i-1}'^+ \sim_1 \widehat{\D}_i'^+,$ we have $\widehat{\D}_{i-1}' \sim_1 \widehat{\D}_i'.$  It then follows by Proposition \ref{pro-extend2} that $\widehat{\D}_i'$ is $\widehat{e}_{12}, \widehat{e}_{34}$-amenable.  Thus we
 we may assume that $\widehat{g}_{i-1}' \ne \widehat{g}_i.$   Thus $\widehat{\D}_i'^+$ is obtained from $\widehat{\D}_{i-1}'^+$ by an ({\bf EB}) exchange where $\widehat{e} = \widehat{g}_{i-1}'$ is exchanged with $\widehat{f} = \widehat{g}_i'.$  We may assume that $\widehat{f} \in \widehat{D}_{(i-1)1}'.$  Let $j_1,j_2 \in \{ 1, \dots ,\kappa \},$ where $\widehat{e}_{23} \in \widehat{D}_{(i-1)j_1}'$ and $\widehat{e}_{24} \in \widehat{D}_{(i-1)j_2}'.$  If $1 \not\in \{ j_1, j_2 \}$ then $\widehat{D}_{ij_1}' = \widehat{D}_{(i-1)j_1}'$ and $\widehat{D}_{ij_2}' = \widehat{D}_{(i-1)j_2}'$ which would mean that $\widehat{\D}_i'^+$ is $\widehat{e}_{12}, \widehat{e}_{34}$-amenable since $\widehat{\D}_{i-1}'^+$ is.  Thus we may assume that $j_1 = 1;$. that is, $\widehat{e}_{23} \in \widehat{D}_{(i-1)1}'.$
 

We observe that $\widehat{\D}_{i-1}'^+$ cannot be fused since $\widehat{\D}_i'^+$ is split and $\widehat{e} = \widehat{g}_{i}' \not\in \{ \widehat{e}_{23}, \widehat{e}_{24} \}.$ Thus $\widehat{\D}_{i-1}'^+$ is split
and we have $j_2 \ne 1.$  For convenience, we may assume $j_2 =2;$ that is, $\widehat{e}_{24} \in \widehat{D}_{(i-1)2}'.$   Given that $\D_{i-1}'^+$ is not $\widehat{e}_{12}, \widehat{e}_{34}$-switchable, it follows that 
$\widehat{F}_I(\widehat{D}_{(i-1)1}') = \{ \widehat{e}_{12}, \widehat{e}_{34}, \widehat{e}_{23}, \widehat{e}_{14} \}$ and  $\widehat{F}_I(\widehat{D}_{(i-1)2}') = \{ \widehat{e}_{12}, \widehat{e}_{34}, \widehat{e}_{13}, \widehat{e}_{24} \}$.  Thus
$\widehat{F}_I(\widehat{D}_{i2}') = \widehat{F}_I(\widehat{D}_{(i-1)2}') = \{ \widehat{e}_{12}, \widehat{e}_{34}, \widehat{e}_{13}, \widehat{e}_{24} \}$.  If $\widehat{F}_I(\widehat{D}_{i1}')
\cap \{ \widehat{e}_{12}, \widehat{e}_{34} \} \ne \emptyset,$ then clearly $\widehat{\D}_i'^+$ is $\widehat{e}_{12}, \widehat{e}_{34}$-amenable.  Thus we may assume that  $\widehat{F}_I(\widehat{D}_{i1}')
\cap \{ \widehat{e}_{12}, \widehat{e}_{34} \} = \emptyset.$   Since $\widehat{e} = \widehat{g}_{i}' \ne \widehat{e}_{23},$ it follows that
$\widehat{e}_{23} \in \widehat{D}_{i1}',$ implying that $\widehat{e}_{23} \in \widehat{F}_I(\widehat{D}_{i1}').$  Since $\widehat{F}_I(\widehat{D}_{i1}')
\cap \{ \widehat{e}_{12}, \widehat{e}_{34} \} = \emptyset,$ it follows that $\widehat{F}_I(\widehat{D}_{i1}') = \{ \widehat{e}_{13}, \widehat{e}_{14}, \widehat{e}_{23}, \widehat{e}_{24} \}.$
However, this implies that $\widehat{\D}_i'^+$ is $\widehat{e}_{23}, \widehat{e}_{14}$-switchable, a contradiction. 
\end{proof}

\subsubsection{The extended sequences $\widehat{\D}_i''^+$}

Suppose that $\widehat{\D}_i'^+$ is not $\widehat{e}_{12}, \widehat{e}_{34}$-switchable for $i= 1,\dots ,p',$ where $p' <p.$    Then ({\bf \ref{nonanona-notequal1}}) implies that for $i = 1,\dots, p',$ if $\widehat{\D}_i'$ is split then it is $\widehat{e}_{12}, \widehat{e}_{34}$ amenable.  Let $\widehat{\D}_i''^+ = (\widehat{\D}_i'', \widehat{g_i}''),\ i = 1, \dots ,p'$ be such that $\widehat{\D}_i''^+ = \widehat{\D}_i'^+$ if $\widehat{\D}_i'^+$ is fused.  Otherwise, if $\widehat{\D}_i'^+$ is split, then we let $\widehat{\D}_i''^+$ be a $\widehat{e}_{12}, \widehat{e}_{34}$-perturbation of $\widehat{\D}_i'^+.$  We define $\D_i''^+ = (\D_i'', g_i'') \in \fB_M(\widehat{\D}_i''^+),\ i = 1, \dots ,p'$ such that  $\D_1''^+ = \B_1^+,$ and, iteratively, $\D_i''^+ \vdash \D_{i+1}''^+,\ i = 1, \dots ,p'-1.$

\begin{nonamenoname}
$\D_1''^+ \sim \D_2''^+ \sim \cdots \sim \D_{p'}''^+.$\label{nonanona-notequal2}
\end{nonamenoname}

\begin{proof}
 We shall show that $\D_i''^+ \sim \D_{i+1}''^+,\ i = 1, \dots ,p'-1.$  We first observe that 
if $\widehat{\D}_i'^+$ is split, then, given that $\widehat{\D}_i'^+$ is not $\widehat{e}_{12}, \widehat{e}_{34}$-switchable, it follows that $\widehat{g}_i' \not\in \{ \widehat{e}_{23}, \widehat{e}_{24} \}.$  On the other hand, if $\widehat{\D}_i'^+$ is fused, then $\widehat{g}_i' \not\in \{ \widehat{e}_{23}, \widehat{e}_{24} \}.$ Thus $\widehat{g}_i' \not\in \{ \widehat{e}_{23}, \widehat{e}_{24} \},\ i = 1, \dots ,p'.$  In particular, $\widehat{g}_i'' = \widehat{g}_i',\ i =1, \dots ,p'.$ 

  Fix $i \in \{ 1, \dots ,p' \}.$  If $\widehat{g}_i'' = \widehat{g}_{i+1}''$, then $g_i'' = g_{i+1}'',$ since $\D_i''^+ \vdash \D_{i+1}''^+.$  In this case, $\D_i''^+ \bumpeq \D_{i+1}''^+$ and hence $\D_i''^+ \sim \D_{i+1}''^+.$  Thus we may assume that $\widehat{g}_i'' \ne \widehat{g}_{i+1}''$ (and thus $\widehat{g}_i' \ne \widehat{g}_{i+1}'$) and $\widehat{D}_{(i+1)1}' = \widehat{D}_{i1}'-\widehat{g}_{i+1}' + \widehat{g}_i'.$  Then $\widehat{D}_{ij}' = \widehat{D}_{(i+1)j}',\ j = 2, \dots ,\kappa.$  Thus we may assume that $\widehat{D}_{ij}'' = \widehat{D}_{(i+1)j}'',\ j = 2, \dots ,\kappa.$ If $\widehat{D}_{ij}'' \cap \widehat{E} \ne \emptyset,$ for some $j\ge 2,$ then it would follow that $D_{ij}'' = D_{(i+1)j}''$, since $\D_i''^+  \vdash \D_{i+1}''^+$, and thus $\D_i''^+ \bumpeq \D_{i+1}''^+.$  It then follows that $\D_i''^+ \sim \D_{i+1}''^+.$  Thus we may assume that $\widehat{D}_{ij}'' \cap \widehat{E} = \emptyset,$ for $j = 2, \dots ,\kappa.$  In particular, this implies that $\widehat{\D}_i'^+$ must be fused.  If $\widehat{\D}_{i+1}'^+$ is split, then $\widehat{g}_{i+1}' \in \{ \widehat{e}_{23}, \widehat{e}_{24} \},$ contradicting the assumption that $\widehat{\D}_{i+1}'^+$ is not $\widehat{e}_{12}, \widehat{e}_{24}$-switchable.  Thus $\widehat{\D}_{i+1}'^+$ is also fused and $\widehat{\D}_i'' = \widehat{\D}_i'$ and $\widehat{\D}_{i+1}'' = \widehat{\D}_{i+1}'.$  We see that $D_{i2}''$ and $D_{(i+1)2}''$ are $v$-anchored and we observe that $e_1$ belongs to $v$-anchored bases in both $\D_i''^+$ and $\D_{i+1}''^+.$   Thus we may assume that $e_1 \in \D_{i2}'' \cap D_{(i+1)2}'',$  implying that $D_{i2}'' = D_{(i+1)2}''$.  Consequently $\D_i''^+ \bumpeq \D_{i+1}''^+$ and hence $\D_i''^+ \sim \D_{i+1}''^+$. 
\end{proof}

Let $$p' = \max \left\{ i\in \{ 1, \dots ,p-1\} \ \big| \ \widehat{\D}_j'^+ \ \mathrm{not}\ \widehat{e}_{12}, \widehat{e}_{34}-\mathrm{switchable}, \ \forall j\in \{ 1, \dots ,i\} \right\}.$$
We shall choose $\widehat{\D}_i''^+$ and $\D_i''^+,\ i = 1, \dots ,p'$ as before.   By definition,  $\widehat{\D}_{p'+1}'^+$ is $\widehat{e}_{12}, \widehat{e}_{34}$-switchable.  Let $\widehat{\D}_{p'+1}''^+ = (\widehat{\D}_{p'+1}'', \widehat{g}_{p'+1}'')$ be an $\widehat{e}_{12}, \widehat{e}_{34}$-switch of $\widehat{\D}_{p'+1}'^+.$   Let $\D_{p'+1}''^+ \in \fB_M(\widehat{\D}_{p'+1}''^+).$  As before, may assume that $\D_i''^+ \vdash \D_{i+1}'',\ i =1, \dots ,p'.$    

\begin{nonamenoname}
$\D_1''^+ \sim \D_2''^+ \sim \cdots \sim \D_{p'+1}''^+.$\label{nonanona-notequal2}
\end{nonamenoname}

\begin{proof}   By {\bf (\ref{nonanona-notequal1})}, we have $\D_1''^+ \sim \D_2''^+ \sim \cdots \sim \D_{p'}''^+.$  Thus we need only show that $\D_{p'}''^+ \sim \D_{p'+1}''.$  Let $I = \{ 1,2,3,4 \}$ and $\widehat{H} = \widehat{H}_I.$  As before, we have that $\widehat{g}_{p'}' \not\in \{ \widehat{e}_{23}, \widehat{e}_{24} \}$, since $\widehat{\D}_{p'}'$ is not $\widehat{e}_{12}, \widehat{e}_{34}$-switchable.  This means that $\widehat{g}_{p'}'' = \widehat{g}_{p'}'.$  If $\widehat{g}_{p'}'' = \widehat{g}_{p'+1}''$, then $g_{p'}'' = g_{p'+1}''$ (since $\D_{p'}'' \vdash \D_{p'+1}''$) and hence $\D_{p'}'' \bumpeq \D_{p'+1}''.$  Thus in this case, $\D_{p'}'' \sim \D_{p'+1}''.$  Because of this, we may assume that $\widehat{g}_{p'}'' \ne \widehat{g}_{p'+1}''$.  If $\widehat{g}_{p'}' = \widehat{g}_{p'+1}',$ then $\widehat{g}_{p'+1}'' \ne \widehat{g}_{p'+1}'.$  By the way we choose a $\widehat{e}_{12}, \widehat{e}_{34}$-switch of $\widehat{\D}_{p'+1}',$ this can only happen if $\widehat{g}_{p'+1}' \in \{ \widehat{e}_{23}, \widehat{e}_{24} \}$, which in turn implies $\widehat{g}_{p'}' \in \{ \widehat{e}_{23}, \widehat{e}_{24} \},$ a contradiction.  Thus $\widehat{g}_{p'}' \ne \widehat{g}_{p'+1}'.$  We may assume that $\widehat{g}_{p'+1}' \in \widehat{D}_{p'1}'$ and $\widehat{D}_{(p'+1)1}' = \widehat{D}_{p'1} - \widehat{g}_{p'+1}' + \widehat{g}_{p'}'.$  We see that $\widehat{D}_{p'j}' = \widehat{D}_{(p'+1)j}' = \widehat{D}_{(p'+1)j}'',\ j = 2, \dots ,\kappa.$

\begin{nonamenonamenoname}
If $\widehat{g}_{p'+1}' \in \{ \widehat{e}_{23}, \widehat{e}_{24} \},$ then $\D_{p'}''^+ \sim \D_{p'+1}''^+.$
\end{nonamenonamenoname}

\begin{proof}
Suppose $\widehat{g}_{p'+1}' \in \{ \widehat{e}_{23}, \widehat{e}_{24} \}.$  Assume that $\widehat{e}_{24} \in \widehat{D}_{(p'+1)j}',$ for some $j \ge 2.$  Then $\widehat{g}_{p'+1}' = \widehat{e}_{23}$ and $\widehat{e}_{23} \in \widehat{D}_{p'1}'.$  Furthermore, $\widehat{\D}_{p'}'^+$ is split and hence also $\widehat{e}_{12}, \widehat{e}_{34}$-amenable.   Since $\widehat{\D}_{p'}'^+$ is not $\widehat{e}_{12}, \widehat{e}_{34}$-switchable, it follows that $\widehat{F}_I(\widehat{D}_{p'1}') = \{ \widehat{e}_{12}, \widehat{e}_{34}, \widehat{e}_{23}, \widehat{e}_{14} \}$ and  $\widehat{F}_I(\widehat{D}_{p'j}') = \{ \widehat{e}_{12}, \widehat{e}_{34}, \widehat{e}_{13}, \widehat{e}_{24} \}.$
Since $\widehat{D}_{(p'+1)j}' = \widehat{D}_{p'j}',$ it follows that  $\widehat{F}_I(\widehat{D}_{(p'+1)j}') = \{ \widehat{e}_{12}, \widehat{e}_{34}, \widehat{e}_{13}, \widehat{e}_{24} \}.$  We now see that $\widehat{\D}_{p'+1}'^+$ is $\widehat{e}_{12}, \widehat{e}_{34}$-amenable.  Let $\widehat{\D}_{p'+1}^{(3)+} = (\widehat{\D}_{p'+1}^{(3)}, \widehat{g}_{p'+1}^{(3)})$ be a $\widehat{e}_{12}, \widehat{e}_{34}$-perturbation of $\widehat{\D}_{p'+1}'^+$ and 
let $\D_{p'+1}^{(3)+} = (\D_{p'+1}^{(3)}, g_{p'+1}^{(3)}) \in \fB_M(\widehat{\D}_{p'+1}^{(3)+})$ where $\D_{p'+1}^{(3)} = (D_{(p'+1)1}^{(3)}, \dots , D_{(p'+1)\kappa}^{(3)})$.  We may assume that 
$\widehat{D}_{p'j}'' = \widehat{D}_{(p'+1)j}^{(3)}$ (given that $\{ \widehat{e}_{12}, \widehat{e}_{34} \} \subset \widehat{F}_I(\widehat{D}_{p'1}')\cap \widehat{F}_I(\widehat{D}_{p'j}').$  Furthermore, we may choose $\D_{p'+1}^{(3)+}$ so that 
$\D_{p'}''^+ \vdash \D_{p'+1}^{(3)+}$.  Then $D_{(p'+1)j}^{(3)} = D_{p'j}'',$ since $\widehat{D}_{(p'+1)j}^{(3)} = \widehat{D}_{p'j}''.$  Thus $\D_{p'}''^+ \bumpeq \D_{p'+1}^{(3)+},$ and $\D_{p'}''^+ \sim \D_{p'+1}^{(3)+}.$  However, it is also seen that $\widehat{D}_{(p'+1)1}'' =  \widehat{D}_{(p'+1)1}^{(3)},$ and as such we may choose $D_{(p'+1)}^{(3)} = D_{(p'+1)1}''.$  Thus $\D_{p'+1}''^+ \bumpeq \D_{p'+1}^{(3)+}$.  Thus $\D_{p'}''^+ \sim \D_{p'+1}^{(3)+} \sim \D_{p'+1}''^+ .$  If $\widehat{e}_{23} \in \widehat{D}_{(p'+1)j}',$ for some $j \ge 2,$ then we can use similar arguments.  By the above, we may assume
 that $|\{ \widehat{e}_{23}, \widehat{e}_{24} \} \cap \widehat{D}_{(p'+1)1}' | =1.$  
 
 Without loss of generality, we may assume that $\widehat{e}_{23} \in \widehat{D}_{(p'+1)1}'$ and $\widehat{g}_{p'+1}'=\widehat{e}_{24}.$  Note that this means that $\{ \widehat{e}_{23}, \widehat{e}_{24} \} \subset \widehat{D}_{p'1}'$.  Thus $\widehat{\D}_{p'}'$ is fused and hence $\widehat{\D}_{p'}'' = \widehat{\D}_{p'}'.$  Since $\widehat{g}_{p'+1}'=\widehat{e}_{24},$ either $e_2 \in E_G(u)$ or $e_4 \in E_G(u).$  Suppose $e_4 \in E_G(u)$.  Then $\widehat{F}_I(\widehat{g}_{p'+1}') = \{ \widehat{e}_{14}, \widehat{e}_{24}, \widehat{e}_{34} \}.$  Given that $\widehat{\D}_{p'+1}''^+$ could be any $\widehat{e}_{12}, \widehat{e}_{34}$-switch of $\widehat{\D}_{p'+1}'^+,$ we may assume that $\widehat{\D}_{p'+1}'' = \widehat{\D}_{p'+1}'$ and $\widehat{g}_{p'+1}'' = \widehat{e}_{14}.$  Then $g_{p'+1}'' = e_4$ and $e_1$ is seen to belong to an $v$-anchored base in both $\D_{p'}''$ and $\D_{p'+1}''.$  Thus we may assume that $e_1 \in D_{p'2}'' \cap D_{(p'+1)2}''$ and consequently, $D_{p'2}'' = D_{(p'+1)2}''.$  It follows that $\D_{p'}'' \bumpeq \D_{p'+1}''$ and hence $\D_{p'}'' \sim \D_{p'+1}''.$  Suppose $e_2 \in E_G(u).$  Then $\widehat{F}_I(\widehat{g}_{p'+1}') = \{ \widehat{e}_{12}, \widehat{e}_{23}, \widehat{e}_{24} \}$ and consequently, $\widehat{g}_{p'+1}'' \in \{ 
\widehat{e}_{23}, \widehat{e}_{24} \}.$  Suppose $\widehat{g}_{p'+1}'' = \widehat{e}_{23}.$  Then $g_{p'+1}'' = e_2$ and $e_3$ belongs to a $v$-anchored base in $\D_{p'+1}''.$  On the other hand,
since $\widehat{e}_{23} \in \widehat{D}_{p'1}'',$ it follows by Lemma \ref{lem-Mhat1} that either $\widehat{D}_{p'1}'' - \widehat{e}_{23} + \widehat{e}_{12} \in \B(\widehat{M}_\kappa)$ or 
$\widehat{D}_{p'1}'' - \widehat{e}_{23} + \widehat{e}_{13} \in \B(\widehat{M}_\kappa).$  The latter cannot occur since $\widehat{\D}_{p'}' = \widehat{\D}_{p'}''$ is not $\widehat{e}_{12}, \widehat{e}_{34}$-switchable.  Thus $\widehat{D}_{p'1}'' - \widehat{e}_{23} + \widehat{e}_{12} \in \B(\widehat{M}_\kappa)$.  Given this, we may choose $\D_{p'}''$ such that $e_3$ belongs to an anchored base in $\D_{p'}''.$  Because of this, we may assume that $e_3 \in D_{p'2}'' \cap D_{(p'+1)2}''$, from which it follows that $D_{p'2}'' = D_{(p'+1)2}''.$  Again, we have that $\D_{p'}'' \bumpeq \D_{p'+1}''$ and hence $\D_{p'}'' \sim \D_{p'+1}''.$  One can deal with the case where $\widehat{g}_{p'+1}'' = \widehat{e}_{24}$ using similar arguments.
\end{proof}

From the above, we may assume that $\widehat{g}_{p'+1}' \not\in \{ \widehat{e}_{23}, \widehat{e}_{24} \}$ (and  $\widehat{g}_{p'}' \not\in \{ \widehat{e}_{23}, \widehat{e}_{24} \}$) and consequently, $\widehat{g}_{p'+1}'' = \widehat{g}_{p'+1}'.$  Thus $\widehat{\D}_{p'}'^+$  is split if and only if $\widehat{\D}_{p'+1}'^+$ is split.

\begin{nonamenonamenoname}
If $\widehat{\D}_{p'}'^+$ and $\widehat{\D}_{p'+1}'^+$ are fused, then $\D_{p'}''^+ \sim \D_{p'+1}''^+.$
\end{nonamenonamenoname}

\begin{proof}
Suppose $\widehat{\D}_{p'}'^+$ and $\widehat{\D}_{p'+1}'^+$ are fused.  Since $\widehat{\D}_{p'}'^+$ is not $\widehat{e}_{12}, \widehat{e}_{34}$-switchable whereas $\widehat{\D}_{p'+1}'^+$ is, it follows that 
$\{ \widehat{e}_{23}, \widehat{e}_{24} \} \subseteq \widehat{D}_{p'1}'$ and $\{ \widehat{e}_{23}, \widehat{e}_{24} \} \subseteq \widehat{D}_{(p'+1)1}'.$  By definition, $\D_{p'}''^+ \in \fB_{M_\kappa}(\widehat{\D}_{p'}'^+),$ since $\widehat{\D}_{p'}'^+$ is fused.  At the same time, since $\widehat{\D}_{p'+1}'^+$ is fused, there exists $\D_{p'+1}'^+ = (\D_{p'+1}', g_{p'+1}')\in \fB_{M_\kappa}(\widehat{\D}_{p'+1}'^+)$ where $\D_{p'+1}' = (D_{(p'+1)1}', \dots ,D_{(p'+1)\kappa}') \in \fB_{M_\kappa}(\widehat{\D}_{p'+1}').$  We observe that $e_1$ belongs to $v$-anchored bases in both $\D_{p'}''$ and $\D_{p'+1}'$ and as such, we may assume that $e_1 \in D_{p'2}'' \cap D_{(p'+1)2}'.$  Then $D_{p'2}'' = D_{(p'+1)2}'$ and hence $\D_{p'}''^+ \bumpeq \D_{p'+1}'^+.$  On the other hand, since $g_{p'+1}'' = g_{p'+1}',$ it follows that $\D_{p'+1}'^+ \bumpeq \D_{p'+1}''^+.$
Thus $\D_{p'}''^+ \bumpeq \D_{p'+1}'^+ \bumpeq D_{p'+1}''^+$ and hence $\D_{p'}''^+ \sim \D_{p'+1}'^+ \sim D_{p'+1}''^+.$
\end{proof}

Suppose that $\widehat{\D}_{p'}'^+$ and $\widehat{\D}_{p'+1}'^+$ are both split.  Given that $\widehat{\D}_{p'}'^+$ is not
$\widehat{e}_{12}, \widehat{e}_{34}$-switchable, whereas $\widehat{\D}_{p'+1}'^+$ is, we must have that $\{ \widehat{e}_{23}, \widehat{e}_{24} \} \cap \widehat{D}_{p'1}' \ne \emptyset.$
We may assume that $\widehat{e}_{23} \in \widehat{D}_{p'1}'$ and $\widehat{e}_{24} \in \widehat{D}_{p'2}'.$   
Since $\widehat{\D}_{p'}'^+$ is $\widehat{e}_{12},\widehat{e}_{34}$-amenable but not $\widehat{e}_{12},\widehat{e}_{34}$-switchable, it follows that $\widehat{F}_I(\widehat{D}_{p'1}') = \{ \widehat{e}_{12}, \widehat{e}_{34}, \widehat{e}_{23}, \widehat{e}_{14} \}$ and $\widehat{F}_I(\widehat{D}_{p'2}') = \{ \widehat{e}_{12}, \widehat{e}_{34}, \widehat{e}_{13}, \widehat{e}_{24} \}.$   It should be noted that since $\widehat{D}_{(p'+1)2}' = \widehat{D}_{p'2}',$ we have $\widehat{F}_I(\widehat{D}_{(p'+1)2}') = \widehat{F}_I(\widehat{D}_{p'2}').$  We may assume that
$\widehat{D}_{p'1}''= \widehat{D}_{p'1}' - \widehat{e}_{23} + \widehat{e}_{12},$ and $\widehat{D}_{p'2}'' = \widehat{D}_{p'2}' - \widehat{e}_{24} + \widehat{e}_{34}.$  

\begin{nonamenonamenoname}
If $\{ \widehat{e}_{12}, \widehat{e}_{34} \} \cap \widehat{F}_I(\widehat{D}_{(p'+1)1}') \ne \emptyset,$ then $\D_{p'}''^+ \sim \D_{p'+1}''^+.$
\end{nonamenonamenoname}

\begin{proof}
Suppose 
$\widehat{e}_{12} \in \widehat{F}_I(\widehat{D}_{(p'+1)1}').$  
Let $\widehat{\D}_{p'+1}^{(3)+} = (\widehat{\D}_{p'+1}^{(3)}, \widehat{g}_{p'+1}^{(3)})$ where $\widehat{g}_{p'+1}^{(3)} = \widehat{g}_{p'+1}''$, $\widehat{\D}_{p'+1}^{(3)} = (\widehat{D}_{(p'+1)1}^{(3)}, \dots ,\widehat{D}_{(p'+1)\kappa}^{(3)}),$ and $\widehat{D}_{(p'+1)1}^{(3)} = \widehat{D}_{(p'+1)1}' - \widehat{e}_{23} + \widehat{e}_{12}$, $\widehat{D}_{(p'+1)2}^{(3)} = \widehat{D}_{(p'+1)2}' - \widehat{e}_{24} + \widehat{e}_{34}$
 and $\widehat{D}_{(p'+1)j}^{(3)}= \widehat{D}_{p'j}',\ j = 3, \dots ,\kappa.$  Let $\D_{p'+1}^{(3)+} = (\D_{p'+1}^{(3)}, g_{p'+1}^{(3)})$ where
 \begin{itemize}
\item $\D_{p'+1}^{(3)} = (D_{(p'+1)1}^{(3)}, \dots ,D_{(p'+1)\kappa}^{(3)}) \in \fB_{M_\kappa}(\widehat{\D}_{p'+1}^{(3)})$
\item $\D_{p'}''^+ \vdash \D_{p'+1}^{(3)+}.$ 
\end{itemize}
We note that $g_{p'+1}^{(3)} = g_{p'+1}''$ since $\widehat{g}_{p'+1}^{(3)} = \widehat{g}_{p'+1}''$ and $\D_{p'}''^+ \vdash \D_{p'+1}^{(3)+}.$   Thus $\D_{p'+1}^{(3)} \bumpeq \D_{p'+1}''$ and hence $\D_{p'+1}^{(3)} \sim \D_{p'+1}''$.  We also see that $\widehat{D}_{(p'+1)2}^{(3)} = \widehat{D}_{p'2}'',$ and hence we have that $D_{(p'+1)2}^{(3)} = D_{p'2}'',$ since $\D_{p'}''^+ \vdash \D_{p'+1}^{(3)+}.$  Thus $\D_{p'}''^+ \bumpeq \D_{p'+1}^{(3)+}$ and hence $\D_{p'}''^+ \sim \D_{p'+1}^{(3)+}$.   It now follows that $\D_{p'}''^+ \sim \D_{p'+1}^{(3)+}\sim \D_{p'+1}''^+.$  Since $\widehat{e}_{12}$ and $\widehat{e}_{34}$ can be interchanged in $\widehat{\D}_{p'}'^+$, the same proof works if $\widehat{e}_{34} \in  \widehat{F}_I(\widehat{D}_{(p'+1)1}').$
\end{proof}

By the above, we may assume that $\{ \widehat{e}_{12}, \widehat{e}_{34} \} \cap \widehat{F}_I(\widehat{D}_{(p'+1)1}') = \emptyset.$    Thus it follows that $\widehat{F}_I(\widehat{D}_{(p'+1)1}') = \{ \widehat{e}_{13}, \widehat{e}_{14}, \widehat{e}_{23}, \widehat{e}_{24} \}.$  Because of this, we may assume that $\widehat{D}_{(p'+1)1}'' = \widehat{D}_{(p'+1)1}' - \widehat{e}_{23} + \widehat{e}_{13}$ and $\widehat{D}_{(p'+1)2}'' = \widehat{D}_{(p'+1)2}'$.

\begin{nonamenonamenoname}
If $\widehat{D}_{p'1}'' \cap (\widehat{E} \backslash \{ \widehat{e}_{12} \}) \ne \emptyset$ or $\widehat{D}_{p'2}'' \cap (\widehat{E} \backslash \{ \widehat{e}_{34} \}) \ne \emptyset,$ then $\D_{p'}''^+ \sim \D_{p'+1}''^+.$
\end{nonamenonamenoname}

\begin{proof}
Suppose $\widehat{D}_{p'1}'' \cap (\widehat{E}\backslash \{ \widehat{e}_{12} \} ) \ne \emptyset.$  Without loss of generality, we may assume that $\widehat{e}_{56} \in \widehat{D}_{p'1}''.$  By Lemma \ref{lem-Mhat1}, we have that either $\widehat{D}_{p'1}'' - \widehat{e}_{12}  + \widehat{e}_{15} \in \B(\widehat{M}_\kappa)$ or $\widehat{D}_{p'1}'' - \widehat{e}_{12} + \widehat{e}_{25} \in \B(\widehat{M}_\kappa).$  Assume that the former holds. 
Let $\widehat{\D}_{p'}^{(3)} = (\widehat{D}_{p'1}^{(3)}, \dots ,\widehat{D}_{p'\kappa}^{(3)})$ where $\widehat{D}_{p'1}^{(3)} = \widehat{D}_{p'1}'' - \widehat{e}_{12} + \widehat{e}_{15},$ $\widehat{D}_{p'2}^{(3)} = \widehat{D}_{p'2}'' - \widehat{e}_{34} + \widehat{e}_{24},$ and $\widehat{D}_{p'j}^{(3)} = \widehat{D}_{p'j}'',$ for all $j\in \{3, \dots ,\kappa \}.$  Let $\widehat{\D}_{p'}^{(3)+} = (\widehat{\D}_{p'}^{(3)}, \widehat{g}_{p'}^{(3)})$, where 
$\widehat{g}_{p'}^{(3)} = \widehat{g}_{p'}''.$   Note that $\widehat{D}_{p'2}^{(3)} = \widehat{D}_{(p'+1)2}''.$  Let $\D_{p'}^{(3)+} = (\D_{p'}^{(3)}, g_{p'}^{(3)}),$ and $\D_{p'}^{(3)} = (D_{p'1}^{(3)}, \dots ,D_{p'\kappa}^{(3)}),$ where we may assume that $g_{p'}^{(3)} = g_{p'}''$ and $D_{p'2}^{(3)}  = D_{(p'+1)2}''.$   Then we see that $\D_{p'}''^+ \bumpeq \D_{p'}^{(3)+} \bumpeq \D_{p'+1}''^+$, and hence $\D_{p'}''^+ \sim \D_{p'}^{(3)+} \sim \D_{p'+1}''^+.$

On the other hand, suppose $\widehat{D}_{p'1}' - \widehat{e}_{12} + \widehat{e}_{25} \in \B(\widehat{M}).$   Let $\widehat{\D}_{p'}^{(3)} = (\widehat{D}_{p'1}^{(3)}, \dots ,\widehat{D}_{p'\kappa}^{(3)})$ where $\widehat{D}_{p'1}^{(3)} = \widehat{D}_{p'1}'' - \widehat{e}_{12} + \widehat{e}_{25},$ $\widehat{D}_{p'2}^{(3)} = \widehat{D}_{p'2}'' - \widehat{e}_{34} + \widehat{e}_{13},$ and $\widehat{D}_{p'j}^{(3)} = \widehat{D}_{p'j}'',$ for all $j\in \{3, \dots ,\kappa \}.$  Let $\widehat{\D}_{p'}^{(3)+} = (\widehat{\D}_{p'}^{(3)}, \widehat{g}_{p'}^{(3)})$, where $\widehat{g}_{p'}^{(3)} = \widehat{g}_{p'}''.$  Let $\D_{p'}^{(3)+} = (\D_{p'}^{(3)}, g_{p'}^{(3)}),$ and $\D_{p'}^{(3)} = (D_{p'1}^{(3)}, \dots ,D_{p'\kappa}^{(3)}),$ where we may assume that $g_{p'}^{(3)} = g_{p'}''$.  Then $\D_{p'}''^+ \sim \D_{p'}^{(3)+}.$

Let $\widehat{\D}_{p'+1}^{(3)+} = (\widehat{\D}_{p'+1}^{(3)}, \widehat{g}_{p'+1}^{(3)}),$ where $\widehat{g}_{p'+1}^{(3)} = \widehat{g}_{p'+1}'',$ $\widehat{\D}_{p'+1}^{(3)} = (\widehat{D}_{(p'+1)1}^{(3)}, \dots ,\widehat{D}_{(p'+1)\kappa}^{(3)}),$ $\widehat{D}_{(p'+1)1}^{(3)} = \widehat{D}_{(p'+1)1}'' - \widehat{e}_{13} + \widehat{e}_{24},$ 
$\widehat{D}_{(p'+1)2}^{(3)} = \widehat{D}_{(p'+1)2}'' - \widehat{e}_{24} + \widehat{e}_{13},$ and $\widehat{D}_{(p'+1)j}^{(3)} = \widehat{D}_{(p'+1)j}'',\ j = 3, \dots ,\kappa.$  Note that $\widehat{D}_{p'2}^{(3)} = \widehat{D}_{(p'+1)2}^{(3)}.$  Thus choosing $\D_{p'+1}^{(3)+} = (\D_{p'+1}^{(3)}, g_{p'+1}^{(3)})$ so that $g_{p'+1}^{(3)} = g_{p'+1}''$ and choosing $\D_{p'+1}^{(3)} = (D_{(p'+1)1}^{(3)}, \dots ,D_{(p'+1)\kappa}^{(3)})$ so that $D_{(p'+1)2}^{(3)} = D_{p'2}^{(3)},$ we see that $\D_{p'}''^+ \sim \D_{p'}^{(3)+} \sim  \D_{p'+1}^{(3)+} \sim \D_{p'+1}''^+.$

From the above, we may assume that $\widehat{D}_{p'1}'' \cap \widehat{E} =\{ \widehat{e}_{12} \}.$  Suppose $\widehat{D}_{p'2}''\cap (\widehat{E} \backslash \{ \widehat{e}_{34} \} ) \ne \emptyset.$  We may assume that $\widehat{e}_{56} \in \widehat{D}_{p'2}''.$  By Lemma \ref{lem-Mhat1}, either $\widehat{D}_{p'2}'' - \widehat{e}_{34}  + \widehat{e}_{45} \in \B(\widehat{M}_\kappa)$ or $\widehat{D}_{p'2}'' - \widehat{e}_{34} + \widehat{e}_{35} \in \B(\widehat{M}_\kappa).$  Suppose the former occurs.  Let $\widehat{\D}_{p'}^{(3)+} = (\widehat{\D}_{p'}^{(3)}, \widehat{g}_{p'}^{(3)}),$ where $\widehat{g}_{p'}^{(3)} = \widehat{g}_{p'}'',$ $\widehat{\D}_{p'}^{(3)} = (\widehat{D}_{p'1}^{(3)}, \dots ,\widehat{D}_{p'\kappa}^{(3)}),$ $\widehat{D}_{p'2}^{(3)} = \widehat{D}_{p'2} - \widehat{e}_{34} + \widehat{e}_{45},$ and $\widehat{D}_{p'}^{(3)} = \widehat{D}_{p'j}'',\ \forall j\in \{ 1,3,\dots ,\kappa \}.$
Let $\D_{p'}^{(3)+} = (\D_{p'}^{(3)}, g_{p'}^{(3)}) \in \fB_{M_\kappa}(\widehat{\D}_{p'}^{(3)+}),$ where $g_{p'}^{(3)} = g_{p'}'',$ and $\D_{p'}^{(3)} = (D_{p'1}^{(3)}, \dots ,D_{p'\kappa}^{(3)}).$  We observe that $\fB_{M_\kappa}(\widehat{D}_{p'2}^{(3)}) \subseteq \fB_{M_\kappa}(\widehat{D}_{(p'+1)2}'').$  Thus we may choose $D_{p'2}^{(3)} = D_{(p'+1)2}''.$  We now see that $\D_{p'}''^+ \bumpeq \D_{p'}^{(3)+} \bumpeq \D_{p'+1}''^+$ and hence
$\D_{p'}''^+ \sim \D_{p'}^{(3)+} \sim \D_{p'+1}''^+.$  If instead $\widehat{D}_{p'2}'' - \widehat{e}_{34} + \widehat{e}_{35} \in \B(\widehat{M}_\kappa)$, then one can use similar arguments to the above to show that $\D_{p'}''^+ \sim \D_{p'+1}''^+.$  
\end{proof}
From the above, we may assume that $\widehat{D}_{p'1}''\cap \widehat{E} = \{ \widehat{e}_{12} \}$ and $\widehat{D}_{p'2}''\cap \widehat{E} = \{ \widehat{e}_{34} \}$.  

\begin{nonamenonamenoname}
If $\kappa \ge 3,$ then $\D_{p'}''^+ \sim \D_{p'+1}''^+.$
\end{nonamenonamenoname}

\begin{proof}If $\kappa \ge 3,$ then $\widehat{D}_{p'j}'' \cap \widehat{E} \ne \emptyset,$ for some $j\in \{ 3, \dots ,\kappa \}.$  Since $\D_{p'}''^+ \vdash \D_{p'+1}''^+,$ it follows that $D_{p'j}'' = D_{(p'+1)j}''$ and hence $\D_{p'}''^+ \bumpeq \D_{p'+1}''^+.$  It then follows that $\D_{p'}''^+ \sim \D_{p'+1}''^+.$ \end{proof} 

From the above, we may assume that $\kappa = 2$.  Since $D_{p'1}'' \cap \widehat{E} = \{ \widehat{e}_{12}\}$ and $D_{p'2}'' \cap \widehat{E} = \{ \widehat{e}_{34}\}$, it follows that $\{ e_1, e_2 \} \subseteq D_{p'1}''$ and $\{ e_3, e_4 \} \subseteq D_{p'2}''.$  We also see that $D_{(p'+1)1}'' \cap \widehat{E} = \{ \widehat{e}_{13}\}$ and $D_{(p'+1)2}'' \cap \widehat{E} = \{ \widehat{e}_{24} \}$ and hence 
$\{ e_1, e_3 \} \subseteq D_{(p'+1)1}''$ and $\{ e_2, e_4 \} \subseteq D_{(p'+1)2}''.$
Since
$\widehat{e}_{12} \in \widehat{F}_I(\widehat{D}_{p'1}') \backslash \widehat{F}_I(\widehat{D}_{(p'+1)1}'),$ it follows by Lemma \ref{lem-Mhat3.1} i), that 
$\widehat{D}_{p'1}' - \widehat{e}_{23} + \widehat{g}_{p'} \in \B(\widehat{M}_\kappa).$ Thus\\ $D_{p'1}'' - e_1 -e_2 + g_{p'}'' + e_i \in \B(M_\kappa),$ for all $e_i \in E_G(v).$

As noted in Section \ref{subsec-widehatuextend}, the vertex $\widehat{u}$ is an endvertex of some edge in $\widehat{E}.$  By symmetry, we need only consider the case where $\widehat{u}$ is an endvertex of $\widehat{e}_{12}.$ Then either $e_1 \in E_G(u)$ or $e_2 \in E_G(u).$  Suppose $e_1 \in E_G(u)$.   Let $\D_{p'}^{(3)+} = (\D_{p'}^{(3)}, g_{p'}^{(3)})$ be the extended base sequence obtained from an (EB) exchange where $e_1$ is exchanged with $g_{p'}'';$ that is, $\D_{p'}^{(3)} = (D_{p'1}^{(3)}, \dots ,D_{p'\kappa}^{(3)}),\ D_{p'1}^{(3)} = D_{p'1}'' - e_1 + g_{p'}'',\ D_{p'j}^{(3)} = D_{p'j}'',\ j = 2, \dots ,\kappa$, and $g_{p'}^{(3)} = e_1.$  Then $\D_{p'}''^+ \sim_1 \D_{p'}^{(3)+}.$
Let $\D_{p'}^{(4)+} = (\D_{p'}^{(4)}, g_{p'}^{(4)})$ be the extended bases sequence obtained from a (BB) exchange where $e_2$ is exchanged with $e_3$ in $\D_{p'}^{(3)};$ that is, $\D_{p'}^{(4)} = (D_{p'1}^{(4)}, \dots ,D_{p'\kappa}^{(4)}),\ D_{p'1}^{(4)} = D_{p'1}^{(3)} - e_2 + e_3,\ D_{p'2}^{(4)} = D_{p'2}^{(3)} - e_3 + e_2,\  D_{p'j}^{(4)} = D_{p'j}^{(3)},\ j = 2, \dots ,\kappa$, and $g_{p'}^{(4)} = g_{p'}^{(3)} = e_1.$  Thus $\D_{p'}^{(3)+} \sim_1 \D_{p'}^{(4)+}.$  Finally, we see that $\D_{p'+1}''^+$ is obtained from $\D_{p'}^{(4)+}$ by an (EB) exchange where $g_{p'}^{(4)}=e_1$ is exchanged with $g_{p'+1}''.$  Thus we have $\D_{p'}''^+ \sim_1 \D_{p'}^{(3)+} \sim_1 \D_{p'}^{(4)+} \sim_1 \D_{p'+1}''^+$ and hence $\D_{p'}''^+ \sim \D_{p'+1}''^+.$

Suppose instead that $e_2 \in E_G(u).$   Let $\D_{p'}^{(3)+} = (\D_{p'}^{(3)}, g_{p'}^{(3)})$ be the extended base sequence obtained from $\D_{p'}''^+$ by an (EB) exchange where $e_2$ is exchanged with $g_{p'}'';$ that is, $\D_{p'}^{(3)} = (D_{p'1}^{(3)}, \dots ,D_{p'\kappa}^{(3)}),\ D_{p'1}^{(3)} = D_{p'1}'' - e_2 + g_{p'}'',\ D_{p'j}^{(3)} = D_{p'j}'',\ j = 2, \dots ,\kappa$, and $g_{p'}^{(3)} = e_2.$  Then $\D_{p'}''^+ \sim_1 \D_{p'}^{(3)+}.$
Let $\D_{p'+1}^{(3)+} = (\D_{p'+1}^{(3)}, g_{p'+1}^{(3)})$ be the extended base sequence obtained from $\D_{p'+1}''^+$ by exchanging $e_1,e_3$ with $e_2, e_4$; that is, $g_{p'+1}^{(3)} = g_{p'+1}'',$ and 
given $\D_{p'+1}^{(3)} = (D_{(p'+1)1}^{(3)}, \dots ,D_{(p'+1)\kappa}^{(3)} ),$ we have $D_{(p'+1)1}^{(3)} = D_{(p'+1)1}'' - e_1 - e_3 + e_2 + e_4$, $D_{(p'+1)2}^{(3)} = D_{(p'+1)2}'' - e_2 - e_4 + e_1 + e_3$, and $D_{(p'+1)j}^{(3)} = D_{(p'+1)j}'',\ j = 3, \dots ,\kappa.$  Then it is clear that $\D_{p'+1}''^+ \bumpeq \D_{p'+1}^{(3)+}$ and hence $\D_{p'+1}''^+ \sim \D_{p'+1}^{(3)+}.$  Now one can obtain $\D_{p'+1}^{(3)+}$ from $\D_{p'}^{(3)+}$ by first exchanging $e_1$ and $e_4$ in a (BB) exchange, followed by an (EB) exchange where $e_2$ is exchanged with $g_{p'+1}''.$  Thus $\D_{p'}^{(3)+} \sim \D_{p'+1}^{(3)+}$ and hence we have $\D_{p'}''^+ \sim_1 \D_{p'}^{(3)+} \sim \D_{p'+1}^{(3)+} \sim_1 \D_{p'+1}''^+.$  
\end{proof}

\end{document}